\DeclareMathAlphabet{\mathbbm}{U}{bbm}{m}{n}
\definecolor{myred}{HTML}{ae1908}
\definecolor{myblue}{HTML}{05348b}
\definecolor{myorange}{HTML}{ec813b}
\definecolor{mylightblue}{HTML}{9acdc4}
\definecolor{tpurple}{HTML}{743096}
\definecolor{myyellow}{HTML}{e5a84b}
\definecolor{mygreen}{HTML}{6bb392}
\definecolor{revisecolor}{HTML}{000000}
\numberwithin{equation}{section}
\newcommand{\indep}{\perp \!\!\! \perp}
\newcommand*{\supp}{\mathrm{supp}}
\newcommand{\aosversion}[2]{\iftoggle{vaos}{#1}{#2}}
\newcommand{\Rom}[1]{\text{\uppercase\expandafter{\romannumeral #1\relax}}}
\newcommand{\blue}[1]{{\color{blue} #1}}
\newcommand{\myred}[1]{{\color{myred} #1}}
\newcommand{\myblue}[1]{{\color{myblue} #1}}
\newcommand{\mylightblue}[1]{{\color{mylightblue} #1}}
\newcommand{\myorange}[1]{{\color{myorange} #1}}
\newcommand{\myyellow}[1]{{\color{myyellow} #1}}
\begin{document}
\setcounter{tocdepth}{2}

\title{Environment Invariant Linear Least Squares}
\author{ 
	Jianqing Fan
	\qquad Cong Fang
	\qquad Yihong Gu
	\qquad Tong Zhang}
\date{Princeton University, Peking University, and \\ The Hong Kong University of Science and Technology}
\maketitle

\begin{abstract}
\setstretch{1}
    This paper considers a multi-environment linear regression model in which data from multiple experimental settings are collected. The joint distribution of the response variable and covariates may vary across different environments, yet the conditional expectations of the response variable, given the unknown set of important variables, are invariant. Such a statistical model is related to the problem of endogeneity, causal inference, and transfer learning. The motivation behind it is illustrated by how the goals of prediction and attribution are inherent in estimating the true parameter and the important variable set. We construct a novel {\it environment invariant linear least squares (EILLS)} objective function, a multi-environment version of linear least squares regression that leverages the above conditional expectation invariance structure and heterogeneity among different environments to determine the true parameter. Our proposed method is applicable without any additional structural knowledge and can identify the true parameter under a near-minimal identification condition related to the heterogeneity of the environments. We establish non-asymptotic $\ell_2$ error bounds on the estimation error for the EILLS estimator in the presence of spurious variables. Moreover, we further show that the $\ell_0$ penalized EILLS estimator can achieve variable selection consistency in high-dimensional regimes. These non-asymptotic results demonstrate the sample efficiency of the EILLS estimator and its capability to circumvent the curse of endogeneity in an algorithmic manner without any additional prior structural knowledge.  To the best of our knowledge, this paper is the first to realize statistically efficient invariance learning in the general linear model.
\end{abstract} 
\noindent{\bf Keywords}: Least Squares, Endogeneity, Multiple Environments, Invariance, Heterogeneity, Structural Causal Model, Invariant Risk Minimization.
{
\section{Introduction}

The development of statistical regression methods dates back to the least squares proposed in the early nineteenth century \citep{legendre1805leastsquares, gauss1809leastsquares}. The ordinary linear least squares method, also known as {\it the combination of observations}, uses observations (data) to fit a model predicting the {\it response variable} as a linear function of several designated {\it explanatory variables}. At that time, the fitted models for specific tasks, for example, determining lunar motion, were successfully deployed in the real world for predicting the unseen future and were of great commercial and military significance \citep{stigler1986history}. For example, such a prediction of lunar liberation helps determine the ship's position and facilitates navigating the ocean during the Age of Discovery. This kind of prediction requires only a strong correlation with the response variable. Since then, one ultimate goal of fitting a regression model is to discover the law, which is {\it invariant} across time and space or more broadly {\it environments} to some extent, from the data and then ground it in the real world for prediction. The latter requires understanding causation.  

With the rapidly growing amount of high-dimension data in the era of big data, there is a surge in demand for making predictions based on numerous explanatory variables \citep{fan2014challenges,  wainwright2019high, fan2020statistical}. Compared to the circumstances in the nineteenth century, in which the target is to predict the response variable using fixed, carefully chosen explanatory variables, we are now in the stage where the algorithms such as Lasso \citep{tibshirani1997lasso}, SCAD \citep{fan2001variable}, Dantzig selector \citep{candes2007dantzig}, to name a few, can automatically select tens of important variables responsible for the response variable out of thousands or more of candidates. There is considerable literature on the theoretical analysis of these methods regarding the estimation error and variable selection property \citep{zhao2006model, candes2007dantzig, bickel2009simultaneous, buhlmann2011statistics, wainwright2019high, fan2020statistical}, demonstrating their successes and promising prospects.

Let us use a thought experiment to illustrate the underlying risks and potential remedies when incorporating more candidate variables. Suppose our task is to fit a model to classify cows and camels based on extracted hierarchical features from some ``oracle'' agent. We find a dataset $\mathcal{D}$ containing 10k images of cows and camels from the Internet and use 70\% of them to train a classifier on top of two features provided by the agent: the back shape $x_1$, and the background color $x_2$. It is contemplated that cows often appear on the grass while most camels appear on the sand. This indicates that we can use $x_2$ to build a classifier that works well on the training data and the remaining $30\%$ test data by classifying whether the background color is green or yellow. Moreover, incorporating $x_2$ can also increase the accuracy of the classifier built on $x_1$. However, introducing $x_2$ is not what we expected: it may result in a disaster when we deploy it in real-world applications, for example, a detector in a place farming camels and cows, in which the background color is fixed. Problems of a similar nature arise readily in realistic applications \citep{torralba2011unbiased, geirhos2020shortcut} and easily in high dimensions \citep{fan2014endogeneity}. A natural question is whether there are any purely data-driven methods to address such an endogeneity problem. Consider the case where we have another dataset $\tilde{\mathcal{D}}$ in which an association between the background color and object label still exists yet slightly perturbs; for example, 60\%/90\% of the camels stand on sand in the two datasets, respectively. Intuitively, we can combine two different associations between the background color and the object label in these two datasets and infer that $x_2$ may be a ``spurious'' variable for prediction or causation.

The above thought experiment demonstrates that we may suffer from the ``curse of endogeneity'', that the conditional expectation of the response given all the explanatory variables may diverge from the law of interests, when including a lot of variables besides the true important variables before estimation \citep{fan2014challenges}. Such a problem will deviate our route toward building a decent prediction model grounded in the real world and yield non-robust predictions in other environments. Meanwhile, a potential data-driven strategy is to utilize the \emph{heterogeneity} across datasets. This paper implements the above intuition to the linear regression model in statistical modeling, methodology, and theory. We propose a multi-environment version of linear least squares, whose key idea can be summarized as \emph{the combination of combinations of observations} under heterogeneous environments: it combines the linear least squares (\emph{combination of observations}) solutions across different datasets and uses their differences to determine the true parameter $\bbeta^*$ in a completely data-driven manner.

\subsection{The Problem under Study}

In this work, we are interested in predicting the response variable $y \in \mathbb{R}$ with a linear function of the explanatory variable $\bx \in \mathbb{R}^p$ using data from \emph{multiple environments}. Suppose we have collected data from multiple resources/environments. Let $\mathcal{E}$ be the set of environments. For each \emph{environment} $e\in \mathcal{E}$, we observe $n$ i.i.d. $(\bx_1^{(e)}, y_1^{(e)}), \ldots, (\bx_n^{(e)}, y^{(e)}_n) \sim \mu^{(e)}$, typically assumed from the linear model\footnote{Our main theoretical results still hold if the conditional expectation is replaced by $\mathbb{E}[\varepsilon^{(e)} \bx_{S^*}^{(e)}]=0$, in which $\bbeta^*$ is the best linear predictor on important variables.}
\begin{align}
\label{eq:intro-dgp}
    y^{(e)} = (\bbeta^*_{S^*})^\top \bx_{S^*}^{(e)} + \varepsilon^{(e)} ~~~~~ \text{with} ~~~~ \mathbb{E}[\varepsilon^{(e)}|\bx_{S^*}^{(e)}] \equiv 0,
\end{align} where the unknown set of important variables $S^* = \{j: \beta_j^* \neq 0\}$ and the model parameters $\bbeta^*$ are the same, or \emph{invariant}, across different environments, while $\mu^{(e)}$, the distribution of $(\bx^{(e)},y^{(e)})$, may vary. We aim to estimate $\bbeta^*$ and $S^*$ using the $n\cdot |\mathcal{E}|$ data $\{(\bx_i^{(e)}, y_i^{(e)})\}_{e\in \mathcal{E}, i\in \{1,\ldots, n\}}$. The model assumptions of multiple environments resemble and slightly relax the assumptions in this paper's predecessors, for example, \cite{peters2016causal, rojas2018invariant, pfister2021stabilizing, yin2021optimization}.

The main challenge behind identifying $\bbeta^*$ is that the exogeneity condition \citep{engle1983exogeneity} on all predictors, i.e., $\mathbb{E}[\varepsilon^{(e)}|\bx^{(e)}] \equiv 0$ or at least $\mathbb{E}[\varepsilon^{(e)} \bx^{(e)}] = 0$, no longer holds for each single $e\in \mathcal{E}$.  This arises easily in the high-dimensional settings as argued in \cite{fan2014endogeneity} and \cite{fan2014challenges} where a different data-driven strategy is proposed to solve the problem. Instead, for each environment $e\in \mathcal{E}$, the exogeneity condition only holds on the important variables according to \eqref{eq:intro-dgp}. The only assumption available now is
\begin{align}
\label{eq:intro-invariance}
    \forall e\in \mathcal{E}, ~~~~~~ \mathbb{E}[y^{(e)}|\bx^{(e)}_{S^*}] = (\bbeta^*_{S^*})^\top \bx^{(e)}_{S^*},
\end{align} while $\mathbb{E}[y^{(e)}|\bx^{(e)}]$ is not necessarily equal to $(\bbeta^*)^\top \bx^{(e)}$ for each single environment $e\in \mathcal{E}$. This further indicates that $\bbeta^*$ may not be the best linear predictor for environment $e\in \mathcal{E}$, and the gap can be potentially large. Recall when the covariance matrix $\mathbb{E}[\bx^{(e)} (\bx^{(e)})^\top]$ is positive definite, the best linear predictor for environment $e\in \mathcal{E}$ can be written as
\begin{align}
\label{eq:intro-lse-sol}
    \bbeta^{(e)} = \bbeta^* + \left(\mathbb{E}[\bx^{(e)} (\bx^{(e)})^\top]\right)^{-1} \mathbb{E}[\bx^{(e)} \varepsilon^{(e)}].
\end{align} This implies that when only one environment $e\in \mathcal{E}$ is taken into consideration, one can obtain a reduced mean squared error by incorporating those \emph{linear spurious variables}, defined as the variables $x_j$ satisfying $\mathbb{E}[x_j^{(e)} \varepsilon^{(e)}]\neq 0$, and other variables correlated to these variables. They help predict $ \varepsilon^{(e)}$ and reduce prediction error under this environment.   An example is the variable measuring ``background color'' in the above thought experiment. However, these variables are unstable because the corresponding association between these variables and $y$ may vary or even be adversarial in other or unseen environments, leading to biased predictions.

Two ultimate goals of fitting a statistical model using data are prediction and attribution. Regarding ``prediction'', we hope our fitted model can make decent predictions on unseen data grounded in the real world rather than only on the ``demo'' test data. Regarding ``attribution'' \citep{efron2020prediction}, we wish to attribute the outcome/response variable to the significant variables of the fitted model such that the fitted model can lead to true scientific claims. See also Chapter 1 of \cite{fan2020statistical}. We illustrate that the goal of estimating $\bbeta^*$ and $S^*$ for the model \eqref{eq:intro-dgp} unifies the above two seemingly separate goals, thereby expounding the motivation and importance of the multi-environment linear regression model.

\noindent \textbf{Prediction.} Consider the case where the potential distribution of the unseen data $\tilde{\mu}$ may be different from those $\{\mu^{(e)}\}_{e\in \mathcal{E}}$ yet shares the same conditional expectation structure $\mathbb{E}_{\tilde{\mu}}[y|\bx_{S^*}] = (\bbeta^*_{S^*})^\top \bx_{S^*}$ as those of observations in \eqref{eq:intro-dgp}. Without informing of the unseen data distribution ahead, we can define the out-of-sample $L_2$ risk in an adversarial manner as
\begin{align*}
    \mathsf{R}_{\mathtt{oos}}(\bbeta) = \sup_{\substack{\mathbb{E}_\mu[y|\bx_{S^*}] = (\bbeta^*_{S^*})^\top \bx_{S^*} \\ \mathrm{Var}_{\mu}[y|\bx_{S^*}] \lor \max_{1\le j\le p}\mathbb{E}_{\mu}[x_j^2] \le \sigma^2}}  \mathbb{E}_{\mu}\left[|\bbeta^\top \bx - y|^2\right].
\end{align*} 
It follows from \cref{prop:oos-definition-property} in \cref{subsec:problem-formulation} that $\bbeta^*$ minimizes $\mathsf{R}_{\mathtt{oos}}(\bbeta)$ though it may not be the best linear predictor for a specific environment $\tilde{\mu}$. This demonstrates that $\bbeta^*$ is the optimal linear predictor robust to all potential distribution shifts on the unseen data to some extent. Moreover, \cref{prop:oos-definition-property} also implies that the $\ell_2$ error $\|\bbeta - \bbeta^*\|_2$ can be treated as an surrogate of the excess out-of-sample $L_2$ risk. This connects the estimation error of $\bbeta^*$ to the ``adversarial'' mean squared error on potential unseen data.

\noindent \textbf{Attribution.} Let us restrict the multi-environment linear regression model to a specific instance -- data with different experimental settings \citep{didelez2012direct, peters2016causal} in the context of causal inference. To be specific, suppose there exists an environment $e_0\in \mathcal{E}$ with observational data, and the rest are environments with interventional data \citep{he2008active}, in which some interventions are performed on the variables other than the response variable $y$. The distribution of the variables in each environment can be encoded as a Structural Causal Model (SCM) \citep{glymour2016causal}. Under the modularity \citep{scholkopf2012causal} assumption for SCMs that the intervention on variable $x_j$ only changes the distribution of $\mathbb{P}(x_j|\bx_\mathsf{pa(j)})$ where $\mathsf{pa}(j)$ is the set of direct causes of $x_j$, we can see that $S^*$ in this instance is exactly the ``direct cause'' of the response variable. In this case, inferring $S^*$ from data coincides with discovering the direct cause of the variable $y$, while estimating $\bbeta^*$ is exactly estimating the true causal coefficients characterizing the mechanism $\mathbb{P}(y|\bx_{\mathsf{pa(y)}})=\mathbb{P}(y|\bx_{S^*})$.  See \cref{subsec:causal} for additional details.

\subsection{Related Works}

Multi-environment regression is common in many applications \citep{meinshausen2016methods, vcuklina2021diagnostics}. There is considerable literature proposing methods to estimate $\bbeta^*$ and $S^*$ starting from the pioneering work of \cite{peters2016causal}, in which they propose the ``Invariant Causal Prediction'' (ICP) to do causal discovery. The key idea behind it is the modularity assumption of SCMs, which is also referred to as invariance, autonomy \citep{haavelmo1944probability, aldrich1989autonomy}, and stability \citep{dawid2010identifying}. To be specific, \cite{peters2016causal} considers the multiple environments setting, in which the intervention may be applied to unknown variables other than $y$, leading to the following multi-environment linear regression model,
\begin{align}
\label{eq:intro-icp-cond}
    y^{(e)} = (\bbeta^*)^\top\bx^{(e)} + \varepsilon^{(e)} ~~~~~ \text{with} ~~~~~ \varepsilon^{(e)} \sim \mu_\varepsilon, ~\varepsilon^{(e)} \indep \bx_{S^*}^{(e)}, ~\text{and}~\mathbb{E}[\varepsilon^{(e)}]=0.
\end{align} They exploit the conditional distribution invariance structure, i.e., $\varepsilon^{(e)}|\bx_{S^*}^{(e)}$, and propose a hypothesis testing procedure which can guarantee the selected set $\hat{S}$ satisfies $\mathbb{P}(\hat{S} \subseteq S^*) \ge 1-\alpha$ for given Type-I error $\alpha>0$. There is considerable literature extending this idea to other models such as \cite{heinze2018invariant, pfister2019invariant}. Though the Type-I error is guaranteed for their method, these procedures may collapse to conservative solutions, such as $\hat{S}=\emptyset$, and there is a lack of guarantee in the power of the test.

The conservative nature of ICP methods has sparked the development of numerous optimization-based methods. Built upon the invariance principle, there is also considerable literature \citep{ghassami2017learning, rothenhausler2019causal, rothenhausler2021anchor} proposing provably sample-efficient regression methods for estimating the causal parameter $\bbeta^*$. However, they rely on additional, restrictive structures that simplify the original problem considerably. For instance, the Causal Dantzig \citep{rothenhausler2019causal} presumes a linear SCM model with the heterogeneity of environments resulting from additive interventions. Implementing these methods in practical scenarios necessitates expert domain knowledge to validate these structures before estimation. This requirement introduces potential risks of model misspecification, as these methods are specifically tailored to the assumed structures.

There is also a considerable literature designing methods for the generic linear model \eqref{eq:intro-icp-cond}, for example, \cite{ rojas2018invariant, pfister2021stabilizing, yin2021optimization}. However, these methods tend to be heuristic, and finite sample guarantees are lacking. Inspired by the goal of achieving out-of-distribution generalization, \cite{arjovsky2019invariant} introduced another heuristic and model-agnostic approach. Their method, Invariant Risk Minimization (IRM), seeks a data representation such that the optimal predictors based on it are invariant across all environments. The ideas of IRM and its variants are widely applied in many machine learning tasks \citep{sagawa2019distributionally, zhang2020invariant, krueger2021out, lu2021nonlinear}. Nevertheless, the theoretical understanding of invariance learning remains sparse, and the performance improvement of these methods over the standard empirical risk minimization is not clear \citep{rosenfeld2020risks, kamath2021does}. 

Another critical issue associated with these invariance learning methods is the inadequacy of theoretical insights into their identification conditions, which characterizes when it is possible to identify $\bbeta^*$ in the model \eqref{eq:intro-icp-cond} using infinite data from {\it finite} environments. While \cite{peters2016causal} delves into this issue, providing sufficient conditions for specific intervention and SCM structure types, it falls short of offering general criteria. Similarly, \cite{arjovsky2019invariant} also has some preliminary discussions on linear models, yet it stipulates an impractical requirement: $|\mathcal{E}| \ge d$. The understanding of the identification condition for a developed method is of great significance because it elucidates the method's sample efficiency in terms of the number of environments $|\mathcal{E}|$ required: a stronger identification condition may necessitate a potentially increased number of environments $|\mathcal{E}|$ to recover $\bbeta^*$.

It is worth noting that the invariance structure we can exploit depends on the perturbations we expect in real-world scenarios. Due to its clear causal interpretation, the idea of invariance is also widely adopted in domain adaptation, transfer learning, and out-of-distribution generalization. Numerous invariance forms have been proposed based on various expected perturbations beyond the residual invariance \eqref{eq:intro-dgp}. There is also considerable literature designing methods to leverage these invariance structures \citep{muandet2013domain, gong2016domain, heinze2021conditional} using data from multiple environments. A notable example in classification tasks is the invariance of the label conditional distribution, expressed as $\mathbb{P}^{(e)}(\phi(\bx)|y)\equiv q(\bx, y)$, where $\phi(\cdot)$ is an unknown data representation. We refer readers to \cite{chen2021domain, wang2023causal} for an overview.

\subsection{New Contributions and Comparison with Predecessors}

The significance of recovering $S^*$ and $\bbeta^*$ in the model \eqref{eq:intro-dgp}, the sample inefficiency of previous methods in terms of $n$ and $|\mathcal{E}|$, and the lack of theoretical understanding in invariance learning raise the following question:
\begin{align*}
    \begin{split}
    &\text{\it Is provably sample-efficient estimation of $\bbeta^*$ and $S^*$ in the model \eqref{eq:intro-dgp}}\\
    &\text{\it possible under a general, minimal identification condition?} \\
    \end{split}
\end{align*}

This paper provides an affirmative ``yes'' to the above question. In this paper, we propose the \emph{environment invariant linear least squares (EILLS)} estimator that regularizes linear least squares with a \emph{focused linear invariance regularizer} which promotes the invariance or exogeneity on selected variables. In particular, the population-level EILLS objective over the environments $\mathcal{E}$ is defined as
\begin{align}
\label{eq:intro-eills-obj-popu}
    \mathsf{Q}_\gamma(\bbeta) = \underbrace{\sum_{e\in \mathcal{E}} \mathbb{E} \left[|y^{(e)} - \bbeta^\top \bx^{(e)}|^2\right]}_{\mathsf{R}(\bbeta)} + \gamma \underbrace{\sum_{j=1}^p \mathds{1}\{\beta_j\neq 0\} \times  \sum_{e\in \mathcal{E}} \left|\mathbb{E} [(y^{(e)} - \bbeta^\top \bx^{(e)}) x_j^{(e)}]\right|^2 }_{\mathsf{J}(\bbeta)}
\end{align} with some hyper-parameter $\gamma>0$. From a high-level viewpoint, the introduction of the regularizer $\mathsf{J}(\cdot)$ leverages the invariance structure \eqref{eq:intro-invariance} while the sum of $L_2$ loss $\mathsf{R}(\cdot)$ requires a good overall solution and prevents it from collapsing to conservative solutions such as $\bbeta=0$.  See also \cite{fan2014endogeneity} for a similar method to deal with endogeneity in high-dimensional regression. To get a crude sense of what $\mathsf{J}(\cdot)$ imposes, note that $\mathsf{J}(\cdot)$ discourages selecting variables that have a strong correlation with the fitted residuals in some environments and hence encourages the selected variables to be exogenous, uncorrelated with the fitted residuals for all the environments in a sense that
\begin{align*}
    \forall e\in \mathcal{E}, j~\text{with}~\bar{\beta}_j\neq 0 ~~~~~~~~ \mathbb{E} [(y^{(e)} - \bar{\bbeta}^\top \bx^{(e)}) x_j^{(e)}] = 0.
\end{align*} 
This can be seen when $\gamma \to \infty$ and has a similar spirit of imposing $\mathbb{E}\big[y^{(e)} |\bx_{\supp(\bar{\bbeta})}^{(e)}\big]=\bar{\bbeta}^\top \bx^{(e)}$ for all the $e\in \mathcal{E}$. As for the empirical counterpart, given the $n \cdot |\mathcal{E}|$ observations from $|\mathcal{E}|$ environments, the EILLS estimator minimizes $\hat{\mathsf{Q}}_\gamma(\bbeta)$
which substitutes all the expectations in \eqref{eq:intro-eills-obj-popu} by their corresponding empirical means. One can also use the EILLS estimator in high-dimensional case ($p>n$) by adding an $\ell_0$ penalty with some pre-defined hyper-parameter $\lambda>0$ when needed.

We further develop theories in \cref{sec:theory} characterizing when our proposed EILLS estimator can consistently estimate $\bbeta^*$ and $S^*$. We show that our EILLS estimator can identify $\bbeta^*$ with some large enough $\gamma$ in the sense that $\bbeta^*$ is the unique minimizer of the population-level objective \eqref{eq:intro-eills-obj-popu} if
\begin{align}
\label{eq:intro-ident}
    \forall S\subseteq \{1,\ldots, p\} ~\text{with}~ \sum_{e\in\mathcal{E}} \mathbb{E}[\varepsilon^{(e)} \bx^{(e)}_S] \neq 0 ~~~ \implies ~~~\exists e, e'\in \mathcal{E}, ~~\bbeta^{(e,S)} \neq \bbeta^{(e',S)}
\end{align} where $\bbeta^{(e,S)} = \argmin_{\supp(\bbeta) \subseteq S} \mathbb{E}[|y^{(e)} - \bbeta^\top \bx^{(e)}|^2]$ is the best linear predictor constrained on $S$ for environment $e\in \mathcal{E}$. Such a general identification condition is minimal if only linear information is used. This demonstrates that the EILLS objective can automatically circumvent the problem caused by endogeneity if incorporating any {\it pooled linear spurious variable}, defined as the variable $x_j$ satisfying $\sum_{e\in\mathcal{E}} \mathbb{E}[\varepsilon^{(e)} x^{(e)}_j] \neq 0$, will lead to shifts in the least squares solutions among different environments, and it is sample-efficient in terms of $|\mathcal{E}|$ required. 

Under the general identification condition \eqref{eq:intro-ident}, a series of non-asymptotic results are presented for the EILLS estimator when $\gamma$ is greater than some critical threshold $\gamma_*$, illustrating the sample efficiency of our proposed EILLS estimator. In the low-dimensional regime, the vanilla EILLS estimator can obtain the optimal rate for linear regression. Moreover, the EILLS objective also embodies a variable selection property that can eliminate all the linear spurious variables while keeping all the true important variables provided $n$ is large enough. In the high-dimensional regime where $p > n$, the $\ell_0$ regularized EILLS estimator can achieve the variable selection consistency $\{j: \hat{\beta}_{j} \neq 0\} = S^*$ with high probability in a similar manner to the standard $\ell_0$ regularized least squares. Non-asymptotic upper bounds on $\gamma_*$ for some concrete models are calculated, illustrating the applicability of our general result. 

This paper proposes a provably sample-efficient estimation method for the general model \eqref{eq:intro-dgp}. To the best of our knowledge, it is the {\it first} estimator with non-asymptotic guarantees in terms of $|\mathcal{E}|$ and $n$ for the general multi-environment linear regression model \eqref{eq:intro-dgp}, or a slightly restricted version of it, e.g., \eqref{eq:intro-icp-cond}. As a comparison, previous provably sample-efficient estimation methods, like anchor regression \citep{rothenhausler2021anchor} and Causal Dantzig \citep{rothenhausler2019causal}, have predominantly been confined to the linear SCM with additive intervention case, that is, $((\bx^{(e)})^\top, y^{(e)})^\top \gets \bB ((\bx^{(e)})^\top, y^{(e)})^\top + g(\ba^{(e)}, \bvarepsilon^{(e)})$ with some invariant matrix $\bB\in \mathbb{R}^{(p+1)\times (p+1)}$. These imposed structures not only restrict the scalable applicability of these methods in practice but also hinder their potential for extension to nonlinear models. While our method primarily addresses the linear model, it demonstrates promise for extension to nonlinear models; see a discussion in \cref{sec:ex-nonlinear}. Our approach also shares a similar spirit to the ICP method but necessitates a weaker identification condition to recover the true parameter $\bbeta^*$. This implies that our method is more sample-efficient than ICP regarding $|\mathcal{E}|$. Furthermore, we conduct a numerical analysis to benchmark our proposed EILLS estimator against various other invariance methods, demonstrating its superior performance in \cref{sec:simulation}.

\section{Setup and Background}
\label{sec:setup}

\subsection{Multi-Environment Linear Regression}
\label{subsec:problem-formulation}

Suppose we are interested in uncovering the ``scientific truth'' between the response variable $y$ and $\bx_{S^*}$, a sub-vector of $p$-dimensional covariate vector $\bx \in \mathbb{R}^p$. As $S^*$ is unknown, we collect many more variables, but the collected covariates are easily correlated with residuals of $y$ on $\bx_{S^*}$ \citep{fan2014endogeneity, fan2014challenges}. This gives rise to the following more realistic assumption: The distribution of $\bx$ and $y$, $\mu$, satisfies
\begin{align}
\label{eq:true-law-set}
    \mu \in \mathcal{U}_{\bbeta^*, \sigma^2} = \Bigg\{\begin{split}\mu&: ~\mathbb{E}_\mu[y|\bx_{S^*} ] = (\bbeta^*_{S^*})^\top \bx_{S^*}, \text{Var}_{\mu}[y|\bx_{S^*}] \le \sigma^2, ~~ \mu\text{-a.s.} ~ \bx, \\
    &~~~~~\forall j \in [p], ~ \mathbb{E}_{\mu}[x_j^2] \le \sigma^2
    \end{split}
    \Bigg\}.
\end{align} Here, $S^* \subseteq [p]$ denotes the set of important variables that contribute to explain the ``truth'',  $\bbeta^*$ is the parameter of interest whose support set $\supp(\bbeta^*)$ is $S^*$, and $\sigma^2$ is any given positive number.

With only one environment, it is impossible to identify $S^*$.
Consider the data collected from multiple environments: in each environment $e$, the data $(\bx^{(e)}, y^{(e)})$ follows from some law $\mu^{(e)}$ in $\mathcal{U}_{\bbeta^*, \sigma^2}$. Denote the set of environments by $\mathcal{E}$. For each environment $e\in \mathcal{E}$, we observe $n^{(e)}$ i.i.d. samples $(\bx^{(e)}_1,y^{(e)}_1),\ldots, (\bx^{(e)}_{n^{(e)}}, y^{(e)}_{n^{(e)}})  \sim \mu^{(e)}$ with $\mu^{(e)} \in \mathcal{U}_{\bbeta^*, \sigma^2}$. Let $\mathcal{D}^{(e)} = \{(\bx_i^{(e)}, y_i^{(e)})\}_{i=1}^{n^{(e)}}$ be the data collected from environment $e$. Our goal is to use the data $\{\mathcal{D}^{(e)}\}_{e\in \mathcal{E}}$ collected from multiple environments to find a linear predictor $f(\bx) = \bbeta^\top \bx$ that  minimizes the out-of-sample $L_2$ risk that is robust to all potential out-of-sample distributions in $\mathcal{U}_{\bbeta^*, \sigma^2}$. That is, find the minimizer of
\begin{align}
\label{eq:oos-mse}
    \mathsf{R}_{\mathtt{oos}}(\bbeta;\mathcal{U}_{\bbeta^*, \sigma^2}) = \sup_{\mu \in \mathcal{U}_{\bbeta^*, \sigma^2}} \mathbb{E}_{(\bx,y)\sim \mu} \left[ |y - \bbeta^\top \bx|^2 \right].
\end{align}
The following proposition asserts that $\bbeta^*$ is the unique minimizer of \eqref{eq:oos-mse}, whose proof is given in \aosversion{Appendix~\blue{B.1}}{\cref{prop:oos-definition-property-proof}}.

\begin{proposition}[Properties of $\mathsf{R}_{\mathtt{oos}}$] \label{prop:oos-definition-property} We have $\mathsf{R}_{\mathtt{oos}}(\bbeta^*;\mathcal{U}_{\bbeta^*, \sigma^2})=\sigma^2$, and for any $\bbeta \in \mathbb{R}^p$,
\begin{align}
\label{eq:optimal-beta-s-star}
\begin{split}
\sigma^2 \|\bbeta - \bbeta^*\|_2^2 \le \mathsf{R}_{\mathtt{oos}}(\bbeta;&\mathcal{U}_{\bbeta^*,\sigma^2}) - \mathsf{R}_{\mathtt{oos}}(\bbeta^*;\mathcal{U}_{\bbeta^*, \sigma^2}) \\
&\le \sigma^2 p^2 \|\bbeta-\bbeta^*\|_2^2 + 2 \sigma^2 p \|\bbeta_{[p]\setminus S^*}\|_2.
\end{split}
\end{align}
\end{proposition}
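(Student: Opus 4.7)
The plan is to expand the squared loss around $\bbeta^*$ and to exploit two basic consequences of $\mu\in\mathcal{U}_{\bbeta^*,\sigma^2}$. Set $\varepsilon := y - (\bbeta^*)^\top\bx$, which equals $y - (\bbeta^*_{S^*})^\top \bx_{S^*}$ because $\supp(\bbeta^*)=S^*$, and set $\bm{\delta} := \bbeta^*-\bbeta$. The defining constraints yield $\mathbb{E}_\mu[\varepsilon\mid\bx_{S^*}]=0$ and $\text{var}_\mu[\varepsilon\mid\bx_{S^*}]\le\sigma^2$, hence $\mathbb{E}_\mu[\varepsilon^2]\le\sigma^2$ by the law of total variance and $\mathbb{E}_\mu[\varepsilon x_j]=0$ for every $j\in S^*$ by the tower property; combined with $\mathbb{E}_\mu[x_j^2]\le\sigma^2$, these are the only ingredients the proof will need.

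For the first claim, the identity $(y-\bbeta^\top\bx)^2=\varepsilon^2+2\varepsilon\bm{\delta}^\top\bx+(\bm{\delta}^\top\bx)^2$ specialized at $\bbeta=\bbeta^*$ immediately gives $\mathbb{E}_\mu[(y-(\bbeta^*)^\top\bx)^2]=\mathbb{E}_\mu[\varepsilon^2]\le\sigma^2$, so $\mathsf{R}_{\mathtt{oos}}(\bbeta^*)\le\sigma^2$; the matching lower bound follows by exhibiting the degenerate distribution with $\bx\equiv\mathbf{0}$ and $y$ of mean $0$ and variance $\sigma^2$, which lies in $\mathcal{U}_{\bbeta^*,\sigma^2}$ and realizes $\sigma^2$. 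For the upper bound in \eqref{eq:optimal-beta-s-star}, I will take expectation of the same expansion. The cross term reduces to $-\bbeta_{[p]\setminus S^*}^\top\mathbb{E}_\mu[\varepsilon\bx_{[p]\setminus S^*}]$ by the tower identity, and the entrywise estimate $|\mathbb{E}_\mu[\varepsilon x_j]|\le\sqrt{\mathbb{E}_\mu[\varepsilon^2]\mathbb{E}_\mu[x_j^2]}\le\sigma^2$ together with Cauchy--Schwarz yields $|\bm{\delta}^\top\mathbb{E}_\mu[\varepsilon\bx]|\le\sigma^2\|\bbeta_{[p]\setminus S^*}\|_1\le\sigma^2\sqrt{p}\,\|\bbeta_{[p]\setminus S^*}\|_2$; analogously $|\Sigma_{j,k}|\le\sigma^2$ gives $\bm{\delta}^\top\bSigma\bm{\delta}\le\sigma^2\|\bm{\delta}\|_1^2\le\sigma^2 p\|\bm{\delta}\|_2^2$. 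These are in fact tighter than what the proposition claims; using the slack inequalities $p\ge\sqrt{p}$ and $p^2\ge p$ and then taking a supremum over $\mu$ delivers the stated upper bound.

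For the lower bound I will construct an explicit worst-case distribution: take $\bx$ to be the deterministic vector $\sigma\bu$ with $u_j=\text{sign}(\delta_j)$ (using the convention $\text{sign}(0)=0$), and let $y=(\bbeta^*)^\top\bx+\xi$ with $\xi$ independent of $\bx$, $\mathbb{E}[\xi]=0$ and $\text{var}(\xi)=\sigma^2$. A coordinate-wise check shows $\mu\in\mathcal{U}_{\bbeta^*,\sigma^2}$: $\mathbb{E}[x_j^2]=\sigma^2 u_j^2\le\sigma^2$, $\mathbb{E}[y\mid\bx_{S^*}]=(\bbeta^*_{S^*})^\top\bx_{S^*}$, and $\text{var}[y\mid\bx_{S^*}]=\sigma^2$. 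Then $\mathbb{E}_\mu[(y-\bbeta^\top\bx)^2]=\sigma^2+\sigma^2(\bm{\delta}^\top\bu)^2=\sigma^2+\sigma^2\|\bm{\delta}\|_1^2\ge\sigma^2(1+\|\bm{\delta}\|_2^2)$, and subtracting $\mathsf{R}_{\mathtt{oos}}(\bbeta^*)=\sigma^2$ delivers the lower bound. The only subtle step is to deploy the tower identity so that the cross term really collapses onto $[p]\setminus S^*$ (this is what produces the sharper $\|\bbeta_{[p]\setminus S^*}\|_2$ factor in the upper bound rather than the cruder $\|\bbeta-\bbeta^*\|_2$), and to verify membership in $\mathcal{U}_{\bbeta^*,\sigma^2}$ for the two extremal constructions; neither is a genuine obstacle.
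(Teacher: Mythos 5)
Your proof is correct and follows essentially the same route as the paper's: expand the risk around $\bbeta^*$, use the tower property to reduce the cross term to the coordinates in $[p]\setminus S^*$, and control the quadratic term via the second-moment constraints (you do this entrywise with $\ell_1$--$\ell_2$ comparisons, the paper via $\lambda_{\max}(\mathbb{E}[\bx\bx^\top])\le p\sigma^2$; both give the stated constants). The only substantive difference is in the extremal distributions --- you use degenerate designs ($\bx\equiv\mathbf{0}$ to attain $\sigma^2$, and the deterministic $\bx=\sigma\,\mathrm{sign}(\bbeta^*-\bbeta)$ for the lower bound, which even yields the stronger $\sigma^2\|\bbeta-\bbeta^*\|_1^2$), whereas the paper uses a single Gaussian $\bx\sim\mathcal{N}(0,\sigma^2 I_p)$ with independent Gaussian noise for both purposes; both constructions verifiably lie in $\mathcal{U}_{\bbeta^*,\sigma^2}$.
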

As a by-product, \cref{prop:oos-definition-property} also implies that $\|\bbeta - \bbeta^*\|_2$ can be seen as a surrogate metric for the out-of-sample $L_2$ risk \eqref{eq:oos-mse}. \cref{prop:oos-definition-property} motivates us to design methods to estimate the variable set $S^*$  and the true parameter $\bbeta^*$  using data from multiple environments $\mathcal{E}$ which share the same structure $\mathcal{U}_{\bbeta^*,\sigma^2}$. Particularly, define the CE-invariant set as follows. We will take advantage of the fact that $S^*$ is CE-invariant across $\mathcal{E}$.

\begin{definition}[CE-invariant Set] A variable set $S \subseteq [p]$ is \textbf{conditional expectation invariant} (CE-invariant) across environments $\mathcal{E}$ if there exists some $\bbeta$ with support set $S$ such that
\begin{align}
\label{eq:ce-invariant}
    \forall e\in \mathcal{E}, ~~~~~~ \mathbb{E} [y^{(e)}|\bx_{S}^{(e)}] = \bbeta_S^\top \bx_{S}^{(e)}.
\end{align}
\end{definition} 

\noindent \textbf{Necessity of heterogeneous environments. } The following proposition argues that introducing multiple environments, i.e., $|\mathcal{E}| \ge 2$, with potentially different distributions is necessary to identify $\bbeta^*$. The proof is given in \aosversion{Appendix~\blue{B.2}}{\cref{prop:necessary-multiple-envs-proof}}.

\begin{proposition} \label{prop:necessary-multiple-envs} Given fixed $\bbeta^*$, for any $\bbeta \in \mathbb{R}^p$ such that $\supp(\bbeta) \setminus S^* \neq \emptyset$, there exists some large enough $\sigma^2>0$ and $\mu \in \mathcal{U}_{\bbeta^*, \sigma^2}$ such that
\begin{align*}
    \mathbb{E}_{\mu}[y|\bx] = \bbeta^\top \bx.
\end{align*}
\end{proposition}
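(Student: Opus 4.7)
The plan is to explicitly construct a law $\mu$ on $(\bx,y)$ that simultaneously lies in $\mathcal{U}_{\bbeta^*,\sigma^2}$ (for some large $\sigma^2$) and satisfies $\mathbb{E}_\mu[y\mid \bx]=\bbeta^\top\bx$. Since $T:=\supp(\bbeta)\setminus S^*$ is nonempty, I would fix any $j_0\in T$; note that $\beta^*_{j_0}=0$ so $\beta_{j_0}=\delta_{j_0}\neq 0$, where $\delta:=\bbeta-\bbeta^*$. The key observation driving the whole construction is that $\mathbb{E}_\mu[y\mid \bx_{S^*}]=\bbeta^{*\top}_{S^*}\bx_{S^*}$ combined with $\mathbb{E}_\mu[y\mid \bx]=\bbeta^\top\bx$ forces $\mathbb{E}_\mu[\delta^\top \bx\mid \bx_{S^*}]=0$, so I need to arrange a linear dependence among the covariates that exactly cancels the deterministic part of $\delta^\top\bx$.

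Concretely I would take $\bx_{S^*}$ to be a standard Gaussian vector on $S^*$ (any convenient distribution works), set
\begin{equation*}
x_{j_0}\;=\;\alpha^\top \bx_{S^*}+\eta,\qquad \alpha\;=\;-\frac{\delta_{S^*}}{\delta_{j_0}},
\end{equation*}
with $\eta\sim N(0,1)$ independent of $\bx_{S^*}$, and set $x_j\equiv 0$ for every $j\in S^{*c}\setminus\{j_0\}$. Finally define $y:=\bbeta^\top\bx$ deterministically. Then $\mathbb{E}_\mu[y\mid\bx]=\bbeta^\top\bx$ is immediate.

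The next step is to verify that this $\mu$ belongs to $\mathcal{U}_{\bbeta^*,\sigma^2}$. A short calculation gives
\begin{equation*}
\mathbb{E}_\mu[y\mid \bx_{S^*}]\;=\;\bbeta_{S^*}^\top \bx_{S^*}+\beta_{j_0}\,\alpha^\top \bx_{S^*}\;=\;(\bbeta_{S^*}-\delta_{S^*})^\top\bx_{S^*}\;=\;(\bbeta^*_{S^*})^\top \bx_{S^*},
\end{equation*}
using $\beta_{j_0}=\delta_{j_0}$. The conditional variance is $\mathrm{Var}_\mu[y\mid \bx_{S^*}]=\beta_{j_0}^2\,\mathrm{Var}(\eta)=\beta_{j_0}^2$, and the second moments $\mathbb{E}_\mu[x_j^2]$ are uniformly bounded by $\max\{1,\|\alpha\|_2^2+1\}$. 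Hence choosing
\begin{equation*}
\sigma^2 \;\ge\; \max\!\left\{\,1,\;\beta_{j_0}^2,\;\|\alpha\|_2^2+1\right\}
\end{equation*}
ensures $\mu\in \mathcal{U}_{\bbeta^*,\sigma^2}$, completing the argument.

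There is essentially no hard step here: the content is just the construction. The only point that needs a little care is the degenerate case where the auxiliary covariates (those in $S^{*c}\setminus\{j_0\}$) must still be well-defined random variables; setting them identically zero is legitimate since the definition of $\mathcal{U}_{\bbeta^*,\sigma^2}$ only requires $\mathbb{E}_\mu[x_j^2]\le \sigma^2$, which is trivially satisfied. If one preferred strictly nondegenerate covariates, the same argument goes through by assigning them any independent mean-zero noise with variance at most $\sigma^2$, without affecting the conditional expectation identities above.
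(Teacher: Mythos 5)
Your construction is correct and is in substance the same as the paper's: both proofs make the single spurious coordinate $j_0$ (with $\beta_{j_0}\neq 0$, $\beta^*_{j_0}=0$) depend linearly on $\bx_{S^*}$ with coefficient $-\boldsymbol{\delta}_{S^*}/\delta_{j_0}$ plus independent noise, which is exactly the off-diagonal block $\bB$ in the paper's covariance matrix. The only differences are presentational — you write explicit structural equations and set the remaining covariates to zero and $y$ deterministic given $\bx$, whereas the paper packages the same dependence into a joint Gaussian covariance matrix and verifies the two conditional-expectation identities via the Gaussian regression formula — and your verification of membership in $\mathcal{U}_{\bbeta^*,\sigma^2}$ (conditional mean, conditional variance $\beta_{j_0}^2$, bounded second moments) is complete.
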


Note that $\mathbb{E} [|\mathbb{E}[y|\bx] - y|^2] \le \mathbb{E} [|f(\bx) - y|^2]$ for any measurable function $f$. Hence \cref{prop:necessary-multiple-envs} implies the population $L_2$ risk minimizer $f^{\mu} = \argmin_f \mathbb{E}_\mu [|f(\bx) - y|^2]$ for a specific $\mu$ is not necessarily equal to $f^*(\bx) = (\bbeta^*)^\top \bx$. Instead, the bias $\mathbb{E}[|(f^{\mu} - f^{*})(\bx)|^2]$ between the population $L_2$ risk minimizer and the true regression function $f^*$ can be arbitrarily large. It illustrates that running a linear regression on the data in one environment may not be able to estimate $\bbeta^*$ well. Instead, running a linear regression on data in one sole environment may introduce some \emph{spurious variables} even in a population aspect. These variables are spurious since incorporating them in the prediction model can increase the prediction performance in one environment. However, the associations between these variables and $y$ are unstable and can lead to much worse prediction performances in other environments.

To estimate $\bbeta^*$ well, we should use data from \emph{heterogeneous} environments and exploit the invariant structure \eqref{eq:ce-invariant}. This is the main idea of this paper: we will take advantage of the underlying heterogeneity and invariance to infer the important variable set $S^*$ and the true parameter $\bbeta^*$. Furthermore, we will show later in our theoretical analysis that only two environments, i.e., $|\mathcal{E}|=2$, are enough to estimate $\bbeta^*$ consistently.

\subsection{Notations}

The following notations will be used throughout this paper. Let $\mathbb{R}^+$ and $\mathbb{N}^+$ be the set of positive real numbers/integers, respectively. Let $|S|$ denote the cardinality of a set $S$. Define $2^S = \{A: A\subseteq S\}$. Denote by $[m]=\{1,\ldots, m\}$. Let $a\lor b = \max\{a,b\}$ and $a\land b = \min\{a,b\}$. We use $a(n) \lesssim b(n)$ or $a(n) = O(b(n))$ to represent that there exists some universal constant $C>0$ such that $a(n) \le C\cdot b(n)$ for all the $n \in \mathbb{N}^+$, and use $a(n) \gtrsim b(n)$ or $a(n) = \Omega(b(n))$ if $a(n) \ge c \cdot b(n)$ with some universal constant $c>0$ for any $n\in \mathbb{N}^+$. Denote $a(n) \asymp b(n)$ if $a(n) \lesssim b(n)$ and $a(n)\gtrsim b(n)$. We use the notations $a(n) \ll b(n)$, or $b(n) \gg a(n)$, or $a(n) = o(b(n))$ if $\limsup_{n\to\infty} (a(n) / b(n)) = 0$.

We use bold lower case letter $\bx = (x_1,\ldots, x_d)^\top$ to represent a $d$-dimension vector, let $\|\bx\|_q = (\sum_{i=1}^d |x_i|^q)^{1/q}$ be it's $\ell_q$ norm. We use $\supp(\bx) = \{j\in [d]: x_j \neq 0\}$ to denote the support set of the vector $\bx$. For any $S=\{j_1,\ldots, j_{|S|}\} \subseteq[d]$ with $j_1<j_2<\cdots<j_{|S|}$, we denote $[\bx]_{S} = (x_{j_1},\ldots, x_{j_{|S|}})^\top$ be the $|S|$-dimension sub-vector of $\bx$ and abbreviate it as $\bx_S$ when there is no ambiguity. We use bold upper case letter $\bA = [A_{i,j}]_{i\in [n], j\in [m]}$ to denote a matrix. Denote $\bA_{S,T} = [A_{i,j}]_{i\in S, j\in T}$ be the sub-matrix of $\bA$, and abbreviate $\bA_{S,S}$ as $\bA_S$. We let $\|\bA\|_2 = \max_{\bv\in \mathbb{R}^m, \|\bv\|_2=1} \|\bA \bv\|_2$.

For each $e\in \mathcal{E}$, suppose we have $n^{(e)}$ observations $\{(\bx_i^{(e)}, y_i^{(e)})\}_{i=1}^{n^{(e)}} \subseteq \mathbb{R}^p \times \mathbb{R}$ drawn i.i.d. from some distribution $\mu^{(e)}$. Let $\mathbb{E}[f(\bx^{(e)}, y^{(e)})] = \int f(\bx, y) \mu^{(e)}(d\bx, dy)$ and $\hat{\mathbb{E}}[f(\bx^{(e)},y^{(e)})] = \frac{1}{n^{(e)}} \sum_{i=1}^{n^{(e)}} f(\bx^{(e)}_i, y^{(e)}_i)$ for a measurable function $f$. Define the empirical $L_2$ risk $\hat{\mathsf{R}}^{(e)}$ and population $L_2$ risk $\mathsf{R}^{(e)}$  as
\begin{align}
\label{eq:def-l2-risk}
    \hat{\mathsf{R}}^{(e)}(\bbeta) = \hat{\mathbb{E}}\left[|y^{(e)} - \bbeta^\top \bx^{(e)}|^2\right] ~~~~ \text{and} ~~~~ \mathsf{R}^{(e)}(\bbeta) = \mathbb{E} \left[|y^{(e)} - \bbeta^\top \bx^{(e)}|^2\right].
\end{align} 
We also define $\varepsilon^{(e)} = y^{(e)} - \mathbb{E}[y^{(e)}|\bx_{S^*}^{(e)}] = y^{(e)} - (\bbeta^*)^\top \bx^{(e)}$ and $\varepsilon^{(e)}_i = y^{(e)}_i - (\bbeta^*)^\top \bx^{(e)}_i$. Let $\hat{\bSigma}^{(e)}$ and $\bSigma^{(e)}$ denote the empirical covariance matrix and population covariance matrix for environment $e\in \mathcal{E}$, respectively, that is, $\hat{\bSigma}^{(e)} = \hat{\mathbb{E}} \left[\bx^{(e)} (\bx^{(e)})^\top \right]$ and $\bSigma^{(e)} = \mathbb{E} \left[\bx^{(e)} (\bx^{(e)})^\top\right]$. When $\bSigma^{(e)}$ is positive definite, for any $S\subseteq [p]$ we can define the $\bbeta^{(e,S)}$, the population-level best linear predictor constrained on $S$ for environment $e\in \mathcal{E}$ as
\begin{align}
\label{eq:local-minimizer}
    \bbeta^{(e,S)} = \argmin_{\bbeta \in \mathbb{R}^p, \supp(\bbeta) \subseteq S} \mathsf{R}^{(e)}(\bbeta).
\end{align} It is worth noticing that though $\bbeta^{(e,S)} \in \mathbb{R}^p$, the support set of $\bbeta^{(e,S)}$ is a subset of $S$.

\subsection{An Example: Structural Causal Model with Different Interventions}\label{subsec:causal}

We provide a generic statistical model of interest in \cref{subsec:problem-formulation}. 
Here, we present an instance of the multiple environments setting -- structural causal models with different interventions. We will show in this example where such heterogeneity of environments comes from and provide a specific meaning of $S^*$ and $\bbeta^*$. We first introduce the concept of the Structural Causal Model \citep{glymour2016causal}, also called the Structural Equation Model \citep{bollen1989structural}. 

A structural causal model (SCM) on $p+1$ variables $\bz = (z_1,\ldots, z_{p+1}) \in \mathbb{R}^{p+1}$ consists of $p+1$ structural assignments $\{f_j\}_{j=1}^{p+1}$,
\begin{align}
\label{eq:def-scm}
    z_j \gets f_j(\bz_{\mathtt{pa}(j)}, u_j) ~~~~~~~~ j = 1,\ldots, p+1
\end{align} where $\mathtt{pa}(j) \subseteq [p+1]$ is the set of parents, or \emph{direct cause}, of the variable $z_j$, and $u_1, \ldots, u_{p+1}$ are independent zero mean \emph{exogenous variables}. We call an SCM a \emph{linear SCM} if all the functions $f_j$ are linear, in which the above assignments \eqref{eq:def-scm} can be written in a matrix form as $\bz \gets \bB \bz + \bu$ for some structured matrix $\bB \in \mathbb{R}^{(p+1)\times (p+1)}$. 

The above SCM naturally induces a directed graph $G = (V, E)$, where $V = \{1,\ldots, p+1\}$ is the set of nodes, $E$ is the edge set such that $(i,j) \in E$ if and only if $i \in \mathtt{pa}(j)$. We say there is a directed path from $i$ to $j$ if there exists $(v_1,\cdots, v_k)$ with $k\ge 2$ such that $v_1=i$, $v_k=j$ and $(v_l, v_{l+1}) \in E$ for any $l\in [k-1]$. We call a directed graph a \emph{directed acyclic graph (DAG)} if there does not exist a direct path from $j$ to $j$ for any $j\in[p+1]$.  Throughout this paper, the induced graphs of the SCMs we consider are all DAGs.

\begin{figure}[t]
    \centering

    \begin{tikzpicture}[scale=0.7,state/.style={circle, draw, minimum size=0.7cm, scale=0.8}]
    \draw[black, rounded corners] (-1, -4.5) rectangle (4, 1);

    \node[state] at (0,0) (x1) {$x_1$};
    \node[state] at (1.5,-1.5) (y) {$y$};
    \node[state] at (3,0) (x2) {$x_2$};
        \node[state] at (0,-3)  (x3) {$x_3$};
    \node[state] at (3,-3) (x4) {$x_4$};
    
    \draw[->] (x1) -- node{\small $1$} (x2);
    \draw[->] (x2) -- node{\small $2$} (y);
    \draw[->] (y) -- node{\small $1$} (x3);
    \draw[->] (y) -- node{\small $1$} (x4);
    \draw[->] (x4) -- node{\small $.5$} (x3);
    \draw node at (1.5, -4) {\small environment e=1};
    
    \draw[black, fill=mylightblue!10, rounded corners] (5, -4.5) rectangle (10, 1);

    \node[state] at (6,0) (e2x1) {$x_1$};
    \node[state] at (7.5,-1.5) (e2y) {$y$};
    \node[state] at (9,0) (e2x2) {$x_2$};
        \node[state] at (6,-3)  (e2x3) {$x_3$};
    \node[state,fill=gray!50] at (9,-3) (e2x4) {$x_4$};
    
    \draw[->] (e2x1) -- node{\small $1$} (e2x2);
    \draw[->] (e2x2) -- node{\small $2$} (e2y);
    \draw[->] (e2y) -- node{\small $1$} (e2x3);
    \draw[->] (e2y) -- node{\small \myred{$-1$}} (e2x4);
    \draw[->] (e2x4) -- node{\small $.5$} (e2x3);
    \draw node at (7.5, -4) {\small environment e=2};
    
    \draw[black, fill=myblue!10, rounded corners] (11, -4.5) rectangle (16, 1);

    \node[state] at (12,0) (e3x1) {$x_1$};
    \node[state] at (13.5,-1.5) (e3y) {$y$};
    \node[state,fill=gray!50] at (15,0) (e3x2) {$x_2$};
        \node[state,fill=gray!50] at (12,-3)  (e3x3) {$x_3$};
    \node[state] at (15,-3) (e3x4) {$x_4$};
    
    \draw[->] (e3x1) -- node{\small \myred{$-.5$}} (e3x2);
    \draw[->] (e3x2) -- node{\small $2$} (e3y);
    \draw[->] (e3y) -- node{\small $1$} (e3x3);
    \draw[->] (e3y) -- node{\small $1$} (e3x4);
    \draw[->] (e3x4) -- node{\small \myred{$1$}} (e3x3);
    \draw node at (13.5, -4) {\small environment e=3};
\end{tikzpicture}
    \caption{Linear SCMs with different interventions when $p=4$ and $|\mathcal{E}|=3$. Here $z_5=y$, $S^*=\{2\}$, and we omit the dependence on the exogenous variables $u_1,\ldots, u_5$ for a clear presentation. The arrow from node $i$ to node $j$ is marked by $B_{j,i}^{(e)}$ if $B_{j,i}^{(e)} \neq 0$. We can treat $e=1$ as the observational environment and $e=2,3$ as interventional environments. One intervention is performed to the mechanisms of variable $x_4$ (gray shadowed) in environment $e=2$, and simultaneously interventions on variable $x_2$, $x_3$ (gray shadowed) are applied in environment $e=3$. }
    \label{fig:linear-scm-example}
\end{figure}
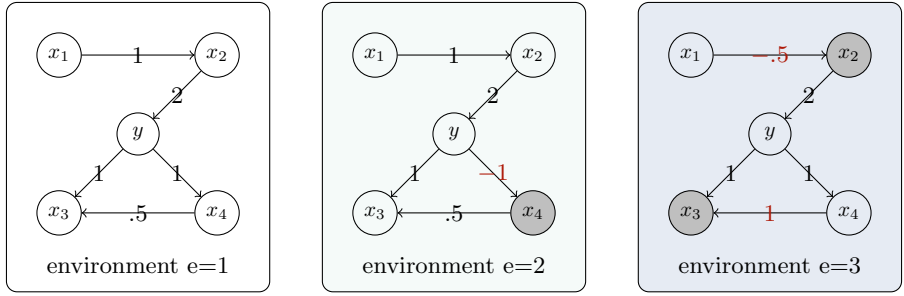

Consider the following $|\mathcal{E}|$ SCMs on $p+1$ variables $\bz = (x_1,\ldots, x_p, y) \in \mathbb{R}^{p+1}$, that for any $e\in \mathcal{E}$, the following assignments holds
\begin{align}
\label{eq:general-scm-example}
\begin{split}
x^{(e)}_j := z_j^{(e)} &\gets {f^{(e)}_j}(\bz^{(e)}_{{\mathtt{pa}(j)}}, {u_{j}^{(e)}}), ~~~~~~~~~~~~~~~~~~~~ j = 1,\ldots, p\\
y^{(e)} := z_{p+1}^{(e)} &\gets ({\bbeta^*})^\top_{{\mathtt{pa}(p+1)}} \bx^{(e)}_{{\mathtt{pa}(p+1)}} + {u_{p+1}^{(e)}},
\end{split}
\end{align} where all the $\bu^{(e)} = (u_1^{(e)}, \ldots, u_{p+1}^{(e)})$ are independent across different environments and also have independent, zero mean entries. Here the direct cause relationship $\mathtt{pa}: [p+1] \to 2^{[p+1]}$ and the function $f_{p+1}(\bx^{(e)}_{\mathtt{pa}(p+1)}, u_{p+1}^{(e)})=({\bbeta^*})^\top_{{\mathtt{pa}(p+1)}} \bx^{(e)}_{{\mathtt{pa}(p+1)}} + {u_{p+1}^{(e)}}$ are both invariant across different $e\in \mathcal{E}$. The structural assignments $f^{(e)}_j$ for variables $x_j^{(e)}$ and the distribution of exogenous variables $u_j^{(e)}$ may vary among different environments. We use the superscript $(e)$ to emphasize this heterogeneity. We use $\bz$ in the assignments \eqref{eq:general-scm-example} to emphasize that $y$ can be the direct cause of some variables $x_j$. We can see that the heterogeneity may come from performing do-interventions on variables other than $y$ (which will result in different $f_j^{(e)}$ or distribution $u_j^{(e)}$). \cref{fig:linear-scm-example} provides an example of the model \eqref{eq:general-scm-example}.  Here we consider the case where there are no hidden confounders and the system is not anti-causal, since the model misspecification in either direction cannot be addressed without imposing further structural assumptions. 

It is easy to verify that the above model \eqref{eq:general-scm-example} is an instance of the problem formulation discussed in  \cref{subsec:problem-formulation} with the important variable set $S^*=\mathtt{pa}(p+1)$ and the true parameter $\bbeta^*$. Moreover, the set $S^*$ and the parameter $\bbeta^*$ have precise meanings in the context of SCM. Specifically, $S^*$ is the set of \emph{direct causes} of the response variable. From a traditional viewpoint of causal inference, for each $j\in S^*$, the corresponding coefficient $\beta_j^*$ measures how the outcome $y$ will change if we only apply intervention on the variable $x_j$.  

\section{Methodology}
\label{sec:method}

\subsection{Focused Linear Invariance Regularizer}
\label{subsec:focused-regularizer}

As discussed in \cref{prop:necessary-multiple-envs} and \cref{subsec:problem-formulation}, it is necessary to leverage the conditional expectation invariance structure across heterogeneous environments. The straightforward idea is to impose a conditional expectation invariance to the solution $\bar{\bbeta}$ we find. That is, we wish our solution $\bar{\bbeta}$ to satisfy
\begin{align*}
    \forall e\in \mathcal{E}, ~~~~~~ \mathbb{E}\left[y^{(e)} \big |\bx^{(e)}_{\supp(\bar{\bbeta})}\right] \equiv \bar{\bbeta}^\top \bx^{(e)}
\end{align*} from {a population perspective}. To implement the idea efficiently, we consider the variational principle that, for any $e\in \mathcal{E}$ and given $\bbeta \in \mathbb{R}$ with support set $S$, 
\begin{align}
     \mathbb{E}[y^{(e)}|\bx_{S}^{(e)}] = \bbeta^\top_S \bx_S^{(e)} ~~&\Leftrightarrow~~ \mathbb{E} \left[ \left(y^{(e)} - \bbeta^\top_S \bx^{(e)}_S\right) g(\bx_S^{(e)})\right] = 0  ~~~~~~ \forall g ~\text{with}~ \mathbb{E} [ |g(\bx^{(e)}_S)|^2 ] < \infty \nonumber \\
     &\overset{(a)}{\Rightarrow} ~~ \mathbb{E} \left[ \left(y^{(e)} - \bbeta^\top_S \bx^{(e)}_S\right) g(\bx_S^{(e)})\right] = 0  ~~~~~~\forall g~\mathrm{linear} \nonumber\\
     &\Leftrightarrow ~~ \mathbb{E} \left[ \left(y^{(e)} - \bbeta^\top_S \bx^{(e)}_S\right) \bx_j^{(e)}\right] = 0 ~~~~~~~~~~~ \forall j \in S \nonumber \\
     &\Leftrightarrow ~~ \nabla_j \mathsf{R}^{(e)}(\bbeta) = 0 ~~~~~~~~~~~~~~~~~~~~~~~~~~~~~~~ \forall j \in S \label{eq:fr-final-condition}
\end{align} where the statements connected by ``$\Leftrightarrow$'' are equivalent and $\mathsf{R}^{(e)}(\bbeta)$ is defined by \eqref{eq:def-l2-risk}, and we make a relaxation in $(a)$ such that the last statement $\nabla_S \mathsf{R}^{(e)}(\bbeta) = 0$ is a necessary but not sufficient condition for $\mathbb{E}[y^{(e)}|\bx_S^{(e)}] = \bbeta^\top_S\bx_S^{(e)}$. 

Based on the above derivations, we propose to minimize a \emph{focused linear invariance regularizer}, whose population-level form $\mathsf{J}(\bbeta; \bomega)$ can be written as
\begin{align}
\label{eq:invariant-regularizer-population}
\mathsf{J}(\bbeta;\bomega) = \sum_{j=1}^p \mathds{1}\{\beta_j \neq 0\} \sum_{e\in \mathcal{E}} \frac{\omega^{(e)}}{4} |\nabla_j \mathsf{R}^{(e)}(\bbeta)|^2,
\end{align} where $\bomega = (\omega^{(e)})_{e\in \mathcal{E}} \in \mathbb{R}^{|\mathcal{E}|}$ are pre-determined weights associated with environments $\mathcal{E}$ satisfying $\sum_{e\in \mathcal{E}} \omega^{(e)}=1$ and $\omega^{(e)} > 0$ for any $e\in \mathcal{E}$. Some typical choices of $\bomega$ can be $\omega^{(e)} = 1/|\mathcal{E}|$ or $\omega^{(e)} = n^{(e)} / (\sum_{e'\in \mathcal{E}} n^{(e')})$. The word ``\emph{focused}'' emphasizes that we only penalize the gradient in the direction of non-zero coordinates due to the statement \eqref{eq:fr-final-condition}.  See \cite{fan2014endogeneity} for a similar idea for dealing with endogeneity.

We first analyze what it implies when $\mathsf{J}(\bbeta; \bomega)$ attains its global minima. Since $S^*$ is CE-invariant across environments $\mathcal{E}$ according to our assumption, we have $\mathsf{J}(\bbeta^*;\omega)=0$ and this implies the global minima of the loss function $\mathsf{J}(\bbeta;\omega)$ is $0$. Moreover, for a fixed $\tilde{\bbeta}$ with support set $\tilde{S}$, it is easy to see that
\begin{align*}
    \mathsf{J}(\tilde{\bbeta};\bomega) = 0 ~~&\Leftrightarrow~~ \forall e\in \mathcal{E}, ~\nabla_{\tilde{S}} \mathsf{R}^{(e)}(\tilde{\bbeta}) = 0 \\
    ~~&\Leftrightarrow ~~\forall e\in \mathcal{E}, ~\tilde{\bbeta} \in \argmin_{\supp(\bbeta) \subseteq \tilde{S}} \mathsf{R}^{(e)}(\bbeta).
\end{align*}
Therefore, though $\mathsf{J}(\tilde{\bbeta};\omega) = 0$ (attaining its global minima) does not imply that the selected variables set $\tilde{S}$ is \emph{CE-invariant} across environments $\mathcal{E}$, we argue that $\mathsf{J}(\tilde{\bbeta};\bomega) = 0$ indeed indicates that the select variable set $\tilde{S}$ is \emph{LLS-invariant} across environments $\mathcal{E}$, which can be defined as follows.
\begin{definition}[LLS-invariant Set] We say a variable set $S \subseteq [p]$ is \textbf{linear least squares invariant} (LLS-invariant) across environments $\mathcal{E}$ if there exists some $\bbeta$ with support set $S$ such that
\begin{align}
\label{eq:lls-invariant}
    \forall e\in \mathcal{E}, ~~~~~~ \bbeta \in \argmin_{\supp(\bbeta) \subseteq S} \mathbb{E} \left[ |y^{(e)} - \bbeta^\top \bx^{(e)}|^2 \right].
\end{align}
\end{definition}

LLS-invariance is weaker than CE-invariance because the CE-invariance of $S^\star$ implies the LLS-invariance of $S^\star$ while the converse is false.  In this paper, we only leverage linear information to facilitate LLS-invariance on the solution we find. That is what the word ``\emph{linear}'' in our regularizer's name emphasizes. According to the above discussion of the relaxation in $(a)$, we can incorporate more hand-crafted nonlinear features other than linear feature $x_j$ in the \emph{focused invariance regularizer} to somewhat strengthen the degree of invariance. For example, consider the following enhanced regularizer that includes another marginal nonlinear feature $h(\cdot)$ in,
\begin{align}
\label{eq:lls-invariant-nonlinear-feat}
    \mathsf{J}_{h}(\bbeta;\bomega) = \sum_{j=1}^p \mathds{1}\{\beta_j \neq 0\} \sum_{e\in \mathcal{E}} \omega^{(e)} \left\{ \left| \mathbb{E}[ \hat{\varepsilon}^{(e)}_{\bbeta} \bx_j^{(e)}] \right|^2 + \left| \mathbb{E}[ \hat{\varepsilon}^{(e)}_{\bbeta} h(\bx_j^{(e)}) ] \right|^2\right\},
\end{align} where $\hat{\varepsilon}^{(e)}_{\bbeta} = y^{(e)} - \bbeta^\top \bx^{(e)}$. Specifically, we can take $h(u) = u^2$ or $h(u) = \cos(u)$.

Though LLS-invariance is weaker than CE-invariance, these two types of invariance are equivalent under some specific models. For example, suppose $\mu^{(e)}_{\bx, y}$ are all joint Gaussian distribution for any $e\in \mathcal{E}$. In that case, it is easy to verify that if a set $S$ is LLS-invariant, then $S$ is also CE-invariant using the fact that uncorrelatedness implies independence for joint Gaussian random variables. In this case, the enhanced regularizer \eqref{eq:lls-invariant-nonlinear-feat} would have no advantages over the most simple linear one \eqref{eq:invariant-regularizer-population}.

\subsection{Our Approach: EILLS}

Given data $\{\mathcal{D}^{(e)}\}_{e\in \mathcal{E}} = \{\{(\bx_i^{(e)},y_i^{(e)})\}_{i=1}^{n^{(e)}}\}_{e\in \mathcal{E}}$ from heterogeneous environments, the empirical-level \emph{focused linear invariance regularizer} with weights $\bomega$ can be written as
\begin{align}
\label{eq:invariant-regularizer-empirical}
    \hat{\mathsf{J}}(\bbeta;\bomega) = \sum_{j=1}^p \mathds{1}\{\beta_j \neq 0\} \sum_{e\in \mathcal{E}} \omega^{(e)} \left| \hat{\mathbb{E}}[x_j^{(e)} (y^{(e)} - \bbeta^\top \bx^{(e)})]\right|^2.
\end{align} 
Recall that $\hat{\mathsf{R}}^{(e)}(\bbeta)$ defined in \eqref{eq:def-l2-risk} is the empirical $L_2$ loss in environment in $e\in \mathcal{E}$. We also define the following \emph{pooled empirical $L_2$ loss} over all the environments: for $\bbeta \in \mathbb{R}^p$,
\begin{align}
\label{eq:pooled-l2-risk-empirical}
    \hat{\mathsf{R}}(\bbeta;\bomega) = \sum_{e\in \mathcal{E}} \omega^{(e)} \hat{\mathsf{R}}^{(e)}(\bbeta) = \sum_{e\in \mathcal{E}}  \frac{\omega^{(e)}}{n^{(e)}} \sum_{i=1}^{n^{(e)}} \{ y^{(e)}_i - \bbeta^\top \bx^{(e)}_i \}^2.
\end{align}
The \emph{environment invariant linear least squares (EILLS)} estimator $\hat{\bbeta}_{\mathsf{Q}}$ is defined by minimizing the following objective function:
\begin{align}
\label{eq:eills-objective-q}
    \hat{\mathsf{Q}}(\bbeta; \gamma, \bomega) = \hat{\mathsf{R}}(\bbeta;\bomega) + \gamma \hat{\mathsf{J}}(\bbeta;\bomega),
\end{align} which is a linear combination of the pooled empirical $L_2$ loss $\hat{\mathsf{R}}(\bbeta;\bomega)$ and focused linear invariance regularizer $\hat{\mathsf{J}}(\bbeta;\bomega)$ with weights $(1,\gamma)$ for some given hyper-parameter $\gamma \in \mathbb{R}^+$. We also define the population analogs of the EILLS objective function as follows:
\begin{align}
\label{eq:eills-objective-q-population}
    \mathsf{Q}(\bbeta;\gamma,\bomega) = \mathsf{R}(\bbeta; \bomega) + \gamma \mathsf{J}(\bbeta; \bomega) ~~~~ \text{with} ~~~~ \mathsf{R}(\bbeta; \bomega) = \sum_{e\in \mathcal{E}} \omega^{(e)} \mathbb{E} \left[|y^{(e)} - \bx^\top \bx^{(e)}|^2 \right].
\end{align}

The focused regularizer $\mathsf{J}(\bbeta; \bomega)$ can screen out all linear spurious variables when $\gamma$ is large enough.  This can lead to a low-dimensional problem and be sufficient for many applications. However, some linear exogenous variables that do not contribute to explaining $y$ can still survive after the screening. They can be eliminated further by introducing an $\ell_0$ penalty.
This leads us to considering the $\ell_0$ regularized EILLS estimator $\hat{\bbeta}_{\mathsf{L}}$ that minimizes the following objective:
\begin{align}
\label{eq:eills-objective-l}
    \hat{\mathsf{L}}(\bbeta; \lambda, \gamma, \bomega) = \hat{\mathsf{Q}}(\bbeta; \gamma, \bomega) + \lambda \|\bbeta\|_0 =  \hat{\mathsf{R}}(\bbeta;\bomega) +  \gamma \hat{\mathsf{J}}(\bbeta;\bomega)  +  \lambda \|\bbeta\|_0
\end{align} with given hyper-parameter $\lambda$.  This helps reduce variables that are uncorrelated to residuals and $y$.

At the methodology level, our method has a close connection to the FGMM estimator in \cite{fan2014endogeneity} and the invariant risk minimization \citep{arjovsky2019invariant} framework; see detailed connections and differences in \aosversion{Appendix~{\color{blue}D}}{\cref{sec:discussion}}. 

\medskip

\noindent \textbf{Practical Concerns} The current estimator has some combinatorial nature in optimization of \eqref{eq:invariant-regularizer-empirical} and involves an additional hyper-parameter $\gamma$. For the first concern, we show that one can use the Gumbel \citep{jang2016categorical} trick introduced by a follow-up work \cite{gu2024causality} to make variants of gradient descent continue to work in practice; see how to adopt Gumbel approximation to transform \eqref{eq:eills-objective-q} to a continuous optimization program and some simulation studies when $p=70$ in \aosversion{Appendix \blue{D.6}}{\cref{sec:gumbel}}. We also provide some guidance on how to tune the hyper-parameter $\gamma$ in practice; see \aosversion{Appendix \blue{D.7}}{\cref{sec:gamma-tune}}.

\section{Theory}
\label{sec:theory}

 To simplify the presentation, we consider the case of balanced data with equal weights, that is, $n^{(e)} \equiv n$ and $\omega^{(e)} \equiv 1/|\mathcal{E}|$, and defer the results of varying $(n^{(e)}, \omega^{(e)})$ to \aosversion{Appendix~\blue{A}}{\cref{sec:theory:general}}. 
Define the pooled covariance matrix $\bSigma = \frac{1}{|\mathcal{E}|} \sum_{e\in \mathcal{E}} \bSigma^{(e)}$. We first impose some regularity conditions for theoretical analysis.

\begin{condition}
\label{cond0-model-main}
    For each $e\in \mathcal{E}$,  $(\bx_1^{(e)}, y_1^{(e)}), \ldots, (\bx_{n}^{(e)}, y_{n}^{(e)})$ are i.i.d. copies of $(\bx^{(e)}, y^{(e)}) \sim \mu^{(e)}$, where $\mu^{(e)}$ belongs to $\mathcal{U}_{\bbeta^*, \sigma^2}$ for some $\sigma^2$. The data from different environments are also independent.  We set $\omega^{(e)} \equiv 1/|\mathcal{E}|$. 
\end{condition}

\begin{condition}
\label{cond1-well-condition-main}
    There exists some universal constants $\kappa_L \in (0,1]$ and $\kappa_U \in [1,\infty)$ such that
    \begin{align}
        \forall e\in \mathcal{E}, ~~~~~~~~~~ \kappa_L \bI_p \preceq \bSigma^{(e)} \preceq \kappa_U \bI_p.
    \end{align}
\end{condition}

\begin{condition}
\label{cond2-subgaussain-x-main}
    There exists some universal constant $\sigma_x \in [1,\infty)$ such that
    \begin{align}
        \forall e\in \mathcal{E}, \bv \in \mathbb{R}^p, ~~~~~~~~~~  \mathbb{E} \left[\exp\left\{\bv^\top {\bSigma}^{-1/2}\bx^{(e)}\right\}\right]  \le \exp\left(\frac{\sigma_x^2}{2} \cdot \|\bv\|_2^2\right).
    \end{align}
\end{condition}

\begin{condition}
\label{cond3-sub-gaussian-eps-main}
    There exists some universal constant $\sigma_\varepsilon \in \mathbb{R}^+$ such that,
    \begin{align}
        \forall e\in \mathcal{E}, \lambda \in \mathbb{R}, ~~~~~~~~~~ \mathbb{E} [e^{\lambda \varepsilon^{(e)}}] \le e^{\frac{1}{2} \lambda^2 \sigma_\varepsilon^2}.
    \end{align}
\end{condition}

\cref{cond0-model-main} is the basic setup of our multi-environment linear regression described in \cref{subsec:problem-formulation}. \cref{cond1-well-condition-main}--\ref{cond3-sub-gaussian-eps-main} are standard in high-dimensional linear regression analysis. We focus on the centered covariate case that $\mathbb{E}[\bx^{(e)}] = \bm{0}$, and it can be easily generalized to the non-centered counterpart.  The lower bound on the smallest eigenvalue, $\kappa_L$, is to establish non-asymptotic error bounds on $\ell_2$ norm $\|\hat{\bbeta}-\bbeta^*\|_2$. To simplify expressions in our results, we set $\kappa_U \land \sigma_x \ge 1$ and $\kappa_L \le 1$, thus avoiding repeated use of $(\kappa_U \lor 1)$, $(\sigma_x \lor 1)$, and $(\kappa_L^{-1} \lor 1)$. We adopt simple sub-Gaussian conditions to convey our main message. It is possible to relax the above conditions and derive a similar result. For example, one can replace the sub-Gaussian condition of the noise $\varepsilon^{(e)}$ with the sub-Weibull condition \citep{vladimirova2020sub}. One can also substitute the joint sub-Gaussian condition of the covariate with the marginal sub-Weibull condition and show that the EILLS objective \eqref{eq:eills-objective-q} with folded concave penalty function introduced by \cite{fan2001variable} has certain oracle property.

\subsection{Pooled Linear Spurious Variables and the Bias of Pooled Least Squares}
\label{subsec:linear-spurious-variable}

We first define \emph{pooled linear spurious variables} and demonstrate that the vanilla pooled least squares method is an inconsistent estimator in the presence of pooled linear spurious variables.

\begin{definition}[Pooled Linear Spurious Variables] We let $G$ be the index set of all pooled linear spurious variables in environments $\mathcal{E}$ concerning the uniform weights $\omega^{(e)}\equiv 1/|\mathcal{E}|$, that is,
        $G = \{j \in [p]:  \sum_{e\in \mathcal{E}} \mathbb{E}[x_j^{(e)}\varepsilon^{(e)}] \neq 0 \}$. We say $x_j$ is a pooled linear spurious variable if $j\in G$.
\end{definition}

The following proposition characterizes the properties of pooled least squares.

\begin{proposition}[Properties of Pooled Least Squares]
\label{prop:pooled-least-squares-main}
 Assume Conditions \ref{cond0-model-main}--\ref{cond3-sub-gaussian-eps-main} hold. Then, there exists some $\bar{\bbeta}^{\mathsf{R}} \in \mathbb{R}^p$ satisfying $\frac{1}{\kappa_U} \|\frac{1}{|\mathcal{E}|}\sum_{e\in \mathcal{E}} \mathbb{E} [\varepsilon^{(e)} \bx^{(e)}] \|_2 \le \|\bar{\bbeta}^{\mathsf{R}} - \bbeta^*\|_2 \le \frac{1}{\kappa_L} \|\frac{1}{|\mathcal{E}|}\sum_{e\in \mathcal{E}} \mathbb{E} [\varepsilon^{(e)} \bx^{(e)}] \|_2$ 
    such that, for any $\bbeta \in \mathbb{R}^p$,
    \begin{align}
    \label{eq:pooled-least-square-strong-convexity-main}
        \mathsf{R}(\bbeta) - \mathsf{R}(\bar{\bbeta}^{\mathsf{R}}) = \|\bSigma^{1/2}(\bbeta - \bar{\bbeta}^{\mathsf{R}})\|_2^2.
    \end{align} Moreover, there exist universal constants $c_1$ and $c_2$ such that if $n\cdot |\mathcal{E}| \ge c_1 \sigma_x^4(p + t)$, then the pooled least      squares estimator $\hat{\bbeta}_{\mathsf{R}}$ minimizing \eqref{eq:pooled-l2-risk-empirical} satisfies
    \begin{align*}
        \|{\bSigma}^{1/2}(\hat{\bbeta}_{\mathsf{R}} - \bar{\bbeta}^{\mathsf{R}})\|_2 \le c_2 \sigma_x \left(\sigma_\varepsilon + \sigma_x \|{\bSigma}^{1/2}(\bar{\bbeta}^{\mathsf{R}} - \bbeta^*)\|_2\right) \sqrt{\frac{p + t}{{ n\cdot |\mathcal{E}|}}}
    \end{align*} with probability at least $1-2e^{-t}$.
\end{proposition}

From \cref{prop:pooled-least-squares-main}, we observe that $\delta_{\mathtt{b}}=\|\bar{\bbeta}^{\mathsf{R}} - \bbeta^*\|_2$, which can be interpreted as the bias of the pooled least squares, satisfies $\delta_{\mathtt{b}} \asymp \left\||\mathcal{E}|^{-1} \sum_{e\in \mathcal{E}}  \mathbb{E} [\varepsilon^{(e)} \bx^{(e)}] \right\|_2$. \cref{prop:pooled-least-squares-main} thus indicates that, for large enough $n$,
\begin{align*}
    \|\hat{\bbeta}_{\mathsf{R}} - \bbeta^*\|_2 \asymp \left\|\frac{1}{|\mathcal{E}|}\sum_{e\in \mathcal{E}} \mathbb{E}[\varepsilon^{(e)} \bx^{(e)}]\right\|_2.
\end{align*} 
Therefore, the pooled least squares can be a fair estimate of $\bbeta^*$ only when the biases from each environment happen to cancel: $\sum_{e\in \mathcal{E}} \mathbb{E} [\varepsilon^{(e)} x_j^{(e)}] = o(|\mathcal{E}|)$ for any $j\in \mathcal{E}$. Moreover, the strong convexity with respect to $\bar{\bbeta}^{\mathsf{R}}$ \eqref{eq:pooled-least-square-strong-convexity-main} suggests that this issue persists for penalized least squares.

\subsection{Local Strong Convexity for Population Loss}
\label{subsec:strong-convexity}

We first examine the population EILLS loss \eqref{eq:eills-objective-q-population} when $G\neq \emptyset$. Specifically, we show that with a large enough $\gamma$, the population EILLS loss satisfies certain strong convexity with respect to $\bbeta^*$ under the following identification condition.

\begin{condition}[Identification]
\label{cond-ident-main}
    For any $S\subseteq [p]$ satisfying $S \cap G \neq \emptyset$, there exists some $e, e'\in \mathcal{E}$ such that $\bbeta^{(e, S)} \neq \bbeta^{(e',S)}$, where $\bbeta^{(e,S)}$ is defined in \eqref{eq:local-minimizer}.
\end{condition}

\begin{remark}[Near Minimal Identification Condition]
It is worth noticing that the above identification condition is minimal if only linear information is considered. To see this, consider the case where $(\bx^{(e)}, y^{(e)})$ are all multivariate normal distributions, a violation of \cref{cond-ident-main} implies that there exist some $\tilde{\bbeta}$ with support set $\tilde{S}$ containing variables outside $S^\star$ such that
\begin{align*}
    \forall e\in \mathcal{E}, ~~~~~ \mathbb{E}[y^{(e)}|\bx_{\tilde{S}}^{(e)}] \equiv (\tilde{\bbeta}_{\tilde{S}})^\top \bx_{\tilde{S}}^{(e)}.
\end{align*} 
In this case, it is impossible to determine which one among $S^*$ and $\tilde{S}$ is the true important variable set because they are all CE-invariant across $\mathcal{E}$ and $\tilde{S}\setminus S^*\neq \emptyset$. 
\end{remark}

\begin{theorem}[Strong Convexity with respect to $\bbeta^*$] 
\label{thm:global-strong-convexity-main}
Assume Conditions \ref{cond0-model-main}--\ref{cond1-well-condition-main} and \ref{cond-ident-main} hold. Then $\bbeta^*$ is the unique minimizer of $\mathsf{Q}(\bbeta;\gamma, \bomega)$ for large enough $\gamma$: for any $\epsilon \in (0, 1)$ and any $\gamma\ge \epsilon^{-1} \gamma^*$ with
\begin{align}
\label{eq:thm1-gamma-star-main}
    \gamma^* = (\kappa_L)^{-3} \sup_{S: S\cap G \neq \emptyset} \left(\mathsf{b}_S/\bar{\mathsf{d}}_S\right),
\end{align} where $\mathsf{b}_S = \|\frac{1}{|\mathcal{E}|}\sum_{e\in \mathcal{E}} \mathbb{E} [\varepsilon^{(e)} \bx_S^{(e)}] \|^2_2$ and $\bar{\mathsf{d}}_S = \sum_{e\in \mathcal{E}} \frac{1}{|\mathcal{E}|} \|\bbeta^{(e,S)} - \bar{\bbeta}^{(S)}\|_2^2$
 with $\bar{\bbeta}^{(S)}=\frac{1}{|\mathcal{E}|} \sum_{e'\in \mathcal{E}} \bbeta^{(e',S)}$, we have
\begin{align}
\label{eq:thm1-jsc-condition-main}
    \mathsf{Q}(\bbeta;\gamma, \bomega) - \mathsf{Q}(\bbeta^*;\gamma, \bomega) \ge (1-\epsilon) \|\bSigma^{1/2}(\bbeta - \bbeta^*)\|_2^2 + \kappa_L^2 (\gamma - \epsilon^{-1}\gamma^*) \bar{\mathsf{d}}_{\supp(\bbeta)}.
\end{align}
\end{theorem}

Note $\bar{\mathsf{d}}_S>0$ for any $S\cap G \neq \emptyset$ under \cref{cond-ident-main}. \cref{thm:global-strong-convexity-main} provides a generic condition under which $\bbeta^*$ is the global optimal of the population-level EILLS objective \eqref{eq:eills-objective-q-population} with a non-asymptotic critical threshold for $\gamma$. Moreover, a detailed characterization of global optimality \eqref{eq:thm1-jsc-condition-main}, which is slightly stronger than strong convexity, is also established.  The strong convexity with respect to the target function $\bbeta^*$ lays the foundation to derive faster convergence rate to $\bbeta^*$ \citep{van1996weak, wainwright2019high}.

\begin{remark}[Interpretation of the Quantities $\mathsf{b}_S$, $\bar{\mathsf{d}}_S$]
Observe that when $S\supseteq S^*$, the bias of the least squares solution in one environment $e\in \mathcal{E}$ is $\bDelta^{(e)} = \bbeta^{(e,S)} - \bbeta^*$. Here we refer to $\mathsf{b}_S$ as bias mean because $\mathsf{b}_S$ is the bias of running least squares on $X_S$ using all the data when $S\supseteq S^*$, that is, $\|\hat{\bbeta}_{\mathsf{R}, S} - \bbeta^*\|_2^2 \asymp \mathsf{b}_S$ by  \cref{prop:pooled-least-squares-main}. At the same time, we have $\bar{\mathsf{d}}_S = \frac{1}{|\mathcal{E}|} \sum_{e\in \mathcal{E}} \|\bDelta^{(e)} - (|\mathcal{E}|^{-1}\sum_{e'\in \mathcal{E}} \bDelta^{(e')})\|_2^2$ provided $S\supseteq S^*$. Thus, the quantity $\bar{\mathsf{d}}_S$ can be interpreted as the variance of bias since it measures the variations of the biases $\bDelta^{(e)}$ among different environments.
\end{remark}

\begin{remark}[Interpretation of the Critical Threshold $\gamma^*$] 
\cref{thm:global-strong-convexity-main} implies that the global optimality of $\bbeta^*$ is guaranteed when $\gamma > \gamma^*$. $\gamma^*$ is the ratio of bias mean to bias variance and will affect both the forthcoming estimation error and variable selection property. As an intuitive example, for the previous cow/camel thought experiment, a reasonable value for $\gamma^*$ is expected when the fractions of cows on grass and camels on sand are both 90\% in $e=0$ and 60\% in $e=1$. Similarly, a moderate $\gamma^*$ is anticipated when fractions are 51\% and 53\%. However, with fractions 90\% and 89\%, $\gamma^*$ is significantly larger, necessitating more data for both accurate estimation and variable selection. 
\end{remark}

\begin{remark}[Interpretation of Small $\gamma$]
In \cref{thm:global-strong-convexity-main}, we show that the invariant parameter $\beta^\star$ will uniquely minimize the EILLS objective when $\gamma$ is larger than some threshold $\gamma^\star$. Yet, it is unclear what is the regularization effect of $\gamma \mathsf{J}$ when $\gamma$ is small. We provide some intuitions on the impact of the regularizer when $\gamma$ is small in \aosversion{Appendix~\blue{D.8}}{\cref{sec:small-gamma}}. 
\end{remark}

We use the toy example below to demonstrate (1) when \cref{cond-ident-main} holds, and (2) how $\gamma^*$ scales in a concrete model, and leave more examples and discussions in \aosversion{Appendix~\blue{A.4}}{\cref{sec:moreex}}.

\begin{example}
\label{ex-h-de}
    Consider the following two-environment linear SCMs with $p=2$:
    \begin{align*}
        x^{(e)}_1 &\gets \sqrt{0.5} \cdot \varepsilon_1 \\
        y^{(e)} &\gets 1\cdot x^{(e)}_1 + \sqrt{0.5} \cdot \varepsilon_0 \\
        x^{(e)}_2 &\gets  {s^{(e)}} \cdot y^{(e)} + \varepsilon_2.  
    \end{align*} Here $\varepsilon_0, \varepsilon_1, \varepsilon_2$ are independent, standard normally distributed exogenous variables. The linear model is $y^{(e)} = (\bbeta^*)^\top \bx^{(e)} + \varepsilon^{(e)}$ with $\bbeta^*=(1,0)$ and $\varepsilon^{(e)} = \sqrt{0.5} \varepsilon_0$. Note that $\mathbb{E}[\varepsilon^{(e)} x_2^{(e)}] = 0.5 s^{(e)}$. When $\omega^{(e)}\equiv 1/2$, the variable $x_2$ will be a pooled linear spurious variable if $s^{(1)}+s^{(2)} \neq 0$. 
    
{\normalfont We focus on the case where the pooled least squares is not consistent, i.e., $s^{(1)} + s^{(2)} \neq 0$. In the well-conditioned regime where $|s^{(1)}| + |s^{(2)}| = O(1)$, by some calculations, we have
    \begin{align*}
        \sqrt{\frac{\mathsf{b}_{\{2\}}}{\bar{\mathsf{d}}_{\{2\}}}} \asymp \frac{|s^{(1)}+s^{(2)}|}{|s^{(2)} - s^{(1)}|} \cdot \frac{1}{|1 - s^{(1)} s^{(2)}|}~~~~~~\text{and}~~~~~~\sqrt{\frac{\mathsf{b}_{\{1,2\}}}{\bar{\mathsf{d}}_{\{1,2\}}}} \asymp \frac{|s^{(1)} + s^{(2)}|}{|s^{(2)} - s^{(1)}|}.
    \end{align*}

First, the \cref{cond-ident-main} holds if $s^{(1)}\neq s^{(2)}$ and $s^{(1)} s^{(2)} \neq 1$. The first inequality is easy to understand, without which the underlying distributions in the two environments are identical. The second inequality is strange at first glance. However, we have $\mathbb{E}[y^{(e)}|x_2^{(e)}] \equiv \frac{s}{s^2+1} x_2^{(e)}$ when $s^{(1)} = s = 1/s^{(2)}$. In this case, it is impossible to identify which of $\{1\}$ and $\{2\}$ are the true important variable set because all the sets are CE-invariant across $\mathcal{E}$. This also demonstrates the necessity of taking the supremum over all sets $S$ with $S\cap G \neq \emptyset$ in \eqref{eq:thm1-gamma-star-main}.

When $s^{(1)} s^{(2)}$ is away from $1$, the critical threshold $\gamma^*$ satisfies $(\gamma^*)^{1/2}\asymp |s^{(1)}+s^{(2)}|/|s^{(2)}-s^{(1)}|$, where the numerator quantifies the strength of spuriousness, and the denominator quantifies the strength of heterogeneity. Hence, a constant-level $\gamma$ can be adopted when the strength of heterogeneity is of the same order as the strength of spuriousness.
}\qed
\end{example}

\subsection{Statistical Analysis of the EILLS Estimator in the Low-dimensional Regime}
\label{subsec:theory-low-dim}

Our statistical analysis of the EILLS estimator focuses on the regime where it is possible to identify $\bbeta^*$, i.e., \cref{cond-ident-main} holds, and when our choice of $\gamma$ satisfies $\gamma \ge 3\gamma^* \lor 1$, where $\gamma^*$ is the quantity defined in  \cref{thm:global-strong-convexity-main}. We let $\gamma \ge 1$ to simplify the presentation. We are now ready to provide a statistical analysis of the EILLS estimator $\hat{\bbeta}_{\mathsf{Q}}$ minimizing \eqref{eq:eills-objective-q}.  The first result is about the sure screening with false positive control.
\begin{theorem}[Non-asymptotic Variable Selection Property]
\label{thm:rate-low-dim-main} Define
\begin{align}
\label{eq:thm2-signal-main}
    \mathsf{s}_+ = \min_{j\in S^*} |\beta_j^*|^2 \qquad \text{and} \qquad \mathsf{s}_- = \min_{S\subseteq [p], S\cap G \neq \emptyset} \bar{\mathsf{d}}_S
\end{align} 
Suppose Conditions \ref{cond0-model-main}--\ref{cond-ident-main} hold, and we choose $\gamma \ge 3\gamma^*\lor 1$ where $\gamma^*$ is defined in  \cref{thm:global-strong-convexity-main}. There exists some universal constants $c_1$--$c_2$ that only depends on $(\kappa_U, \sigma_x, \sigma_\varepsilon)$ such that for any $t>0$, if $n \ge c_1(\gamma/\kappa_L)(p+ \log(|\mathcal{E}|) + t)\{\mathsf{s}_+^{-0.5}+\mathsf{s}_+^{-1}+(\gamma\kappa_L\mathsf{s}_-)^{-0.5}\}$, and $n \cdot |\mathcal{E}| \ge c_2(\gamma/\kappa_L)^2(p+t)\{\mathsf{s}_+^{-1} + (\gamma\kappa_L\mathsf{s}_-)^{-1} + 1\}$, then the EILLS estimator $\hat{\bbeta}_{\mathsf{Q}}$ minimizing \eqref{eq:eills-objective-q} satisfies
\begin{align}
\label{eq:low-dim-variable-selection-main}
    \mathbb{P}\left[S^* \subseteq \supp(\hat{\bbeta}_{\mathsf{Q}}) \subseteq G^c \right] \ge 1-7e^{-t}.
\end{align}
\end{theorem}

Equation \eqref{eq:low-dim-variable-selection-main} reveals that all endogenous variables are screened out when $\gamma$ is sufficiently large. When the choice of $\gamma$ and the curvature $\kappa_L$ are both of constant order, \cref{thm:rate-low-dim-main} implies that with high probability, the EILLS estimator $\hat{\bbeta}_{\mathsf{Q}}$ can eliminate all the pooled linear spurious variables while keeping all the important variables, i.e., \eqref{eq:low-dim-variable-selection-main} holds, if 
\begin{align}
\label{eq:low-dim-n-m-condition-main}
    \frac{n}{p + \log(|\mathcal{E}|)} \gg \mathsf{s}_+^{-1} + \mathsf{s}_{-}^{-1/2} \left(1\lor \frac{\mathsf{s}_{-}^{-1/2}}{|\mathcal{E}|}\right).
\end{align}  Here, the quantities $\mathsf{s}_+$ and $\mathsf{s}_-$ defined in \eqref{eq:thm2-signal-main} can be interpreted as the signal of true important variables and the signal of heterogeneity, respectively. One can expect more data to differentiate whether it is a signal or noise when one of $\mathsf{s}_-$ and $\mathsf{s}_+$ is small.

When the variable selection property \eqref{eq:low-dim-variable-selection-main} is satisfied, $\hat{S}$ does not contain any pooled linear spurious variables while maintaining all the important variables. In this case, one can provide a good estimate of $\bbeta^*$ by running another least squares constrained on $\hat{S} = \supp(\hat{\bbeta}_{\mathsf{Q}})$ without regularization. In this case, there is no bias anymore. It then follows from \cref{prop:pooled-least-squares-main} that, with high probability, we have the $\ell_2$-error $\{|G^c|/(n \cdot |\mathcal{E}|)\}^{1/2}$ given $\hat{S}\subseteq G^c$. 

When some weak spurious variables exist such that the corresponding absolute value of $\bar{\mathsf{d}}_S$ is small, the EILLS estimator requires a large $n$ to eliminate all these spurious variables. To see this, suppose there exists a weak spurious variable $x_j$ such that $|\mathbb{E}[x_j^{(e)}\varepsilon^{(e)}]| \le \epsilon$ for all the $e\in \mathcal{E}$. Consider the set $\tilde{S} = S^* \cup \{j\}$, it is easy to verify that $\mathsf{b}_{\tilde{S}} \le \varepsilon^2$ and $\bar{\mathsf{d}}_{\tilde{S}} \le \varepsilon^2$. In this case, we require $n\gg n_{*,\mathtt{sel}} \asymp p\epsilon^{-1}( \epsilon^{-1}/\mathcal{|E|} + 1)$ to eliminate variable $x_j$ by \eqref{eq:low-dim-n-m-condition-main}. The next theorem claims that a small $\ell_2$ estimation error $\|\hat{\bbeta}_{\mathsf{Q}} - \bbeta^*\|_2$ can be obtained in the regime where $p \ll n \ll n_{*,\mathtt{sel}}$ regardless of whether EILLS selects the correct variable.

\begin{theorem}[Non-asymptotic $\ell_2$ Error Bound] 
\label{thm:rate-faster-low-dim-main}
Assume Conditions \ref{cond0-model-main}--\ref{cond-ident-main} hold, and we choose $\gamma \ge 3\gamma^*\lor 1$ where $\gamma^*$ is defined in \cref{thm:global-strong-convexity-main}. There exists some universal constants $c_1$--$c_4$ that only depend on $(\kappa_U, \sigma_x)$ such that for any $t>0$, if $n \ge p + \log(2|\mathcal{E}|) + t$ and $n \cdot |\mathcal{E}| \ge c_1 (\gamma/\kappa_L)(p + t)$, then $\hat{\bbeta}_{\mathsf{Q}}$ minimizing \eqref{eq:eills-objective-q} satisfies
\begin{align}
\label{eq:low-dim-rate2}
    \frac{\|\hat{\bbeta}_{\mathsf{Q}} - \bbeta^*\|_2}{\sigma_\varepsilon (\gamma/\kappa_L)}  &\le c_2 \left(\sqrt{\frac{p+t}{n \cdot |\mathcal{E}|}} + \frac{p+\log(|\mathcal{E}|)+t}{n}\right) + c_3 \frac{\sqrt{|S^*|}}{n} \cdot \frac{\log(|\mathcal{E}||S^*|)+t}{\min_{j\in S^*}|\beta^*_j|}
\end{align} with probability at least $1-7e^{-t}$. Moreover, when the additional conditions in \cref{thm:rate-low-dim-main} hold, then
\begin{align}
\label{eq:low-dim-rate1}
    \frac{\|\hat{\bbeta}_{\mathsf{Q}} - \bbeta^*\|_2}{\sigma_\varepsilon (\gamma/\kappa_L)}  &\le c_2 \left(\sqrt{\frac{|{G}^c|+t}{n \cdot |\mathcal{E}|}} + \frac{|{G}^c|+\log(|\mathcal{E}|)+t}{n}\right)
\end{align} occurs with probability at least $1-14e^{-t}$.
\end{theorem}

In the well-conditioned regime where $\min_{j\in S^*} |\beta_j^*| \gtrsim p^{-1/2}$, $\kappa_L\gtrsim 1$, and $\gamma^*\asymp 1$, one can adopt a constant-level hyper-parameter $\gamma$ such that
\begin{align*}
    \|\hat{\bbeta}_{\mathsf{Q}} - \bbeta^*\|_2 \lesssim \sqrt{\frac{p}{n\cdot |\mathcal{E}|}} + \frac{p}{n}
\end{align*} with high probability provided $n \gtrsim p$. When the first term dominates ($|\mathcal{E}|$ is not too big), the EILLS estimator achieves an optimal linear regression rate. This implies that the EILLS estimator can estimate $\bbeta^*$ well in $\ell_2$ error when there are some weak spurious variables and the number of data points is not large enough to eliminate all the variables in $G$. 

One technical novelty behind Theorems \ref{thm:rate-low-dim-main}--\ref{thm:rate-faster-low-dim-main} and \cref{thm:high-dim-main} later is to apply the localization argument in the analysis of the invariance regularizer to get a faster rate. To see this, we have $\|\hat{\bbeta}_{\mathsf{Q}} - \bbeta^*\|_2^2 \asymp \delta_{n,\mathsf{Q}}^2 \asymp (n\cdot |\mathcal{E}|)^{-1} + n^{-2}$ when $p=O(1)$. The first term is faster than $(n\cdot |\mathcal{E}|)^{-1/2}$ which directly applies uniform concentration, and the second term is faster than $n^{-1} \asymp \sup_{\|\bbeta\|_2 \le 1} |\mathsf{J}(\bbeta) - \mathbb{E}[\hat{\mathsf{J}}(\bbeta)]|$, the (uniform) bias of the invariance regularizer. We use \aosversion{Lemma~\blue{C.4}}{\cref{lemma:instance-dependent-bound-j-lowdim}}, a novel one-side instance-dependent error bound for $\mathsf{J}(\bbeta) - \hat{\mathsf{J}}(\bbeta) + \hat{\mathsf{J}}(\bbeta^*) - \mathsf{J}(\bbeta^*)$ to obtain such a faster rate; see \aosversion{Appendix~\blue{C.3}}{\cref{sec:proof-thm2}}. 

\subsection{Variable Selection Consistency in the High-dimensional Regime}
\label{subsec:theory-high-dim}

 In the high-dimensional regime, we further define $s^* = |S^*|$, $\beta_{\min} = \min_{j\in S^*} |\beta_j^*|$. We need a condition asserting that the sample size $n$ should be large enough for the given hyper-parameter $\gamma$.

\begin{condition}
\label{cond6-n-req-main}
    Suppose that $\log(|\mathcal{E}|)\le C\log p$ and that 
    
\noindent (1) $n \ge c_1 (\gamma/\kappa_L) \left\{(s^* +  \beta_{\min}^{-2})\log p + (\kappa_L\beta_{\min})^{-1} \sqrt{(s^* + \log p) s^*\log p}\right\}$
    
\noindent (2) $n \cdot |\mathcal{E}| \ge c_2 (\gamma/\kappa_L)^2 (s^* \log p ) \{1 + 1/ (\kappa_L \beta_{\min}^2 ) \}$, 
    
\noindent Here $c_1$--$c_2$ are positive universal constants that depend only on $(C,\sigma_x,\kappa_U, \sigma_\varepsilon)$.
\end{condition} 

\begin{theorem}[Variable Selection Consistency in High Dimensions]  
\label{thm:high-dim-main}  Assume Conditions \ref{cond0-model-main}--\ref{cond6-n-req-main} hold, and we choose $\gamma \ge 3\gamma^* \lor 1$ where $\gamma^*$ is defined in \cref{thm:global-strong-convexity-main}. Suppose further that the choice of $\lambda$ satisfies
\begin{align*}
    c_1 \left\{ (\gamma/\kappa_L)^2 \frac{s^*\log p}{n \cdot |\mathcal{E}|} + \epsilon(n) \right\} \le \lambda \le c_2 \kappa_L \beta_{\min}^2,
\end{align*} where $\epsilon(n) = (\gamma/\kappa_L)^2 s^\star (\log p)(s^* + \log p) / n^2 + (\gamma/\kappa_L) \log p \sqrt{n^{-3}(s^* + \log p)}$, and  $c_1, c_2$ are some universal positive constants only depends on $(C,\kappa_U, \sigma_x,\sigma_\varepsilon)$. Then the $\ell_0$ regularized EILLS estimator $\hat{\bbeta}_{\mathsf{L}}$ minimizing \eqref{eq:eills-objective-l} satisfies $\mathbb{P}[\supp(\hat{\bbeta}_{\mathsf{L}}) = S^*] \ge 1-p^{-10}$.
\end{theorem}

We can see that $\epsilon(n)$ is negligible when $|\mathcal{E}| \asymp 1$ and $s^*+\log p = o(n)$ because 
\begin{align*}
    \epsilon(n) \Big/ \left\{ (\gamma/\kappa_L)^2\frac{s^* \log p}{n \cdot |\mathcal{E}|}\right\} \lesssim |\mathcal{E}| \left\{ \frac{s^*+\log p}{n} + 
    \left(\sqrt{\frac{\log p}{n}}\right) \right\} = o(1).
\end{align*} Therefore, in the regime where $\gamma, \kappa_L, |\mathcal{E}|$ are fixed while $n, p, s^*$ may grow, the variable selection consistency can be achieved when 
\begin{align*}
\frac{s^* \log p}{n}\ll \lambda \ll \beta_{\min}^2.
\end{align*}  Recall that for standard $\ell_0$ regularized least squares with sample size $n$, the variable selection consistency can be obtained when $n \gg (s^* + \beta_{\min}^{-2})\log p$ and $\lambda$ satisfies $n^{-1} \log p \ll \lambda \ll \beta_{\min}^2$ \citep{zhang2012general}. Compared to the standard $\ell_0$ regularized least squares, the EILLS estimator needs $n \gg s^* \beta_{\min}^{-2} \log p$ in the fixed number of environments setting.

Another question is how much can the $\ell_0$ regularized EILLS estimator benefit from growing $|\mathcal{E}|$? When $\gamma \asymp\kappa_L \asymp s^*\asymp 1$, \cref{thm:high-dim-main} implies that though achieving variable selection consistency still needs $(1+\beta_{\min}^{-2})\log p = o(n)$ due to \cref{cond6-n-req-main} (1), we can choose a wider range of $\lambda$. To be specific, in this case, the variable selection consistency can be achieved when $\lambda$ satisfies
\begin{align*}
    \left\{\frac{1}{|\mathcal{E}|} + \left(\frac{\log p}{n}\right)^{1/2} \right\} \frac{\log p}{n} \ll \lambda \ll \beta_{\min}^2.
\end{align*} Hence we can choose any $(n\cdot |\mathcal{E}|)^{-1} \log p \ll \lambda \ll \beta_{\min}^2$ in the regime $(\log p) |\mathcal{E}|^2 = o(n)$. This is the same with running $\ell_0$ regularized least squares on total $n\cdot |\mathcal{E}|$ data when $\mathbb{E}[\varepsilon|\bx]\equiv 0$.

With the variable selection consistency given in \cref{thm:high-dim-main}, we can then attain the optimal $\ell_2$-rate by running the least squares on $\supp(\hat{\bbeta}_{\mathsf{L}})$, as if assisted by an oracle.

\section{An Illustration by Structural Causal Model}
\label{sec:simulation}

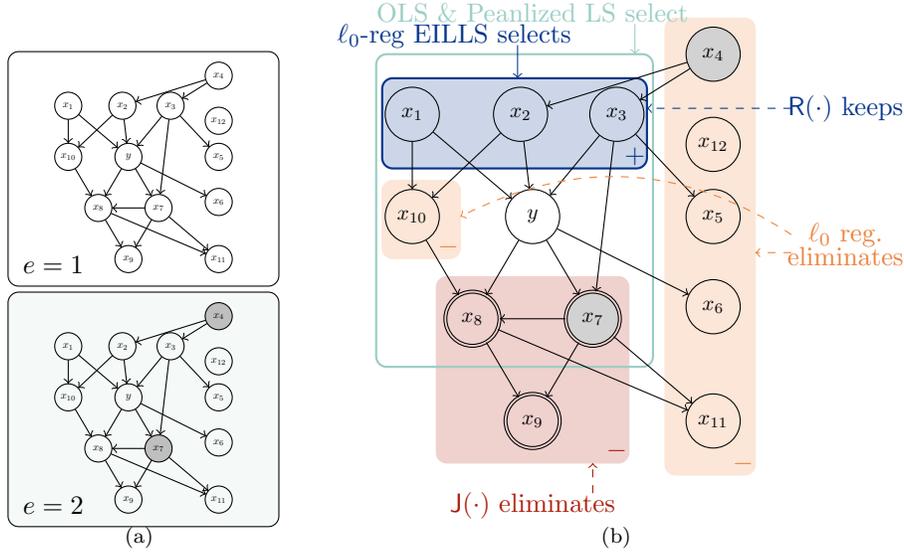
\begin{figure}[!t]
\centering
\begin{tabular}{cc}
\subfigure[]{
\begin{tikzpicture}[scale=0.4, state/.style={circle, draw, minimum size=0.9cm, scale=0.4}]

\draw[black, rounded corners] (-2, -6) rectangle (7, 1.8);
\draw node at (-0.5, -5.3) {$e=1$};
\node[state] at (0, 0) (x1) {$x_1$};
\node[state] at (2, -1.7) (y) {$y$};
\node[state] at (1.8, 0) (x2) {$x_2$};
\node[state] at (3.4, 0) (x3) {$x_3$};
\node[state] at (5, 1) (x4) {$x_4$}; 
\node[state] at (5, -1.7) (x5) {$x_5$};
\node[state] at (5, -3.2) (x6) {$x_6$};
\node[state] at (3, -3.4) (x7) {$x_7$};
\node[state] at (1, -3.4) (x8) {$x_8$};
\node[state] at (0, -1.7) (x10) {$x_{10}$};
\node[state] at (2, -5.1) (x9) {$x_{9}$};
\node[state] at (5, -5.1) (x11) {$x_{11}$};
\node[state] at (5, -0.5) (x12) {$x_{12}$};

\draw[->] (x1) -- (y);
\draw[->] (x2) -- (y);
\draw[->] (x3) -- (y);
\draw[->] (x1) -- (x10);
\draw[->] (x2) -- (x10);
\draw[->] (x3) -- (x5);
\draw[->] (x4) -- (x2);
\draw[->] (x4) -- (x3);
\draw[->] (y) -- (x6);
\draw[->] (y) -- (x7);
\draw[->] (y) -- (x8);
\draw[->] (x7) -- (x8);
\draw[->] (x10) -- (x8);
\draw[->] (x7) -- (x9);
\draw[->] (x8) -- (x9);
\draw[->] (x7) -- (x11);
\draw[->] (x8) -- (x11);
\draw[->] (x3) -- (x7);

\draw[black, rounded corners, fill=mylightblue!10] (-2, -14) rectangle (7, -6.2);
\draw node at (-0.5, -13.3) {$e=2$};
\node[state] at (0, -8) (e2x1) {$x_1$};
\node[state] at (2, -9.7) (e2y) {$y$};
\node[state] at (1.8, -8) (e2x2) {$x_2$};
\node[state] at (3.4, -8) (e2x3) {$x_3$};
\node[state, fill=gray!50] at (5, -7) (e2x4) {$x_4$}; 
\node[state] at (5, -9.7) (e2x5) {$x_5$};
\node[state] at (5, -11.2) (e2x6) {$x_6$};
\node[state, fill=gray!50] at (3, -11.4) (e2x7) {$x_7$};
\node[state] at (1, -11.4) (e2x8) {$x_8$};
\node[state] at (0, -9.7) (e2x10) {$x_{10}$};
\node[state] at (2, -13.1) (e2x9) {$x_{9}$};
\node[state] at (5, -13.1) (e2x11) {$x_{11}$};
\node[state] at (5, -8.5) (e2x12) {$x_{12}$};

\draw[->] (e2x1) -- (e2y);
\draw[->] (e2x2) -- (e2y);
\draw[->] (e2x3) -- (e2y);
\draw[->] (e2x1) -- (e2x10);
\draw[->] (e2x2) -- (e2x10);
\draw[->] (e2x3) -- (e2x5);
\draw[->] (e2x4) -- (e2x2);
\draw[->] (e2x4) -- (e2x3);
\draw[->] (e2y) -- (e2x6);
\draw[->] (e2y) -- (e2x7);
\draw[->] (e2y) -- (e2x8);
\draw[->] (e2x7) -- (e2x8);
\draw[->] (e2x10) -- (e2x8);
\draw[->] (e2x7) -- (e2x9);
\draw[->] (e2x8) -- (e2x9);
\draw[->] (e2x7) -- (e2x11);
\draw[->] (e2x8) -- (e2x11);
\draw[->] (e2x3) -- (e2x7);

\end{tikzpicture}   
} & 
\subfigure[]{
\begin{tikzpicture}[scale=0.8, state/.style={circle, draw, minimum size=0.9cm, scale=0.8}]

\draw[myorange!20, fill=myorange!20, rounded corners] (4.2, -6) rectangle (5.7, 1.6);
\draw[myorange!20, fill=myorange!20, rounded corners] (-0.5, -2.4) rectangle (0.8, -1.1);
\draw[myred!20, fill=myred!20, rounded corners] (0.4, -5.8) rectangle (3.6, -2.7);
\draw[myblue, thick, fill=myblue!20, rounded corners] (-0.5, -0.9) rectangle (3.9, 0.6);
\draw[mylightblue, thick, rounded corners] (-0.6, -4.2) rectangle (4, 1);

\node[state] at (0, 0) (x1) {$x_1$};
\node[state] at (2, -1.7) (y) {$y$};
\node[state] at (1.8, 0) (x2) {$x_2$};
\node[state] at (3.4, 0) (x3) {$x_3$};
\node[state, fill=gray!35] at (5, 1) (x4) {$x_4$}; 
\node[state] at (5, -1.7) (x5) {$x_5$};
\node[state] at (5, -3.2) (x6) {$x_6$};
\node[state, double=myred!20, double distance=0.7pt, fill=gray!35] at (3, -3.4) (x7) {$x_7$};
\node[state, double=myred!20, double distance=0.7pt] at (1, -3.4) (x8) {$x_8$};
\node[state] at (0, -1.7) (x10) {$x_{10}$};
\node[state, double=myred!20, double distance=0.7pt] at (2, -5.1) (x9) {$x_{9}$};
\node[state] at (5, -5.1) (x11) {$x_{11}$};
\node[state] at (5, -0.5) (x12) {$x_{12}$};

\draw[myblue] node at (3.7, -0.7) {$+$};
\draw[myorange] node at (5.5, -5.8) {$-$};
\draw[myorange] node at (0.6, -2.2) {$-$};
\draw[myred] node at (3.4, -5.6) {$-$};

\draw[->] (x1) -- (y);
\draw[->] (x2) -- (y);
\draw[->] (x3) -- (y);
\draw[->] (x1) -- (x10);
\draw[->] (x2) -- (x10);
\draw[->] (x3) -- (x5);
\draw[->] (x4) -- (x2);
\draw[->] (x4) -- (x3);
\draw[->] (y) -- (x6);
\draw[->] (y) -- (x7);
\draw[->] (y) -- (x8);
\draw[->] (x7) -- (x8);
\draw[->] (x10) -- (x8);
\draw[->] (x7) -- (x9);
\draw[->] (x8) -- (x9);
\draw[->] (x7) -- (x11);
\draw[->] (x8) -- (x11);
\draw[->] (x3) -- (x7);

\draw[mylightblue] (2, 1.7) node{OLS \& Peanlized LS select};
\draw[->, mylightblue] (3.7, 1.5) -- (3.7, 1);

\draw[myblue] (0.7, 1.3) node{$\ell_0$-reg EILLS selects};
\draw[->, myblue] (1.75, 1.15) -- (1.75, 0.6);

\draw[myred] (2, -6.5) node{$\mathsf{J}(\cdot)$ eliminates};
\draw[->, dashed, myred] (3, -6.3) to (3, -5.8);

\draw[myorange] (7.2, -2) node{$\ell_0$ reg.};
\draw[myorange] (7.2, -2.35) node{eliminates};
\draw[->, dashed, myorange] (6.3,-2.3) to (5.7, -2.3);
\draw[->, dashed, myorange] (6.3,-2) to[bend right] (0.8, -1.7);

\draw[myblue] (7.2, 0.1) node{$\mathsf{R}(\cdot)$ keeps};
\draw[->, dashed, myblue] (6.3, 0.1) to (3.9, 0.1);

\end{tikzpicture}

}
\end{tabular}

\caption{(a) An illustration of the two-environment model, the SCMs in the two environments share the same induced graph, which is also plotted in (b). The arrow from node $x$ to node $z$ indicates that $x$ is the direct cause of $z$. (b) An illustration of how EILLS works under the two-environment model. The double-circled nodes represent the pooled linear spurious variables.}
\label{fig:scm-variable-selection}
\end{figure}

In this section, we use an example of a structural causal model (with potential interventions on covariates $\bx$) to illustrate how different components of our proposed EILLS objective \eqref{eq:eills-objective-q} and $\ell_0$ regularized EILLS objective \eqref{eq:eills-objective-l} contribute to either seizing the important variables $S^*$ or eliminating pooled linear spurious variables and unrelated variables. Simulations further support intuitive claims\footnote{Scripts to reproduce the simulation result in this section can be found in the supplemental material, see also \href{https://github.com/wmyw96/EILLS}{https://github.com/wmyw96/EILLS}.}.

\noindent \textbf{Model.} Consider a two-environment model ($|\mathcal{E}|=2$). The mechanism of $(\bx, y)$ in each environment is characterized by an SCM. As shown in \cref{fig:scm-variable-selection} (a), the two SCMs share the same direct cause relationship and most of the structural assignments, except that the mechanisms of $x_4$ and $x_7$ are different (gray shadowed); see the detailed structural assignments in \aosversion{Appendix~\blue{D.4}}{\cref{subsec:implementation}}. In this case, the set of important variables $S^*$ is $\{1,2,3\}$, corresponding to the set of direct causes of $y$. The true parameter is $\bbeta^*=(3,2,-0.5,0,\ldots, 0)^\top$. The set of pooled linear spurious variables $G$ is the subset of $y$'s offspring. In this example, $G=\{7,8,9\}$ (denote as double circled node in \cref{fig:scm-variable-selection} (b)).

\noindent \textbf{How EILLS rescues.} First, although vanilla pooled least squares, which minimizing \eqref{eq:pooled-l2-risk-empirical}, can asymptotically eliminate some uncorrelated variables (for example, $\hat{\beta}_{\mathsf{R},12} = o_{\mathbb{P}}(1)$), it will asymptotically select some of the pooled linear spurious variables in $G$ together with other variables related to these variables (for example, some of their ancestors and offsprings) according to \eqref{eq:intro-lse-sol}. For example, it may asymptotically select variables $S_{\mathsf{R}} = \{1, 2, 3, 7, 8, 10\}$ as shown in the \fcolorbox{mylightblue}{white}{\mylightblue{light blue rectangle}}. That is, $|\hat{\beta}_{\mathsf{R},j} - \beta_j| = o_{\mathbb{P}}(1)$ with some $\beta_j\neq 0$ will hold for any $j \in S_{\mathsf{R}}$.

As for our EILLS estimator that minimizes \eqref{eq:eills-objective-q}, if the inclusion of pooled linear spurious variables leads to a shift of best linear predictor (\cref{cond-ident-main}), then, with a large enough $\gamma$, our regularizer \myred{$\hat{\mathsf{J}}$} will eliminate all pooled linear spurious variables in \colorbox{myred!20}{red shadows} for large enough sample size. We add a red ``\myred{$-$}'' in the red shadow to emphasize the regularizer \myred{$\hat{\mathsf{J}}$}'s role in eliminating those pooled linear spurious variables. 
Moreover, including the pooled $L_2$ risk \myblue{$\hat{\mathsf{R}}_{\mathcal{E}}$} will prevent our EILLS objective from collapsing to conservative solutions. Thus it will select all the important variables in \colorbox{myblue!20}{blue shadows}, in which we also use a blue ``\myblue{$+$}'' to underline the pooled $L_2$ risk \myblue{$\hat{\mathsf{R}}_{\mathcal{E}}$}'s impact on keeping all the important variables. 
Finally, the objective \eqref{eq:eills-objective-q} can only guarantee that $\hat{\beta}_{\mathsf{Q},j} = o_{\mathbb{P}}(1)$ for unrelated variables $j$ in \colorbox{myorange!20}{orange shadows}, the inclusion of \myorange{$\ell_0$ regularization} completes the last step towards the target of variable selection consistency: with a properly chosen hyper-parameter $\lambda$, we can eliminate (\myorange{$-$}) these unrelated variables through the \myorange{$\ell_0$ penalty} and only keep the important variables in \fcolorbox{myblue}{white}{\myblue{blue rectangle}}.

\begin{figure}
\centering
\begin{tabular}{ccc}
\subfigure[]{
\includegraphics[scale=0.36]{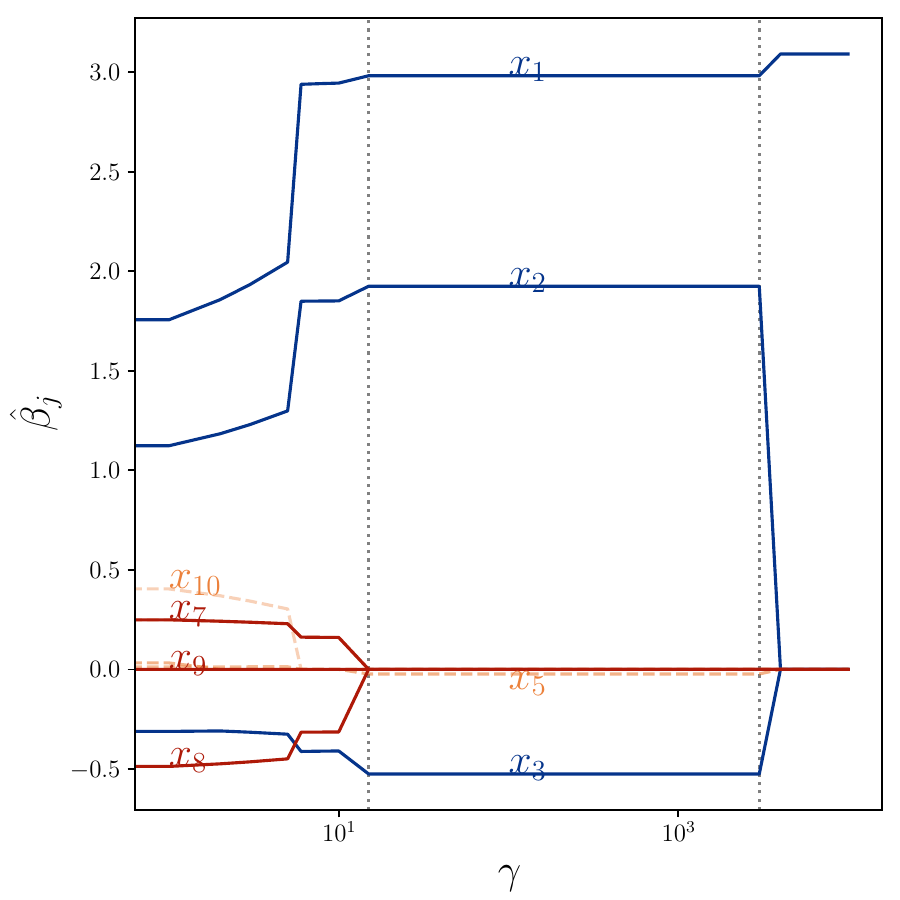}
}&
\subfigure[]{
\includegraphics[scale=0.36]{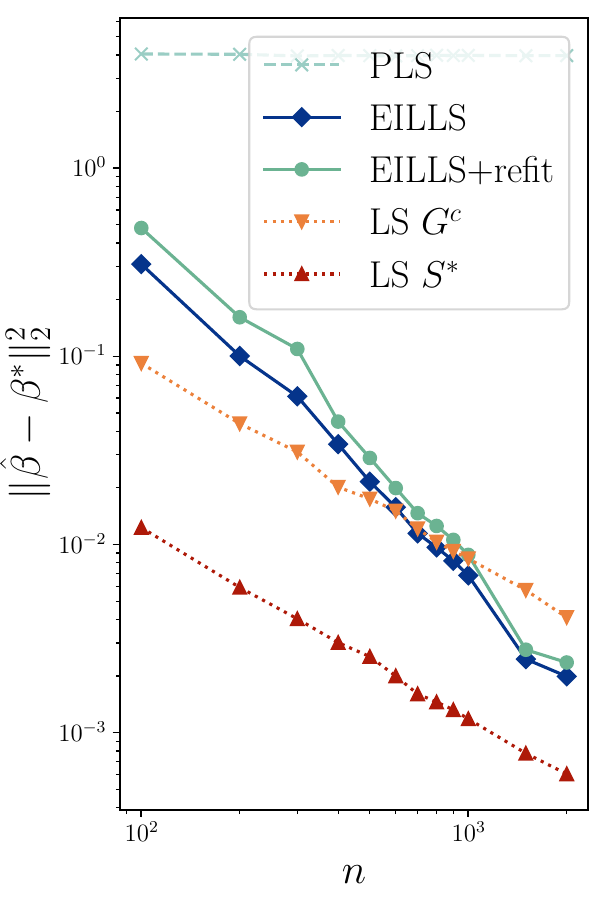}
} &
\subfigure[]{
\includegraphics[scale=0.36]{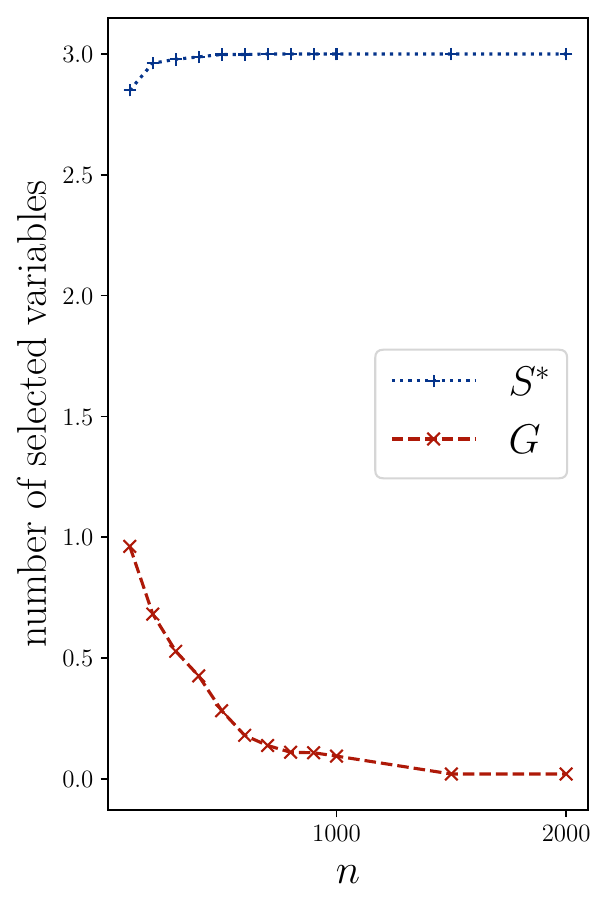}
} 
\end{tabular}
\caption{The simulation results for the model in \cref{fig:scm-variable-selection} (a). (a) depicts how the estimated coefficients for the EILLS estimator vary across hyper-parameter $\gamma$ in one trial when $n=300$: we use \myblue{blue} and \myred{red} solid lines to represent the corresponding coefficients for variables in \myblue{$S^*$} and \myred{$G$}, respectively; and use \myorange{orange} dashed lines to represent the coefficients for other variables. The two gray vertical lines are $\gamma=15$ and $\gamma=3\times 10^3$, respectively. (b) depicts how the average $\ell_2$ errors (based on $500$ replications, shown in log scale) for different estimators (marked with different shapes) change when $n$ grows: `LS $S$' is the estimator that runs least squares on $\bx_S$ using all the data and `PLS' is referred to `LS $[p]$'. 
(c) depicts the average number of selected variables in $S^*$ (\myblue{$+$}) and $G$ (\myred{$\times$}) for the EILLS estimator over $500$ replications.
}
\label{fig:numerical}
\end{figure}

\noindent \textbf{Experimental Justification.} We support the above intuition via simulations, in which the balanced data setting ($n^{(e)} \equiv n$, $\omega^{(e)}\equiv 1/2$) is adopted. Detailed implementation and experimental configurations are presented in \aosversion{Appendix~\blue{D.4}}{\cref{subsec:implementation}}.

Let us first see how EILLS works in practice by visualizing how the estimates of coefficients change over $\gamma \in [0, 10^4]$ in one trial. As shown in \cref{fig:numerical} (a), the pooled least squares running least squares on all the data ($\gamma=0$) will lead to a biased solution, which significantly selects variables in $S_{\mathsf{R}}$. Meanwhile, the EILLS estimator with proper regularization parameter $\gamma \in [15, 3\times 10^3]$ selects variables $\{1,2,3,5\}$: it screens out all the variables in $G$ and keeps all the important variables in $S^*$. \cref{fig:numerical} (a) also demonstrates the necessity of incorporating $L_2$ risk: with a very large $\gamma$ ($\gamma \ge 4\times 10^3$) that the $L_2$ risk is relatively negligible compared with the invariance regularizer, the best (empirical) linear predictor on $\{1,2,3\}$ is not preferred than that on $\{1\}$ by the invariance regularizer.

\cref{fig:numerical} (b) and \cref{fig:numerical} (c) further support the above claims using the average performances over $500$ replications, in which we use fixed $\gamma=20$ for EILLS. As presented in \cref{fig:numerical} (b), the average square of $\ell_2$ estimation error $\|\hat{\bbeta} - \bbeta^*\|_2^2$ for pooled least squares estimator (\mylightblue{$\times$} PLS) does not decrease (remains to be $\approx 4$) as $n$ increases, indicating that it converges to a biased solution. At the same time, the average $\|\hat{\bbeta} - \bbeta^*\|_2^2$ for the EILLS estimator (\myblue{$\blacklozenge$}) decays as $n$ grows (is $\approx 0.1$ when $n=200$) and lies in between that for least squares on $\bx_{G^c}$ (\myorange{$\blacktriangledown$}) and least squares on $\bx_{S^*}$ (\myred{$\blacktriangle$}) when $n\ge 700$. This is expected to happen since the EILLS estimator can not screen out all the uncorrelated variables.  We also report the $\ell_2$ estimation error for the refitted EILLS estimator, which runs OLS on the variable set $\hat{S}$ EILLS selects. It is interesting to see that its performance is slightly worse than the vanilla EILLS estimator; we provide a quantitative explanation in \aosversion{Appendix~\blue{D.5}}{\cref{sec:diagnosis}}.

The variable selection property for EILLS is further demonstrated in \cref{fig:numerical} (c), where the average number of selected variables in $S^*$ (\myblue{$+$}) and $G$ (\myred{$\times$}) is plotted across different $n$. The ``\myblue{$+$}'' curve keeps increasing and is almost $3$ while the ``\myred{$\times$}'' curve decays and approaches $0$, implying that the EILLS will select almost all the variables in $S^*$ and screen out all the variables in $G$ when $n$ grows.

\noindent \textbf{Comparison with Other Invariance Approaches.} In addition, we compare our EILLS approach with other invariance approaches, including invariance causal prediction (ICP) \citep{peters2016causal}, anchor regression (Anchor) \citep{rothenhausler2021anchor}, and invariant risk minimization (IRM) \citep{arjovsky2019invariant}, using the data generating process above. The pooled least squares (PLS) method is also included for comparative purposes. We use EILLS with hyper-parameter $\gamma=20$. For other invariance approaches, the invariance hyper-parameters are chosen in an oracle manner -- we enumerate all the possible hyper-parameters and pick the one that minimizes the $\ell_2$ prediction error $\|\bar{\bSigma}^{1/2}(\hat{\bbeta} - \bbeta^*)\|_2^2$. The implementation details can be found in the Supplemental Material.

The results are shown in \cref{fig:comp} (a). It is apparent that among all the estimators evaluated, the EILLS estimator stands out as the only one capable of consistent estimation. We also provide graphical visualizations of the solutions found by different methods for $n=100$ and $n=1000$ in \cref{fig:comp} (b) and (c), respectively. In these plots, each point $(x, y)$ with marker $m$ represents a solution $\hat{\bbeta}$ that the method $m$ produces. Here, $x$ denotes the relative $\ell_2$ norm restricted to the true important variables set $S^*$, calculated as $\|\hat{\bbeta}_{S^*}\|_2/\|{\bbeta}^*_{S^*}\|_2$; and $y$ represents the relative $\ell_2$ norm restricted to the pooled linear spurious variables set $G$, expressed as $\|\hat{\bbeta}_{G}\|_2/\|\bar{\bbeta}_{G}\|_2$ with $\bar{\bbeta} = \bar{\bbeta}^{([12])}$. 

As depicted in \cref{fig:comp} (b) and (c), the ICP method demonstrates a very conservative nature, failing to select variables in $S^*$ even when $n=1000$. This reveals its lack of guarantees in power even when the sample size is large enough. For other optimization-based methods, the solutions obtained by anchor regression are similar to those found by the pooled least squares. Although the IRM method showcases a slight divergence from PLS, with a tendency to push solutions toward the direction of ``invariance'', this effect remains marginal. In contrast, our EILLS method not only converges to the true parameter $\bbeta^*$ when $n$ is large but also demonstrates commendable performance when the sample size is moderately large ($n=100$).

\begin{figure}
\centering
\begin{tabular}{ccc}
\subfigure[]{
\includegraphics[scale=0.33]{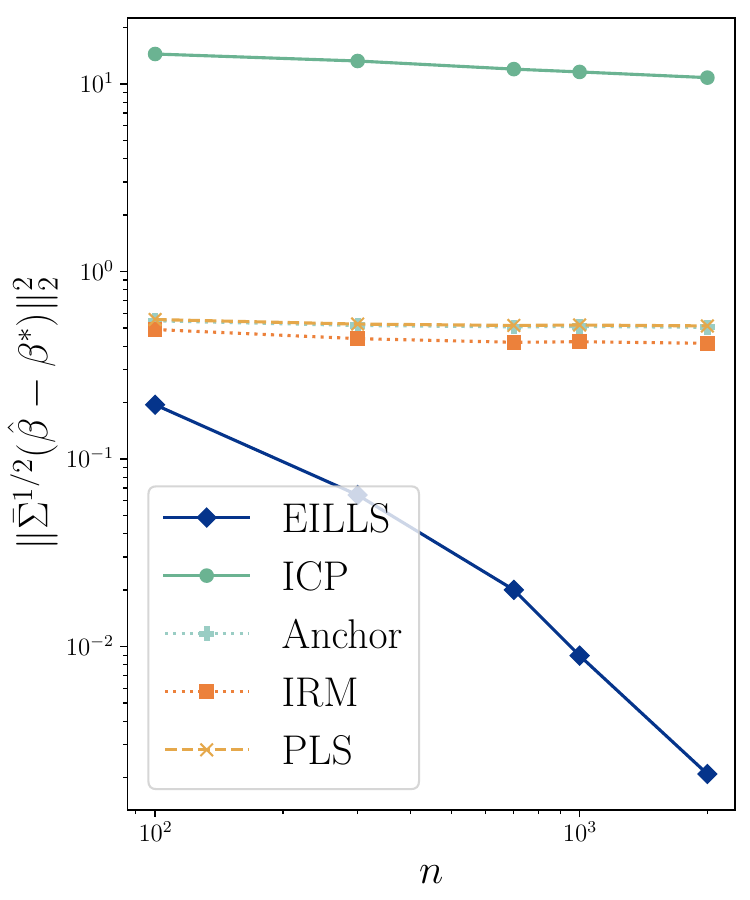}
}&
\subfigure[]{
\includegraphics[scale=0.33]{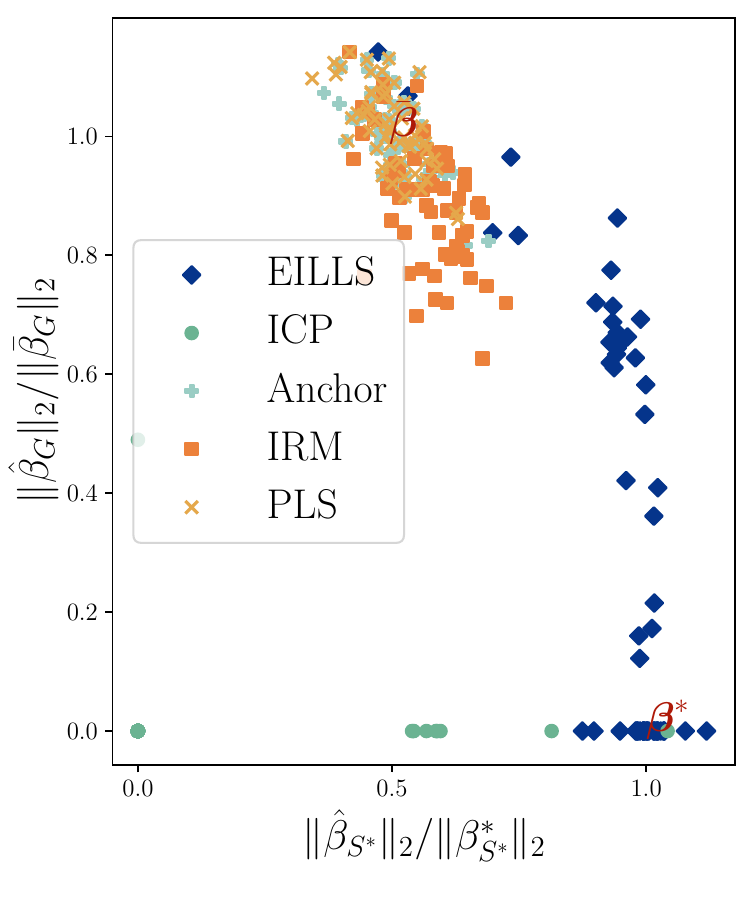}
} &
\subfigure[]{
\includegraphics[scale=0.33]{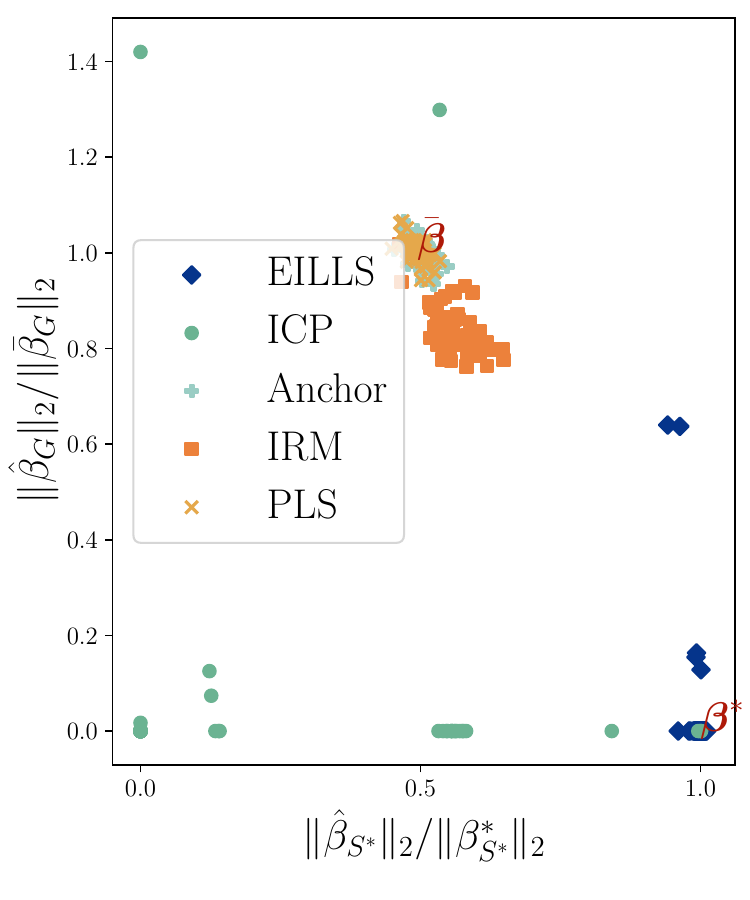}
} 
\end{tabular}
\caption{The simulation results for different methods using the data generated from the model in \cref{fig:scm-variable-selection} (a). (a) depicts how the average $\ell_2$ prediction errors $\|\bar{\bSigma}^{1/2}(\hat{\bbeta} - \bbeta^*)\|_2^2$ (based on $300$ replications) for different invariance methods (marked with different shapes and colors) changes when $n$ grows. (b) and (c) visualizes the solutions of different methods in 60 replications when $n=100$ and $n=1000$, respectively. The true parameter $\bbeta^*$ and the population pooled least squares solution $\bar{\bbeta}$ are also included using red for reference. 
}
\label{fig:comp}
\end{figure}

\section{Discussion}

In this paper, we consider the multi-environment linear regression model. We propose the {\it environment invariant linear least squares}, an optimization-based method applicable under the generic multi-environment linear regression model without additionally imposed structures. We provide a thorough statistical analysis of the proposed method. Specifically, it is possible to identify the true parameter under a near-minimal population-level condition \cref{cond-ident-main}. Under such a condition, the EILLS estimator can obtain an optimal linear regression rate in the low-dimensional regime, and the $\ell_0$ regularized EILLS estimator can achieve variable selection consistency in the high-dimensional regime. 

One key theoretical takeaway from this paper in the invariance field is that a statistically efficient estimation of $\bbeta^*$ is viable under a general, near-minimal identification condition related to the heterogeneity of the environments. This paper proposes an estimator with a non-convex objective function for the linear model to realize statistically efficient estimation, which is the first in the literature. The understanding of the invariance problem that this paper presents, together with this paper's limitations on the linear model and the computationally inefficient algorithm, opens up several interesting and promising future directions.

\subsection{Extension to Nonlinear Models} 
\label{sec:ex-nonlinear}

As illustrated above, one fundamental limitation of previously studied methods is their reliance on specific structures, such as linear SCM with additive intervention. This nature not only restricts them into ``structural methods'', limiting their scalable uses, but also hinders them from generalizing beyond linear settings, even for generalized linear models. On the contrary, our method can be naturally extended to generalized linear models, and one can use a similar idea to develop approaches in a generic nonparametric setup. Here, we only present a direct extension to the generalized linear model using a similar objective function and leave the extension to the fully nonparametric setup as future studies.

Let $\bx \in \mathbb{R}^p$ be the covariate vector and $y\in \mathbb{R}$ be the response variable of interest, and we collect data from $|\mathcal{E}|$ environments. Following the setup in \cref{sec:setup}, we assume that there exists some unknown true important variable set $S^*$ and true parameter $\bbeta^*$ with support set $S^*$ such that
\begin{align*}
 \forall e\in \mathcal{E} \qquad \mathbb{E}[y^{(e)}|\bx_{S^*}^{(e)}] = \varphi((\bbeta^*_{S^*})^\top \bx_{S^*}^{(e)})
\end{align*} with some known invertible link function $\varphi(\cdot)$ such as the logistic regression or log-linear model. The population-level \emph{generalized EILLS} objective analogy to \eqref{eq:intro-eills-obj-popu} in this setup can be written as
\begin{align*}
    \sum_{e\in \mathcal{E}} \mathbb{E}[\ell(\bbeta^\top \bx^{(e)}, y^{(e)})] + \gamma \sum_{j=1}^p \mathds{1}\{\beta_j\neq 0\} \left|\mathbb{E}[\{y^{(e)}-\varphi(\bbeta^\top \bx^{(e)})\} x_j^{(e)}]\right|^2
\end{align*} where $\ell(y, v) = \psi(v) - vy$ with $\varphi(t) = \psi'(t)$, and $\gamma$ is some hyper-parameter to be determined. Examples include (1) $\psi(t) = \frac{1}{2}t^2, \varphi(t) = t$ for linear regression; (2) $\psi(t) = \log(1+e^t)$, $\varphi(t) = \frac{e^t}{1+e^{t}}$ for logistic regression. The high-level viewpoints for the two parts in the loss with general $\varphi$ are similar to linear regression with $\varphi(t)=t$. One is expected to derive theoretical results analogous to \cref{thm:global-strong-convexity-main}--\ref{thm:high-dim-main}; we leave this for future studies.

\subsection{The Computational Complexity Concern}

Currently, we use an algorithm whose computational complexity scales exponentially with $p$ to search for the global minima of the EILLS objective function. It is natural to ask if we can design a provable algorithm that is both computationally and statistically efficient under the same environment heterogeneity condition \cref{cond-ident-main}. Even if there are some fundamental limits to this problem and it is impossible to develop both computationally and statistically efficient algorithms, it is still interesting to study if we can develop heuristic search algorithms, like sure screening \citep{fan2008sure}, forward-and-backward search \citep{zhang2011adaptive} and mixed-integer programming \citep{bertsimas2016best} for variable selection in linear regression, such that it can offer a good solution within an affordable time limit.

}
\section*{Acknowledgement}
The authors would like to thank three anonymous referees, an Associate Editor, and the Editor for their constructive comments that improved the quality of this paper. Y. Gu thank Yiran Jia for pointing out a typo in the proof of \cref{thm:rate-faster-low-dim}. J. Fan's research is supported by the ONR grants N00014-19-1-2120 and N00014-22-1-2340 and NSF grants DMS-2052926, DMS-2053832 and DMS-2210833. C. Fang's research is supported by National Key R\&D Program of China (2022ZD0160301) and NSF China (No. 62376008).

\bibliographystyle{apalike2}
\bibliography{arxiv_main.bbl}

\begin{thebibliography}{}

\bibitem[Aldrich, 1989]{aldrich1989autonomy}
Aldrich, J. (1989).
\newblock Autonomy.
\newblock {\em Oxford Economic Papers}, 41(1), 15--34.

\bibitem[Arjovsky et~al., 2019]{arjovsky2019invariant}
Arjovsky, M., Bottou, L., Gulrajani, I., \& Lopez-Paz, D. (2019).
\newblock Invariant risk minimization.
\newblock {\em arXiv preprint arXiv:1907.02893}.

\bibitem[Bertsimas et~al., 2016]{bertsimas2016best}
Bertsimas, D., King, A., \& Mazumder, R. (2016).
\newblock Best subset selection via a modern optimization lens.
\newblock {\em The Annals of Statistics}, 44(2), 813.

\bibitem[Bickel et~al., 2009]{bickel2009simultaneous}
Bickel, P.~J., Ritov, Y., \& Tsybakov, A.~B. (2009).
\newblock Simultaneous analysis of lasso and dantzig selector.
\newblock {\em The Annals of statistics}, 37(4), 1705--1732.

\bibitem[Bollen, 1989]{bollen1989structural}
Bollen, K.~A. (1989).
\newblock {\em Structural equations with latent variables}, volume 210.
\newblock John Wiley \& Sons.

\bibitem[B{\"u}hlmann \& Van De~Geer, 2011]{buhlmann2011statistics}
B{\"u}hlmann, P. \& Van De~Geer, S. (2011).
\newblock {\em Statistics for high-dimensional data: methods, theory and
  applications}.
\newblock Springer Science \& Business Media.

\bibitem[Candes \& Tao, 2007]{candes2007dantzig}
Candes, E. \& Tao, T. (2007).
\newblock {The Dantzig selector: Statistical estimation when p is much larger
  than n}.
\newblock {\em The Annals of Statistics}, 35(6), 2313 -- 2351.

\bibitem[Chen \& B{\"u}hlmann, 2021]{chen2021domain}
Chen, Y. \& B{\"u}hlmann, P. (2021).
\newblock Domain adaptation under structural causal models.
\newblock {\em The Journal of Machine Learning Research}, 22(1), 11856--11935.

\bibitem[{\v{C}}uklina et~al., 2021]{vcuklina2021diagnostics}
{\v{C}}uklina, J., Lee, C.~H., Williams, E.~G., Sajic, T., Collins, B.~C.,
  Rodr{\'\i}guez~Mart{\'\i}nez, M., Sharma, V.~S., Wendt, F., Goetze, S.,
  Keele, G.~R., et~al. (2021).
\newblock Diagnostics and correction of batch effects in large-scale proteomic
  studies: A tutorial.
\newblock {\em Molecular systems biology}, 17(8), e10240.

\bibitem[Dawid \& Didelez, 2010]{dawid2010identifying}
Dawid, A.~P. \& Didelez, V. (2010).
\newblock {Identifying the consequences of dynamic treatment strategies: A
  decision-theoretic overview}.
\newblock {\em Statistics Surveys}, 4(none), 184 -- 231.

\bibitem[Didelez et~al., 2012]{didelez2012direct}
Didelez, V., Dawid, P., \& Geneletti, S. (2012).
\newblock Direct and indirect effects of sequential treatments.
\newblock {\em arXiv preprint arXiv:1206.6840}.

\bibitem[Efron, 2020]{efron2020prediction}
Efron, B. (2020).
\newblock Prediction, estimation, and attribution.
\newblock {\em International Statistical Review}, 88, S28--S59.

\bibitem[Engle et~al., 1983]{engle1983exogeneity}
Engle, R.~F., Hendry, D.~F., \& Richard, J.-F. (1983).
\newblock Exogeneity.
\newblock {\em Econometrica: Journal of the Econometric Society}, (pp.\
  277--304).

\bibitem[Fan et~al., 2014]{fan2014challenges}
Fan, J., Han, F., \& Liu, H. (2014).
\newblock Challenges of big data analysis.
\newblock {\em National science review}, 1(2), 293--314.

\bibitem[Fan \& Li, 2001]{fan2001variable}
Fan, J. \& Li, R. (2001).
\newblock Variable selection via nonconcave penalized likelihood and its oracle
  properties.
\newblock {\em Journal of the American statistical Association}, 96(456),
  1348--1360.

\bibitem[Fan et~al., 2020]{fan2020statistical}
Fan, J., Li, R., Zhang, C.-H., \& Zou, H. (2020).
\newblock {\em Statistical foundations of data science}.
\newblock Chapman and Hall/CRC.

\bibitem[Fan \& Liao, 2014]{fan2014endogeneity}
Fan, J. \& Liao, Y. (2014).
\newblock Endogeneity in high dimensions.
\newblock {\em Annals of statistics}, 42(3), 872.

\bibitem[Fan \& Lv, 2008]{fan2008sure}
Fan, J. \& Lv, J. (2008).
\newblock Sure independence screening for ultrahigh dimensional feature space.
\newblock {\em Journal of the Royal Statistical Society: Series B (Statistical
  Methodology)}, 70(5), 849--911.

\bibitem[Gauss, 1809]{gauss1809leastsquares}
Gauss, C.~F. (1809).
\newblock {\em Theoria Motus Corporum Coelestium in Sectionibus Conicis Solem
  Ambientium}.
\newblock Cambridge University Press; Reissue edition (May 19, 2011).

\bibitem[Geirhos et~al., 2020]{geirhos2020shortcut}
Geirhos, R., Jacobsen, J.-H., Michaelis, C., Zemel, R., Brendel, W., Bethge,
  M., \& Wichmann, F.~A. (2020).
\newblock Shortcut learning in deep neural networks.
\newblock {\em Nature Machine Intelligence}, 2(11), 665--673.

\bibitem[Ghassami et~al., 2017]{ghassami2017learning}
Ghassami, A., Salehkaleybar, S., Kiyavash, N., \& Zhang, K. (2017).
\newblock Learning causal structures using regression invariance.
\newblock {\em Advances in Neural Information Processing Systems}, 30.

\bibitem[Glymour et~al., 2016]{glymour2016causal}
Glymour, M., Pearl, J., \& Jewell, N.~P. (2016).
\newblock {\em Causal inference in statistics: A primer}.
\newblock John Wiley \& Sons.

\bibitem[Gong et~al., 2016]{gong2016domain}
Gong, M., Zhang, K., Liu, T., Tao, D., Glymour, C., \& Sch{\"o}lkopf, B.
  (2016).
\newblock Domain adaptation with conditional transferable components.
\newblock In {\em International conference on machine learning}  (pp.\
  2839--2848).: PMLR.

\bibitem[Gu et~al., 2024]{gu2024causality}
Gu, Y., Fang, C., B{\"u}hlmann, P., \& Fan, J. (2024).
\newblock Causality pursuit from heterogeneous environments via neural
  adversarial invariance learning.
\newblock {\em arXiv preprint arXiv:2405.04715}.

\bibitem[Haavelmo, 1944]{haavelmo1944probability}
Haavelmo, T. (1944).
\newblock The probability approach in econometrics.
\newblock {\em Econometrica: Journal of the Econometric Society}, (pp.\
  iii--115).

\bibitem[He \& Geng, 2008]{he2008active}
He, Y.-B. \& Geng, Z. (2008).
\newblock Active learning of causal networks with intervention experiments and
  optimal designs.
\newblock {\em Journal of Machine Learning Research}, 9(Nov), 2523--2547.

\bibitem[Heinze-Deml \& Meinshausen, 2021]{heinze2021conditional}
Heinze-Deml, C. \& Meinshausen, N. (2021).
\newblock Conditional variance penalties and domain shift robustness.
\newblock {\em Machine Learning}, 110(2), 303--348.

\bibitem[Heinze-Deml et~al., 2018]{heinze2018invariant}
Heinze-Deml, C., Peters, J., \& Meinshausen, N. (2018).
\newblock Invariant causal prediction for nonlinear models.
\newblock {\em Journal of Causal Inference}, 6(2).

\bibitem[Jang et~al., 2016]{jang2016categorical}
Jang, E., Gu, S., \& Poole, B. (2016).
\newblock Categorical reparameterization with gumbel-softmax.
\newblock {\em arXiv preprint arXiv:1611.01144}.

\bibitem[Kamath et~al., 2021]{kamath2021does}
Kamath, P., Tangella, A., Sutherland, D., \& Srebro, N. (2021).
\newblock Does invariant risk minimization capture invariance?
\newblock In {\em International Conference on Artificial Intelligence and
  Statistics}  (pp.\ 4069--4077).: PMLR.

\bibitem[Krueger et~al., 2021]{krueger2021out}
Krueger, D., Caballero, E., Jacobsen, J.-H., Zhang, A., Binas, J., Zhang, D.,
  Le~Priol, R., \& Courville, A. (2021).
\newblock Out-of-distribution generalization via risk extrapolation (rex).
\newblock In {\em International Conference on Machine Learning}  (pp.\
  5815--5826).: PMLR.

\bibitem[Legendre, 1805]{legendre1805leastsquares}
Legendre, A.-M. (1805).
\newblock {\em Nouvelles méthodes pour la détermination des orbites des
  comètes [New Methods for the Determination of the Orbits of Comets] (in
  French)}.
\newblock Paris: F. Didot.

\bibitem[Lu et~al., 2021]{lu2021nonlinear}
Lu, C., Wu, Y., Hern{\'a}ndez-Lobato, J.~M., \& Sch{\"o}lkopf, B. (2021).
\newblock Nonlinear invariant risk minimization: A causal approach.
\newblock {\em arXiv preprint arXiv:2102.12353}.

\bibitem[Meinshausen et~al., 2016]{meinshausen2016methods}
Meinshausen, N., Hauser, A., Mooij, J.~M., Peters, J., Versteeg, P., \&
  B{\"u}hlmann, P. (2016).
\newblock Methods for causal inference from gene perturbation experiments and
  validation.
\newblock {\em Proceedings of the National Academy of Sciences}, 113(27),
  7361--7368.

\bibitem[Muandet et~al., 2013]{muandet2013domain}
Muandet, K., Balduzzi, D., \& Sch{\"o}lkopf, B. (2013).
\newblock Domain generalization via invariant feature representation.
\newblock In {\em International conference on machine learning}  (pp.\
  10--18).: PMLR.

\bibitem[Peters et~al., 2016]{peters2016causal}
Peters, J., B{\"u}hlmann, P., \& Meinshausen, N. (2016).
\newblock Causal inference by using invariant prediction: identification and
  confidence intervals.
\newblock {\em Journal of the Royal Statistical Society. Series B (Statistical
  Methodology)}, (pp.\ 947--1012).

\bibitem[Pfister et~al., 2019]{pfister2019invariant}
Pfister, N., B{\"u}hlmann, P., \& Peters, J. (2019).
\newblock Invariant causal prediction for sequential data.
\newblock {\em Journal of the American Statistical Association}, 114(527),
  1264--1276.

\bibitem[Pfister et~al., 2021]{pfister2021stabilizing}
Pfister, N., Williams, E.~G., Peters, J., Aebersold, R., \& B{\"u}hlmann, P.
  (2021).
\newblock Stabilizing variable selection and regression.
\newblock {\em The Annals of Applied Statistics}, 15(3), 1220--1246.

\bibitem[Rojas-Carulla et~al., 2018]{rojas2018invariant}
Rojas-Carulla, M., Sch{\"o}lkopf, B., Turner, R., \& Peters, J. (2018).
\newblock Invariant models for causal transfer learning.
\newblock {\em The Journal of Machine Learning Research}, 19(1), 1309--1342.

\bibitem[Rosenfeld et~al., 2021]{rosenfeld2020risks}
Rosenfeld, E., Ravikumar, P., \& Risteski, A. (2021).
\newblock The risks of invariant risk minimization.
\newblock {\em International Conference on Learning Representations}.

\bibitem[Rothenh{\"a}usler et~al., 2019]{rothenhausler2019causal}
Rothenh{\"a}usler, D., B{\"u}hlmann, P., \& Meinshausen, N. (2019).
\newblock Causal dantzig: fast inference in linear structural equation models
  with hidden variables under additive interventions.
\newblock {\em The Annals of Statistics}, 47(3), 1688--1722.

\bibitem[Rothenh{\"a}usler et~al., 2021]{rothenhausler2021anchor}
Rothenh{\"a}usler, D., Meinshausen, N., B{\"u}hlmann, P., \& Peters, J. (2021).
\newblock Anchor regression: Heterogeneous data meet causality.
\newblock {\em Journal of the Royal Statistical Society. Series B, Statistical
  Methodology}, 83(2), 215--246.

\bibitem[Sagawa et~al., 2020]{sagawa2019distributionally}
Sagawa, S., Koh, P.~W., Hashimoto, T.~B., \& Liang, P. (2020).
\newblock Distributionally robust neural networks for group shifts: On the
  importance of regularization for worst-case generalization.
\newblock {\em International Conference on Learning Representations}.

\bibitem[Sch{\"o}lkopf et~al., 2012]{scholkopf2012causal}
Sch{\"o}lkopf, B., Janzing, D., Peters, J., Sgouritsa, E., Zhang, K., \& Mooij,
  J. (2012).
\newblock On causal and anticausal learning.
\newblock {\em arXiv preprint arXiv:1206.6471}.

\bibitem[Stigler, 1986]{stigler1986history}
Stigler, S.~M. (1986).
\newblock {\em The history of statistics: The measurement of uncertainty before
  1900}.
\newblock Harvard University Press.

\bibitem[Tibshirani, 1997]{tibshirani1997lasso}
Tibshirani, R. (1997).
\newblock The lasso method for variable selection in the cox model.
\newblock {\em Statistics in medicine}, 16(4), 385--395.

\bibitem[Torralba \& Efros, 2011]{torralba2011unbiased}
Torralba, A. \& Efros, A.~A. (2011).
\newblock Unbiased look at dataset bias.
\newblock In {\em CVPR 2011}  (pp.\ 1521--1528).: IEEE.

\bibitem[van~der Vaart \& Wellner, 1996]{van1996weak}
van~der Vaart, A.~W. \& Wellner, J. (1996).
\newblock {\em Weak Convergence and Empirical Processes: With Applications to
  Statistics}.
\newblock Springer Science \& Business Media.

\bibitem[Vladimirova et~al., 2020]{vladimirova2020sub}
Vladimirova, M., Girard, S., Nguyen, H., \& Arbel, J. (2020).
\newblock Sub-weibull distributions: Generalizing sub-gaussian and
  sub-exponential properties to heavier tailed distributions.
\newblock {\em Stat}, 9(1), e318.

\bibitem[Wainwright, 2019]{wainwright2019high}
Wainwright, M.~J. (2019).
\newblock {\em High-dimensional statistics: A non-asymptotic viewpoint},
  volume~48.
\newblock Cambridge University Press.

\bibitem[Wang \& Veitch, 2023]{wang2023causal}
Wang, Z. \& Veitch, V. (2023).
\newblock The causal structure of domain invariant supervised representation
  learning.
\newblock {\em stat}, 1050, 7.

\bibitem[Yin et~al., 2021]{yin2021optimization}
Yin, M., Wang, Y., \& Blei, D.~M. (2021).
\newblock Optimization-based causal estimation from heterogenous environments.
\newblock {\em arXiv preprint arXiv:2109.11990}.

\bibitem[Zhang et~al., 2020]{zhang2020invariant}
Zhang, A., Lyle, C., Sodhani, S., Filos, A., Kwiatkowska, M., Pineau, J., Gal,
  Y., \& Precup, D. (2020).
\newblock Invariant causal prediction for block mdps.
\newblock In {\em International Conference on Machine Learning}  (pp.\
  11214--11224).: PMLR.

\bibitem[Zhang \& Zhang, 2012]{zhang2012general}
Zhang, C.-H. \& Zhang, T. (2012).
\newblock A general theory of concave regularization for high-dimensional
  sparse estimation problems.
\newblock {\em Statistical Science}, 27(4), 576--593.

\bibitem[Zhang, 2011]{zhang2011adaptive}
Zhang, T. (2011).
\newblock Adaptive forward-backward greedy algorithm for learning sparse
  representations.
\newblock {\em IEEE transactions on information theory}, 57(7), 4689--4708.

\bibitem[Zhao \& Yu, 2006]{zhao2006model}
Zhao, P. \& Yu, B. (2006).
\newblock On model selection consistency of lasso.
\newblock {\em The Journal of Machine Learning Research}, 7, 2541--2563.

\end{thebibliography}

\newpage
\appendix

\section*{Supplemental Material}

The supplemental materials collect the complete theoretical analysis (\cref{sec:theory:general}), all the population-level proofs (\cref{sec:proof-population}), finite sample proofs (\cref{sec:proof-nonasymptotic}), and omitted discussions in the main text (\cref{sec:discussion}).

\section{General Theoretical Analysis}

\label{sec:theory:general}

\subsection{General Notations and Conditions}

Define the pooled covariance matrix $\bar{\bSigma} = \sum_{e\in \mathcal{E}} \omega^{(e)} \bSigma^{(e)}$. We first state the standard assumptions used in linear regression, which are analogous to \cref{cond0-model-main}--\ref{cond3-sub-gaussian-eps-main}. \cref{cond1-well-condition} and \cref{cond3-sub-gaussian-eps} are just copies of \cref{cond1-well-condition-main} and \cref{cond3-sub-gaussian-eps-main}, respectively. \cref{cond0-model} allows for varying $(n^{(e)}, \omega^{(e)})$. \cref{cond2-subgaussain-x} replaces the $\bSigma$ in \cref{cond2-subgaussain-x-main} by $\bar{\bSigma}$.

\begin{condition}
\label{cond0-model}
	For each $e\in \mathcal{E}$, $(\bx_1^{(e)}, y_1^{(e)}), \ldots, (\bx_{n^{(e)}}^{(e)}, y_{n^{(e)}}^{(e)})$ are i.i.d. copies of $(\bx^{(e)}, y^{(e)}) \sim \mu^{(e)}$, where $\mu^{(e)}$ belongs to $\mathcal{U}_{\bbeta^*, \sigma^2}$ for some $\sigma^2$. The data from different environments are also independent. We have $\omega^{(e)} > 0$ for any $e\in \mathcal{E}$. 
\end{condition}

\begin{condition}
\label{cond1-well-condition}
	There exists some universal constants $\kappa_L \in (0,1]$ and $\kappa_U \in [1,\infty)$ such that
	\begin{align}
		\forall e\in \mathcal{E}, ~~~~~~~~~~ \kappa_L \bI_p \preceq \bSigma^{(e)} \preceq \kappa_U \bI_p.
	\end{align}
\end{condition}

\begin{condition}
\label{cond2-subgaussain-x}
	There exists some universal constant $\sigma_x \in [1,\infty)$ such that
	\begin{align}
		\forall e\in \mathcal{E}, \bv \in \mathbb{R}^p, ~~~~~~~~~~ \mathbb{E} \left[\exp\left\{\bv^\top \bar{\bSigma}^{-1/2}\bx^{(e)}\right\}\right] \le \exp\left(\frac{\sigma_x^2}{2} \cdot \|\bv\|_2^2\right).
	\end{align}
\end{condition}

\begin{condition}
\label{cond3-sub-gaussian-eps}
	There exists some universal constant $\sigma_\varepsilon \in \mathbb{R}^+$ such that,
	\begin{align}
		\forall e\in \mathcal{E}, \lambda \in \mathbb{R}, ~~~~~~~~~~ \mathbb{E} [e^{\lambda \varepsilon^{(e)}}] \le e^{\frac{1}{2} \lambda^2 \sigma_\varepsilon^2}.
	\end{align}
\end{condition}

We also define several quantities regarding sample size:
\begin{align}
    \label{eq:def-n-low-dim}
    n_* =\min_{e\in \mathcal{E}} \frac{n^{(e)}} {\omega^{(e)}} \qquad \bar{n} = \Bigg(\sum_{e\in \mathcal{E}}\frac{\omega^{(e)}}{n^{(e)}}\Bigg)^{-1} \qquad n_{\min} = \min_{e\in \mathcal{E}} n^{(e)} \qquad n_\dagger = \Bigg( \sum_{e\in \mathcal{E}} \frac{\omega^{(e)}}{(n^{(e)})^{3/2}} \Bigg)^{-1}.
\end{align}
It is easy to see that $n_* \ge \bar{n} \ge n_{\min}$. In the case of balanced data with equal weights, i.e., $n^{(e)} \equiv n$ and $\omega^{(e)} \equiv 1/|\mathcal{E}|$, we have $n_* = n\cdot |\mathcal{E}|$, $\bar{n}=n_{\min} = n$ and $n_\dagger=n^{3/2}$.

We also define the {\it pooled linear spurious variable} concerning the environment weights $\bomega$.

\begin{definition}[Pooled Linear Spurious Variables] We let $G_{\bomega}$ be the index set of all pooled linear spurious variables in environments $\mathcal{E}$ concerning the weights $\bomega$, that is,
		$G_{\bomega} = \{j \in [p]:  \sum_{e\in \mathcal{E}} \omega^{(e)} \mathbb{E}[x_j^{(e)}\varepsilon^{(e)}] \neq 0 \}$. We say a variable $x_j$ is a pooled linear spurious variable if $j\in G_{\bomega}$.
\end{definition}

We need the following condition related to the heterogeneity of environments in the presence of pooled linear spurious variables to recover $\bbeta^*$.

\begin{condition}[Identification]
\label{cond-ident}
    For any $S\subseteq [p]$ satisfying $S \cap G_{\bomega} \neq \emptyset$, there exists some $e, e'\in \mathcal{E}$ such that $\bbeta^{(e, S)} \neq \bbeta^{(e',S)}$, where $\bbeta^{(e,S)}$ is defined in \eqref{eq:local-minimizer}.
\end{condition} 

\subsection{Strong Convexity with respect to the True Parameter}

We are now ready to state the main population-level result, a generalized version of \cref{thm:global-strong-convexity-main} with varying environments coefficients $\bomega$.

\begin{theorem}[{Strong Convexity with respect to} $\bbeta^*$] 
\label{thm:global-strong-convexity}
Assume Conditions \ref{cond0-model}--\ref{cond1-well-condition} and \ref{cond-ident} hold. Then $\bbeta^*$ is the unique minimizer of $\mathsf{Q}(\bbeta;\gamma,\bomega)$ for large enough $\gamma$: for any $\epsilon \in (0, 1)$ and any $\gamma\ge \epsilon^{-1} \gamma^*$ with
\begin{align}
\label{eq:thm1-gamma-star}
	\gamma^* = (\kappa_L)^{-3} \sup_{S: S\cap G_{\bomega} \neq \emptyset} \left(\mathsf{b}_S/\bar{\mathsf{d}}_S\right),
\end{align} where
\begin{align}
\label{eq:def-bias-bias-difference}
 \mathsf{b}_S = \Bigg\|\sum_{e\in \mathcal{E}} \omega^{(e)} \mathbb{E} [\varepsilon^{(e)} \bx_S^{(e)}]\Bigg\|^2_2 \qquad \text{and} \qquad \bar{\mathsf{d}}_S = \sum_{e\in \mathcal{E}} \omega^{(e)} \big\|\bbeta^{(e,S)} - \bar{\bbeta}^{(S)}\big\|_2^2  
\end{align} 
 with $\bar{\bbeta}^{(S)}=\sum_{e'\in \mathcal{E}} \omega^{(e')} \bbeta^{(e',S)}$, we have
\begin{align}
\label{eq:thm1-jsc-condition}
	\mathsf{Q}(\bbeta;\gamma, \bomega) - \mathsf{Q}(\bbeta^*;\gamma, \bomega) \ge (1-\epsilon) \|\bar{\bSigma}^{1/2}(\bbeta - \bbeta^*)\|_2^2 + \kappa_L^2 (\gamma - \epsilon^{-1}\gamma^*) \bar{\mathsf{d}}_{\supp(\bbeta)}.
\end{align}
\end{theorem}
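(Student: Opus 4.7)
The plan is to lower-bound $\mathsf{Q}(\bbeta;\gamma,\bomega)-\mathsf{Q}(\bbeta^*;\gamma,\bomega)$ by treating the two pieces of the EILLS objective separately. Since $S^*$ is CE-invariant, $\mathbb{E}[\varepsilon^{(e)}x_j^{(e)}]=0$ for every $e\in\mathcal{E}$ and $j\in S^*$, whence $\mathsf{J}(\bbeta^*;\bomega)=0$. Expanding the squared residuals via $y^{(e)}=\bx^{(e)\top}\bbeta^*+\varepsilon^{(e)}$ yields
\[
\mathsf{R}_\mathcal{E}(\bbeta;\bomega)-\mathsf{R}_\mathcal{E}(\bbeta^*;\bomega)=\|\bar{\bSigma}^{1/2}(\bbeta-\bbeta^*)\|_2^2-2(\bbeta-\bbeta^*)^\top\boldsymbol{\nu},
\]
where $\boldsymbol{\nu}:=\sum_{e\in\mathcal{E}}\omega^{(e)}\mathbb{E}[\varepsilon^{(e)}\bx^{(e)}]$. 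The key structural point is that $\nu_j=0$ on $S^*$ (by CE-invariance) and on $[p]\setminus G_\bomega$ (by definition), so $G_\bomega\cap S^*=\emptyset$, $\beta_j^*=0$ for every $j\in G_\bomega$, and writing $S:=\supp(\bbeta)$ gives
\[
(\bbeta-\bbeta^*)^\top\boldsymbol{\nu}=\bbeta_{G_\bomega\cap S}^\top\boldsymbol{\nu}_{G_\bomega\cap S},\qquad \|\boldsymbol{\nu}_{G_\bomega\cap S}\|_2^2=\mathsf{b}_S.
\]

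Next I establish a clean lower bound on the regularizer. Because $\bbeta^{(e,S)}$ minimizes $\mathsf{R}^{(e)}$ over $\{\bbeta:\supp(\bbeta)\subset S\}$, its first-order condition combined with the same expansion gives, for every $\bbeta$ with $\supp(\bbeta)\subset S$ and every $j\in S$,
\[
\mathbb{E}\big[(y^{(e)}-\bbeta^\top\bx^{(e)})x_j^{(e)}\big]=-\big[\bSigma^{(e)}_{S,S}(\bbeta_S-\bbeta_S^{(e,S)})\big]_j.
\]
Summing the squares over $j\in S$ and invoking $\bSigma^{(e)}_{S,S}\succeq\kappa_L\bI_{|S|}$ from Condition \ref{cond1-well-condition} yields
\[
\mathsf{J}(\bbeta;\bomega)=\sum_{e\in\mathcal{E}}\omega^{(e)}\|\bSigma^{(e)}_{S,S}(\bbeta_S-\bbeta_S^{(e,S)})\|_2^2\ \ge\ \kappa_L^2\sum_{e\in\mathcal{E}}\omega^{(e)}\|\bbeta-\bbeta^{(e,S)}\|_2^2\ =\ \kappa_L^2\big(\|\bbeta-\bar{\bbeta}^{(S)}\|_2^2+\bar{\mathsf{d}}_S\big),
\]
the last identity being the bias--variance decomposition around the weighted mean $\bar{\bbeta}^{(S)}$, using $\sum_{e}\omega^{(e)}(\bbeta^{(e,S)}-\bar{\bbeta}^{(S)})=\mathbf{0}$.

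It remains to absorb the cross term $2|(\bbeta-\bbeta^*)^\top\boldsymbol{\nu}|$ into the two nonnegative quantities now in hand. Cauchy--Schwarz gives $2|(\bbeta-\bbeta^*)^\top\boldsymbol{\nu}|\le 2\|\bbeta_{G_\bomega\cap S}\|_2\sqrt{\mathsf{b}_S}$, and applying $2ab\le\alpha a^2+b^2/\alpha$ with the carefully tuned choice $\alpha=\epsilon\kappa_L$ produces
\[
2|(\bbeta-\bbeta^*)^\top\boldsymbol{\nu}|\ \le\ \epsilon\kappa_L\|\bbeta_{G_\bomega\cap S}\|_2^2+\frac{\mathsf{b}_S}{\epsilon\kappa_L}\ \le\ \epsilon\|\bar{\bSigma}^{1/2}(\bbeta-\bbeta^*)\|_2^2+\kappa_L^2\epsilon^{-1}\gamma^*\bar{\mathsf{d}}_S.
\]
The first inequality uses $\beta_j^*=0$ on $G_\bomega$ together with Condition \ref{cond1-well-condition} to get $\kappa_L\|\bbeta_{G_\bomega\cap S}\|_2^2\le\|\bar{\bSigma}^{1/2}(\bbeta-\bbeta^*)\|_2^2$; the second uses $\mathsf{b}_S\le\kappa_L^3\gamma^*\bar{\mathsf{d}}_S$ from the very definition of $\gamma^*$ (applicable because $\mathsf{b}_S>0$ forces $S\cap G_\bomega\neq\emptyset$, which by Condition \ref{cond-ident} yields $\bar{\mathsf{d}}_S>0$; when $\mathsf{b}_S=0$ the cross term is zero and this step is vacuous). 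Combining everything and discarding the nonnegative term $\gamma\kappa_L^2\|\bbeta-\bar{\bbeta}^{(S)}\|_2^2$ produces \eqref{eq:thm1-jsc-condition} exactly. Uniqueness follows by specializing to any $\epsilon\in(0,1)$ (e.g.\ $\epsilon=1/2$), since the right-hand side is then strictly positive whenever $\bbeta\neq\bbeta^*$.

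The only real subtlety is the constant matching that produces the $\kappa_L^{-3}$ factor in $\gamma^*$: the AM--GM coefficient $\alpha=\epsilon\kappa_L$ is forced, because one $\kappa_L$ is spent converting $\|\bbeta_{G_\bomega\cap S}\|_2^2$ into a fraction of $\|\bar{\bSigma}^{1/2}(\bbeta-\bbeta^*)\|_2^2/\kappa_L$ via Condition \ref{cond1-well-condition}, while the regularizer separately supplies $\kappa_L^2\bar{\mathsf{d}}_S$; matching the residual $\mathsf{b}_S/(\epsilon\kappa_L)$ against $\kappa_L^2\epsilon^{-1}\gamma^*\bar{\mathsf{d}}_S$ then demands precisely $\gamma^*=\kappa_L^{-3}\sup_S\mathsf{b}_S/\bar{\mathsf{d}}_S$. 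Everything else reduces to linear algebra, with Condition \ref{cond-ident} entering only to guarantee that this supremum is finite whenever $S\cap G_\bomega\neq\emptyset$.
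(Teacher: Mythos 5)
Your proof is correct and follows essentially the same route as the paper's: the same decomposition into the pooled $L_2$ excess risk plus $\gamma$ times the regularizer, the same Young/Cauchy--Schwarz absorption of the cross term $2(\bbeta-\bbeta^*)^\top\boldsymbol{\nu}$ at cost $\epsilon\|\bar{\bSigma}^{1/2}(\bbeta-\bbeta^*)\|_2^2+\epsilon^{-1}\kappa_L^{-1}\mathsf{b}_S$, and the same lower bound $\mathsf{J}(\bbeta;\bomega)\ge\kappa_L^2\bar{\mathsf{d}}_{S}$ via the first-order conditions for $\bbeta^{(e,S)}$ and the bias--variance decomposition around $\bar{\bbeta}^{(S)}$. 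The only cosmetic difference is that you use scalar Cauchy--Schwarz plus the eigenvalue bound where the paper uses the $\bar{\bSigma}_{\bar S}$-weighted version; the constants come out identical.
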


It is easy to see that \cref{thm:global-strong-convexity-main} is a direct corollary of the above \cref{thm:global-strong-convexity} with $\omega^{(e)} \equiv 1 / |\mathcal{E}|$.

\subsection{Non-asymptotic Analysis}

The first proposition characterizes the convergence of the pooled least squares.

\begin{proposition}[Properties of Pooled Least Squares]
\label{prop:pooled-least-squares}
	Assume Conditions \ref{cond0-model}--\ref{cond3-sub-gaussian-eps} hold. Then, there exists some $\bar{\bbeta}^{\mathsf{R}} \in \mathbb{R}^p$ satisfying $\frac{1}{\kappa_U} \left\|\sum_{e\in \mathcal{E}} \omega^{(e)} \mathbb{E} [\varepsilon^{(e)} \bx^{(e)}] \right\|_2 \le \|\bar{\bbeta}^{\mathsf{R}} - \bbeta^*\|_2 \le \frac{1}{\kappa_L} \left\|\sum_{e\in \mathcal{E}} \omega^{(e)} \mathbb{E} [\varepsilon^{(e)} \bx^{(e)}] \right\|_2$
 	such that, for any $\bbeta \in \mathbb{R}^p$,
	\begin{align}
	\label{eq:pooled-least-square-strong-convexity}
		\mathsf{R}(\bbeta) - \mathsf{R}(\bar{\bbeta}^{\mathsf{R}}) = \|\bar{\bSigma}^{1/2}(\bbeta - \bar{\bbeta}^{\mathsf{R}})\|_2^2.
	\end{align} Moreover, there exist universal constants $c_1$ and $c_2$ such that if $n^* \ge c_1 \sigma_x^4(p + t)$, then the pooled least      squares estimator $\hat{\bbeta}_{\mathsf{R}}$ minimizing \eqref{eq:pooled-l2-risk-empirical} satisfies
	\begin{align}
		\|\bar{\bSigma}^{1/2}(\hat{\bbeta}_{\mathsf{R}} - \bar{\bbeta}^{\mathsf{R}})\|_2 \le c_2 \sigma_x \left(\sigma_\varepsilon + \sigma_x \|\bar{\bSigma}^{1/2}(\bar{\bbeta}^{\mathsf{R}} - \bbeta^*)\|_2\right) \sqrt{\frac{p + t}{n_*}}
	\end{align} with probability $1-2e^{-t}$.
\end{proposition}

The following two theorems are generalized versions of \cref{thm:rate-low-dim-main} and \cref{thm:rate-faster-low-dim-main} with varying $n^{(e)}$ and $\omega^{(e)}$. 

\begin{theorem}[Non-asymptotic Variable Selection Property]
\label{thm:rate-low-dim} Define
\begin{align}
\label{eq:thm2-signal}
    \mathsf{s}_+ = \min_{j\in S^*} |\beta_j^*|^2 \qquad \text{and} \qquad \mathsf{s}_- = \min_{S\subseteq [p], S\cap G_{\bomega} \neq \emptyset} \bar{\mathsf{d}}_S
\end{align} 
Suppose Conditions \ref{cond0-model}--\ref{cond-ident} hold, and we choose $\gamma \ge 3\gamma^*\lor 1$ where $\gamma^*$ is defined in \cref{thm:global-strong-convexity}. There exists some universal constants $c_1$--$c_2$ that only depends on $(\kappa_U, \sigma_x, \sigma_\varepsilon)$ such that for any $t>0$, if $n_{\min} \ge p + \log(2|\mathcal{E}|) + t$, $\bar{n} \ge c_1(\gamma/\kappa_L)(p+ \log(|\mathcal{E}|) + t)\{\mathsf{s}_+^{-0.5}+\mathsf{s}_+^{-1}+(\gamma\kappa_L\mathsf{s}_-)^{-0.5}\}$, and $n_* \ge c_2(\gamma/\kappa_L)^2(p+t)\{\mathsf{s}_+^{-1} + (\gamma\kappa_L\mathsf{s}_-)^{-1} + 1\}$, then the EILLS estimator $\hat{\bbeta}_{\mathsf{Q}}$ minimizing \eqref{eq:eills-objective-q} satisfies
\begin{align}
\label{eq:low-dim-variable-selection}
    \mathbb{P}\left[S^* \subseteq \supp(\hat{\bbeta}_{\mathsf{Q}}) \subseteq (G_{\bomega})^c \right] \ge 1-7e^{-t}.
\end{align}
\end{theorem}

Combining \cref{thm:rate-low-dim} and \cref{prop:pooled-least-squares}, we can conclude that when $(n_{\min}, \bar{n}, n_*)$ are large enough, one can refit the solution found by the EILLS estimator to obtain a faster rate. To be specific, we can run another pooled least squares on $\bx_{\hat{S}}$ with $\hat{S}=\supp(\hat{\bbeta}_{\mathsf{Q}})$ and obtain the $\ell_2$-error $\{|(G_{\bomega})^c|/n_*\}^{1/2}$ given that $S^*\subseteq \hat{S}\subseteq (G_{\bomega})^c$. 

\begin{theorem}[Non-asymptotic $\ell_2$ Error Bound] 
\label{thm:rate-faster-low-dim}
Assume Conditions \ref{cond0-model}--\ref{cond-ident} hold, and we choose $\gamma \ge 3\gamma^*\lor 1$ where $\gamma^*$ is defined in \cref{thm:global-strong-convexity}. There exists some universal constants $c_1$--$c_4$  that only depend on $(\kappa_U, \sigma_x)$ such that for any $t>0$, if $n_{\min} \ge p + \log(2|\mathcal{E}|) + t$ and $n_* \ge c_1 (\gamma/\kappa_L)(p + t)$, then $\hat{\bbeta}_{\mathsf{Q}}$ minimizing \eqref{eq:eills-objective-q} satisfies
\begin{align}
\label{eq:low-dim-rate2}
    \frac{\|\hat{\bbeta}_{\mathsf{Q}} - \bbeta^*\|_2}{\sigma_\varepsilon (\gamma/\kappa_L)}  &\le c_2 \left(\sqrt{\frac{p+t}{n_*}} + \frac{p+\log(|\mathcal{E}|)+t}{\bar{n}}\right) + c_3 \frac{\sqrt{|S^*|}}{\bar{n}} \cdot \frac{\log(|\mathcal{E}||S^*|)+t}{\min_{j\in S^*}|\beta^*_j|}
\end{align} with probability at least $1-7e^{-t}$. Moreover, when the additional conditions in \cref{thm:rate-low-dim} hold, then
\begin{align}
\label{eq:low-dim-rate1}
    \frac{\|\hat{\bbeta}_{\mathsf{Q}} - \bbeta^*\|_2}{\sigma_\varepsilon (\gamma/\kappa_L)}  &\le c_2 \left(\sqrt{\frac{p_0+t}{n_*}} + \frac{p_0+\log(|\mathcal{E}|)+t}{\bar{n}}\right) \qquad \text{with}~~p_0 = |({G}_{\bomega})^c|
\end{align} occurs with probability at least $1-14e^{-t}$. The dependency of $(c_1,c_2,c_3)$ on $(\kappa_U,\sigma_x)$ can be found in~\eqref{eq:low-dim-faster-rate-c1} and~{\eqref{eq:low-dim-faster-rate-c2-4}} in \cref{sec:proof-thm3}. 
\end{theorem} 

In the high-dimensional regime, recall that $s^* = |S^*|$, $\beta_{\min} = \min_{j\in S^*} |\beta_j^*|$. We need the following condition regarding the sample size defined above in \eqref{eq:def-n-low-dim}.

\begin{condition}
\label{cond6-n-req}
    Suppose that $\log(|\mathcal{E}|)\le C\log p$ and that 
    
\noindent (1) $n_{\min} \ge c_1 (s^* + \log p)$,
    
\noindent (2) $n_* \ge c_2 (\gamma/\kappa_L)^2 \Big(s^* \log p \Big) \Big\{1 + 1\big/ \big(\kappa_L \beta_{\min}^2 \big)\Big\}$, 
    
\noindent (3) $\bar{n} \ge c_3 (\gamma/\kappa_L) (\log p) \left\{s^* + 1/ (\beta_{\min}^2)\right\}$ and $\bar{n} \ge c_3 (\gamma/\kappa_L) \sqrt{(s^*\log p)(s^* + \log p)} \big/ \big(\kappa_L \beta_{\min} \big)$, 
    
\noindent (4) $n_\dagger \ge c_4(\gamma/\kappa_L) \big(s^* \log p \big)\sqrt{s^* + \log p}\Big\{1+1\big/ \big(\sqrt{\kappa_L} \beta_{\min}\big)\Big\}$.
    
\noindent Here $c_1$--$c_4$ are universal positive constants that depend only on $(C,\sigma_x,\kappa_U, \sigma_\varepsilon)$. See details in~{\eqref{eq:req-n-high-dim}}.
\end{condition}

The above \cref{cond6-n-req} matches \cref{cond6-n-req-main} in the case of balanced data with equal weights. The following theorem claims that EILLS can attain variable selection consistency with proper choice of the model selection hyper-parameter $\lambda$.

\begin{theorem}[Variable Selection Consistency in High Dimensions]	
\label{thm:high-dim}  Assume Conditions \ref{cond0-model}--\ref{cond6-n-req} hold, and we choose $\gamma \ge 3\gamma^* \lor 1$ where $\gamma^*$ is defined in \cref{thm:global-strong-convexity}. Suppose further that the choice of $\lambda$ satisfies
\begin{align}
    c_1 \left\{ (\gamma/\kappa_L)^2 \frac{s^*\log p}{n_*} + \epsilon(\bar{n}, n_\dagger)\right\} \le \lambda \le c_2 \kappa_L \beta_{\min}^2,
\end{align} where $\epsilon(\bar{n}, n_\dagger) = (\gamma/\kappa_L)^2 (\log p) (s^* + \log p) \{\bar{n}^{-2}s^* + n_\dagger^{-2} (s^*)^2\log p + (n_\dagger \sqrt{s^* + \log p})^{-1}(\gamma/\kappa_L)^{-1} \}$, and  $c_1, c_2$ are some universal positive constants only depends on $(C,\kappa_U, \sigma_x,\sigma_\varepsilon)$. Then the $\ell_0$ regularized EILLS estimator $\hat{\bbeta}_{\mathsf{L}}$ solving \eqref{eq:eills-objective-l} satisfies 
\begin{align*}
    \mathbb{P}\left[\supp(\hat{\bbeta}_{\mathsf{L}}) = S^*\right] \ge 1-p^{-10}.
\end{align*}
\end{theorem}

\subsection{Examples of \cref{thm:global-strong-convexity}}

\label{sec:moreex}

\subsubsection{Linear SCM with Heterogeneous Variance}

We start with a toy example similar to \cref{ex-h-de}.

\begin{example}[Toy Example, Heterogeneous Variances]
\label{ex-h-var-toy}
	 Consider the following multi-environment linear model with $\mathcal{E}=\{1,2\}$ and $p=2$,
	 \begin{align*}
	 	x^{(e)}_1 &\gets \sqrt{v_1} \cdot \varepsilon_1 \\
	 	y^{(e)} &\gets 1\cdot x^{(e)}_1 + \myred{\sqrt{v_0^{(e)}}} \cdot \varepsilon_0 \\
	 	x^{(e)}_2 &\gets h \cdot x^{(e)}_1 + s \cdot y^{(e)} + \sqrt{v_2} \cdot \varepsilon_2	
	 \end{align*}
	for some parameters $s, h \in \mathbb{R}$ and $v_1, v_2 \in \mathbb{R}^+$ that is invariant across different environments, and one parameter $v_0^{(e)}$ that will vary for different environments. Here $\varepsilon_0, \varepsilon_1, \varepsilon_2$ are independent, standard normal distributed exogenous variables. We let $\omega^{(e)}\equiv 1/2$.

    {\normalfont In the example, $S^*=\{1\}$, $\bbeta^* = (1,0)^\top$, and $\varepsilon^{(e)} = (v_0^{(e)})^{1/2} \cdot \varepsilon_0$. Hence $G_{\bomega} = \{2\}$ whenever $s\neq 0$ since $\mathbb{E}[\varepsilon^{(e)} x_2^{(e)}] = s v_0^{(e)}$. 
	In a well-conditioned regime where $|h|+|s| \lesssim 1$ and $v_1 \asymp v_2 \asymp v_0^{(e)} \asymp 1$, we have
	\begin{align*}
		\sqrt{\frac{\mathsf{b}_{\{2\}}}{\bar{\mathsf{d}}_{\{2\}}}} 
		&\asymp \frac{1}{|v_0^{(1)} - v_0^{(2)}|} \frac{1}{|h(s+h)v_1 + v_2|} ~~~~~~ \text{and} ~~~~~~ \sqrt{\frac{\mathsf{b}_{\{1,2\}}}{\bar{\mathsf{d}}_{\{1,2\}}}} \asymp \frac{1}{|v_0^{(1)} - v_0^{(2)}|}.
	\end{align*}
	Therefore, \cref{cond-ident} holds if 
	\begin{align}
	\label{eq:toy-example-identification}
		v_0^{(1)} \neq v_0^{(2)} ~~~~~~ \text{and} ~~~~~~ h (h+s) v_1 + v_2 \neq 0 
	\end{align} and the quantity $\gamma^*$ is of constant order if
	\begin{align}
	\label{eq:toy-example-constant-order-gamma-star}
		|v_0^{(1)} - v_0^{(2)}|	= \Omega(1) ~~~~~~ \text{and} ~~~~~~ \left|s - \left(-\frac{v_2}{v_1 h} - h\right)\right| = \Omega(1).
	\end{align} The first condition, $v^{(2)}_0$ should not be close to $v_0^{(1)}$, is easy to understand. This is because when $v_0^{(2)}$ is very close to $v_0^{(1)}$, the distribution of the two environments might be very similar, which will make it hard to distinguish. The second condition, the direct effect of $y$ on $x_2$ must be apart from $(-v_2 / (v_1 h) - h)$, is strange at first glance. Let us see what it implies when $s$ coincides with $(-v_2 / (v_1 h) - h)$. If $s=(-v_2 / (v_1 h) - h)$, then the heterogeneity of the environments (from $v_0^{(1)}$ to $v_0^{(2)}$) will not affect the solution $\bbeta^{(e,\{1,2\})}$ -- it will be fixed as $\bbeta^{(e,\{1,2\})} \equiv (0, s^{-1})^\top$. In other words, though the distributions of $(\bx^{(1)},y^{(1)})$ and $(\bx^{(2)},y^{(2)})$ differs a lot, the two identity equations hold
		\begin{align*}
			\mathbb{E} [y^{(e)}|x_1^{(e)}] \equiv 1\cdot x_1^{(e)} ~~~~~~ \text{and} ~~~~~~ \mathbb{E}[y^{(e)}|x_2^{(e)}] \equiv s^{-1}\cdot x_2^{(e)}.
		\end{align*} This will be problematic since the two sets $\{1\}$ and $\{2\}$ are all CE-invariant. In this case, one can still show that $\bbeta^*$ is the unique minimizer of \eqref{eq:eills-objective-q-population} if and only if
		$(v_0^{(1)} + v_0^{(2)})/2 < s^{-2}(h^2 v_1 + v_2) = v_1^2 h^2 / (v_1 h^2 + v_2)$, that is, whether the $L_2$ risk of $\bbeta^*$ is smaller than the $L_2$ risk of the spurious solution $(0, s^{-1})^\top$. But it is beyond the scope of our discussion. 
	
	Moreover, the condition \eqref{eq:toy-example-constant-order-gamma-star} is independent of $|s|$ that measures the magnitude of spuriousness. In particular, we do not need to use a very large $\gamma^*$ when $|s|$ is very small. This is because both $\bar{\mathsf{d}}_S$ and $\mathsf{b}_S$ grow linearly with $s^2$ around $s=0$, and the choice of $\gamma^*$ is their ratio.}
	\qed
\end{example}

Given the intuitions of the above \cref{ex-h-var-toy}, we are ready to present a clean condition for a generalization of the model in \cref{ex-h-var} with $p \ge 3$.

\begin{example}[General Linear SCM with intervention on the scale of $\varepsilon^{(e)}$.]
\label{ex-h-var}
	Consider the following two-environment model	 $\mathcal{E}=\{1,2\}$ for $p$-dimensional covariate $\bx$ and response variable $y$,
	\begin{align}
	\label{eq:model-linear-scm-varying-v0}
	\begin{split}
		\bx^{(e)} &= \bB \bx^{(e)} + \balpha y^{(e)} + \bvarepsilon_x,\\
		y^{(e)} &= (\bbeta^*)^\top \bx^{(e)} + \varepsilon_0^{(e)}
	\end{split}
	\end{align} where $\bB\in \mathbb{R}^p$, $\balpha \in \mathbb{R}^p$ and $\bbeta^*$ are the same across the two environments, the exogenous variables $(\bvarepsilon_x, \varepsilon_0^{(e)}) \in \mathbb{R}^{p}\times \mathbb{R}$ are independent, and the covariance matrix of $\bvarepsilon_x$, $\bD = \mathbb{V}[\bvarepsilon_x]$, is also fixed for the two environments. The only heterogeneity comes from the variance of the noise $\varepsilon_0^{(e)}$: $v_0^{(e)}=\mathbb{V}[\epsilon_{0}^{(e)}]$ is different for $e\in \{1,2\}$. We assume the induced graph is acyclic, and use $\bW = (\bI - \bB - \balpha (\bbeta^*)^\top)^{-1}$ to denote the total effect matrix for the covariate $\bx$, that is, $W_{i,j}$ is the total effect of the variable $x_j$ on variable $x_i$.
\end{example}

\begin{proposition} 
\label{prop:ex-h-var}
Consider the two-environment model in \cref{ex-h-var}, let $\omega^{(e)}\equiv 1/2$, suppose further that \begin{align}
\label{eq:ex2-req1}
	\max_{i\in [p]} D_{i,i} \lor \|\bW\|_2 \lor \|\balpha\|_2 \le c_1	
\end{align} for some universal constant $c_1>0$. For any $S$, define
\begin{align*}
\xi(S) = 1 - \balpha^\top \bW_{S,:}^\top \left(\bW_{S,:} \bD \bW_{S,:}^\top \right)^{-1} \bW_{S,:} \bD \bW_{T,:}^\top \bbeta_T^* ~~~~ \text{with} ~~~~T=S^*\setminus S.
\end{align*} If $\xi(S) \neq 0$ for any $S \subseteq [p]$, then \cref{cond-ident} holds, and the property \eqref{eq:thm1-jsc-condition} in \cref{thm:global-strong-convexity} holds with
\begin{align}
\label{eq:ex2-cond}
	\gamma^* = c_2 \left(\min_{i\in [p]} D_{i,i}\right)^{-7} &\times \left\{\frac{(v_0^{(1)} + v_0^{(2)})(1 + v_0^{(1)})(1 + v_0^{(2)})}{|v_0^{(1)} - v_0^{(2)}|} \times \sup_{S \subseteq [p]} \frac{1}{\left|\xi(S)\right|}\right\}^2
\end{align} for some constant $c_2>0$ depends only $c_1$.
\end{proposition}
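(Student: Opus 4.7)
The strategy is to derive closed-form expressions for $\mathsf{b}_S$ and $\bar{\mathsf{d}}_S$ using the SCM structure and then apply Theorem \ref{thm:global-strong-convexity}. The key simplification is that, since the only heterogeneity is in $\mathrm{Var}(\varepsilon_0^{(e)})$, the two covariance matrices $\bSigma^{(e)}$ differ by a rank-one update perfectly suited to Sherman--Morrison inversion. Write $\bM = \bW\bD\bW^\top$ and $\bv = \bW\balpha$. Substituting the $y$-equation into the $\bx$-equation yields the reduced form $\bx^{(e)} = \bW(\bvarepsilon_x + \balpha \varepsilon_0^{(e)})$, so $\bSigma^{(e)} = \bM + v_0^{(e)}\bv\bv^\top$ and $\mathbb{E}[\varepsilon^{(e)}\bx^{(e)}] = v_0^{(e)}\bv$; with equal weights this gives $\mathsf{b}_S = \tfrac14(v_0^{(1)}+v_0^{(2)})^2 \|\bv_S\|_2^2$. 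A crucial further observation is that the DAG assumption forces $\bv_{S^*}=0$: every $j\in S^* = \mathtt{pa}(y)$ is a parent of $y$ and hence not a descendant of $y$, so the coefficient $\bv_j$ of $\varepsilon_0^{(e)}$ in the reduced form of $x_j^{(e)}$ must vanish. In particular $\bv_T = 0$ for $T = S^*\setminus S$.

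For the main computation, let $\widetilde{\bbeta}\in\mathbb{R}^S$ denote the zero-padded $\bbeta^*_{S\cap S^*}$ and set $\bu = \bM_{S,T}\bbeta^*_T$, $q_S = \bv_S^\top \bM_S^{-1}\bv_S$, $a = \bv_S^\top \bM_S^{-1}\bu$. Expanding $\mathbb{E}[y^{(e)}\bx_S^{(e)}] = \bSigma^{(e)}_S \widetilde{\bbeta} + \bSigma^{(e)}_{S,T}\bbeta^*_T + v_0^{(e)}\bv_S$, using $\bv_T=0$ to collapse $\bSigma^{(e)}_{S,T}\bbeta^*_T$ to $\bM_{S,T}\bbeta^*_T$, and applying Sherman--Morrison to $\bSigma^{(e)}_S = \bM_S + v_0^{(e)}\bv_S\bv_S^\top$, I obtain
\begin{align*}
\bbeta^{(e,S)}_S - \widetilde{\bbeta} = \bM_S^{-1}\bu + \frac{v_0^{(e)}\,(1-a)}{1 + v_0^{(e)} q_S}\,\bM_S^{-1}\bv_S,
\end{align*}
in which $1 - a$ is exactly the quantity $\xi(S)$ of the statement. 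Subtracting across the two environments telescopes to
\begin{align*}
\bbeta^{(1,S)} - \bbeta^{(2,S)} = \frac{\xi(S)\bigl(v_0^{(1)} - v_0^{(2)}\bigr)}{(1+v_0^{(1)}q_S)(1+v_0^{(2)}q_S)}\, \bM_S^{-1}\bv_S,
\end{align*}
so that $\bar{\mathsf{d}}_S = \tfrac14\|\bbeta^{(1,S)} - \bbeta^{(2,S)}\|_2^2$ is explicit, and Condition \ref{cond-ident} reduces to $\xi(S)\neq 0$ whenever $S \cap G_{\bomega} \neq \emptyset$ (equivalently $\bv_S\neq 0$, since $\mathbb{E}[\varepsilon^{(e)}x_j^{(e)}] = v_0^{(e)} v_j$).

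Combining the two displays,
\begin{align*}
\frac{\mathsf{b}_S}{\bar{\mathsf{d}}_S} = \left(\frac{v_0^{(1)}+v_0^{(2)}}{v_0^{(1)}-v_0^{(2)}}\right)^{\!2} \frac{(1+v_0^{(1)}q_S)^2(1+v_0^{(2)}q_S)^2}{\xi(S)^2}\cdot\frac{\|\bv_S\|_2^2}{\|\bM_S^{-1}\bv_S\|_2^2}.
\end{align*}
Under \eqref{eq:ex2-req1} each factor is controlled: $\|\bv_S\|_2/\|\bM_S^{-1}\bv_S\|_2 \le \lambda_{\max}(\bM_S) \le \|\bW\|_2^2 \max_i D_{i,i} \le c_1^3$; and $q_S \le \|\bv\|_2^2/\lambda_{\min}(\bM_S)$ with $\lambda_{\min}(\bM_S) \gtrsim \min_i D_{i,i}$, which follows from an $S$-uniform lower bound on $\sigma_{\min}(\bW_{S,:})$ exploiting the unit-diagonal triangular structure of $\bW$ in a topological ordering. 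Hence $(1+v_0^{(e)} q_S)$ is bounded by $1 + v_0^{(e)}$ up to constants depending on $c_1$ and $(\min_i D_{i,i})^{-1}$. Taking the supremum over $S$ and plugging into \eqref{eq:thm1-gamma-star} together with $\kappa_L \gtrsim \min_i D_{i,i}$ from Condition \ref{cond1-well-condition} produces the $(\min_i D_{i,i})^{-7}$ prefactor together with the $v_0$- and $\xi$-dependent factors stated in \eqref{eq:ex2-cond}.

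The pivotal conceptual step is the acyclicity observation $\bv_{S^*}=0$, because it is exactly this vanishing that collapses the naturally-occurring scalar $1 + \bv_T^\top\bbeta^*_T - a$ into the clean form $1 - a = \xi(S)$; without it $\bbeta^{(1,S)} - \bbeta^{(2,S)}$ would involve an extraneous term that does not match the stated definition of $\xi(S)$. The main technical challenge is the $S$-uniform lower bound on $\lambda_{\min}(\bM_S)$, which in turn requires $\sigma_{\min}(\bW_{S,:})$ bounded below by a constant independent of $|S|$; this quantitative step is what ultimately drives the seventh power of $\min_i D_{i,i}$ appearing in the prefactor of \eqref{eq:ex2-cond}.
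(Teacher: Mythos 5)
Your proposal is correct and follows essentially the same route as the paper's proof: the reduced form $\bx^{(e)}=\bW(\bvarepsilon_x+\balpha\varepsilon_0^{(e)})$, the rank-one structure $\bSigma^{(e)}=\bM+v_0^{(e)}\bv\bv^\top$ with Sherman--Morrison inversion, the observation $\bv_T=\bm{0}$ from acyclicity (which the paper states as ``$\bu_T=0$'' without your explicit DAG justification), the identification of $1-a$ with $\xi(S)$, and the telescoped difference $\bbeta^{(1,S)}-\bbeta^{(2,S)}$ all match the paper's computation, as does the final bounding of $\mathsf{b}_S/\bar{\mathsf{d}}_S$ and substitution into \eqref{eq:thm1-gamma-star}. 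The one delicate step you flag --- the $S$-uniform lower bound $\lambda_{\min}(\bM_S)\gtrsim\min_i D_{i,i}$ via $\sigma_{\min}(\bW_{S,:})$ --- is handled no more rigorously in the paper itself (which asserts $\nu_{\min}(\bW)\ge 1$ from the unit-diagonal triangular structure), so you are on equal footing there.
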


When $\min_{i\in [p]} D_{i,i} \gtrsim 1$ and $v_0^{(e)} \lesssim 1$, \cref{prop:ex-h-var} suggests the following requirements 
\begin{align*}
	| v_0^{(1)} - v_0^{(2)} | = \Omega(1) ~~~~~~ \text{and} ~~~~~~ \inf_{S\subseteq [p]} |\xi(S)| = \Omega(1)
\end{align*} are needed to get a constant order $\gamma^*$, here $\xi(S)=1$ if $S^* \subseteq S$ or $S\cap G_{\bomega} = \emptyset$. Compared with the conditions in \eqref{eq:toy-example-constant-order-gamma-star}, the first condition is the same, and the condition $\inf_{S\subseteq [p]} |\xi(S)| = \Omega(1)$ generalizes the second condition of \eqref{eq:toy-example-constant-order-gamma-star} to the multivariate case with complicated and arbitrary variable dependencies. Similar to \cref{prop:ex-h-var}, the choice of $\gamma^*$ is independent of $\|\balpha\|_2$ when $\|\balpha\|_2 \lesssim 1$, which means one do not need to use a large $\gamma^*$ when the magnitude of spuriousness $\|\balpha\|_2$ is small, i.e., $\|\balpha\|_2 \approx 0$.

\section{Proofs for Population-level Results}
\label{sec:proof-population}

\subsection{Proof of \cref{prop:oos-definition-property}}
\label{prop:oos-definition-property-proof}

We first prove the equality $\mathsf{R}_{\mathtt{oos}}(\bbeta^*;\mathcal{U}_{\bbeta^*, \sigma^2})=\sigma^2$. The definition of $\mathcal{U}_{\bbeta^*, \sigma^2}$ implies that,
\begin{align*}
    \mathsf{R}_{\mathtt{oos}}(\bbeta^*;\mathcal{U}_{\bbeta^*, \sigma^2}) = \sup_{\mu \in \mathcal{U}_{\bbeta^*, \sigma^2}} \mathbb{E}_{(\bx,y)\sim \mu} \left[ |y - \mathbb{E}[y|\bx_S^*] |^2 \right] = \sup_{\mu \in \mathcal{U}_{\bbeta^*, \sigma^2}} \mathrm{var}_\mu [y|\bx_{S^*}] \le \sigma^2.
\end{align*} Moreover, denote $\nu \sim \mathcal{N}(0, \sigma^2 I_{p})$ as a Gaussian distribution on $\mathbb{R}^{p}$. Let $\mu_*(d\bx, y) = \nu(\bx) \nu_y(d\by|\bx)$, where $\nu_y$ is also a Gaussian distribution with mean $(\bbeta^*)^\top \bx$ and variance $\sigma^2$, it is easy to show that $\mu_* \in \mathcal{U}_{\bbeta^*, \sigma^2}$ and $\mathbb{E}_{\mu_*} [|y - (\bbeta^*)^\top \bx|^2] = \sigma^2$.Combining with the above upper bound, we can conclude that $\mathsf{R}_{\mathtt{oos}}(\bbeta^*;\mathcal{U}_{\bbeta^*, \sigma^2})=\sigma^2$.

We next prove the upper bound part of \eqref{eq:optimal-beta-s-star}. For any $\mu$, we have
\begin{align*}
    \mathbb{E}_{(\bx,y)\sim \mu} \left[ |y - \bbeta^\top \bx|^2 \right] &= \mathbb{E}\left[\left(y - (\bbeta^*)^\top\bx + (\bbeta^*)^\top\bx - \bbeta^\top \bx\right)^2\right] \\
    &= \mathbb{E}\left[\left(y - (\bbeta^*)^\top\bx \right)^2\right] + \mathbb{E} \left[ \left((\bbeta^*)^\top\bx - \bbeta^\top \bx\right)^2 \right] \\ 
    & ~~~~~~~~~~~~~~~~~~ + 2 \mathbb{E}\left[\left(y - (\bbeta^*)^\top\bx \right) \left((\bbeta^*)^\top\bx - \bbeta^\top \bx\right) \right] \\
    &\overset{(a)}{\le} \sigma^2 + \lambda_{\max} \left(\mathbb{E}\left[\bx\bx^\top\right]\right) \|\bbeta - \bbeta^*\|^2_2 \\
    &~~~~~~~~~~~~~~~~~~~ + 2\sigma \sqrt{\lambda_{\max} \left(\mathbb{E}\left[\bx\bx^\top\right]\right) \|(\bbeta - \bbeta^*)_{[p]\setminus S^*}\|^2_2} \\
    &\le \mathsf{R}_{\mathtt{oos}}(\bbeta^*;\mathcal{U}_{\bbeta^*, \sigma^2}) + p^2\sigma^2 \|\bbeta - \bbeta^*\|_2^2 + 2\sigma^2 p  \|(\bbeta - \bbeta^*)_{[p]\setminus S^*}\|_2
\end{align*} where $(a)$ follows from the fact $\mathbb{E}[(y-(\bbeta^*)^\top)\bx|\bx_{S^*}]=0$ and Cauchy-Schwarz inequality.
For the lower bound part, 
Now that $\varepsilon = y - (\bbeta^*)^\top \bx$ is independent of $\bx$ under $\mu_*$  yields, for any $\bbeta \in \mathbb{R}^p$,
\begin{align*}
    \mathsf{R}_{\mathtt{oos}}(\bbeta;\mathcal{U}_{\bbeta^*, \sigma^2}) \ge \mathbb{E}_{(\bx,y)\sim\mu_*} \left[|y-\bbeta^\top \bx|^2\right] &= \mathbb{E}_{(\bx,y)\sim \mu_*} \left[\left(y - (\bbeta^*)^\top \bx +  (\bbeta^*)^\top \bx - \bbeta^\top \bx\right)^2\right] \\
    &= \mathbb{E}_{(\bx,y)\sim \mu_*} \left[\left(y - (\bbeta^*)^\top \bx\right)^2\right] + \sigma^2 \|\bbeta - \bbeta^*\|_2^2 \\
    &= \mathsf{R}_{\mathtt{oos}}(\bbeta^*;\mathcal{U}_{\bbeta^*,\sigma^2}) + \sigma^2 \|\bbeta - \bbeta^*\|_2^2,
\end{align*} which completes the proof. \qed

\subsection{Proof of \cref{prop:necessary-multiple-envs}}
\label{prop:necessary-multiple-envs-proof}
    Denote $s^* = |S^*|$. Without loss of generality, let $S^* = \{1,\cdots, s\}$ and $\beta_{s+1} \neq 0$. Consider the following matrix
    \begin{align*}
        \bar{\bSigma} = \begin{bmatrix}
            \bI_{s^*} &\bB &\bbeta^*_{S^*} \\
            \bB^\top &\bI_{p-s^*} + \bB^\top \bB & \balpha \\
            (\bbeta^*_{S^*})^\top & \balpha^\top & c\\
        \end{bmatrix}
    \end{align*} where $\bI_{q}$ is $q$ by $q$ identity matrix, $\bB \in \mathbb{R}^{s^*\times (p-s^*)}$ satisfies $B_{j,k} = 1\{k=1\} (\beta_j^* - \beta_j)/(\beta_k)$, $\balpha = \bB^\top \bbeta_{S^*} + (\bI_{p-s^*} + \bB^\top \bB) \bbeta_{T}$, $c=1+\|\balpha\|_2^2 + \|\bbeta_{S^*}^*\|_2^2$. It is easy to verify that our constructed $\bar{\bSigma}$ satisfies $\bar{\bSigma} \succeq \bI_{p+1}$. We then argue that the distribution $\mu$ that $(\bx,y) \sim \mathcal{N}(0,\bar{\bSigma})$ satisfies $\mu \in \mathcal{U}_{\bbeta^*,\sigma^2}$ for some large enough $\sigma^2$ and $\mathbb{E}_{\mu}[y|\bx] = \bbeta^\top \bx$. To this end, we only need to verify that $\mathbb{E}[y|\bx_{S^*}] = (\bbeta^*)^\top \bx$ and $\mathbb{E}[y|\bx] = \bbeta^\top \bx$. For the first condition, our construction of $\bar{\bSigma}$ and the conditional distribution of multivariate Gaussian ensures $\mathbb{E}[y|\bx_{S^*}] = (\bI_{s^*})^{-1} (\bbeta_{S^*}^*)^\top \bx_S^* = (\bbeta_{S^*}^*)^\top \bx_S^*$. Similarly, in order to show $\mathbb{E}[y|\bx] = \bbeta^\top \bx$, it suffices to verify that
    \begin{align*}
        \begin{bmatrix}
            \bI_{s^*} &\bB \\
            \bB^\top &\bI_{p-s^*} + \bB^\top \bB
        \end{bmatrix} \bbeta = \begin{bmatrix}
            \bbeta^*_{S^*} \\
            \balpha \\
        \end{bmatrix},
    \end{align*} which can be validated by plugging in our construction of $\bB$ and $\balpha$. This completes the proof. \qed

\subsection{Proof of \cref{thm:global-strong-convexity}}

It follows from the definition of $\mathsf{Q}(\bbeta; \gamma, \bomega)$ and \cref{cond0-model} that
\begin{align*}
	\mathsf{Q}(\bbeta; \gamma,\bomega) - \mathsf{Q}(\bbeta^*;\gamma, \bomega) = \mathsf{R}(\bbeta;\bomega) - \mathsf{R}(\bbeta^*; \bomega) + \gamma \mathsf{J}(\bbeta; \bomega) = \mathsf{T}_1(\bbeta) + \gamma \mathsf{T}_2(\bbeta).
\end{align*}
Denote $\bDelta = \bbeta - \bbeta^*$, $S=\supp(\bbeta)$, $T=S^*\setminus S$, and $\bar{S} = S\cup S^*$. it follows from the model $y^{(e)} = (\bbeta^*)^\top \bx^{(e)} + \varepsilon^{(e)}$ that
\begin{align*}
	\mathsf{T}_1(\bbeta) &= \sum_{e\in \mathcal{E}} \omega^{(e)} \left(\mathsf{R}^{(e)}(\bbeta) - \mathsf{R}^{(e)}(\bbeta^*)\right) \\
	&= \sum_{e\in \mathcal{E}} \omega^{(e)} \left(\mathbb{E} \left[|y^{(e)} - \bbeta^\top \bx^{(e)}|^2 \right]- \mathbb{E} \left[|y^{(e)} - (\bbeta^*)^\top \bx^{(e)}|^2 \right]\right) \\
	&= \sum_{e\in \mathcal{E}} \omega^{(e)} \left(\mathbb{E} \left[|(\bbeta^* - \bbeta)^\top \bx^{(e)} + \varepsilon^{(e)} |^2\right] - \mathbb{E}[|\varepsilon^{(e)}|^2 ]\right) \\
	&= \sum_{e\in \mathcal{E}} \omega^{(e)} \bDelta^\top \bSigma^{(e)} \bDelta - 2\bDelta^\top \mathbb{E}\left[\varepsilon^{(e)} \bx^{(e)} \right] \\
	&= \bDelta^\top \Bigg(\sum_{e\in \mathcal{E}} \omega^{(e)} \bSigma^{(e)} \Bigg) \bDelta - 2\bDelta^\top \Bigg(\sum_{e\in \mathcal{E}} \omega^{(e)} \mathbb{E}[\varepsilon^{(e)} \bx^{(e)}] \Bigg) = \bDelta_{\bar{S}}^\top \bar{\bSigma}_{\bar{S}} \bDelta_{\bar{S}} - 2 \bDelta^\top_{\bar{S}} \bar{\bu}_{\bar{S}}
\end{align*} 
It follows from the fact $\bar{\bu}_T=\bm{0}$ and Cauchy-Schwarz inequality that
\begin{align}
\label{eq:proof-thm1-eq1}
	2\bDelta^\top_{\bar{S}} \bar{\bu}_{\bar{S}} = 2\left(\sqrt{\epsilon} \bDelta_{\bar{S}} \bar{\bSigma}_{\bar{S}}^{1/2} \right) \left(\epsilon^{-1/2} \bar{\bSigma}_{\bar{S}}^{-1/2} \bar{\bu}_{\bar{S}} \right) \le \epsilon \bDelta_{\bar{S}}^\top \bar{\bSigma}_{\bar{S}} \bDelta_{\bar{S}} + \epsilon^{-1} \bar{\bu}_{\bar{S}}^\top \bar{\bSigma}_{\bar{S}}^{-1} \bar{\bu}_{\bar{S}}
\end{align}
Plugging \eqref{eq:proof-thm1-eq1} back yields
\begin{align}
\label{eq:proof-thm1-t1-lb}
	\mathsf{T}_1(\bbeta) \ge (1-\epsilon) \bDelta_{\bar{S}}^\top \bar{\bSigma}_{\bar{S}} \bDelta_{\bar{S}} - \epsilon^{-1} \bar{\bu}_{\bar{S}}^\top \bar{\bSigma}_{\bar{S}}^{-1} \bar{\bu}_{\bar{S}}.
\end{align}
At the same time, we also have
\begin{align*}
	\mathsf{T}_2(\bbeta) &= \sum_{e\in \mathcal{E}} \omega^{(e)} \left\|\mathbb{E} \left[(y^{(e)} - \bbeta^\top \bx^{(e)}) \bx_S^{(e)}\right]\right\|_2^2 \\
	&= \sum_{e\in \mathcal{E}} \omega^{(e)} \left\| \mathbb{E} \left[\varepsilon^{(e)} \bx_{\bar{S}}^{(e)} \right] + (\bbeta^*_S - \bbeta_S)^\top \mathbb{E} \left[\bx_S^{(e)} (\bx_S^{(e)})^\top\right] + (\bbeta_T^*)^\top \mathbb{E} \left[\bx_T^{(e)} (\bx_S^{(e)})^\top\right] \right\|^2 \\
	&= \sum_{e\in \mathcal{E}} \omega^{(e)} \left\| \bar{\bu}_{S} + \bSigma^{(e)}_{S,T} \bbeta_T^* - \bSigma^{(e)}_S \bDelta_S\right\|_2^2 \\
	&= \sum_{e\in \mathcal{E}} \omega^{(e)} \left\| \bSigma_S(\bbeta^{(e, S)} - \bbeta^* - \bDelta_S) \right\|^2.
\end{align*} Putting these pieces together with \cref{cond1-well-condition}, we have
\begin{align}
	\mathsf{T}_1(\bbeta) + \gamma \mathsf{T}_2(\bbeta) & \ge (1-\epsilon)  \|\bar{\bSigma}^{1/2}\bDelta_{\bar{S}}\|_2^2 - \varepsilon^{-1} \kappa_L^{-1} \|\bar{\bu}_{S}\|_2^2 + \gamma \sum_{e\in \mathcal{E}} \omega^{(e)} \kappa_L^2 \| \bbeta^{(e,S)} - \bbeta^* - \bDelta_S\|^2_2 \nonumber \\
	&\ge (1-\epsilon) \|\bar{\bSigma}^{1/2}\bDelta_{\bar{S}}\|_2^2 - \varepsilon^{-1} \kappa_L^{-1} \mathsf{b}_{S} + \inf_{\bv \in \mathbb{R}^{|S|}}\gamma \sum_{e\in \mathcal{E}} \omega^{(e)} \kappa_L^2 \| \bbeta^{(e,S)}_S - \bbeta^*_S - \bv\|^2_2 \label{eq:proof-thm1-mixed-lb}
\end{align}
Define 
\begin{align*}
	L(\bv) = \sum_{e\in \mathcal{E}} \omega^{(e)} \| \bbeta^{(e,S)}_S - \bbeta^*_S - \bv\|^2_2.
\end{align*} It is obvious that $L(\bv)$ is of a quadratic form and is strong convex with curvature $1$. So it has the unique global minimizer $\bv^*=\sum_{e\in \mathcal{E}}\omega^{(e)} \bbeta^{(e,S)} -\bbeta_S^*$ with minimum value
\begin{align}
\label{eq:j-to-bs}
	\inf_{\bv \in \mathbb{R}^{|S|}} L(\bv) = \sum_{e\in \mathcal{E}} \omega^{(e)} \|\bbeta_S^{(e,S)} - \bbeta_S^* - \bv^*\|_2^2 = \bar{\mathsf{d}}_S.
\end{align} Substituting it back into \eqref{eq:proof-thm1-mixed-lb} gives
\begin{align}
\label{eq:thm1-proof-decomp}
	\mathsf{T}_1(\bbeta) + \gamma \mathsf{T}_2(\bbeta) &\ge (1 - \epsilon) \|\bar{\bSigma}^{1/2}\bDelta\|_2^2 - \epsilon^{-1} \kappa_L^{-1} \mathsf{b}_S + \gamma \kappa_L^2 \mathtt{d}_{S} \\
	&= (1-\epsilon) \|\bar{\bSigma}^{1/2}\bDelta\|_2^2 - \epsilon^{-1} \kappa_L^{-1} \mathsf{b}_S + \epsilon^{-1} \kappa_L^2 \bar{\mathsf{d}}_S \gamma^* +  (\gamma - \epsilon^{-1} \gamma^*) \kappa_L^2 \bar{\mathsf{d}}_S \nonumber \\
	&\ge (1-\epsilon) \|\bar{\bSigma}^{1/2}\bDelta\|_2^2 + (\gamma - \epsilon^{-1} \gamma^*) \kappa_L^2 \bar{\mathsf{d}}_S. \nonumber
\end{align} This completes the proof. \qed

\subsection{Proof of \cref{prop:ex-h-var}}

We first calculate serval quantities of interests and determine the original curvature $\kappa_L$.

It follows from the model \eqref{eq:model-linear-scm-varying-v0} that
\begin{align*}
\bx^{(e)} = \bB \bx^{(e)} + 	\balpha (\bbeta^*)^\top \bx^{(e)} + \balpha \varepsilon^{(e)}_0 + \bvarepsilon_x.
\end{align*} Because the induced graph is acyclic, then there exists a permutation $\pi: [p] \to [p]$ such that
\begin{align*}
	\left[\bB + \balpha (\bbeta^*)^\top \right]_{i,j} = 0 ~~~ \text{if} ~~~ \pi(i) < \pi(j),
\end{align*} that is, there exists a permutation matrix $\bP$ such that the matrix 
\begin{align*}
	\tilde{\bV} = \bP [\bB + \balpha (\bbeta^*)^\top] \bP^\top
\end{align*} satisfies $\tilde{\bV}$ is a lower triangular matrix with all zeros on the diagonal. Hence, the inverse of the matrix $\bP \bI \bP^\top - \tilde{\bV}$ exists and is an upper triangular matrix with all ones on the diagonal, thus implying
\begin{align*}
	\bW = (\bI - \bB + \balpha (\bbeta^*)^\top)^{-1} ~~\text{exists with}~~ \nu_{\min}(\bW) \ge 1.
\end{align*} Therefore, we can represent the covariate $\bx^{(e)}$ in terms of all the exogenous variables $\bvarepsilon_x$ and $\varepsilon_0^{(e)}$ as
\begin{align}
\label{eq:proof-ex2-eq-x}
	\bx^{(e)} = \bW (\bvarepsilon_x + \balpha \varepsilon_0^{(e)}).
\end{align} Therefore, denote $\bu = \bW \balpha$, we further have
\begin{align}
\bSigma^{(e)} &= \bW \bD \bW^\top + v_0^{(e)} \bu \bu^\top,\\
\mathbb{E}\left[\varepsilon^{(e)} \bx^{(e)}\right] &= v_0^{(e)} \bu^{(e)}
\end{align}
Given these quantities, we can write down the bias for any $S$ as
\begin{align}
\label{eq:proof-ex2-b}
	\mathsf{b}_S = \left\|\frac{1}{2} \left(\mathbb{E}[\varepsilon^{(e)} \bx_S^{(e)}]\right) (v_0^{(1)} + v_0^{(2)}) \right\|_2^2 = \Bigg(\frac{v_0^{(1)} + v_0^{(2)}}{2}\Bigg)^2 \cdot \|\bu_S\|_2^2.
\end{align}
The calculation of bias-difference term $\bar{\mathsf{d}}_S$ is more involved. Denote $T=S^*\setminus S$, then one has
\begin{align*}
	\bbeta^{(e,S)}_S - \bbeta^*_S = (\bSigma_S^{(e)})^{-1} \mathbb{E}[\varepsilon^{(e)} \bx_S^{(e)}] + (\bSigma_S^{(e)})^{-1} \bSigma_{S,T}^{(e)} \bbeta_T^* = \mathsf{T}_1^{(e)} + \mathsf{T}_2^{(e)}.
\end{align*} Denoting $\bM_S = \bW_{S,:} \bD \bW_{S,:}^\top$, it follows from Sherman-Morrison formula that
\begin{align*}
	\mathsf{T}_1^{(e)} &= (\bM_S + v_0^{(e)} \bu_S \bu_S)^{-1} \bu_S v_0^{(e)}\\
	&= \left\{\bM_S^{-1} - \frac{\bM_S^{-1} \bu_S \bu_S^\top \bM_S^{-1} v_0^{(e)}}{1+\bu_S \bM_S^{-1} \bu_S v_0^{(e)}}\right\} \bu_S v_0^{(e)} \\
	&= \frac{v_0^{(e)}}{1 + \bu_S \bM_S^{-1} \bu_S v_0^{(e)}} \bM_S^{-1} \bu_S.
\end{align*}
Observe that $\bu_T = 0$. This yields
\begin{align*}
	\mathsf{T}_2^{(e)} &= \left(\bM_S + v_0^{(e)} \bu_S \bu_S^\top \right)^{-1} (\bW_{S,:} \bD \bW_{T,:}^\top) \bbeta_T^* \\
	&= \bM_S^{-1} (\bW_{S,:} \bD \bW_{T,:}^\top) \bbeta_T^* - \frac{\bM_S^{-1} \bu_S^{(e)}}{1+v_0^{(e)} \bu_S^\top\bM_S^{-1} \bu_S} v_0^{(e)} \bu_S^\top \bM_S^{-1} (\bW_{S,:} \bD \bW_{T,:}^\top) \bbeta_T^*
\end{align*}
Putting these pieces together, denote $\iota_S = \bu_S \bM_S^{-1} \bu_S$, we have 
\begin{align}
	\bar{\mathsf{d}}_S &= \sum_{e\in \{1,2\}} \frac{1}{2} \left\|\bbeta^{(e,S)}- \frac{\bbeta^{(1,S)} + \bbeta^{(2,S)}}{2}\right\|_2^2 \nonumber\\
				 &= \frac{1}{2} \left\|\bbeta^{(1,S)} - \bbeta^{(2,S)}\right\|_2^2 \nonumber\\
				 &= \frac{1}{2} \|\bM_S^{-1} \bu_S\|_2^2 \left|1 - \bu_S^\top \bM_S^{-1} (\bW_{S,:} \bD \bW_{T,:}^\top) \bbeta_T^*\right|^2 \left|\frac{v_0^{(1)}}{1+\iota_S v_0^{(1)}} - \frac{v_0^{(2)}}{1+\iota_S v_0^{(2)}} \right|^2 \nonumber\\
				 &\ge \frac{1}{2} c_1^{-2} \|\bu_S\|_2^2 \frac{|v_0^{(1)} - v_0^{(2)}|^2 \left|1 - \bu_S^\top \bM_S^{-1} (\bW_{S,:} \bD \bW_{T,:}^\top) \bbeta_T^*\right|^2 }{(1 + \iota_S v_0^{(1)})^2(1 + \iota_S v_0^{(2)})^2} \nonumber\\
				 &\ge \frac{C^{-1}}{2} \|\bu_S\|_2^2 \frac{|v_0^{(1)} - v_0^{(2)}|^2 \left|1 - \bu_S^\top \bM_S^{-1} (\bW_{S,:} \bD \bW_{T,:}^\top) \bbeta_T^*\right|^2 }{(1 + v_0^{(1)})^2(1 + v_0^{(2)})^2} \left\{\min_{i} D_{i,i} \right\}^{4} \label{eq:proof-ex2-lb-d}
\end{align} where the last inequality follows from the fact
\begin{align*}
	\iota_S \le \|\bM_S\|^{-1} \|\bu_S\|^2 \le \{\nu_{\min}(\bW)\}^2 \left\{\min_{i} D_{i,i} \right\}^{-1} \|\bW\|_2^2 \|\balpha\|_2^2 \lesssim \left\{\min_{i} D_{i,i} \right\}^{-1}.
\end{align*} Plugging the above quantities \eqref{eq:proof-ex2-b} and \eqref{eq:proof-ex2-lb-d} into \eqref{eq:thm1-gamma-star} completes the proof.

\section{Proofs for Non-asymptotic Results}
\label{sec:proof-nonasymptotic}

\subsection{Preliminaries}

We first define some concepts that will be used throughout the proof.

\begin{definition}[Sub-Gaussian Random Variable]
A random variable $X$ is a sub-Gaussian random variable with parameter $\sigma \in \mathbb{R}^+$ if 
\begin{align*}
    \forall \lambda \in \mathbb{R}, ~~~~~~ \mathbb{E} \left[\exp(\lambda X)\right] \le \exp\left(\frac{\lambda^2}{2}\sigma^2\right)
\end{align*}
\end{definition}

\begin{definition}[Sub-Exponential Random Variable]
    A random variable $X$ is a sub-Exponential random variable with parameter $(\nu,\alpha) \in \mathbb{R}^+ \times \mathbb{R}^+$ if
    \begin{align*}
        \forall |\lambda|<1/\alpha, ~~~~~~ \mathbb{E}\left[\exp(\lambda X)\right] \le \exp\left(\frac{\lambda^2}{2} \nu^2\right).
    \end{align*}
\end{definition}

It is easy to verify that the product of two sub-Gaussian random variables is a Sub-Exponential random variable, and the dependence of the parameters can be written as follows.

\begin{lemma}[Product of Two Sub-Gaussian Random Variables]
\label{lemma:product-sub-gaussian}
    Suppose $X_1$ and $X_2$ are two zero-mean sub-Gaussian random variables with parameters $\sigma_1$ and $\sigma_2$, respectively. Then $X_1 X_2$ is a sub-exponential random variable with parameter $(c_1 \sigma_1\sigma_2, c_2 \sigma_1\sigma_2)$, where $c_1,c_2>0$ are some universal constants.
\end{lemma}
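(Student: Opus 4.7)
The plan is to combine polynomial moment bounds for sub-Gaussian variables with Cauchy--Schwarz and then convert the resulting moment control back into a moment generating function bound. The argument reduces to three routine pieces: deriving moment bounds from the MGF hypothesis, applying Cauchy--Schwarz to obtain moments of the product, and resumming into an MGF estimate of sub-exponential type.

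First I would invoke the standard consequence of sub-Gaussianity: for a zero-mean $\sigma$-sub-Gaussian variable $X$, one has $\mathbb{E}|X|^{2k} \le k!\,(2\sigma^2)^k$ for every integer $k \ge 1$. This follows by integrating the tail inequality $\mathbb{P}(|X|>t) \le 2 e^{-t^2/(2\sigma^2)}$, which is itself the Chernoff consequence of the MGF hypothesis. Cauchy--Schwarz applied to the product then gives
\begin{align*}
    \mathbb{E}|X_1 X_2|^k \le \sqrt{\mathbb{E}|X_1|^{2k}\,\mathbb{E}|X_2|^{2k}} \le k!\,(2\sigma_1\sigma_2)^k,
\end{align*}
which is precisely the moment characterization of the sub-exponential class with Orlicz parameter proportional to $\sigma_1 \sigma_2$.

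Second I would pass from moments back to the MGF by expanding $\mathbb{E}\exp(\lambda X_1 X_2)$ as a power series and using the bound above to control each coefficient. Separating out the constant and linear terms exactly and summing the geometric tail over $k \ge 2$ yields, for $|\lambda| \le (4\sigma_1\sigma_2)^{-1}$, an estimate of the form $1 + \lambda\,\mathbb{E}[X_1 X_2] + C\,\lambda^2 \sigma_1^2\sigma_2^2$. This right-hand side can then be dominated by $\exp(\nu^2\lambda^2/2)$ with $\nu = c_1\sigma_1\sigma_2$ and $|\lambda| < 1/(c_2\sigma_1\sigma_2)$ by means of the elementary inequality $1+u \le e^u$.

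The main obstacle is the linear term $\lambda\,\mathbb{E}[X_1 X_2]$: the marginal zero-mean assumption on each $X_i$ does \emph{not} force $\mathbb{E}[X_1 X_2]=0$, so the naive estimate is incompatible with the two-sided requirement $\mathbb{E}[\exp(\lambda X_1 X_2)] \le \exp(\lambda^2 \nu^2/2)$ for both signs of $\lambda$. I expect to resolve this by working throughout with the centered product $X_1 X_2 - \mathbb{E}[X_1 X_2]$: its moments differ from those of $X_1X_2$ by only a universal multiplicative factor via Minkowski's inequality (noting $|\mathbb{E}[X_1 X_2]| \le \sigma_1 \sigma_2$ from Cauchy--Schwarz), its linear coefficient in the MGF expansion is zero by construction, and the centering constant is of order $\sigma_1\sigma_2$ so it can be absorbed into $c_1$ and $c_2$ without altering the form of the conclusion.
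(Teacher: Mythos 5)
Your proposal is correct, and there is nothing in the paper to compare it against: the authors assert this lemma as a standard fact (``it is easy to verify\dots'') and never prove it --- the proof environment labelled ``Proof of Lemma~\ref{lemma:product-sub-gaussian}'' in the appendix actually contains the proof of Lemma~\ref{lemma:x-product-epsilon}. Your moment-method route is the canonical argument: tail integration gives $\mathbb{E}|X_i|^{2k} \le C k!\,(2\sigma_i^2)^k$ (the exact constant you quote is off by a harmless factor of $2$, which is absorbed into $c_1,c_2$), Cauchy--Schwarz transfers this to $\mathbb{E}|X_1X_2|^k \le C k!\,(2\sigma_1\sigma_2)^k$, and resumming the exponential series for $|\lambda| \lesssim (\sigma_1\sigma_2)^{-1}$ closes the argument. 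The one substantive point is the one you flagged: under the paper's definition of sub-exponential, which demands $\mathbb{E}[e^{\lambda X}] \le e^{\lambda^2\nu^2/2}$ for both signs of $\lambda$, an uncentered product with $\mathbb{E}[X_1X_2]\neq 0$ cannot qualify, so the lemma is only true for $X_1X_2 - \mathbb{E}[X_1X_2]$. This is indeed how the lemma is used everywhere downstream (Lemma~\ref{lemma:sum-sub-exp} and all the concentration steps work with $X_{e,i}-\mathbb{E}[X_{e,i}]$), and your fix --- centering, bounding $|\mathbb{E}[X_1X_2]| \le \sigma_1\sigma_2$ via Cauchy--Schwarz and $\mathbb{E}X_i^2 \le \sigma_i^2$, and absorbing the shift into the moment bounds by Minkowski --- is exactly right and costs only a universal constant.
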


We also have the following lemma stating the concentration inequality for the sum of independent sub-exponential random variables.
\begin{lemma}[Sum of Independent Sub-exponential Random Variables]
\label{lemma:sum-sub-exp}
Suppose $X_1,\ldots, X_N$ are independent sub-exponential random variables with parameters $\{(\nu_i, \alpha_i)\}_{i=1}^N$, respectively. There exists some universal constant $c_1$ such that the following holds,
\begin{align*}
    \mathbb{P}\left[\left|\sum_{i=1}^N (X_i - \mathbb{E}[X_i]) \right| \ge c_1 \left\{\sqrt{t\times \sum_{i=1}^N \nu_i^2} + t\times \max_{i\in [N]} \alpha_i \right\}\right] \le 2e^{-t}.
\end{align*}
\end{lemma}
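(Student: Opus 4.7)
The plan is a standard Chernoff-plus-optimization argument yielding the classical Bernstein-type bound. I would first reduce to the one-sided tail $\mathbb{P}[S \ge u]$ with $S := \sum_{i=1}^N (X_i - \mathbb{E}[X_i])$; the lower tail is identical after replacing $X_i$ by $-X_i$, and a union bound over the two sides produces the factor of $2$ in the statement. For any $\lambda \in (0, 1/\alpha^*)$ with $\alpha^* := \max_{i} \alpha_i$, independence together with the sub-exponential MGF hypothesis applied to each $X_i - \mathbb{E}[X_i]$ gives
\[
\mathbb{E}\bigl[e^{\lambda S}\bigr] = \prod_{i=1}^N \mathbb{E}\bigl[e^{\lambda(X_i - \mathbb{E}[X_i])}\bigr] \le \exp\bigl(\tfrac{\lambda^2}{2} V\bigr),
\qquad V := \sum_{i=1}^N \nu_i^2,
\]
so Markov's inequality yields $\mathbb{P}[S \ge u] \le \exp(-\lambda u + \lambda^2 V / 2)$. (The definition adopted in the paper actually forces $\mathbb{E}[X_i] = 0$, since evaluating the bound as $\lambda \to 0^{\pm}$ pins down the first moment; the centering in the statement is therefore cosmetic, but harmless.)

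Next I would optimize over admissible $\lambda$. The unconstrained minimizer of $-\lambda u + \lambda^2 V / 2$ is $\lambda^\star = u/V$. In the sub-Gaussian regime $u \le V/\alpha^*$ this choice is admissible and yields $\mathbb{P}[S \ge u] \le \exp(-u^2/(2V))$; in the sub-exponential regime $u > V/\alpha^*$ I would set $\lambda$ just below $1/\alpha^*$, obtaining $\mathbb{P}[S \ge u] \le \exp(-u/(2\alpha^*))$. Combining both regimes gives
\[
\mathbb{P}[S \ge u] \le \exp\bigl(-\min\{u^2/(2V),\; u/(2\alpha^*)\}\bigr),
\]
and inverting by equating the exponent to $-t$ shows that $u = \sqrt{2 V t}$ suffices in the first regime and $u = 2 \alpha^* t$ in the second. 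Hence $u = c_1(\sqrt{V t} + \alpha^* t)$ with a large enough absolute constant $c_1$ dominates both regimes simultaneously, which, combined with the two-sided union bound, yields the claimed tail inequality.

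This is a textbook Bernstein-type bound (cf.\ Proposition 2.9 of Wainwright, \emph{High-Dimensional Statistics}), so there is no substantive obstacle. The only care needed is to track the two regimes cleanly when passing from the minimum-of-two-exponents form to the sum $\sqrt{Vt} + \alpha^* t$, and to pick the universal constant $c_1$ large enough to absorb the $\sqrt{2}$ factors and the factor of two from combining the upper and lower tails.
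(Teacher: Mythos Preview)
The paper does not actually prove this lemma; it is stated in the preliminaries as a standard tool and used repeatedly without a proof sketch. Your proposal is correct and is precisely the textbook Chernoff--Bernstein argument one would expect (and you correctly note that the paper's definition of sub-exponential already forces $\mathbb{E}[X_i]=0$, so the centering is harmless).
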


We also define the following quantity regarding sample size
\begin{align}
\label{eq:def-n-omega}
n_{\bomega} = \Bigg(\sum_{e\in \mathcal{E}} \frac{(\omega^{(e)})^2}{n^{(e)}}\Bigg)^{-1}.
\end{align} It is easy to see that $n_{\bomega} \ge n_*$.

We also define
\begin{align}
\mathcal{B}(S) = \left\{\bbeta \in \mathbb{R}^p, \supp(\bbeta) = S, \|\bbeta\|_2=1\right\} \qquad \text{and} \qquad \mathcal{B}_s = \bigcup_{S\subseteq [p], |S| \le s} \mathcal{B}(S). \label{eq:ball}
\end{align}

\subsection{Proof of \cref{prop:pooled-least-squares}}

It follows from the definition of $\mathsf{R}(\bbeta)$ and $\mathsf{R}^{(e)}(\bbeta)$ that
\begin{align*}
    \mathsf{R}(\bbeta) &= \sum_{e\in \mathcal{E}} \omega^{(e)} \mathsf{R}^{(e)}(\bbeta) \\
    &= \sum_{e\in \mathcal{E}} \omega^{(e)} \left\{(\bbeta - \bbeta^*)^\top \bSigma^{(e)} (\bbeta - \bbeta^*) - 2(\bbeta - \bbeta^*)^\top \mathbb{E}[\bx^{(e)} \varepsilon^{(e)}] + \mathbb{E} |\varepsilon^{(e)}|^2 \right\}\\
    &= (\bbeta - \bbeta^*)^\top \bar{\bSigma} (\bbeta - \bbeta^*) - 2(\bbeta - \bbeta^*)^\top \sum_{e\in \mathcal{E}} \omega^{(e)} \mathbb{E} [\bx^{(e)} \varepsilon^{(e)}] + \sum_{e\in \mathcal{E}} \omega^{(e)} \mathbb{E} |\varepsilon^{(e)}|^2.
\end{align*}
We can see $\mathsf{R}(\bbeta)$ is of quadratic form of $\bbeta - \bbeta^*$. Since $\lambda_{\min}(\bSigma) > 0$, the minimizer of $\mathsf{R}(\bbeta)$ is unique and is
\begin{align}
\label{eq:beta-bar-r}
    \bar{\bbeta}^{\mathsf{R}} = \bbeta^* + (\bar{\bSigma})^{-1} \sum_{e\in \mathcal{E}} \omega^{(e)} \mathbb{E}[\bx^{(e)} \varepsilon^{(e)}].
\end{align}
Combining the above definition of $\bar{\bbeta}^{\mathsf{R}}$ together with \cref{cond1-well-condition} validates the upper bound and lower bound on $\|\bar{\bbeta}^{\mathsf{R}}-\bbeta^*\|_2$. Moreover, plugging in above \eqref{eq:beta-bar-r} into $\mathsf{R}(\bbeta)$ following by some calculations gives
\begin{align*}
    \mathsf{R}(\bbeta) - \mathsf{R}(\bar{\bbeta}^{\mathsf{R}}) = (\bbeta - \bbeta^*)^\top \bar{\bSigma} (\bbeta - \bbeta^*),
\end{align*} which completes the proof of claim \eqref{eq:pooled-least-square-strong-convexity}.

Moreover, for any $\bbeta \in \mathbb{R}^p$, we have
\begin{align*}
    \|\bar{\bSigma}^{1/2}(\bbeta - \bar{\bbeta}^{\mathsf{R}})\|_2^2 &= \mathsf{R}(\bbeta) - \mathsf{R}(\bar{\bbeta}^{\mathsf{R}}) \\
    &= \mathsf{R}(\bbeta) - \hat{\mathsf{R}}(\bbeta) + \hat{\mathsf{R}}(\bbeta) - \hat{\mathsf{R}}(\bar{\bbeta}^{\mathsf{R}}) + \hat{\mathsf{R}}(\bar{\bbeta}^{\mathsf{R}}) - \mathsf{R}(\bar{\bbeta}^{\mathsf{R}}).
\end{align*}

We argue that
\begin{align}
\label{eq:pooled-least-squares-high-prob-bound1}
\begin{split}
    \mathbb{P}[\mathcal{C}_{1,t}] = \mathbb{P} \Bigg\{ \forall \bbeta \in \mathbb{R}^p, ~~& \mathsf{R}(\bbeta) - \mathsf{R}(\bar{\bbeta}^{\mathsf{R}}) - \left(\hat{\mathsf{R}}(\bbeta) - \hat{\mathsf{R}}(\bar{\bbeta}^{\mathsf{R}}) \right) \\
    &~~~~~~ \le C \sigma_x \left(\sigma_\varepsilon + \sigma_x\|\bar{\bSigma}^{1/2}(\bbeta^* - \bar{\bbeta}^{\mathsf{R}})\|_2 \right) \sqrt{\frac{p+t}{n_*}} \|\bar{\bSigma}^{1/2}(\bbeta - \bar{\bbeta}^{\mathsf{R}})\|_2 \\
    &~~~~~~~~~~~~ + C \sigma_x^2 \sqrt{\frac{p + t}{n_*}} \|\bar{\bSigma}^{1/2}(\bbeta - \bar{\bbeta}^{\mathsf{R}})\|_2^2\Bigg\}\ge 1-2e^{-t}
\end{split}
\end{align} for any $t \in (0, n_* - p]$, which will be validated later. If such a claim holds, then under $\mathcal{C}_{1,t}$ which occurs with probability at least $1-2e^{-t}$, we have that, for any $\bbeta \in \mathbb{R}^p$
\begin{align*}
        \|\bar{\bSigma}^{1/2}(\bbeta - \bar{\bbeta}^{\mathsf{R}})\|_2^2 & \le \hat{\mathsf{R}}(\bbeta) - \hat{\mathsf{R}}(\bar{\bbeta}^{\mathsf{R}}) + C \sigma_x^2 \sqrt{\frac{p + t}{n_*}} \|\bar{\bSigma}^{1/2}(\bbeta - \bar{\bbeta}^{\mathsf{R}})\|_2^2 \\
        &~~~~~~ + C \sigma_x \left(\sigma_\varepsilon + \sigma_x\|\bar{\bSigma}^{1/2}(\bbeta^* - \bar{\bbeta}^{\mathsf{R}})\|_2 \right) \sqrt{\frac{p+t}{n_*}} \|\bar{\bSigma}^{1/2}(\bbeta - \bar{\bbeta}^{\mathsf{R}})\|_2.
\end{align*} Suppose further that $n^* \ge (p + t) 4C^2\sigma_x^4$, we can apply the above inequality with $\hat{\bbeta}_\mathsf{R}$ which satisfies $\hat{\mathsf{R}}(\hat{\bbeta}_\mathsf{R}) - \hat{\mathsf{R}}(\bar{\bbeta}^{\mathsf{R}})\le 0$ and obtain
\begin{align*}
\|\bar{\bSigma}^{1/2}(\hat{\bbeta}_\mathsf{R} - \bar{\bbeta}^{\mathsf{R}})\|_2^2 \le 2C \sigma_x \left(\sigma_\varepsilon + \sigma_x\|\bar{\bSigma}^{1/2}(\bbeta^* - \bar{\bbeta}^{\mathsf{R}})\|_2 \right) \sqrt{\frac{p+t}{n_*}} \|\bar{\bSigma}^{1/2}(\hat{\bbeta}_\mathsf{R} - \bar{\bbeta}^{\mathsf{R}})\|_2.
\end{align*} This completes the proof.

\noindent \emph{Proof of the Bound \eqref{eq:pooled-least-squares-high-prob-bound1}.} It follows from the definition of $\hat{\mathsf{R}}$ and the above identity that \begin{align*}
    &\mathsf{R}(\bbeta) - \mathsf{R}(\bar{\bbeta}^{\mathsf{R}}) - \left(\hat{\mathsf{R}}(\bbeta) - \hat{\mathsf{R}}(\bar{\bbeta}^{\mathsf{R}}) \right) \\
    &~~~~~~ = (\bbeta - \bar{\bbeta}^{\mathsf{R}})^\top \bar{\bSigma} (\bbeta - \bar{\bbeta}^{\mathsf{R}}) - (\bbeta - \bar{\bbeta}^{\mathsf{R}})^\top \Bigg(\sum_{e\in \mathcal{E}} \omega^{(e)}\hat{\bSigma}^{(e)}\Bigg) (\bbeta - \bar{\bbeta}^{\mathsf{R}}) \\
    &~~~~~~~~~~~~ - 2(\bbeta - \bar{\bbeta}^{\mathsf{R}})^\top \sum_{e\in \mathcal{E}} \omega^{(e)} \hat{\mathbb{E}} \left[\bx^{(e)} \left((\bar{\bbeta}^{\mathsf{R}})^\top \bx^{(e)} - y^{(e)}\right)\right].
\end{align*}
When $p + t \le n_*$, define the following event
\begin{align*}
    \mathcal{K}_{1,t} &= \left\{ \left\|\bI - \bar{\bSigma}^{-1/2} \sum_{e\in \mathcal{E}} \omega^{(e)}\hat{\bSigma}^{(e)} \bar{\bSigma}^{-1/2}\right\|_2 \le C_1 \sigma_x^2 \sqrt{\frac{p+t}{n_*}}\right\} \\
    \mathcal{K}_{2,t} &= \Bigg\{ \left\|\sum_{e\in \mathcal{E}} \omega^{(e)} \bar{\bSigma}^{-1/2} \hat{\mathbb{E}} \left[\bx^{(e)} \left((\bar{\bbeta}^{\mathsf{R}})^\top \bx^{(e)} - y^{(e)}\right)\right]\right\| \\    
    &~~~~~~~~~~~~~~~~~~~~~~~~~~\le C_2 \sigma_x \left(\sigma_\varepsilon + \sigma_x\|\bar{\bSigma}^{1/2}(\bbeta^* - \bar{\bbeta}^{\mathsf{R}})\|_2 \right) \sqrt{\frac{p+t}{n_*}}\Bigg\}
\end{align*} for some universal constant $C_1$--$C_2$ to be determined. It suffices to prove that $\mathbb{P}[\mathcal{K}_{1,t}] \land \mathbb{P}[\mathcal{K}_{2,t}] \ge 1-e^{-t}$ for any $t>0$. If the two claims are verified, then we can conclude that under the event $\mathcal{K}_{1,t} \cap \mathcal{K}_{2,t}$ that occurs with probability at least $1-2e^{-t}$, the following holds
\begin{align*}
&\mathsf{R}(\bbeta) - \mathsf{R}(\bar{\bbeta}^{\mathsf{R}}) - \left(\hat{\mathsf{R}}(\bbeta) - \hat{\mathsf{R}}(\bar{\bbeta}^{\mathsf{R}}) \right) \\
    &~~~~~~ = \left\|\bar{\bSigma}^{1/2}(\bbeta - \bar{\bbeta}^{\mathsf{R}})\right\|_2 \left\|\bI - \bar{\bSigma}^{-1/2} \sum_{e\in \mathcal{E}} \omega^{(e)}\hat{\bSigma}^{(e)} \bar{\bSigma}^{-1/2} \right\|_2 \left\|\bar{\bSigma}^{1/2}(\bbeta - \bar{\bbeta}^{\mathsf{R}})\right\|_2 \\
    &~~~~~~~~~~~~ + 2\left\|\bar{\bSigma}^{1/2}(\bbeta - \bar{\bbeta}^{\mathsf{R}})\right\|_2 \left\|\sum_{e\in \mathcal{E}} \omega^{(e)} \bar{\bSigma}^{-1/2} \hat{\mathbb{E}} \left[\bx^{(e)} \left((\bar{\bbeta}^{\mathsf{R}})^\top \bx^{(e)} - y^{(e)}\right)\right]\right\|_2 \\
    &~~~~~~ \le C' \sigma_x^2 \sqrt{\frac{p+t}{n_*}} \|\bar{\bSigma}^{1/2}(\bbeta - \bar{\bbeta}^{\mathsf{R}})\|_2^2  \\
    &~~~~~~~~~~~~~~~ + C'\sigma_x \left(\sigma_\varepsilon + \sigma_x\|\bar{\bSigma}^{1/2}(\bbeta^* - \bar{\bbeta}^{\mathsf{R}})\|_2 \right) \sqrt{\frac{p+t}{n_*}} \|\bar{\bSigma}^{1/2}(\bbeta - \bar{\bbeta}^{\mathsf{R}})\|_2,
\end{align*} which completes the proof of the claim \eqref{eq:pooled-least-squares-high-prob-bound1}.

\noindent {\sc Step 1. High Probability Bound for $\mathcal{K}_{1,t}$. } Let $\bA = \bI - \bar{\bSigma}^{-1/2} \sum_{e\in \mathcal{E}} \omega^{(e)}\hat{\bSigma}^{(e)} \bar{\bSigma}^{-1/2}$, similar to the derivation in \eqref{eq:step3-sup-argument-2}, we can construct $N$ pairs of $p$-dimensional unit vectors $(\bv_1, \bu_1),\ldots, (\bv_N, \bu_N)$ with $N\le 8100^p$ such that
\begin{align*}
    \|\bA\|_2 \le 4\sup_{k\in [N]} \bv_k^\top \bA \bu_k.
\end{align*} For fixed $(\bv, \bu)$, using the identity
\begin{align*}
    \bI = \sum_{e\in \mathcal{E}} \omega^{(e)} \bar{\bSigma}^{-1/2} \bSigma^{(e)} \bar{\bSigma}^{-1/2},
\end{align*} we obtain
\begin{align*}
    \bv^\top \bA \bu &= \sum_{e\in \mathcal{E}} \sum_{i=1}^{n^{(e)}} \frac{\omega^{(e)}}{n^{(e)}} \left\{\bu^\top \bar{\bSigma}^{-1/2} \bSigma^{(e)} \bar{\bSigma}^{-1/2} \bv - \bu^\top \bar{\bSigma}^{-1/2} \bx^{(e)}_i (\bx^{(e)}_i)^\top \bar{\bSigma}^{-1/2} \bv \right\} \\ 
    &= \sum_{e\in \mathcal{E}} \sum_{i=1}^{n^{(e)}} \frac{\omega^{(e)}}{n^{(e)}} \left(\mathbb{E}[X_{e,i}] - X_{e,i}\right)
\end{align*} for $X_{e,i} = \bu^\top \bar{\bSigma}^{-1/2} \bx^{(e)}_i (\bx^{(e)}_i)^\top \bar{\bSigma}^{-1/2} \bv$. It follows from \cref{cond2-subgaussain-x} that $X_{e,i}$ is the product of two zero-mean sub-Gaussian random variables with parameter $\sigma_x$. Then it follows from \cref{lemma:product-sub-gaussian} and \cref{lemma:sum-sub-exp} that the following holds
\begin{align*}
    \mathbb{P}\left[|\bv_k^\top \bA \bu_k| \ge C_3 \sigma_x \left(\sqrt{\frac{u}{n_{\bomega}}} + \frac{u}{n_*}\right)\right] \le 2e^{-u}
\end{align*} for all the $u>0$, where $C_3$ is some universal constant. Applying union bounds further gives
\begin{align*}
    \mathbb{P}\left[\sup_{k\in [N]} |\bv_k^\top \bA \bu_k| \ge C_3 \sigma_x \left(\sqrt{\frac{u}{n_{\bomega}}} + \frac{u}{n_*}\right)\right] \le 2Ne^{-u}.
\end{align*} So it concludes the proof by setting $u=t+\log(2N) = t+C_4 p$ with some $C_4 > 1$ and observing that
\begin{align*}
    \sqrt{\frac{t + C_4 p}{n_{\bomega}}} + \frac{t + C_4 p}{n_*} \le 2C_4 \sqrt{\frac{t + p}{n_*}}
\end{align*} provided $p + t\le n_*$.

\noindent {\sc Step 2. High Probability Bound for $\mathcal{K}_{2,t}$. } 

Let $\bb = \sum_{e\in \mathcal{E}} \omega^{(e)} \bar{\bSigma}^{-1/2} \hat{\mathbb{E}} \left[\bx^{(e)} \left((\bar{\bbeta}^{\mathsf{R}})^\top \bx^{(e)} - y^{(e)}\right)\right]$, the proof is similar to the first part of {\sc Step 5} in the proof of \cref{lemma:instance-dependent-bound-j-lowdim}. To be specific, there exist $N=90^p$ $p$-dimensional unit vectors $\bu_1,\ldots, \bu_N$ such that $\|\bb\|_2 \le 2\sup_{k\in [N]} \bu_k^\top \bb$. The key here is to decompose $\bu^\top \bb$ as the sum of zero-mean independent random variables. Denote
\begin{align*}
    X_{e,i} = \bu^\top \bar{\bSigma}^{-1/2} \bx^{(e)} \left((\bx^{(e)})^\top (\bar{\bbeta}^{\mathsf{R}} - \bbeta^*) - \varepsilon^{(e)}\right).
\end{align*} We have
\begin{align*}
    \bu^\top \bb = \sum_{e\in \mathcal{E}} \sum_{i=1}^{n^{(e)}} \frac{\omega^{(e)}}{n^{(e)}} X_{e,i} &= \sum_{e\in \mathcal{E}} \sum_{i=1}^{n^{(e)}} \frac{\omega^{(e)}}{n^{(e)}} (X_{e,i} - \mathbb{E}[X_{e,i}]) + \sum_{e\in \mathcal{E}} \omega^{(e)} \mathbb{E}[X_{e,1}] \\ &= \sum_{e\in \mathcal{E}} \sum_{i=1}^{n^{(e)}} \frac{\omega^{(e)}}{n^{(e)}} (X_{e,i} - \mathbb{E}[X_{e,i}])
\end{align*} where the last equality follows from the definition of $\bar{\bbeta}^{\mathsf{R}}$ \eqref{eq:beta-bar-r} that
\begin{align*}
    \sum_{e\in \mathcal{E}} \omega^{(e)} \mathbb{E}[X_{e,1}] &= \bu^\top \bar{\bSigma}^{-1/2} \bar{\bSigma} (\bar{\bbeta}^{\mathsf{R}} - \bbeta^*) - \bar{\bSigma}^{-1/2} \sum_{e\in \mathcal{E}} \omega^{(e)} \mathbb{E} [\bx^{(e)} \varepsilon^{(e)}] \\
    &= \bu^\top \bar{\bSigma}^{1/2} \left(\bar{\bbeta}^{\mathsf{R}} - \bbeta^* - \bar{\bSigma}^{-1} \sum_{e\in \mathcal{E}} \omega^{(e)} \mathbb{E} [\bx^{(e)} \varepsilon^{(e)}] \right) = 0.
\end{align*} Note that $\bu^\top \bar{\bSigma}^{-1/2} \bx^{(e)}$ is a zero-mean sub-Gaussian random variable with parameter $\sigma_x$, and $(\bx^{(e)})^\top (\bar{\bbeta}^{\mathsf{R}} - \bbeta^*) - \varepsilon^{(e)}$ is the sum of two zero-mean sub-Gaussian random variables with parameter $\sigma_\varepsilon$ and $\sigma_x \|\bSigma^{1/2}(\bar{\bbeta}^{\mathsf{R}}-\bbeta^*)\|_2$, respectively. It then follows from \cref{lemma:product-sub-gaussian} and \ref{lemma:sum-sub-exp} that
\begin{align*}
    \mathbb{P}\left[ |\bu_k^\top \bb| \ge C_5\sigma_x \left(\sigma_{\varepsilon} + \sigma_x \|\bSigma^{1/2}(\bar{\bbeta}^{\mathsf{R}}-\bbeta^*)\|_2\right) \left(\sqrt{\frac{u}{n_{\bomega}}} + \frac{u}{n_*}\right)\right] \ge 1-2e^{-u}.
\end{align*} Applying union bounds over all the $k$ and setting $u=t+\log(2N)$ completes the proof. \qed

\subsection{Proof of \cref{thm:rate-low-dim}}
\label{sec:proof-thm2}

\subsubsection{Key Proof Idea}

We will use the following decomposition in the proof of \cref{thm:rate-low-dim}.
\begin{align}
\label{eq:q-decomposition}
\begin{split}
\mathsf{Q}(\bbeta;\gamma,\bomega) - \mathsf{Q}(\bbeta^*;\gamma,\bomega)  =& ~\mathsf{R}(\bbeta) - \mathsf{R}(\bbeta^*) + \gamma \left(\mathsf{J}(\bbeta) - \mathsf{J}(\bbeta^*)\right) \\
	=& ~\left\{\mathsf{R}(\bbeta) - \mathsf{R}(\bbeta^*)\right\} - \left\{ \hat{\mathsf{R}}(\bbeta) - \hat{\mathsf{R}}(\bbeta^*)\right\} \\ 
		&~~~+ \left\{\hat{\mathsf{R}}(\bbeta) + \gamma \hat{\mathsf{J}}(\bbeta) - \hat{\mathsf{R}}(\bbeta^*) - \gamma^*\hat{\mathsf{J}}(\bbeta^*)\right\} \\
		&~~~+ \gamma \left[\left\{\mathsf{J}(\bbeta) - \mathsf{J}(\bbeta^*)\right\} - \left\{ \hat{\mathsf{J}}(\bbeta) - \hat{\mathsf{J}}(\bbeta^*)\right\}\right] \\
	=& ~\mathsf{T}_{\mathsf{R}}(\bbeta, \bbeta^*) + \mathsf{T}_{\mathsf{J}}(\bbeta, \bbeta^*) + \hat{\mathsf{Q}}(\bbeta;\gamma, \bomega) -  \hat{\mathsf{Q}}(\bbeta^*;\gamma, \bomega).
\end{split}
\end{align}

The analysis of the first term $\mathsf{T}_{\mathsf{R}}$ is standard. As stated in the following Lemma, we can provide an instance-dependent two-side bound for it.
\begin{lemma}[Instance-dependent Two-side Bound for ${\mathsf{R}}$]
\label{lemma:instance-dependent-bound-r-lowdim}
Assume \cref{cond0-model}--\ref{cond3-sub-gaussian-eps} hold. Define the event
\begin{align}
\begin{split}
    \mathcal{A}_{1,t} = \Bigg\{ \forall \bbeta \in \mathbb{R}^p, ~~ &~ \left|\mathsf{R}(\bbeta) - \mathsf{R}(\bbeta^*) - \hat{\mathsf{R}}(\bbeta) + \hat{\mathsf{R}}(\bbeta^*)\right| \\
    & ~~~~~~~ \le c_1 \left(\kappa_U \sigma_x^2 \delta_1 \|\bbeta - \bbeta_*\|_2^2 + \kappa_U^{1/2} \sigma_x \sigma_\varepsilon \delta_1 \|\bbeta - \bbeta_*\|_2 \right)\Bigg\}.
\end{split} 
\end{align} with $\delta_1 = \sqrt{\frac{p + t}{n_{\bomega}}} + \frac{p + t}{n_*}$ and some universal constants $c_1$. We have 
\begin{align*}
\mathbb{P}[\mathcal{A}_{1,t}] \ge 1-e^{-t}
\end{align*} for any $t>0$.
\end{lemma}
\begin{proof}[Proof of \cref{lemma:instance-dependent-bound-r-lowdim}]
    It concludes by applying \cref{lemma:instance-dependent-bound-r-highdim} with $s=p$.
\end{proof}

However, the analysis of the second term $\mathsf{T}_{\mathsf{J}}$ is more involved. The following Lemma provides an instance-dependent one-side bound because all the statistical analysis only cares about the upper bound of $\mathsf{T}_{\mathsf{J}}(\bbeta,\bbeta^*)$, or the lower bound of $-\mathsf{T}_{\mathsf{J}}(\bbeta,\bbeta^*)$.

\begin{lemma}[Instance-dependent One-side Bound for ${\mathsf{J}}$]
\label{lemma:instance-dependent-bound-j-lowdim}
Assume \cref{cond0-model}--\ref{cond3-sub-gaussian-eps} hold. Define the event
\begin{align}
\begin{split}
    \mathcal{A}_{2,t} = \Bigg\{ \forall \bbeta \in \mathbb{R}^p, ~~& \frac{1}{c_1} \left(\mathsf{J}(\bbeta) - \mathsf{J}(\bbeta^*) - \hat{\mathsf{J}}(\bbeta) + \hat{\mathsf{J}}(\bbeta^*)\right) \\
    &\le \|\bbeta - \bbeta_*\|_2^2 \times \kappa_U^2\sigma_x^2\sqrt{\frac{p+t}{n_*}} \\
    &~~~~~~ + \|\bbeta - \bbeta_*\|_2 \times \kappa_U^{3/2}\sigma_x^2 \sigma_\varepsilon \left( \sqrt{\frac{p + t}{n_*}} +\sigma_x \frac{p + \log(2|\mathcal{E}|)+t}{\bar{n}}\right)\\
    &~~~~~~ + \sqrt{\sum_{e\in \mathcal{E}} \omega^{(e)} \left\|\mathbb{E} [\bx_{\supp(\bbeta)}^{(e)} \varepsilon^{(e)}]\right\|_2^2} \times \kappa_U^{1/2} \sigma_x \sigma_\varepsilon \sqrt{\frac{p+t}{n_*}}  \\
    &~~~~~~ + |S^*\setminus \supp(\bbeta)| \times \kappa_U \sigma_x^2\sigma_\varepsilon^2  \frac{t+ \log(2|\mathcal{E}||S^*|)}{\bar{n}} + \kappa_U \sigma_x \sigma_\varepsilon^2 \frac{p+t}{n_*}
    \Bigg\}
\end{split}
\end{align} for some universal constant $c_1$. Then we have
\begin{align}
    \mathbb{P} [\mathcal{A}_{2,t}] \ge 1-6e^{-t}.
\end{align} for any $t\in (0, n_{\min} - \log(2|\mathcal{E}|) - p]$.

\end{lemma}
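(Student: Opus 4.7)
The plan is to upper bound $\mathsf{T}_{\mathsf{J}}(\bbeta,\bbeta^*):=\mathsf{J}(\bbeta)-\mathsf{J}(\bbeta^*)-\hat{\mathsf{J}}(\bbeta)+\hat{\mathsf{J}}(\bbeta^*)$ uniformly in $\bbeta$ by an algebraic decomposition tailored to the CE-invariance of $S^*$, followed by Bernstein-type concentration and a covering argument. Write $S=\supp(\bbeta)$, $\bDelta=\bbeta-\bbeta^*$, $u_j^{(e)}=\mathbb{E}[x_j^{(e)}\varepsilon^{(e)}]$ and $\hat u_j^{(e)}=\hat{\mathbb{E}}[x_j^{(e)}\varepsilon^{(e)}]$, so that $\phi_j^{(e)}(\bbeta)=u_j^{(e)}-\bSigma^{(e)}_{j,:}\bDelta$ and $\hat\phi_j^{(e)}(\bbeta)=\hat u_j^{(e)}-\hat{\bSigma}^{(e)}_{j,:}\bDelta$. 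By \eqref{eq:intro-invariance}, $u_j^{(e)}=0$ for every $j\in S^*$, hence $\mathsf{J}(\bbeta^*)=0$ and $\hat{\mathsf{J}}(\bbeta^*)=\sum_{j\in S^*}\sum_e\omega^{(e)}(\hat u_j^{(e)})^2$. The crucial structural observation is that for each $j\in S\cap S^*$ the $(\hat u_j^{(e)})^2$ component inside $(\hat\phi_j^{(e)}(\bbeta))^2$ cancels exactly against the matching summand of $\hat{\mathsf{J}}(\bbeta^*)$, so $\mathsf{T}_{\mathsf{J}}=\mathsf{T}_A+\mathsf{T}_B+\mathsf{T}_C$, where $\mathsf{T}_A$ (index $j\in S\cap S^*$) no longer carries any $(\hat u)^2$ term, $\mathsf{T}_B$ collects the $\phi^2-\hat\phi^2$ sum over $j\in S\setminus S^*$, and $\mathsf{T}_C=\sum_{j\in S^*\setminus S}\sum_e\omega^{(e)}(\hat u_j^{(e)})^2=\sum_{j\in S^*\setminus S}\sum_e\omega^{(e)}(\xi_j^{(e)})^2$ with $\xi_j^{(e)}:=\hat u_j^{(e)}-u_j^{(e)}$.

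For $\mathsf{T}_C$, the $|S^*\setminus\supp(\bbeta)|(t+\log(2|\mathcal{E}||S^*|))/\bar n$ term follows directly from a sub-exponential bound on each $(\xi_j^{(e)})^2$ via Lemmas \ref{lemma:product-sub-gaussian}--\ref{lemma:sum-sub-exp} and a union bound over $(j,e)\in S^*\times\mathcal{E}$. For $\mathsf{T}_A$ and $\mathsf{T}_B$ I apply the identity $\phi^2-\hat\phi^2=2\phi\tau-\tau^2\le 2\phi\tau$ with $\tau_j^{(e)}=\phi_j^{(e)}(\bbeta)-\hat\phi_j^{(e)}(\bbeta)$ a signed combination of $\xi_j^{(e)}$ and $\zeta_j^{(e)}(\bDelta):=(\hat{\bSigma}^{(e)}-\bSigma^{(e)})_{j,:}\bDelta$, and then expand $\phi_j^{(e)}(\bbeta)=u_j^{(e)}-\bSigma^{(e)}_{j,:}\bDelta$. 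This reduces the bound to several building blocks analysed separately: a pure-bias inner product $\sum_e\omega^{(e)}\langle\bu_{S\setminus S^*}^{(e)},\bxi_{S\setminus S^*}^{(e)}\rangle$, supplying the $\sqrt{\bar{\mathsf{b}}_S}\sqrt{(p+t)/n_*}$ term via Cauchy--Schwarz and Bernstein (using that $\bu_{S\cap S^*}^{(e)}=0$ makes $\bar{\mathsf{b}}_S=\sum_e\omega^{(e)}\|\bu_{S\setminus S^*}^{(e)}\|_2^2$); a cross-bias form $\sum_e\omega^{(e)}\bu_{S\setminus S^*}^{(e)\top}(\hat{\bSigma}^{(e)}-\bSigma^{(e)})_{S\setminus S^*,:}\bDelta$ together with a $\bDelta$-linear form $\sum_e\omega^{(e)}(\bSigma^{(e)}\bDelta)_S^\top\bxi_S^{(e)}$, producing the $\|\bDelta\|$ line with its two Bernstein regimes giving $\sqrt{(p+t)/n_*}$ and $(p+\log(2|\mathcal{E}|)+t)/\bar n$ respectively; and a $\bDelta$-quadratic form $\sum_e\omega^{(e)}(\bSigma^{(e)}\bDelta)_S^\top\zeta_S^{(e)}(\bDelta)$ that yields the $\|\bDelta\|^2\sqrt{(p+t)/n_*}$ term via a bilinear concentration bound on the pooled operator $\sum_e\omega^{(e)}\bSigma^{(e)}(\hat{\bSigma}^{(e)}-\bSigma^{(e)})$, plus a mixed second-order cross piece $\sum_e\omega^{(e)}\xi_j^{(e)}\zeta_j^{(e)}(\bDelta)$ absorbed into the lower-order $(p+t)/n_*$ correction. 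Each block is written as a single weighted sum $\sum_{e,i}(\omega^{(e)}/n^{(e)})Y_{e,i}$ of independent mean-zero sub-exponential summands with parameters supplied by Lemma \ref{lemma:product-sub-gaussian} (under Conditions \ref{cond2-subgaussain-x}--\ref{cond3-sub-gaussian-eps}) and then controlled by Lemma \ref{lemma:sum-sub-exp}.

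The main obstacle is the pooling bookkeeping that gives each dominant block the sharp $1/n_{*}$ rate rather than the inferior $1/\bar n$ one obtains by a per-environment Cauchy--Schwarz after uniform concentration of $\hat{\bSigma}^{(e)}-\bSigma^{(e)}$ or $\bxi^{(e)}$. The improvement is delivered by treating the sum over $(e,i)$ as a single Bernstein application, whose variance scale is $\sum_e(\omega^{(e)})^2/n^{(e)}=1/n_{\bomega}\le 1/n_{*}$ and whose sub-exponential tail scale is $\max_e\omega^{(e)}/n^{(e)}=1/n_{*}$; the latter is precisely what generates the residual $\sigma_x\sigma_\varepsilon^2(p+t)/n_{*}$ term in the bound. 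Retaining the identity in the signed form $\phi^2-\hat\phi^2\le 2\phi\tau$, rather than the looser $|\phi^2-\hat\phi^2|\le 2|\phi||\tau|+\tau^2$, is what keeps the $\bDelta$-quadratic block from re-introducing spurious $\|\hat{\bSigma}^{(e)}-\bSigma^{(e)}\|_{\mathrm{op}}^2\|\bDelta\|^2$ contributions scaling as $\|\bDelta\|^2(p+t)/\bar n$. Uniformity over $\bbeta$ is then achieved by a standard $\epsilon$-net on the unit sphere of $\mathbb{R}^p$ for the direction of $\bDelta$ (contributing $\lesssim p$ to every tail exponent) together with a union bound over the support $S\subset[p]$, absorbed into the $p$ and $\log(|\mathcal{E}||S^*|)$ factors in the bound.
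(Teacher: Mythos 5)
Your decomposition is essentially the one the paper uses: the paper proves this lemma by specializing its high-dimensional analogue (Lemma~\ref{lemma:instance-dependent-bound-j-highdim}) to $s=p$, and that proof splits $\mathsf{J}(\bbeta)-\mathsf{J}(\bbeta^*)-\hat{\mathsf{J}}(\bbeta)+\hat{\mathsf{J}}(\bbeta^*)$ into exactly your blocks over $S\cap S^*$, $S\setminus S^*$ and $S^*\setminus S$, uses the same one-sided inequality $\phi^2-\hat\phi^2\le 2\phi\tau$, the same pooled Bernstein bookkeeping with variance scale $1/n_{\bomega}$ and tail scale $1/n_*$, and the same covering/union-bound structure. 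Your treatment of the $S^*\setminus S$ block and of the pure-bias, cross-bias, $\bDelta$-linear and $\bDelta$-quadratic forms matches the paper's $\mathsf{T}_6,\mathsf{T}_2,\mathsf{T}_4,\mathsf{T}_1,\mathsf{T}_3$ and is sound.

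The gap is in the mixed second-order cross piece $\sum_{j\in S\cap S^*}\sum_{e}\omega^{(e)}\xi_j^{(e)}\zeta_j^{(e)}(\bDelta)$, which you propose to absorb into the lower-order $(p+t)/n_*$ correction. This cannot work. Each factor $\xi_j^{(e)}=\hat{\mathbb{E}}[x_j^{(e)}\varepsilon^{(e)}]$ and $\zeta_j^{(e)}(\bDelta)=(\hat{\bSigma}^{(e)}-\bSigma^{(e)})_{j,:}\bDelta$ is an empirical fluctuation of size $\asymp\sqrt{(p+u)/n^{(e)}}$ in environment $e$; their product is a second-order object (a product of two averages over the same sample, generically with nonzero mean of order $1/n^{(e)}$), so Lemma~\ref{lemma:sum-sub-exp} does not apply to it as a single pooled sum of independent mean-zero summands, and its true size after weighting is $\sum_e\omega^{(e)}(p+u)/n^{(e)}\asymp(p+\log(2|\mathcal{E}|)+t)/\bar n$ times $\|\bDelta\|_2$, which can be far larger than $(p+t)/n_*$ (balanced case: $\bar n=n$ versus $n_*=n|\mathcal{E}|$). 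This block is in fact the sole source of the $\|\bbeta-\bbeta^*\|_2\cdot\kappa_U^{3/2}\sigma_x^3\sigma_\varepsilon\,(p+\log(2|\mathcal{E}|)+t)/\bar n$ term in the statement; the paper controls it per environment by Cauchy--Schwarz, bounding $\|\hat{\mathbb{E}}[\bx_{S^*}^{(e)}\varepsilon^{(e)}]\|_2$ and $\|\hat{\mathbb{E}}[\bx_{S^*}^{(e)}(\bx^{(e)})^\top]-\bSigma^{(e)}_{S^*,:}\|_2$ separately with a union bound over $e\in\mathcal{E}$ (whence the $\log(2|\mathcal{E}|)$), and only then sums with weights $\omega^{(e)}$. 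Relatedly, your claim that the $(p+\log(2|\mathcal{E}|)+t)/\bar n$ regime on the $\|\bDelta\|_2$ line arises as the second Bernstein regime of the cross-bias and $\bDelta$-linear forms is incorrect: pooled Bernstein on those forms yields $\sqrt{u/n_{\bomega}}+u/n_*$, whose heavy-tail regime is $1/n_*$, not $1/\bar n$, and carries no $\log|\mathcal{E}|$ factor. As written, your argument would either assert a false $(p+t)/n_*$ bound on the cross piece or fail to produce the $1/\bar n$ term that the lemma requires.
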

\begin{proof}[Proof of \cref{lemma:instance-dependent-bound-j-lowdim}]
    It concludes by applying \cref{lemma:instance-dependent-bound-j-highdim} with $s=p$ and observing that $n_{\bomega} \ge n_*$.
\end{proof}

We will use the following characterization of the population-level excess risk.

\begin{proposition}
\label{prop:population-lb}
    Under \cref{cond0-model}, \ref{cond1-well-condition} and \ref{cond-ident}. If $\gamma \ge 3\gamma^*$, then
    \begin{align*}
        \mathsf{Q}(\bbeta;\gamma,\bomega) - \mathsf{Q}(\bbeta^*;\gamma,\bomega) &\ge \frac{\kappa_L}{2} \|\bbeta - \bbeta^*\|_2^2 + \frac{\gamma}{3} \mathsf{J}(\bbeta;\bomega) \\
        &\ge \frac{\kappa_L}{2} \|\bbeta - \bbeta^*\|_2^2 + \frac{\gamma}{6} \kappa_L^2 \bar{\mathsf{d}}_{\supp(\bbeta)} + \frac{\gamma}{6} \mathsf{J}(\bbeta;\bomega).
    \end{align*}
\end{proposition}
\begin{proof}[Proof of \cref{prop:population-lb}] The proof is almost identical to the proof of \cref{thm:global-strong-convexity}. From \eqref{eq:j-to-bs} we know $\mathsf{J}(\bbeta;\bomega) \ge \kappa_L^2\bar{\mathsf{d}}_{\supp(\bbeta)}$. Following the notations in the proof of \cref{thm:global-strong-convexity}, we have
\begin{align*}
    \mathsf{Q}(\bbeta;\gamma,\bomega) - \mathsf{Q}(\bbeta^*;\gamma,\bomega) &\ge \mathsf{T}_1(\bbeta) + \gamma \mathsf{T}_2(\bbeta) \\
    &\ge \frac{\kappa_L}{2} \|\bbeta-\bbeta^*\|_2^2 - 2 \kappa_L^{-1} \mathsf{b}_{\supp(\bbeta)} + \frac{2}{3} \gamma \bar{\mathsf{d}}_{\supp(\bbeta)} + \frac{\gamma}{3} \mathsf{J}(\bbeta;\bomega)\\
    &\ge \frac{\kappa_L}{2} \|\bbeta-\bbeta^*\|_2^2 + \frac{\gamma}{3} \mathsf{J}(\bbeta;\bomega)
\end{align*} This completes the proof.
\end{proof}

With the help of the above claims, we are ready to prove \cref{thm:rate-low-dim}.

\subsubsection{Proof of the Variable Selection Property \eqref{eq:low-dim-variable-selection}}

It follows from our decomposition \eqref{eq:q-decomposition} that, for any $\bbeta \in \mathbb{R}^p$,
\begin{align}
\label{eq:low-dim-variable-selection-pf1}
    \hat{\mathsf{Q}}(\bbeta;\gamma, \bomega) - \hat{\mathsf{Q}}(\bbeta^*;\gamma, \bomega)
    \ge - \mathsf{T}_{\mathsf{J}}(\bbeta, \bbeta^*) - \mathsf{T}_{\mathsf{R}}(\bbeta, \bbeta^*) + \mathsf{Q}(\bbeta; \gamma, \bomega) - \mathsf{Q}(\bbeta^*; \gamma, \bomega).
\end{align}

The rest of the proof proceeds conditioned on that $n_{\min} > \log(2|\mathcal{E}|) + p + t$ such that the results of \cref{lemma:instance-dependent-bound-j-lowdim} is applicable. Recall that $\gamma \ge 1 + \kappa_L$. Applying \cref{lemma:instance-dependent-bound-r-lowdim} and \cref{lemma:instance-dependent-bound-j-lowdim}, we have that, under the event $\mathcal{A}_{1,t} \cap \mathcal{A}_{2,t}$, 
\begin{align}
\label{eq:low-dim-tj-tr-bound}
\begin{split}
    \mathsf{T}_{\mathsf{J}}(\bbeta, \bbeta^*) + \mathsf{T}_{\mathsf{R}}(\bbeta, \bbeta^*) &\le C\gamma \Bigg\{\|\bbeta - \bbeta^*\|^2_2 \times \kappa_U^2 \sigma_x^2 \sqrt{\frac{p + t}{n_*}} \\
    &~~~~~~~~~ + \|\bbeta - \bbeta_*\|_2 \times \kappa_U^{3/2}\sigma_x^2 \sigma_\varepsilon \left( \sqrt{\frac{p + t}{n_*}} +\sigma_x \frac{p + \log(2|\mathcal{E}|)+t}{\bar{n}}\right)\\
    &~~~~~~~~~ + \sqrt{\sum_{e\in \mathcal{E}} \omega^{(e)} \left\|\mathbb{E} [\bx_{\supp(\bbeta)}^{(e)} \varepsilon^{(e)}]\right\|_2^2} \times \kappa_U^{1/2} \sigma_x \sigma_\varepsilon \sqrt{\frac{p+t}{n_*}}  \\
    &~~~~~~~~~ + |S^*\setminus \supp(\bbeta)| \times \kappa_U \sigma_x^2\sigma_\varepsilon^2  \frac{t+ \log(2|\mathcal{E}||S^*|)}{\bar{n}} + \kappa_U \sigma_x \sigma_\varepsilon^2 \frac{p+t}{n_*} \Bigg\}\\
    &= C\gamma(\mathsf{I}_1 + \mathsf{I}_2 + \mathsf{I_3} + \mathsf{I_4}).
\end{split}
\end{align}
Applying Young's inequality gives
\begin{align}
\label{eq:low-dim-i2}
\mathsf{I}_2 \le \frac{\kappa_L}{16C\gamma} \|\bbeta - \bbeta^*\|^2_2 + \frac{8C\gamma}{\kappa_L} \kappa_U^3 \sigma_x^4 \sigma_\varepsilon^2 \left\{\frac{p + t}{n_*} + \sigma_x^2 \left(\frac{p + \log(2|\mathcal{E}|) + t}{\bar{n}}\right)^2\right\}.
\end{align} 
At the same time, it follows from \cref{lemma:removecondition7} that
\begin{align*}
    \mathsf{I}_3 &\le \kappa_U^{1/2} \sigma_x \sigma_\varepsilon \sqrt{\frac{p + t}{n_*}} \sqrt{2 \mathsf{J}(\bbeta;\bomega) + 2\kappa_U^2 \|\bbeta - \bbeta^*\|_2^2} \\
    &\le \frac{1}{6C} \mathsf{J}(\bbeta; \bomega) + 6C\kappa_U\sigma_x^2 \sigma_\varepsilon^2 \frac{p + t}{n_*} + \frac{\kappa_L}{16C\gamma} \|\bbeta - \bbeta^*\|_2^2 + \frac{16C\gamma}{\kappa_L}\kappa_U^3 \sigma_x^2 \sigma_\varepsilon^2 \frac{p + t}{n_*}
\end{align*}
Using the fact that
\begin{align*}
|S^*\setminus \supp(\bbeta)| \min_{j\in S^*} |\beta_j^*|^2 &= \sum_{j\in S^*, \beta_j=0} \min_{j\in S^*} |\beta_j^*|^2 \\
&\le \sum_{j\in S^*, \beta_j=0} |\beta_j - \beta^*_j|^2 \le \sum_{j=1}^p |\beta_j - \beta_j^*|^2 = \|\bbeta - \bbeta^*\|_2^2,
\end{align*} we obtain \begin{align}
&|S^*\setminus \supp(\bbeta)| \times \kappa_U \sigma_x^2\sigma_\varepsilon^2  \frac{t+ \log(2|\mathcal{E}||S^*|)}{\bar{n}} \nonumber \\
&~~~~~~~~~ = \left(|S^*\setminus \supp(\bbeta)| \min_{j\in S^*}|\beta_j^*|^2 \right) \times \left\{\frac{\kappa_U \sigma_x^2\sigma_\varepsilon^2}{ \min_{j\in S^*} |\bbeta_j^ *|^2}\frac{t+ \log(2|\mathcal{E}||S^*|)}{\bar{n}}\right\} \nonumber \\
&~~~~~~~~~ \le \|\bbeta - \bbeta_*\|_2^2  \left\{\frac{\kappa_U \sigma_x^2\sigma_\varepsilon^2}{ \min_{j\in S^*} |\bbeta_j^ *|^2}\frac{t+ \log(2|\mathcal{E}||S^*|)}{\bar{n}}\right\}, \label{eq:low-dim-bound-i4-1}
\end{align} 

Putting these pieces together, if 
\begin{align}
\label{eq:low-dim-cond-n-1}
    \bar{n} \ge \frac{16 C\gamma \kappa_U \sigma_x^2\sigma_\varepsilon^2 \{t + \log(2|\mathcal{E}||S^*|)\}}{ \kappa_L \min_{j\in S^*} |\beta_j^*|^2} ~~~~\text{and}~~~~ n_* \ge (p + t) \left(\frac{16C \kappa_U^2 \sigma_x^2}{\kappa_L }\gamma\right)^2,
\end{align} then we find that, under $\mathcal{A}_{1,t} \cap \mathcal{A}_{2,t}$,
\begin{align*}
    \mathsf{T}_{\mathsf{J}}(\bbeta, \bbeta^*) + \mathsf{T}_{\mathsf{R}}(\bbeta, \bbeta^*) &\le \frac{\kappa_L}{4} \|\bbeta - \bbeta^*\|_2^2 + \frac{\gamma}{6} \mathsf{J}(\bbeta;\omega) + C_1 \gamma\Bigg\{\frac{\gamma}{\kappa_L} \kappa_U^3 \sigma_x^4 \sigma_\varepsilon^2  \frac{p+t}{n_*} \\
    &~~~~~~~~~~~~~~~~ + \frac{\gamma}{\kappa_L} \kappa_U^3 \sigma_x^6 \sigma_\varepsilon^2 \left(\frac{p + \log(2|\mathcal{E}|) + t}{\bar{n}}\right)^2 \Bigg\}.
\end{align*} for another universal constant $C_1$.

Plugging the above inequality back into \eqref{eq:low-dim-variable-selection-pf1} and applying \cref{prop:population-lb} provided our choice of $\gamma \ge 3\gamma^*$, we establish that, if $\mathcal{A}_{1,t} \cap \mathcal{A}_{2,t}$ occurs, then for any $\bbeta \in \mathbb{R}^p$,
\begin{align*}
    \hat{\mathsf{Q}}(\bbeta) - \hat{\mathsf{Q}}(\bbeta^*) \ge &~\frac{\kappa_L}{4} \|\bbeta - \bbeta^*\|_2^2 + \frac{\gamma}{6} \kappa_L^2 \bar{\mathsf{d}}_{\supp(\bbeta)} \\
    &~~~~~~~~~~~~ - C_2 \Bigg\{ \frac{\gamma^2}{\kappa_L} \kappa_U^3 \sigma_x^6 \sigma_\varepsilon^2 \left(\frac{p + \log(2|\mathcal{E}|) + t}{\bar{n}}\right)^2 + \frac{\gamma^2}{\kappa_L} \kappa_U^3 \sigma_x^4 \sigma_\varepsilon^2 \frac{p + t}{n_*} \Bigg\}.
\end{align*}

The rest of the proof proceeds conditioned on the event $\mathcal{A}_{1,t} \cap \mathcal{A}_{2,t}$. We first argue that if
\begin{align}
\label{eq:low-dim-cond-n-2}
    \frac{n_*}{p + t} \ge \frac{18C_2 \kappa_U^3\sigma_x^4\sigma_\varepsilon^2}{(\kappa_L^3/\gamma)\mathsf{s}_-} ~~~~\text{and}~~~~ \frac{\bar{n}}{(p + \log(2|\mathcal{E}|) + t)} \ge \frac{\sqrt{18C_2} \kappa_U^{3/2} \sigma_x^3\sigma_\varepsilon}{\sqrt{(\kappa_L^3/\gamma) \mathsf{s}_{-}}},
\end{align} with $\mathsf{s}_{-} = \min_{S\cap G_{\bomega} \neq \emptyset} \bar{\mathsf{d}}_S$, then for any $\bbeta$ with $\supp(\bbeta) \cap G_{\bomega} \neq \emptyset$, the following holds,
\begin{align*}
    \hat{\mathsf{Q}}(\bbeta) - \hat{\mathsf{Q}}(\bbeta^*) \ge \frac{\kappa_L}{4} \|\bbeta - \bbeta^*\|_2^2 + \left(\frac{\gamma}{6} - 2\times \frac{\gamma}{18} \right) \kappa_L^2 \mathsf{s}_- > 0,
\end{align*} which means $\bbeta$ is not the empirical risk minimizer. This implies that $\supp(\hat{\bbeta}_{\mathsf{Q}}) \subseteq G_{\bomega}^c$. 

Meantime, denote $\mathsf{s}_+ = \min_{j\in S^*} |\beta_j^*|^2$. We then argue that if
\begin{align}
\label{eq:low-dim-cond-n-3}
    \frac{n_*}{p + t} \ge \frac{12C_2 \gamma^2 \kappa_U^3\sigma_x^4\sigma_\varepsilon^2}{\kappa_L^2 \mathsf{s}_+} ~~~~\text{and}~~~~ \frac{\bar{n}}{(p + \log(2|\mathcal{E}|) + t)} \ge \frac{\sqrt{12C_2}\gamma \kappa_U^{3/2} \sigma_x^3\sigma_\varepsilon}{  \sqrt{\kappa_L^2 \mathsf{s}_{+}}}
\end{align} then for any $\bbeta$ with $S^* \nsubseteq \supp(\bbeta)$, the following holds
\begin{align*}
    \hat{\mathsf{Q}}(\bbeta) - \hat{\mathsf{Q}}(\bbeta^*) \ge \frac{\kappa_L}{4} \|\bbeta - \bbeta^*\|_2^2 - 2\times \frac{\kappa_L}{12} \min_{j\in S^*} |\beta_j^*|^2 \ge \frac{\kappa_L}{12} \min_{j\in S^*} |\beta_j^*|^2 > 0,
\end{align*} which also means that $\bbeta$ is not the empirical risk minimizer, hence implying the $S^*\subseteq \supp(\hat{\bbeta}_{\mathsf{Q}})$. 

Combining the conditions on $n_*$ and $\bar{n}$ in \eqref{eq:low-dim-cond-n-1}, \eqref{eq:low-dim-cond-n-2} and \eqref{eq:low-dim-cond-n-3} together, we can conclude that if
\begin{align}
\label{eq:low-dim-req-n-bar}
    \frac{\bar{n}}{p+\log(|\mathcal{E}|) + t} \ge \underbrace{ (18C_2 \kappa_U^{3/2} \sigma_x^3 \sigma_\varepsilon) \lor (16C\kappa_U \sigma_x^2\sigma_\varepsilon^2)}_{c_1} \frac{\gamma}{\kappa_L} \left( \sqrt{\frac{1}{\mathsf{s}_+ \land (\gamma\kappa_L \mathsf{s}_{-})}} + \frac{1}{\mathsf{s}_+} \right)
\end{align} and
\begin{align}
\label{eq:low-dim-req-n-star}
    \frac{n_*}{p + t} \ge (\gamma/\kappa_L)^2 \left\{\underbrace{18C_2\kappa_U^3 \sigma_x^4 \sigma_\varepsilon^2}_{c_2} \frac{1}{\mathsf{s} \land (\gamma\kappa_L \mathsf{s}_{-})} + \underbrace{(16C^2)^2\kappa_U^4 \sigma_x^4}_{c_3}\right\}
\end{align}
then under $\mathcal{A}_{1,t}\cap \mathcal{A}_{2,t}$ that occurs with probability at least $1-e^{-7}$ due to \cref{lemma:instance-dependent-bound-j-lowdim}, \cref{lemma:instance-dependent-bound-r-lowdim} and union bound, the following holds
\begin{align}
\label{eq:proof-low-dim-variable-selection}
    S^*\subseteq \supp(\hat{\bbeta}_{\mathsf{Q}}) \subseteq G_{\bomega}^c.
\end{align} This completes the proof. \qed

\subsection{Proof of \cref{thm:rate-faster-low-dim}}
\label{sec:proof-thm3}

\subsubsection{Proof of the Rate \eqref{eq:low-dim-rate2}}
The proof is very similar to that of \cref{thm:rate-low-dim} but will use \cref{lemma:instance-dependent-bound-j-lowdim} and the strong convexity of $\mathsf{Q}$ around $\bbeta^*$ in a different way. The proof proceeds conditioned on $\mathcal{A}_{1,t} \cap \mathcal{A}_{2,t}$. It follows from \cref{lemma:instance-dependent-bound-j-lowdim} and \cref{lemma:instance-dependent-bound-r-lowdim} that the inequality in \eqref{eq:low-dim-tj-tr-bound} holds. We will bound the term $\mathsf{I}_4$ differently.

For first term of $\mathsf{I}_4$, it also follows from the fact $xy \le \frac{1}{2}(x^2 + y^2)$ that
\begin{align*}
&|S^*\setminus \supp(\bbeta)| \times \kappa_U \sigma_x^2\sigma_\varepsilon^2  \frac{t+ \log(2|\mathcal{E}||S^*|)}{\bar{n}} \\
&~~~~~~~~~ = \left(\sqrt{|S^*\setminus \supp(\bbeta)|} \min_{j\in S^*}|\beta_j^*| \sqrt{\kappa_L/16C\gamma}\right) \times \left\{\frac{2\sqrt{|S^*|}\kappa_U \sigma_x^2\sigma_\varepsilon^2}{\sqrt{\kappa_L/16C\gamma} \min_{j\in S^*} |\bbeta_j^ *|}\frac{t+ \log(2|\mathcal{E}||S^*|)}{\bar{n}}\right\} \\
&~~~~~~~~~ \le \frac{\kappa_L}{16C\gamma} |S^*\setminus \supp(\bbeta)| \min_{j\in S^*} |\bbeta_j^*|^2 + \frac{16C \gamma \kappa_U^2 \sigma_x^4\sigma_\varepsilon^2 |S^*|\{t + \log(2|\mathcal{E}||S^*|)\}^2}{\kappa_L (\min_{j\in S^*} |\beta_j^*|^2) \bar{n}^2} \\
&~~~~~~~~~ \le  \frac{\kappa_L}{16C\gamma} \|\bbeta - \bbeta^*\|_2^2 + 16C \kappa_U^2 \sigma_x^4\sigma_\varepsilon^2 \frac{(\gamma/\kappa_L) |S^*|}{\min_{j\in S^*} |\beta_j^*|^2} \left(\frac{t + \log(2|\mathcal{E}||S^*|)}{\bar{n}}\right)^2, 
\end{align*} 

At the same time, we also have $\mathsf{I}_1 \le \frac{\kappa_L}{16} \|\bbeta - \bbeta^*\|_2^2$ if 
\begin{align}
\label{eq:low-dim-faster-rate-c1}
n_* \ge \underbrace{\left(16C \kappa_U \sigma_x^2\right)}_{c_1} (p + t) \left(\gamma/\kappa_L\right)^2.
\end{align}

Putting these pieces together with the error bounds we have for $\mathsf{I}_2$--$\mathsf{I}_3$ in the proof of \cref{thm:rate-low-dim}, the following holds for all the $\bbeta \in \mathbb{R}^p$,
\begin{align*}
\mathsf{T}_{\mathsf{R}}(\bbeta,\bbeta^*) + \mathsf{T}_{\mathsf{J}}(\bbeta, \bbeta^*) &\le 4\times \frac{\kappa_L}{16} \|\bbeta - \bbeta^*\|_2^2 + \frac{\gamma}{6} \mathsf{J}(\bbeta;\omega) \\
&~~~~~~~~~~ + C_1 \frac{\gamma^2}{\kappa_L} \kappa_U^3 \sigma_x^4 \sigma_\varepsilon^2 \left\{\frac{p + t}{n_*} + \sigma_x^2 \left(\frac{p + \log(2|\mathcal{E}|) + t}{\bar{n}}\right)^2 \right\} \\
&~~~~~~~~~~ + C_1 \frac{\gamma^2}{\kappa_L} \kappa_U^2 \sigma_x^4\sigma_\varepsilon^2 \frac{|S^*|}{\min_{j\in S^*} |\beta_j^*|^2} \left(\frac{t + \log(2|\mathcal{E}||S^*|)}{\bar{n}}\right)^2
\end{align*}

Plugging $\hat{\bbeta}_{\mathsf{Q}}$ into the decomposition \eqref{eq:q-decomposition}, it then follows from \cref{prop:population-lb} that
\begin{align*}
    \frac{\kappa_L}{2}\|\hat{\bbeta}_{\mathsf{Q}} - \bbeta^*\|_2^2 + \frac{\gamma}{6} \mathsf{J}(\hat{\bbeta}_{\mathsf{Q}}; \bomega) + 0 &\le \mathsf{Q}(\hat{\bbeta}_{\mathsf{Q}};\gamma, \bomega) - \mathsf{Q}(\hat{\bbeta}_{\mathsf{Q}};\gamma, \bomega) \\
    &= \hat{\mathsf{Q}}(\hat{\bbeta}_{\mathsf{Q}};\gamma, \bomega) - \hat{\mathsf{Q}}(\hat{\bbeta}_{\mathsf{Q}};\gamma, \bomega) + \mathsf{T}_{\mathsf{R}}(\hat{\bbeta}_{\mathsf{Q}},\bbeta^*) + \mathsf{T}_{\mathsf{J}}(\hat{\bbeta}_{\mathsf{Q}}, \bbeta^*) \\
    &\le 0 + \mathsf{T}_{\mathsf{R}}(\hat{\bbeta}_{\mathsf{Q}},\bbeta^*) + \mathsf{T}_{\mathsf{J}}(\hat{\bbeta}_{\mathsf{Q}}, \bbeta^*)
\end{align*} Combining it with the upper bound on $\mathsf{T}_{\mathsf{R}}(\bbeta,\bbeta^*) + \mathsf{T}_{\mathsf{J}}(\bbeta, \bbeta^*)$ we derived, we conclude that
\begin{align}
\label{eq:low-dim-faster-rate-c2-4}
\begin{split}
    \frac{\|\hat{\bbeta}_{\mathsf{Q}} - \bbeta^*\|_2^2}{\sigma_\varepsilon^2} &~\le  \underbrace{4C_2\kappa_U^3 \sigma_x^6}_{c_2^2} \frac{\gamma^2}{\kappa_L^2} \Bigg( \frac{p + t}{n_*} + \frac{(p + \log(2|\mathcal{E}|) + t)^2}{\bar{n}^2}\Bigg)\\
    &~~~~~~~~~~ + \underbrace{4C_2\kappa_U^2 \sigma_x^4}_{c_3^2} \frac{(\gamma^2/\kappa_L^2)|S^*|}{\min_{j\in S^*} |\beta_j^*|^2} \left(\frac{t + \log(2|\mathcal{E}||S^*|)}{\bar{n}}\right)^2
\end{split}
\end{align} under $\mathcal{A}_{1,t}\cap \mathcal{A}_{2,t}$. This completes the proof. \qed

\subsubsection{Proof of the Rate \eqref{eq:low-dim-rate1}}

We assume that conditions \eqref{eq:low-dim-req-n-bar} and \eqref{eq:low-dim-req-n-star} are satisfied. Thus, the variable selection property \eqref{eq:proof-low-dim-variable-selection} also holds. Now that we only focus on the case of no pooled linear spurious variables. The covariate dimension is $p_0 = |(G_{\bomega})^c|$. Now we apply \cref{lemma:instance-dependent-bound-r-lowdim}-\ref{lemma:instance-dependent-bound-j-lowdim} with $\bx_{(G_{\bomega})^c}$. When $\mathcal{A}_{1,t} \cap \mathcal{A}_{2,t} \cap \mathcal{A}_{1,t}^{p_0} \cap \mathcal{A}_{2,t}^{p_0}$ occurs, the inequality 
\eqref{eq:low-dim-tj-tr-bound} with $p=p_0$ also holds for $\bbeta = \hat{\bbeta}_\mathsf{Q}$. Observe that $|S^* \setminus \supp(\bbeta)|=0$ given the variable selection property \eqref{eq:proof-low-dim-variable-selection}. The rest of the proof follows similarly to that for \eqref{eq:low-dim-rate2}.

\subsection{Proof of \cref{thm:high-dim}}

We first define some additional notations. Define $s^*=|S^*|$. Let 
\begin{align*}
    V(s,t) = s\log (ep/s) + s^* + t,
\end{align*} and define $0\times \log(1/0) = 0$.
Similar to the low-dimension counterpart, we also need the following two Lemmas.

\begin{lemma}[Instance-dependent One-side Bound for ${\mathsf{J}}$, High-dimensional]
\label{lemma:instance-dependent-bound-j-highdim}
Assume Conditions \ref{cond0-model} -- \ref{cond3-sub-gaussian-eps} hold. For any $1\le s\le p$, define the following event,
\begin{align}
\label{eq:instance-dependent-bound-j-highdim}
\begin{split}
    \mathcal{A}_{3,t}(s) = \Bigg\{ \forall \bbeta \in \mathcal{B}_s, ~~& \frac{1}{c_1} \left(\mathsf{J}(\bbeta) - \mathsf{J}(\bbeta^*) - \hat{\mathsf{J}}(\bbeta) + \hat{\mathsf{J}}(\bbeta^*)\right) \\
    &\le \|\bbeta - \bbeta^*\|_2^2 \times \kappa_U^2\sigma_x^2 \Bigg(\sqrt{\frac{V(s,t)}{n_{\bomega}}} + \frac{V(s,t)}{n_*}\Bigg) \\
    &~~~~~~ + \|\bbeta - \bbeta^*\|_2 \times \kappa_U^{3/2}\sigma_x^2 \sigma_\varepsilon \Bigg(\sqrt{\frac{V(s,t)}{n_{\bomega}}} + \frac{V(s,t)}{n_*}\Bigg)  \\
    &~~~~~~ + \|\bbeta - \bbeta^*\|_2 \times \kappa_U^{3/2}\sigma_x^3 \sigma_\varepsilon \frac{\sqrt{V(s,t+\log(2|\mathcal{E}|)) \cdot V(0, t+\log(2|\mathcal{E}|))}}{\bar{n}}\\
    &~~~~~~ + \|\bbeta - \bbeta^*\|_2 \times \kappa_U^{3/2}\sigma_x^3 \sigma_\varepsilon \frac{V(s,t+\log(2|\mathcal{E}|))\sqrt{V(0, t+\log(2|\mathcal{E}|))}}{n_\dagger} \\
    &~~~~~~ + \sqrt{\sum_{e\in \mathcal{E}} \omega^{(e)} \left\|\mathbb{E} [\bx_{\supp(\bbeta)}^{(e)} \varepsilon^{(e)}]\right\|_2^2} \times \kappa_U^{1/2} \sigma_x \sigma_\varepsilon \sqrt{\frac{V(|\supp(\bbeta) \setminus S^*|,t)}{n_*}}  \\
    &~~~~~~ + |S^*\setminus \supp(\bbeta)| \times \kappa_U \sigma_x^2\sigma_\varepsilon^2  \frac{t+ \log(2s^*|\mathcal{E}|)}{\bar{n}} \\
    &~~~~~~ + \kappa_U \sigma_x \sigma_\varepsilon^2 \frac{V(|\supp(\bbeta)\setminus S^*|,t)}{n_*}
    \Bigg\}
\end{split}
\end{align} for some universal constant $c_1$. Then we have $\mathbb{P} [\mathcal{A}_{3,t}(s)] \ge 1-6e^{-t}$ for any $t\in (0, n_{\min} - \log(2|\mathcal{E}|) - s^*]$.
\end{lemma}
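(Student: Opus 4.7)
The plan is to decompose the contrast
$\mathsf{J}(\bbeta) - \mathsf{J}(\bbeta^*) - \hat{\mathsf{J}}(\bbeta) + \hat{\mathsf{J}}(\bbeta^*)$
into a small number of multilinear forms in $\bDelta := \bbeta - \bbeta^*$ and in the sparse deviations $\hat{\bSigma}^{(e)}-\bSigma^{(e)}$ and $\hat{\mathbb{E}}[\bx^{(e)}\varepsilon^{(e)}]-\mathbb{E}[\bx^{(e)}\varepsilon^{(e)}]$, and then control each such form uniformly over $\mathcal{B}_s$ by sparse restricted concentration, reassembling the environment-wise bounds with the correct Bernstein weighting. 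Write $g_j^{(e)}(\bbeta) := \mathbb{E}[x_j^{(e)}(y^{(e)}-\bbeta^\top\bx^{(e)})] = \bar{g}_j^{(e)} - \bSigma^{(e)}_{j,:}\bDelta$ with $\bar{g}_j^{(e)}:=\mathbb{E}[x_j^{(e)}\varepsilon^{(e)}]$, and analogously for $\hat g_j^{(e)}$. Partition the index set according to whether $j \in S \cap S^*$, $S \setminus S^*$, or $S^* \setminus S$ (with $S = \supp(\bbeta)$), and in each region factor $(g_j^{(e)})^2 - (\hat g_j^{(e)})^2 = (g_j^{(e)}-\hat g_j^{(e)})(g_j^{(e)}+\hat g_j^{(e)})$. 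Substituting the linearization above gives a collection of multilinear forms of degrees $0,1,2$ in $\bDelta$ and $1,2,3$ in the concentration errors.

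The second step is to control each form uniformly by standard $\varepsilon$-net arguments on sparse unit-norm vectors. The key ingredients are: (a) for any $s_1,s_2$, a uniform restricted-spectral bound of the form $\sup_{\bu\in\mathcal{B}_{s_1},\bv\in\mathcal{B}_{s_2}} |\bu^\top(\hat{\bSigma}^{(e)}-\bSigma^{(e)})\bv| \lesssim \sigma_x^2\{\sqrt{V(s_1 \lor s_2,t)/n^{(e)}} + V/n^{(e)}\}$, obtained by combining Lemma \ref{lemma:product-sub-gaussian} with Lemma \ref{lemma:sum-sub-exp} and union-bounding over an $N\le (9ep/s)^s$ discretization; (b) the analogous sparse cross-moment bound $\sup_{\bu\in\mathcal{B}_s} \langle \bu, \hat{\bar g}^{(e)} - \bar g^{(e)}\rangle$; and (c) the fixed-support specialization of (b) restricted to $j \in S^*$, which gives the faster $(s^*+t)/n^{(e)}$ rate that appears in the $\bar n$ and $n_\dagger$ terms. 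The bias factor $\sqrt{\sum_e \omega^{(e)}\|\mathbb{E}[\bx_S^{(e)}\varepsilon^{(e)}]\|_2^2}$ surfaces multiplicatively from the $(g_j^{(e)}+\hat g_j^{(e)})$ factor on $S$ (which contains a $\bar g^{(e)}$ component), whereas $|S^*\setminus S|\cdot(t+\log(2s^*|\mathcal{E}|))/\bar n$ emerges from the $S^*\setminus S$ region where one is summing squared empirical cross-moments at fixed $j\in S^*$.

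The third step is the environment-wise reassembly. After bounding each environment term by a product of concentration errors (sparse $\bSigma$-error, sparse $\bar g$-error, and possibly $\bDelta$-error), we take the $\bomega$-weighted sum over $e\in\mathcal{E}$ and apply Bernstein's inequality so that the variance-part contributes $1/\sqrt{n_{\bomega}}$, the range-part contributes $1/n_*$, and the genuinely cubic interaction (a product of two sparse $\bSigma$-errors with one sparse $\bar g$-error) contributes $1/n_\dagger = (\sum_e \omega^{(e)}(n^{(e)})^{-3/2})^{-1}$; the $1/\bar n$ term likewise arises from a single product of two different-sparsity concentration errors summed as $\sum_e \omega^{(e)}/n^{(e)}$. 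Matching each resulting scalar to the sparsity complexity $V(s,t)$, $V(s,t+\log(2|\mathcal{E}|))$, or $V(0,t+\log(2|\mathcal{E}|))=s^*+t+\log(2|\mathcal{E}|)$ yields exactly the seven rate terms in \eqref{eq:instance-dependent-bound-j-highdim}, and collecting all failure probabilities into a single union bound of $6e^{-t}$ completes the claim.

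The main obstacle will be the intricate bookkeeping, in particular two things: first, carrying the localization in $\|\bDelta\|_2$ through the covering argument so that one obtains clean products of the form $\|\bDelta\|_2 \cdot (\text{sparse error})$ or $\|\bDelta\|_2^2 \cdot (\text{sparse error})$ without a peeling loss (using that the quadratic forms in $\bDelta$ can be linearized via a standard $\|\bDelta\|_2 = 1$ homogenization); and second, correctly identifying the cubic interaction whose weighted-sum concentration yields the $n_\dagger$-denominator — namely the product of the sparse $\bSigma$-error applied to $\bDelta$, the sparse $\bSigma$-error contracted with a $\bar g$-direction, and the sparse $\bar g$-error on $S^*$. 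Once this term is isolated and the Bernstein inequality is applied with the matching cubic moment structure, all other terms follow by the routine pattern described above.
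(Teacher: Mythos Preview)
Your overall outline—partition indices into $S\cap S^*$, $S\setminus S^*$, $S^*\setminus S$, linearize via $g_j^{(e)}=\bar g_j^{(e)}-\bSigma_{j,:}^{(e)}\bDelta$, and control the pieces by sparse covering plus sub-exponential concentration—matches the paper's architecture. But two key mechanisms are misidentified, and without them you will not reproduce the lemma as stated.

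First, on $I=S\cap S^*$ the paper does \emph{not} simply factor $(g_j)^2-(\hat g_j)^2$ and then bound $(\hat g_j(\bbeta^*))^2$ separately. It exploits an exact cancellation: writing $\hat g_j(\bbeta)-g_j(\bbeta)=\hat{\mathbb{E}}[x_j^{(e)}\varepsilon^{(e)}]-(\hat\bSigma^{(e)}-\bSigma^{(e)})_{j,:}\bDelta$ for $j\in I$ (where $\bar g_j^{(e)}=0$), one has
\[
(\hat g_j(\bbeta^*))^2-(\hat g_j(\bbeta)-g_j(\bbeta))^2
=2\,\hat{\mathbb{E}}[x_j^{(e)}\varepsilon^{(e)}]\,(\hat\bSigma^{(e)}-\bSigma^{(e)})_{j,:}\bDelta
-\big((\hat\bSigma^{(e)}-\bSigma^{(e)})_{j,:}\bDelta\big)^2,
\]
and the last (negative) term is dropped. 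This isolates $\mathsf{T}_5^{(e)}=\hat{\mathbb{E}}[\bx_I^{(e)}\varepsilon^{(e)}]^\top(\hat\bSigma^{(e)}-\bSigma^{(e)})_{I,:}\bDelta$, which is precisely the term producing the two $\|\bbeta-\bbeta^*\|_2$-weighted lines with denominators $\bar n$ and $n_\dagger$ in the statement. If instead you bound $(\hat g_j(\bbeta^*))^2$ on $I$ as a $\mathsf{T}_6$-type term, you obtain an additive $|S\cap S^*|\cdot(t+\log(2s^*|\mathcal{E}|))/\bar n$ contribution that is \emph{not} localized in $\|\bDelta\|_2$, and you lose the $\|\bDelta\|_2\cdot(\cdot)/n_\dagger$ term entirely—so the bound you get does not match the lemma (in particular the $|S^*\setminus\supp(\bbeta)|$ factor in line six would become $s^*$, which breaks the downstream localization $|S^*\setminus S|\beta_{\min}^2\le\|\bDelta\|_2^2$ used in Theorems~\ref{thm:rate-low-dim}--\ref{thm:high-dim}).

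Second, there is no cubic-in-concentration-error term anywhere in the decomposition, and the $1/n_\dagger$ rate is not obtained via Bernstein on such a form. The reassembly uses two distinct strategies. For the four terms linear in a single deviation ($\mathsf{T}_1$--$\mathsf{T}_4$), one writes $\sum_e\omega^{(e)}\mathsf{T}_k^{(e)}$ directly as a weighted sum of independent sub-exponentials and applies Lemma~\ref{lemma:sum-sub-exp}, giving the $1/\sqrt{n_{\bomega}}$ and $1/n_*$ rates. For $\mathsf{T}_5$ and $\mathsf{T}_6$, which are \emph{products} of two concentration errors, Bernstein is not applicable; instead one bounds each factor per environment (with a union over $e\in\mathcal{E}$, whence the $\log(2|\mathcal{E}|)$) and then sums the deterministic products. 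The $1/n_\dagger$ rate arises specifically from multiplying the per-environment bound $\|\hat{\mathbb{E}}[\bx_{S^*}^{(e)}\varepsilon^{(e)}]\|_2\lesssim\sqrt{(s^*+u)/n^{(e)}}$ by the \emph{tail} part of the sparse covariance bound $V(s,u)/n^{(e)}$ and summing: $\sum_e\omega^{(e)}(n^{(e)})^{-3/2}=1/n_\dagger$. Your description of this as ``a product of two sparse $\bSigma$-errors with one sparse $\bar g$-error'' handled by Bernstein is the wrong mechanism and would not yield the stated form.
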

\begin{proof}[Proof of \cref{lemma:instance-dependent-bound-j-highdim}]
    See \cref{subsec:proof-instance-dependent-bound-j-highdim}.
\end{proof}

\begin{lemma}[Instance-dependent Two-side Bound for ${\mathsf{R}}$, High-dimensional]
\label{lemma:instance-dependent-bound-r-highdim}
Assume Conditions \ref{cond0-model} -- \ref{cond3-sub-gaussian-eps} hold. Define the event
\begin{align}
\begin{split}
    \mathcal{A}_{4,t}(s) = \Bigg\{ \forall \bbeta \in \mathcal{B}_s, ~~ &~ \left|\mathsf{R}(\bbeta) - \mathsf{R}(\bbeta^*) - \hat{\mathsf{R}}(\bbeta) + \hat{\mathsf{R}}(\bbeta^*)\right| \\
    & ~~~~~~~ \le c_1 \left(\kappa_U \sigma_x^2 \delta_1 \|\bbeta - \bbeta^*\|_2^2 + \kappa_U^{1/2} \sigma_x \sigma_\varepsilon \delta_1 \|\bbeta - \bbeta^*\|_2 \right)\Bigg\}.
\end{split} 
\end{align} with $\delta_1 = \sqrt{\frac{V(s,t)}{n_{\bomega}}} + \frac{V(s,t)}{n_*}$ and some universal constants $c_1$. We have $\mathbb{P}[\mathcal{A}_{4,t}(s)] \ge 1-e^{-t}$ for any $t>0$.
\end{lemma}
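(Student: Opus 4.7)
The plan is to split the empirical process into the two natural pieces that arise from expanding the squared loss. Using $y^{(e)} = (\bbeta^*)^\top \bx^{(e)} + \varepsilon^{(e)}$ exactly as in the proof of Proposition~\ref{prop:pooled-least-squares}, I would write
$$\mathsf{R}_{\mathcal{E}}(\bbeta) - \mathsf{R}_{\mathcal{E}}(\bbeta^*) - \hat{\mathsf{R}}_{\mathcal{E}}(\bbeta) + \hat{\mathsf{R}}_{\mathcal{E}}(\bbeta^*) = (\bbeta-\bbeta^*)^\top \bDelta_{\Sigma}(\bbeta-\bbeta^*) - 2(\bbeta-\bbeta^*)^\top \bDelta_{u},$$
where $\bDelta_{\Sigma} = \bar{\bSigma} - \sum_{e} \omega^{(e)}\hat{\bSigma}^{(e)}$ and $\bDelta_{u} = \sum_{e}\omega^{(e)}\bigl(\mathbb{E}[\bx^{(e)}\varepsilon^{(e)}] - \hat{\mathbb{E}}[\bx^{(e)}\varepsilon^{(e)}]\bigr)$ depend only on the data and not on $\bbeta$. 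Since $\bbeta\in\mathcal{B}_s$ implies $\supp(\bbeta-\bbeta^*)\subset S$ for some $S$ with $|S|\le s+s^*$, it suffices to bound $\|(\bDelta_\Sigma)_S\|_2$ and $\|(\bDelta_u)_S\|_2$ uniformly over such $S$; the first controls the $\|\bbeta-\bbeta^*\|_2^2$ term and the second the $\|\bbeta-\bbeta^*\|_2$ term in the claimed envelope.

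For the covariance piece, I fix an $S$ with $|S|=k\le s+s^*$ and build a $1/4$-net $\mathcal{N}_S$ of the unit sphere in $\mathbb{R}^k$ of cardinality $\le 9^k$, giving $\|(\bDelta_\Sigma)_S\|_2 \le 2\sup_{\bu,\bv\in\mathcal{N}_S} \bu^\top(\bDelta_\Sigma)_S\bv$. For any fixed whitened directions, $\bu^\top \bar{\bSigma}_S^{-1/2}(\bDelta_\Sigma)_S \bar{\bSigma}_S^{-1/2}\bv$ is a weighted sum across environments and samples of independent centered variables of the form $\bu^\top \bar{\bSigma}^{-1/2}\bx^{(e)}_i (\bx^{(e)}_i)^\top \bar{\bSigma}^{-1/2}\bv$; Condition~\ref{cond2-subgaussain-x} makes each factor sub-Gaussian with parameter $\sigma_x$, so Lemma~\ref{lemma:product-sub-gaussian} supplies a sub-exponential parameter of order $\sigma_x^2$ per term, and Lemma~\ref{lemma:sum-sub-exp} then gives a Bernstein tail at level $\sigma_x^2(\sqrt{u/n_{\bomega}}+u/n_*)$. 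Union-bounding over $\mathcal{N}_S\times\mathcal{N}_S$ and over all $S\supset S^*$ of size $\le s+s^*$ (at most $\binom{p-s^*}{s}\le (ep/s)^s$ choices), then setting $u\asymp V(s,t)=s\log(ep/s)+s^*+t$, absorbs every combinatorial factor. Undoing the $\bar{\bSigma}^{1/2}$ normalization through Condition~\ref{cond1-well-condition} yields $\|(\bDelta_\Sigma)_S\|_2 \lesssim \kappa_U\sigma_x^2\delta_1$ with probability at least $1-e^{-t}/2$.

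The linear piece follows the same recipe one dimension lower. I discretize the unit sphere in $\mathbb{R}^k$ by a $1/2$-net of size $\le 5^k$, and for each test direction $\bu$ write $\bu^\top \bar{\bSigma}_S^{-1/2}(\bDelta_u)_S$ as a weighted sum of centered independent products $\bigl(\bu^\top \bar{\bSigma}^{-1/2}\bx^{(e)}_i\bigr)\varepsilon^{(e)}_i$. Conditions~\ref{cond2-subgaussain-x}–\ref{cond3-sub-gaussian-eps} together with Lemma~\ref{lemma:product-sub-gaussian} make each summand sub-exponential with parameter of order $\sigma_x\sigma_\varepsilon$, and Lemma~\ref{lemma:sum-sub-exp} again produces a Bernstein tail governed by $n_{\bomega}$ in the variance part and $n_*$ in the scale part. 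Union-bounding over the net and over the supports with $u\asymp V(s,t)$ gives $\|(\bDelta_u)_S\|_2\lesssim \kappa_U^{1/2}\sigma_x\sigma_\varepsilon\delta_1$ on the complementary $1-e^{-t}/2$ probability event. Combining the two high-probability bounds through the initial decomposition produces the lemma.

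The main obstacle is bookkeeping the $\bar{\bSigma}$-normalisation: Condition~\ref{cond2-subgaussain-x} gives sub-Gaussianity of $\bar{\bSigma}^{-1/2}\bx^{(e)}$ rather than of $\bx^{(e)}$ itself, so every concentration step is carried out in whitened coordinates and transferred back via the spectral envelope $\kappa_L\bI\preceq\bar{\bSigma}\preceq\kappa_U\bI$. The other delicate point is that summing weighted independent sub-exponentials across environments produces the effective sample sizes $n_{\bomega}=(\sum_e(\omega^{(e)})^2/n^{(e)})^{-1}$ in the variance term and $n_*=\min_e n^{(e)}/\omega^{(e)}$ in the scale term—one must be careful not to collapse these into a single $|\mathcal{E}|\bar n$ factor. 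Everything else is a mechanical union bound over sparse supports of cardinality $\le s+s^*$.
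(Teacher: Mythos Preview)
Your proposal is correct and follows essentially the same route as the paper: decompose into the quadratic term $(\bbeta-\bbeta^*)^\top\bDelta_\Sigma(\bbeta-\bbeta^*)$ and the linear term $2(\bbeta-\bbeta^*)^\top\bDelta_u$, bound the restricted operator norm of $\bDelta_\Sigma$ and the restricted $\ell_2$ norm of $\bDelta_u$ via $\varepsilon$-nets over sparse unit spheres, apply Lemmas~\ref{lemma:product-sub-gaussian} and~\ref{lemma:sum-sub-exp} to each fixed direction, and union-bound over the $(ep/s)^s\cdot 9^{s^*}$-sized collection with $u\asymp V(s,t)$. The only cosmetic difference is that you whiten explicitly by $\bar{\bSigma}^{-1/2}$ and then pay $\kappa_U$ when un-whitening, whereas the paper absorbs the whitening immediately by noting that $\bv^\top\bx^{(e)}$ is sub-Gaussian with parameter $\kappa_U^{1/2}\sigma_x$; the resulting constants and the $(n_{\bomega},n_*)$ dependence are identical.
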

\begin{proof}[Proof of \cref{lemma:instance-dependent-bound-r-highdim}]
    See \cref{subsec:proof-instance-dependent-bound-r-highdim}.
\end{proof}

We start with a lemma stating that if we choose some large $\lambda$, then the $\hat{S} = \supp(\hat{\bbeta}_{\mathsf{L}})$ will satisfy $|\hat{S}| \le 2s^*$ with high probability.

\begin{proposition}
\label{proposition:small-hat-s}
    Let $0<t\le n_{\min} - \log(2ep|\mathcal{E}|) - s^*$ be arbitrary. Assume Conditions \ref{cond0-model}--\ref{cond-ident} hold, and $\gamma \ge 3\gamma^*\lor 1$. Let $\zeta = \log(ep/s^*) + t$. Suppose further that $\log(|\mathcal{E}|) \le c_1 \log p$ for some universal constant $c_1>0$. There exist some universal constants $c_2$--$c_3$ depending only on $C$ such that if \begin{align*}
        \bar{n} &\ge c_2 \kappa_U\sigma_x^2 \gamma (t + \log p) \left\{ s^* +  \sigma_\varepsilon^2 / (\kappa_L \min_{j\in S^*} |\beta_j^*|^2)\right\}, \\
        n_* &\ge c_2 \kappa_U^4\sigma_x^4 (\gamma/\kappa_L)^2  \zeta s^*, \\
        n_\dagger &\ge c_2 \kappa_U^{3/2} \sigma_x^2 (\gamma/\kappa_L) \zeta s^* \sqrt{\zeta + s^*},
    \end{align*}
    and the choice of $\lambda$ satisfies
    \begin{align*}
        \lambda \ge c_3 \sigma_x^2 \sigma_\varepsilon^2 (\gamma/\kappa_L) \zeta \left\{\frac{\kappa_U^4\sigma_x^2 (\gamma/\kappa_L)}{n_*} + \frac{\kappa_U^{3/2} \sqrt{\zeta + s^*}}{n_\dagger} + \frac{\kappa_{U}^3\sigma_x^2 (\gamma/\kappa_L)(\zeta + s^*)}{\bar{n}^2}\right\}.
    \end{align*} Then the following holds
    \begin{align*}
        \mathbb{P}\left[|\hat{S}| \le 2s^*\right] \ge 1-3e^{-t}.
    \end{align*}
\end{proposition}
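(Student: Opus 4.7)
The approach is proof by contradiction combined with a peeling argument over the size of $\supp(\bbeta)$. Suppose $|\hat S| > 2s^*$ where $\hat S = \supp(\hat{\bbeta}_{\mathsf{L}})$. Since $\hat{\bbeta}_{\mathsf{L}}$ minimizes $\hat{\mathsf{L}}$, evaluating $\hat{\mathsf{L}}$ at $\hat{\bbeta}_{\mathsf{L}}$ versus $\bbeta^*$ gives the key inequality
\begin{align*}
\hat{\mathsf{Q}}(\hat{\bbeta}_{\mathsf{L}}; \gamma, \bomega) - \hat{\mathsf{Q}}(\bbeta^*; \gamma, \bomega) \le \lambda\bigl(s^* - |\hat S|\bigr) \le -\lambda s^*,
\end{align*}
so it suffices to produce a uniform lower bound on $\hat{\mathsf{Q}}(\bbeta;\gamma,\bomega) - \hat{\mathsf{Q}}(\bbeta^*;\gamma,\bomega)$ over all $\bbeta$ with $|\supp(\bbeta)| > 2s^*$ that exceeds $-\lambda(|\supp(\bbeta)| - s^*)$, yielding the desired contradiction.

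To this end, I would invoke the decomposition \eqref{eq:q-decomposition} and combine the population strong convexity from Condition \ref{cond4-population-general} with empirical-process control of the remainder terms $\mathsf{T}_{\mathsf{R}}$ and $\mathsf{T}_{\mathsf{J}}$ via Lemmas \ref{lemma:instance-dependent-bound-j-highdim} and \ref{lemma:instance-dependent-bound-r-highdim}. The peeling is done over support sizes $s \in \{2s^*+1, \ldots, p\}$: for each $s$, apply the two lemmas to the class $\mathcal{B}_s$ with the level $t$ inflated to $t_s = t + c\log p$ so that the union bound over the $p$ levels (which is legitimate since $\log|\mathcal{E}| \lesssim \log p$) accumulates to failure probability at most $3e^{-t}$. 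On the intersection event, one obtains for every $\bbeta$ with $|\supp(\bbeta)| = s$,
\begin{align*}
\hat{\mathsf{Q}}(\bbeta;\gamma,\bomega) - \hat{\mathsf{Q}}(\bbeta^*;\gamma,\bomega) \ge \kappa \|\bbeta - \bbeta^*\|_2^2 + \gamma v_*(\supp(\bbeta)) - \bigl|\mathsf{T}_{\mathsf{R}}(\bbeta,\bbeta^*)\bigr| - \bigl|\mathsf{T}_{\mathsf{J}}(\bbeta,\bbeta^*)\bigr|.
\end{align*}

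Next, the various empirical terms bounded by the lemmas are reorganized as in the proofs of Theorems \ref{thm:rate-low-dim}--\ref{thm:rate-faster-low-dim}. The quadratic $\|\bbeta-\bbeta^*\|_2^2$ terms are absorbed into $\tfrac{\kappa}{4}\|\bbeta-\bbeta^*\|_2^2$ using the assumed lower bound $n_* \gtrsim \kappa_U^4 \sigma_x^4(\gamma/\kappa)^2 \zeta s^*$. The linear $\|\bbeta-\bbeta^*\|_2$ terms, including those scaling with $1/\bar n$ and $1/n_\dagger$, are split by Young's inequality into $\tfrac{\kappa}{4}\|\bbeta-\bbeta^*\|_2^2$ plus deterministic remainders. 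The bias term $|S^*\setminus\supp(\bbeta)| \cdot \kappa_U\sigma_x^2\sigma_\varepsilon^2(t+\log(2s^*|\mathcal{E}|))/\bar n$ is controlled by the inequality $|S^*\setminus\supp(\bbeta)|\min_{j\in S^*}|\beta_j^*|^2 \le \|\bbeta-\bbeta^*\|_2^2$ and absorbed under the $\bar n$ condition. Finally, the delicate $\mathsf{I}_3$-type term $\sqrt{\bar{\mathsf{b}}_{\supp(\bbeta)}} \cdot \kappa_U^{1/2}\sigma_x\sigma_\varepsilon\sqrt{V(|\supp(\bbeta)\setminus S^*|,t)/n_*}$ is handled by two independent routes: either via Condition~\ref{cond5-population-stronger} with Young's inequality, yielding $\tfrac{\gamma}{2} v_*(\supp(\bbeta)) + \tfrac{\gamma\vartheta}{2}\kappa_U\sigma_x^2\sigma_\varepsilon^2 V/n_*$ (the $v_*$ half being absorbed), or directly via the crude bound $\bar{\mathsf{b}}_S \le \kappa_U\sigma_\varepsilon^2$ giving $\gamma \kappa_U\sigma_x\sigma_\varepsilon^2 \sqrt{V/n_*}$; taking the minimum explains the $\land$ in the second $\lambda$-condition.

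This produces $\hat{\mathsf{Q}}(\bbeta;\gamma,\bomega) - \hat{\mathsf{Q}}(\bbeta^*;\gamma,\bomega) \ge -R(s)$ with an explicit deterministic $R(s)$. The assumed lower bounds on $\lambda$ are then verified to imply $R(s) \le \tfrac{1}{2}\lambda(s - s^*)$ for every $s > 2s^*$, using that $s - s^* \ge s^*$ in this range so that $s\log(ep/s)/(s-s^*)$ and $(s^*+t)/(s-s^*)$ are both of order $\zeta = \log(ep/s^*) + t$. Hence $\hat{\mathsf{Q}}(\bbeta;\gamma,\bomega) - \hat{\mathsf{Q}}(\bbeta^*;\gamma,\bomega) > -\lambda(|\supp(\bbeta)| - s^*)$ for all such $\bbeta$, contradicting the inequality from the first paragraph. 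The main obstacle is the careful bookkeeping of the constants and the peeling: one must match each term produced by Lemma \ref{lemma:instance-dependent-bound-j-highdim}, after the Young's inequality splits, to one of the summands in the assumed lower bound on $\lambda$, all while keeping the union bound over support sizes under control.
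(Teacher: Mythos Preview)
Your overall strategy (contradiction via the basic inequality, decomposition \eqref{eq:q-decomposition}, and control of $\mathsf{T}_{\mathsf{R}},\mathsf{T}_{\mathsf{J}}$ through Lemmas \ref{lemma:instance-dependent-bound-j-highdim}--\ref{lemma:instance-dependent-bound-r-highdim}) matches the paper, and your handling of the linear terms, the $|S^*\setminus\supp(\bbeta)|$ term, and the two-route bound on $\sqrt{\bar{\mathsf{b}}_S}$ is essentially what the paper does. However, there is a genuine gap in your absorption step for the quadratic term.

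You claim the coefficient $C_1\kappa_U^2\sigma_x^2\gamma\sqrt{V(s,t)/n_*}$ multiplying $\|\bbeta-\bbeta^*\|_2^2$ can be absorbed into $\tfrac{\kappa}{4}\|\bbeta-\bbeta^*\|_2^2$ using only $n_* \gtrsim \kappa_U^4\sigma_x^4(\gamma/\kappa)^2\zeta s^*$. But $V(s,t)=s\log(ep/s)+s^*+t$ grows with $s$, so the required inequality $n_*\gtrsim (\gamma/\kappa)^2\kappa_U^4\sigma_x^4 V(s,t)$ fails once $s$ is much larger than $s^*$ (in particular for $s$ of order $n_*/\zeta$ or larger). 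At that point the coefficient of $\|\bbeta-\bbeta^*\|_2^2$ becomes negative and the entire lower bound collapses; there is no $\gamma v_*$ term or $\ell_0$ penalty that can rescue it, because $\|\bbeta-\bbeta^*\|_2$ is unbounded on $\mathcal{B}_s$. The same issue hits the $V(s,t)/n_*$ and $V(s,t)\sqrt{V(0,t)}/n_\dagger$ terms from Lemma \ref{lemma:instance-dependent-bound-j-highdim}.

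The paper closes this gap by a two-regime argument. It peels dyadically, $\alpha_\ell=2^\ell s^*$, and introduces a threshold $\tilde{s}\asymp \min\{n_*/\zeta,\ (\kappa/\gamma)^2 n_*/(\kappa_U^4\sigma_x^4\zeta),\ n_\dagger/(\zeta\sqrt{\phi_*})\}$. For $\alpha_{\ell+1}\le\tilde{s}$ it runs exactly your argument, where absorption is legitimate by construction of $\tilde{s}$. For $\alpha_{\ell+1}>\tilde{s}$ it abandons the concentration lemmas entirely and instead uses the crude lower bound $\hat{\mathsf{Q}}(\bbeta)\ge 0$ together with a separate lemma (Lemma \ref{lemma:lemma-qhat-beta-star}) giving $\hat{\mathsf{Q}}(\bbeta^*)\lesssim\sigma_\varepsilon^2$ with high probability; then $\lambda\|\bbeta\|_0\ge \lambda\tilde{s}/2$, and the assumed lower bounds on $\lambda$ are calibrated precisely so that $\lambda\tilde{s}\gtrsim\sigma_\varepsilon^2$, delivering the contradiction directly. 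Your proposal is missing this second regime and the auxiliary bound on $\hat{\mathsf{Q}}(\bbeta^*)$; without them the proof does not go through for large support sizes.
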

\begin{proof}[Proof of \cref{proposition:small-hat-s}] 
    See \cref{subsec:proof-small-hat-s}.
\end{proof}

We are ready to prove \cref{thm:high-dim}.

\begin{proof}[Proof of Theorem~\ref{thm:high-dim}]
We prove the Theorem in a more general form, including the tail probability $t$. Denote $\zeta = \log(ep/s^*) + t$. We first provide a detailed condition for different sample sizes presented in \cref{cond6-n-req},
\begin{align}
\label{eq:req-n-high-dim}
\begin{split}
    n_{\min} &\ge C^* (s^* + \log p) \\
    n_* &\ge C^* \Big\{\kappa_U^4\sigma_x^4 (\gamma/\kappa_L)^2 \zeta s^* \Big\} \lor \Big\{\kappa_U^3 \sigma_x^4 \sigma_\varepsilon^2 \zeta s^* (\gamma/\kappa_L)^2 (1/\kappa_L\beta_{\min}^2)\Big\} \\
    \bar{n} &\ge C^* \left\{\kappa_U\sigma_x^2 \gamma (t + \log p) \left\{ s^* +  \sigma_\varepsilon^2 / (\kappa_L \beta_{\min}^2)\right\} \right\}\lor \left\{\kappa_U^{3/2}\sigma_x^2\sigma_\varepsilon (\gamma/\kappa_L) \frac{\sqrt{s^* \zeta (s^* + \zeta)}}{\kappa_L \beta_{\min}}\right\} \\
    n_\dagger &\ge C^* \kappa_U^{3/2} \sigma_x^2 (\gamma/\kappa_L) s^* \zeta \sqrt{s^* + \zeta} \{1 + \sigma_\varepsilon/(\sqrt{\kappa_L} \beta_{\min})\}
\end{split}
\end{align} together with a detailed lower bound on the regularization parameter $\lambda$
\begin{align}
\label{eq:req-lambda-high-dim}
\begin{split}
    \lambda \ge C^*\kappa_U^3 \sigma_x^4 \sigma_\varepsilon^2 (\gamma/\kappa_L)^2 \Bigg\{ &\left(\frac{\kappa_U \zeta}{n_*} \lor \frac{s^* \zeta}{n_*}\right) + \frac{s^* \zeta(s^* + \zeta)}{\bar{n}^2} \\ 
    &~~~~ + \kappa_U^{-3/2} \sigma_x^{-2} (\gamma/\kappa_L)^{-1} \frac{\zeta\sqrt{s^* + \zeta}}{n_\dagger} + \frac{(s^*\zeta)^2(s^* + \zeta)}{n_\dagger^2} \Bigg\}.
\end{split}
\end{align}
Define the two events
\begin{align*}
    \mathcal{C}_{1,t} = \left\{|\hat{S}| \le 2s^*\right\} ~~~~\text{and}~~~~ \mathcal{C}_{2,t} = \mathcal{A}_{3,t}(2s^*) \cap \mathcal{A}_{4,t}(2s^*).
\end{align*} We can see that the conditions in \cref{lemma:instance-dependent-bound-j-highdim}, \cref{lemma:instance-dependent-bound-r-highdim},  and \cref{proposition:small-hat-s} are satisfied by \eqref{eq:req-n-high-dim} and \eqref{eq:req-lambda-high-dim} with large universal constant $C^*>0$. Denote $\phi_* = s^* + \log(ep/s^*) + t = s^* + \zeta$ and $\phi_s = s^* \log(ep/s^*) + t \le s^* \zeta$. We follow a similar strategy as that in \emph{Case 1} of \cref{proposition:small-hat-s}. Under $\mathcal{C}_{1,t} \cap \mathcal{C}_{2,t}$, it follows our choice of $\gamma$ and \cref{prop:population-lb} that, there exists some universal constant $C_1>0$,
\begin{align*}
    \hat{\mathsf{Q}}(\hat{\bbeta}_{\mathsf{L}}) - \hat{\mathsf{Q}}(\bbeta^*) &= \hat{\mathsf{Q}}(\hat{\bbeta}_{\mathsf{L}}) - \mathsf{Q}(\hat{\bbeta}_{\mathsf{L}}) + \mathsf{Q}(\hat{\bbeta}_{\mathsf{L}}) - \mathsf{Q}(\bbeta^*) + \mathsf{Q}(\bbeta^*) - \hat{\mathsf{Q}}(\bbeta^*) \\
    &\ge \frac{\kappa_L}{2} \|\hat{\bbeta}_{\mathsf{L}} - \bbeta^*\|_2^2 + \frac{\gamma}{6} \mathsf{J}(\hat{\bbeta}_{\mathsf{L}};\bomega) + \hat{\mathsf{R}}(\hat{\bbeta}_{\mathsf{L}}) - \mathsf{R}(\hat{\bbeta}_{\mathsf{L}}) + \mathsf{R} (\bbeta^*) - \hat{\mathsf{R}} (\bbeta^*) \\
    & ~~~~~~~~~~~~ - \gamma \left(\mathsf{J}(\hat{\bbeta}_{\mathsf{L}}) - \hat{\mathsf{J}}(\hat{\bbeta}_{\mathsf{L}}) - \mathsf{J}(\bbeta^*) + \hat{\mathsf{J}}(\bbeta^*)\right)\\
    &\overset{(a)}{\ge} \left(\frac{\kappa_L}{2} - C_1 \kappa_U^2\sigma_x^2 \gamma \sqrt{\frac{\phi_s}{n_*}}\right) \|\hat{\bbeta}_{\mathsf{L}} - \bbeta^*\|_2^2 \\
    & ~~~~~~~~~~~~ + \frac{\gamma}{6} \mathsf{J}(\hat{\bbeta}_{\mathsf{L}};\bomega) - C_1 \gamma \sqrt{\sum_{e\in \mathcal{E}} \omega^{(e)} \left\|\mathbb{E}[\bx_{\hat{S}}^{(e)} \varepsilon^{(e)}]\right\|_2^2} \kappa_U^{1/2} \sigma_x\sigma_\varepsilon \sqrt{\frac{|\hat{S}\setminus S^*|\log (ep/s^*)+s^* + t}{n_*}} \\
    & ~~~~~~~~~~~~ - C_1 \|\hat{\bbeta}_{\mathsf{L}} - \bbeta^*\|_2 \times \gamma \kappa_U^{3/2} \sigma_x^2 \sigma_\varepsilon \sqrt{\frac{\phi_s}{n_*}} \\
    & ~~~~~~~~~~~~ - C_1 \|\hat{\bbeta}_{\mathsf{L}} - \bbeta^*\|_2 \times \gamma \kappa_U^{3/2} \sigma_x^3 \sigma_\varepsilon \left(\frac{\sqrt{\phi_*} \sqrt{\phi_s}}{\bar{n}} + \frac{\phi_s \sqrt{\phi_*}}{n_\dagger} \right) \\
    & ~~~~~~~~~~~~ - C_1 \gamma |S^*\setminus \hat{S}| \kappa_U \sigma_x^2 \sigma_\varepsilon^2 \frac{t+ \log p}{\bar{n}} - C_1 \gamma \kappa_U \sigma_x \sigma_\varepsilon^2 \frac{\phi_s}{n_*} \\
    &= \mathsf{I}_1 + \mathsf{I}_2 + \mathsf{I}_3 + \mathsf{I}_4 + \mathsf{I}_5,
\end{align*} where $(a)$ follows from the fact that $\mathcal{C}_{1,t} \cap \mathcal{C}_{2,t}$ holds such that we can apply the result of \cref{lemma:instance-dependent-bound-j-highdim} and \cref{lemma:instance-dependent-bound-r-highdim} with $s=2s^*$ to $\hat{\bbeta}_{\mathsf{L}}$. Denote $\Diamond=0.5 (6C_1)^2 (\gamma/\kappa_L)^2 \kappa_U^3 \sigma_x^4\sigma_\varepsilon^2$. Following a similar strategy of lower bounds on $\mathsf{I}_1$ -- $\mathsf{I}_5$ and using the fact that $\phi_s \le 2s^*\zeta$ and
\begin{align*}
    \mathsf{I}_2 \ge - \frac{\kappa_L}{12} \|\bbeta - \bbeta^*\|_2^2 - 4\Diamond \frac{|\hat{S} \setminus S^*| \zeta}{n_*},
\end{align*} we obtain
\begin{align*}
    &~\hat{\mathsf{Q}}(\hat{\bbeta}_{\mathsf{L}}) - \hat{\mathsf{Q}}(\bbeta^*) + \lambda \|\hat{\bbeta}_{\mathsf{L}}\|_0 - \lambda \|\bbeta^*\|_0 \\
    &~~~~ = \hat{\mathsf{Q}}(\hat{\bbeta}_{\mathsf{L}}) - \hat{\mathsf{Q}}(\bbeta^*) + \lambda |\hat{S} \setminus S^*| - \lambda |S^* \setminus \hat{S}| \\
    &~~~~\ge \frac{\kappa_L}{12} \|[\hat{\bbeta}_{\mathsf{L}}]_{\hat{S}} - [\bbeta^*]_{\hat{S}}\|_2^2 \\
    &~~~~~~~~~~~~ + \frac{\kappa_L}{12} \|[\hat{\bbeta}_{\mathsf{L}}]_{S^*\setminus \hat{S}} - [\bbeta^*]_{S^*\setminus \hat{S}}\|_2^2 - \lambda |S^* \setminus \hat{S}| \\
    &~~~~~~~~~~~~ + \frac{\lambda}{2} |\hat{S} \setminus S^*| - 4\Diamond \cdot \frac{\zeta}{n_*} |\hat{S}\setminus S^*| \\ 
    &~~~~~~~~~~~~ + \frac{\lambda}{2} |\hat{S}\setminus S^*| - 2 \Diamond \cdot (2s^*) \zeta \left(\frac{1}{n_*} + \frac{\phi_*}{\bar{n}^2} + \frac{2s^* \phi_* \zeta^2}{n_\dagger^2}\right) \\
    &~~~~ \overset{(a)}{\ge} |S^* \setminus \hat{S}| \left(\frac{\kappa_L}{12} \min_{j\in S^*}|\beta^*_j|^2 - \lambda \right) \\
    &~~~~~~~~~~~~ + \frac{\lambda}{2} |\hat{S}\setminus S^*| - 4 \Diamond \cdot s^* \zeta \left(\frac{1}{n_*} + \frac{\phi_*}{\bar{n}^2} + \frac{2s^* \phi_* \zeta}{n_\dagger^2}\right) + \left(\frac{\lambda}{2} - 4\Diamond \cdot \frac{\zeta}{n_*}\right) |\hat{S}\setminus S^*|,
\end{align*} 
Here $(a)$ follows from the facts $\|[\hat{\bbeta}_{\mathsf{L}}]_{\hat{S}} - [\bbeta^*]_{\hat{S}}\|_2^2 \ge 0$, $\|[\hat{\bbeta}_{\mathsf{L}}]_{S^*\setminus \hat{S}} - [\bbeta^*]_{S^*\setminus \hat{S}}\|_2^2 \ge |S^* \setminus \hat{S}| \min_{j\in S^*}|\beta^*_j|$. We argue that if
\begin{align}
\label{eq:proof-thm-highdim-n-req}
    \frac{\kappa_L}{36} \min_{j\in S^*} |\beta_j^*|^2 \ge  4 \Diamond \cdot s^* \zeta \left(\frac{1}{n_*} + \frac{\phi_*}{\bar{n}^2} + \frac{2s^* \phi_* \zeta}{n_\dagger^2}\right),
\end{align} which is satisfied by \cref{cond6-n-req}, and $\lambda \le \frac{\kappa_L}{36} \min_{j\in S^*} |\beta_j^*|^2$, then $|S^*\setminus \hat{S}|=0$. This is because if $|S^*\setminus \hat{S}| \ge 1$, we will have
\begin{align*}
    \hat{\mathsf{Q}}(\hat{\bbeta}_{\mathsf{L}}) - \hat{\mathsf{Q}}(\bbeta^*) + \lambda \|\hat{\bbeta}_{\mathsf{L}}\|_0 - \lambda \|\bbeta^*\|_0 \ge |S^*\setminus \hat{S}| \frac{\kappa_L}{36} \min_{j\in S^*}|\beta^*_j|^2 > 0.
\end{align*} This is contrary to the fact that $\hat{\bbeta}_{\mathsf{L}}$ is the minimizer of \eqref{eq:eills-objective-l}. Moreover, we also have $|\hat{S} \setminus S^*| = 0$ if
\begin{align}
\label{eq:proof-thm-highdim-lambda-req2}
    \lambda \ge 9 \Diamond \cdot s^* \zeta \left(\frac{1}{n_*} + \frac{\phi_*}{\bar{n}^2} + \frac{2s^* \phi_* \zeta}{n_\dagger^2}\right).
\end{align} This is because if $|\hat{S} \setminus S^*| \ge 1$, we will have
\begin{align*}
    \hat{\mathsf{Q}}(\hat{\bbeta}_{\mathsf{L}}) - \hat{\mathsf{Q}}(\bbeta^*) + \lambda \|\hat{\bbeta}_{\mathsf{L}}\|_0 - \lambda \|\bbeta^*\|_0 \ge \frac{\lambda}{2} |\hat{S} \setminus S^*| - 4 \Diamond \cdot s^* \zeta \left(\frac{1}{n_*} + \frac{\phi_*}{\bar{n}^2} + \frac{2s^* \phi_* \zeta}{n_\dagger^2}\right) > 0,
\end{align*} which also contradicts the fact that $\hat{\bbeta}_{\mathsf{L}}$ minimizes \eqref{eq:eills-objective-l}. In conclusion, letting $\lambda \le \frac{\kappa_L}{36} \min_{j\in S^*} |\beta_j^*|^2$ and choosing some large enough constant $C^*$ such that \eqref{eq:proof-thm-highdim-n-req} and \eqref{eq:proof-thm-highdim-lambda-req2} are satisfied by \eqref{eq:req-n-high-dim} and \eqref{eq:req-lambda-high-dim}, we can then argue that under the event $\mathcal{C}_{1,t} \cap \mathcal{C}_{2,t}$, which occurs with probability $1-11e^{-t}$, one has $|\hat{S}\setminus S^*| + |S^*\setminus \hat{S}| = 0$, implying that $\hat{S} = S^*$. This completes the proof via setting $t=10\log (ep)$. 
\end{proof}

\subsection{Proof of \cref{lemma:instance-dependent-bound-j-highdim}}
\label{subsec:proof-instance-dependent-bound-j-highdim}

Let $S=\supp(\bbeta)$, $I = S\cap S^*$. It follows from the fact that $\mathsf{J}(\bbeta^*)=0$ and the definition of $\mathsf{J}$ that
\begin{align*}
	\mathsf{J}(\bbeta) - \mathsf{J}(\bbeta^*) - \hat{\mathsf{J}}(\bbeta) + \hat{\mathsf{J}}(\bbeta^*) = \sum_{e\in \mathcal{E}} \frac{\omega^{(e)}}{4} \left\{\|\nabla_S \mathsf{R}^{(e)}(\bbeta)\|^2_2 - \|\nabla_S \hat{\mathsf{R}}^{(e)}(\bbeta)\|_2^2 + \|\nabla_{S^*} \hat{\mathsf{R}}^{(e)}(\bbeta^*)\|_2^2\right\}.
\end{align*}
Observe that, for any fixed $e\in \mathcal{E}$, 
\begin{align}
&\frac{1}{8}\left(\|\nabla_S \mathsf{R}^{(e)}(\bbeta)\|^2_2 - \|\nabla_S \hat{\mathsf{R}}^{(e)}(\bbeta)\|_2^2 + \|\nabla_{S^*} \hat{\mathsf{R}}^{(e)}(\bbeta^*)\|_2^2\right) \nonumber \\
=&~ \frac{1}{8}\left\{\|\nabla_S \mathsf{R}^{(e)}(\bbeta)\|^2_2 - \|\nabla_S \hat{\mathsf{R}}^{(e)}(\bbeta) - \nabla_S \mathsf{R}^{(e)}(\bbeta) + \nabla_S \mathsf{R}^{(e)}(\bbeta)\|_2^2 + \|\nabla_{S^*} \hat{\mathsf{R}}^{(e)}(\bbeta^*)\|_2^2\right\} \nonumber\\
=&~ -\frac{1}{4} \{\nabla_S \mathsf{R}^{(e)}(\bbeta)\}^\top \left\{\nabla_S \hat{\mathsf{R}}^{(e)}(\bbeta) - \nabla_S {\mathsf{R}}^{(e)}(\bbeta)\right\} + \frac{1}{8} \|\nabla_{S^*\setminus S} \hat{\mathsf{R}}^{(e)}(\bbeta^*)\|_2^2 \nonumber\\
&~~~~~~~~~~~ - \frac{1}{8} \|\nabla_{S\setminus S^*} \hat{\mathsf{R}}^{(e)}(\bbeta) - \nabla_{S\setminus S^*} {\mathsf{R}}^{(e)}(\bbeta)\|_2^2 \nonumber\\
&~~~~~~~~~~~ + \frac{1}{8}\left\{\|\nabla_{I} \hat{\mathsf{R}}^{(e)}(\bbeta^*)\|_2^2 - \|\nabla_{I} \hat{\mathsf{R}}^{(e)}(\bbeta) - \nabla_{I} \mathsf{R}^{(e)}(\bbeta)\|_2^2\right\} \nonumber\\
\overset{(a)}{\le}&  \left(\bSigma^{(e)}_{S,:}(\bbeta - \bbeta^*) - \mathbb{E}[\bx_S^{(e)} \varepsilon^{(e)}]\right)^\top \Bigg\{\hat{\mathbb{E}}[\bx_S^{(e)} \varepsilon^{(e)}] - \mathbb{E}[\bx_S^{(e)} \varepsilon^{(e)}]  \nonumber\\
&~~~~~~~~~~~~~~~~ - \left\{\hat{\mathbb{E}}[\bx_S^{(e)} (\bx^{(e)})^\top (\bbeta - \bbeta^*)] - \bSigma^{(e)}_{S,:} (\bbeta- \bbeta^*)\right\}\Bigg\} \nonumber\\
&~~~~~~~~~~~~~~~~ + \left(\hat{\mathbb{E}}[\bx_{I}^{(e)} \varepsilon^{(e)}] \right)^\top \left(\hat{\mathbb{E}} [\bx_{I}^{(e)} (\bx^{(e)})^\top (\bbeta - \bbeta^*)] - \bSigma^{(e)}_{I,:} (\bbeta - \bbeta^*)\right) + \frac{1}{2} \left\|\hat{\mathbb{E}}[\bx_{S^*\setminus S}^{(e)} \varepsilon^{(e)}]\right\|_2^2 \nonumber\\
=&~  (\bbeta - \bbeta^*)^\top \bSigma^{(e)}_{:,S} \left\{\hat{\mathbb{E}}[\bx_S^{(e)} \varepsilon^{(e)}] - \mathbb{E}[\bx_S^{(e)} \varepsilon^{(e)}]\right\} \nonumber\\
&~~~~~~~~~ -\mathbb{E}[\bx^{(e)}_S \varepsilon^{(e)}]^\top \left\{\hat{\mathbb{E}}[\bx_S^{(e)} \varepsilon^{(e)}] - \mathbb{E}[\bx_S^{(e)} \varepsilon^{(e)}]\right\} \nonumber\\
&~~~~~~~~~ +(\bbeta - \bbeta^*)^\top \bSigma_{:,S}^{(e)} \left\{\hat{\mathbb{E}}[\bx_S^{(e)} (\bx^{(e)})^\top (\bbeta - \bbeta^*)] - \bSigma^{(e)}_{S,:} (\bbeta- \bbeta^*)\right\} \nonumber\\
&~~~~~~~~~ -\mathbb{E}[\bx_S^{(e)}\varepsilon^{(e)}]^\top \left\{\hat{\mathbb{E}}[\bx_S^{(e)} (\bx^{(e)})^\top (\bbeta - \bbeta^*)] - \bSigma^{(e)}_{S,:} (\bbeta- \bbeta^*)\right\} \nonumber\\
&~~~~~~~~~ +\left(\hat{\mathbb{E}}[\bx_{I}^{(e)} \varepsilon^{(e)}] \right)^\top \left(\hat{\mathbb{E}} [\bx_{I}^{(e)} (\bx^{(e)})^\top (\bbeta - \bbeta^*)] - \bSigma^{(e)}_{I,:} (\bbeta - \bbeta^*)\right) + \frac{1}{2} \left\|\hat{\mathbb{E}}[\bx_{S^*\setminus S}^{(e)} \varepsilon^{(e)}]\right\|_2^2\nonumber\\
=&~ \mathsf{T}^{(e)}_1(\bbeta) + \mathsf{T}^{(e)}_2(\bbeta) + \mathsf{T}^{(e)}_3(\bbeta) + \mathsf{T}^{(e)}_4(\bbeta) + \mathsf{T}^{(e)}_5(\bbeta) + \mathsf{T}_6^{(e)}(\bbeta), \label{eq:low-dim-id-decomposition}
\end{align}
where $(a)$ follows from the definition of $\nabla \mathsf{R}(\bbeta)$, $\nabla \hat{\mathsf{R}}(\bbeta)$ together with the facts
\begin{align*}
	&\frac{1}{8}\left\{\|\nabla_{I} \hat{\mathsf{R}}^{(e)}(\bbeta^*)\|_2^2 - \|\nabla_{I} \hat{\mathsf{R}}^{(e)}(\bbeta) - \nabla_{I} {\mathsf{R}}^{(e)}(\bbeta)\|_2^2\right\} \\
	=& ~\frac{1}{2} \sum_{j\in I} \left(\hat{\mathbb{E}}[x_j^{(e)} \varepsilon^{(e)}] \right)^2 - \left(- \hat{\mathbb{E}}[x_j^{(e)} \varepsilon^{(e)}] + \hat{\mathbb{E}}[x_j^{(e)} (\bx^{(e)})^\top (\bbeta - \bbeta^*)] - {\mathbb{E}}[x_j^{(e)} (\bx^{(e)})^\top (\bbeta - \bbeta^*)]\right)^2\\
	=& ~\frac{1}{2} \sum_{j \in I} \left(\hat{\mathbb{E}}[x_j^{(e)} (\bx^{(e)})^\top (\bbeta - \bbeta^*)] - {\mathbb{E}}[x_j^{(e)} (\bx^{(e)})^\top (\bbeta - \bbeta^*)]\right) \times \\
	&~~~~~~~~~~~~~~~~~~~~~ \left\{2\hat{\mathbb{E}}[x_j^{(e)} \varepsilon^{(e)}] -\left(\hat{\mathbb{E}}[x_j^{(e)} (\bx^{(e)})^\top (\bbeta - \bbeta^*)] - {\mathbb{E}}[x_j^{(e)} (\bx^{(e)})^\top (\bbeta - \bbeta^*)]\right)\right\} \\
	\le& ~ \sum_{j\in I} \hat{\mathbb{E}}[x_j^{(e)} \varepsilon^{(e)}] \times \left(\hat{\mathbb{E}}[x_j^{(e)} (\bx^{(e)})^\top (\bbeta - \bbeta^*)] - {\mathbb{E}}[x_j^{(e)} (\bx^{(e)})^\top (\bbeta - \bbeta^*)]\right) \\
	=& ~\left(\hat{\mathbb{E}}[\bx_{I}^{(e)} \varepsilon^{(e)}] \right)^\top \left(\hat{\mathbb{E}} [\bx_{I}^{(e)} (\bx^{(e)})^\top (\bbeta - \bbeta^*)] - \bSigma^{(e)}_{I,:} (\bbeta - \bbeta^*)\right),
\end{align*} and $-\frac{1}{8} \|\nabla_{S\setminus S^*} \hat{\mathsf{R}}^{(e)}(\bbeta) - \nabla_{S\setminus S^*} {\mathsf{R}}^{(e)}(\bbeta)\|_2^2 \le 0$.

The rest of the proof will be divided into several pieces deriving the instance-dependent high-probability upper bounds on $\sum_{e\in \mathcal{E}} \omega^{(e)} \mathsf{T}_k^{(e)}(\bbeta)$ for each $k\in [6]$.

\noindent {\sc Step 1. Upper bound on $\mathsf{T}_1^{(e)}$.} Define the event
\begin{align}
\label{eq:low-dim-id-bound1}
	\mathcal{C}_{1,t} = \left\{\forall \bbeta \in \mathcal{B}_s, ~~\sum_{e\in \mathcal{E}} \omega^{(e)} \mathsf{T}_1^{(e)} \le C_1 \kappa_U^{3/2} \sigma_x \sigma_\varepsilon \|\bbeta - \bbeta^*\|_2 \Bigg(\sqrt{\frac{V(s,t)}{n_{\bomega}}} + \frac{V(s,t)}{n_*}\Bigg)\right\}
\end{align} for some constant $C_1$ to be determined. We will claim that $\mathbb{P}(\mathcal{C}_{1,t}) \ge 1-e^{-t}$ in this step. We further assume $\bbeta \neq \bbeta^*$ since the inequality holds trivially when $\bbeta = \bbeta^*$. It then follows from Cauchy-Schwarz inequality that
\begin{align*}
	\sup_{\bbeta \in \mathcal{B}_s, \bbeta \neq \bbeta^*} \frac{\sum_{e\in \mathcal{E}} \omega^{(e)} \mathsf{T}_1^{(e)}}{\|\bbeta - \bbeta^*\|_2} &= \sup_{\bbeta \in \mathcal{B}_s, \bbeta \neq \bbeta^*} \frac{(\bbeta - \bbeta^*)^\top}{\|\bbeta - \bbeta^*\|_2} \sum_{e\in \mathcal{E}} \omega^{(e)} \bSigma^{(e)}_{S\cup S^*,S} \left\{\hat{\mathbb{E}}[\bx_S^{(e)} \varepsilon^{(e)}] - \mathbb{E}[\bx^{(e)} \varepsilon^{(e)}]\right\} \\
	&\le \sup_{|S| \le s} \left\|\sum_{e\in \mathcal{E}} \omega^{(e)} \bSigma^{(e)}_{S\cup S^*,S} \left\{\hat{\mathbb{E}}[\bx_S^{(e)} \varepsilon^{(e)}] - \mathbb{E}[\bx^{(e)} \varepsilon^{(e)}]\right\}\right\|_2.
\end{align*}

For any $S\subseteq [p]$ with $|S|\le s$, let $\bv^{(S)}_1, \ldots, \bv^{(S)}_{N_S}$ be an $1/4-$covering of $\mathcal{B}(S \cup S^*)$, that is, for any $\bv\in \mathcal{B}(S\cup S^*)$, there exists some $\pi(\bv) \in [N_S]$ such that
\begin{align}
\label{eq:proof-cover1}
	\|\bv - \bv_{\pi(v)}^{(S)}\|_2 \le 1/4.
\end{align}
	It follows from standard empirical process result that $N_S \le 9^{|S\cup S^*|}$, then 
\begin{align}
\label{eq:calculate-cover-number}
\begin{split}
	N = \sum_{|S| \le s} N_S \le \sum_{|S|\le s} 9^{|S\cup S^*|} &\le \sum_{i=0}^s 9^{i+s^*} \binom{p}{i} \\
            &\le 9^{s^*} \times  \left(\frac{9p}{s}\right)^{s} \sum_{i=0}^s \left(\frac{s}{p}\right)^i \binom{p}{i} \le 9^{s^*} \times  \left(\frac{9p}{s}\right)^{s} \sum_{i=0}^p \left(\frac{s}{p}\right)^i \binom{p}{i} \\
            & \le 9^{s^*} \left(\frac{9p}{s}\right)^{s} \left(1 + \frac{s}{p}\right)^p \le 9^{s^*} \left(\frac{9ep}{s}\right)^{s}.
\end{split}
\end{align} 

At the same time, denote $\bxi = \sum_{e\in \mathcal{E}} \omega^{(e)} \bSigma^{(e)}_{S\cup S^*,S} \left\{\hat{\mathbb{E}}[\bx_S^{(e)} \varepsilon^{(e)}] - \mathbb{E}[\bx^{(e)} \varepsilon^{(e)}]\right\}$. For any $S\in [p]$ with $|S| \le s$, it follows from the variational representation of the $\ell_2$ norm that
\begin{align*}
\|\bxi\|_2 &= \sup_{\bv \in \mathcal{B}(S \cup S^*)} \bv^\top \bxi = \sup_{k \in [N_S]} (\bv_k^{(S)})^\top \bxi + \sup_{\bv \in \mathcal{B}(S\cup S^*)} (\bv - \bv_{\pi(v)}^{(S)})^\top \bxi \le \sup_{k \in [N_S]} (\bv_k^{(S)})^\top \bxi + \frac{1}{4}\|\bxi\|_2,
\end{align*} where the last inequality follows from the Cauchy-Schwarz inequality and our construction of covering in \eqref{eq:proof-cover1}. This implies $\|\bxi\|_2 \le 2 \sup_{k \in [N_S]} (\bv_k^{(S)})^\top \bxi$, thus
\begin{align}
\label{eq:proof-sup1}
\begin{split}
	&\sup_{|S| \le s} \left\|\sum_{e\in \mathcal{E}} \omega^{(e)} \bSigma^{(e)}_{S\cup S^*,S} \left\{\hat{\mathbb{E}}[\bx_S^{(e)} \varepsilon^{(e)}] - \mathbb{E}[\bx^{(e)} \varepsilon^{(e)}]\right\}\right\|_2  \\
	&~~~~~~ \le 2 \sup_{|S|\le s, k\in [N_S]} \underbrace{(\bv_{k}^{(S)})^\top \sum_{e\in \mathcal{E}} \omega^{(e)} \bSigma^{(e)}_{S\cup S^*,S} \left\{\hat{\mathbb{E}}[\bx_S^{(e)} \varepsilon^{(e)}] - \mathbb{E}[\bx^{(e)} \varepsilon^{(e)}]\right\}}_{Z(S,k)}. 
\end{split}
\end{align} 
For given fixed $\bv_k^{(S)} \in \mathcal{B}(S\cup S^*)$, $Z(S,k)$ can be written as the sum of independent zero-mean random variables as
\begin{align*}
	\sum_{e\in \mathcal{E}} \sum_{i=1}^{n^{(e)}} \frac{\omega^{(e)}}{n^{(e)}}\left(X_{e,i} - \mathbb{E}[X_{e,i}] \right) ~~~~ with ~~~~ X_{e,i} = \left((\bv^{(S)}_k)^\top \bSigma^{(e)}_{S\cup S^*,S} \bx_S^{(e)}\right) \left( \varepsilon^{(e)} \right).
\end{align*} Observe that $\varepsilon^{(e)}$ is a zero-mean sub-Gaussian random variable with parameter $\sigma_\varepsilon$ by \cref{cond3-sub-gaussian-eps}, and $(\bv^{(S)}_k)^\top \bSigma^{(e)}_{S\cup S^*, S}\bx_{S}$ is a zero-mean sub-Gaussian random variable with parameter 
\begin{align*}
	\sigma_1 = \|(\bv_{k}^{(S)})^\top \bSigma^{(e)}_{S\cup S^*, S} \bar{\bSigma}_{S,:}^{1/2} \|_2 \sigma_x \le \kappa_U^{3/2} \sigma_x
\end{align*} by \cref{cond2-subgaussain-x}. It then follows from \cref{lemma:product-sub-gaussian} and \cref{lemma:sum-sub-exp} that there exists some universal constant $C'$ such that
\begin{align*}
    \mathbb{P}\left[ |Z(S,k)| \ge C' \sigma_1 \sigma_\varepsilon \left\{\sqrt{\sum_{e\in \mathcal{E}} \sum_{i=1}^{n^{(e)}}  \left(\frac{\omega^{(e)}}{n^{(e)}}\right)^2 u} + \max_{e\in \mathcal{E}} \frac{\omega^{(e)}}{n^{(e)}} u\right\}\right] \le 2e^{-u}
\end{align*} for any $u>0$. Letting $u=t+\log(2N) \le 3 \left(t + s\log(ep/s) + s^*\right)$, we obtain
\begin{align*}
    \mathbb{P}\left[\sup_{|S|\le s, k} |Z(S,k)| \ge 3C' \sigma_1\sigma_\varepsilon \Bigg(\sqrt{\frac{V(s,t)}{n_{\bomega}}} + \frac{V(s,t)}{n_*}\Bigg)\right] \le  N\times 2e^{-\log(2N)-t} \le e^{-t}.
\end{align*} Combining with the argument \eqref{eq:proof-sup1} concludes the proof of the claim with $C_1=6C'$.

\noindent {\sc Step 2. Upper Bound on $\mathsf{T}_2^{(e)}$. } We claim that $\mathbb{P}(\mathcal{C}_{2,t}) \ge 1-e^{-t}$ for any $t>0$, where
\begin{align}
\label{eq:low-dim-id-bound2}
\begin{split}
\mathcal{C}_{2,t} = \Bigg\{\forall \bbeta, ~~~~ \sum_{e\in \mathcal{E}} \omega^{(e)} \mathsf{T}_2^{(e)}  \le &~C_2 \kappa_U^{1/2} \sigma_x \sigma_\varepsilon \sqrt{\frac{V(|S\setminus S^*|,t)}{n_*}} \times \sqrt{\sum_{e\in \mathcal{E}} \omega^{(e)} \left\|\mathbb{E} [\bx_S^{(e)} \varepsilon^{(e)}]\right\|_2^2} \\
	&~~~~ + C_2 \kappa_U \sigma_x \sigma_\varepsilon^2 \frac{V(|S\setminus S^*|,t)}{n_*} \Bigg\}
\end{split}
\end{align} for some universal constant $C_2$ to be determined. Note that L.H.S. and R.H.S. of the inequality in \eqref{eq:low-dim-id-bound2} both depend on $S$, which is the support set of $\bbeta$ and satisfies $|S|\le s$. For a fixed $S$, we can write down $\sum_{e\in \mathcal{E}} \omega^{(e)} \mathsf{T}_2^{(e)}$ as sum of independent random variables as
\begin{align*}
	Z(S)=\sum_{e\in \mathcal{E}} \omega^{(e)} \mathsf{T}_2^{(e)} = \sum_{e\in \mathcal{E}} \sum_{i=1}^{n^{(e)}} \frac{\omega^{(e)}}{n^{(e)}} \left(X_{e,i} - \mathbb{E}[X_{e,i}]\right) ~~~~\text{with}~~~~ X_{e,i} = \left(\mathbb{E}[\bx_S^{(e)} \varepsilon^{(e)}]^\top [\bx_{i}^{(e)}]_S \right) (\varepsilon^{(e)}_{i}).
\end{align*}
Observe that $X_{i,e}$ are independent sub-exponential random variables because of \cref{lemma:product-sub-gaussian} and our assumptions \cref{cond2-subgaussain-x}--\ref{cond3-sub-gaussian-eps}. It then follows from \cref{lemma:sum-sub-exp} that the following event,
\begin{align*}
	|Z(S)| &\le C'\kappa_U \sigma_x \sigma_\varepsilon \left\{\sqrt{\sum_{e\in \mathcal{E}} (\omega^{(e)})^2 \frac{1}{n^{(e)}} \left\|\mathbb{E}[\bx_S^{(e)} \varepsilon^{(e)}]\right\|_2^2} \times \sqrt{u} + \max_{e\in \mathcal{E}} \frac{\omega^{(e)}}{n^{(e)}} \left\|\mathbb{E}[\bx_S^{(e)}\varepsilon^{(e)}]\right\|_2^2 \times u\right\}\\
	&\le C'\kappa_U^{1/2} \sigma_x \sigma_\varepsilon \left\{ \sqrt{u\times \max_{e'\in \mathcal{E}} \frac{\omega^{(e')}}{n^{(e')}}} \sqrt{\sum_{e\in \mathcal{E}} \omega^{(e)} \left\|\mathbb{E}[\bx_S^{(e)} \varepsilon^{(e)}]\right\|_2^2} + \frac{u}{n_*} \max_{e\in \mathcal{E}}  \left\|\mathbb{E}[\bx^{(e)}\varepsilon^{(e)}]\right\|_2^2 \right\} \\
	&\le C'\kappa_U^{1/2} \sigma_x \sigma_\varepsilon \left\{ \sqrt{\frac{x}{n_*}} \sqrt{\sum_{e\in \mathcal{E}} \omega^{(e)} \left\|\mathbb{E}[\bx_S^{(e)} \varepsilon^{(e)}]\right\|_2^2}  + \kappa_U^{1/2} \sigma_\varepsilon \frac{x}{n_*} \right\}
\end{align*} occurs with probability at least $1-2e^{-x}$ for any $u>0$, where the last inequality follows from \cref{lemma:x-product-epsilon} and the definition of $n_*$ in \eqref{eq:def-n-low-dim}. Now, define the event 
\begin{align*}
\mathcal{K}_u(r) = \left\{ \forall S, |S\setminus S^*|=r, ~~~ |Z(S)| \le 2C'\kappa_U^{1/2} \sigma_x \sigma_\varepsilon \left\{ \sqrt{\frac{V(r,u)}{n_*}} \sqrt{\sum_{e\in \mathcal{E}} \omega^{(e)} \left\|\mathbb{E}[\bx_S^{(e)} \varepsilon^{(e)}]\right\|_2^2} + \kappa_U^{1/2} \sigma_\varepsilon \frac{V(r,u)}{n_*} \right\}\right\}
\end{align*} for any $r\ge 1$. The total number of $S$ satisfying $|S\setminus S^*|=r$ can be upper-bounded by
\begin{align*}
    N_r = 2^{s^*} \times \binom{p-s^*}{r} \le 2^{s^*} (ep/r)^r.
\end{align*} Applying union bound with $x=u+\log(2N_r) \le 2(u + r\log(ep/r) + s^*)$ then gives $\mathbb{P}[\mathcal{K}_u(r)] \ge 1-2N_r e^{-(u+\log(2N_r))} = 1-e^{-u}$. Therefore, we can argue that, under $\bigcap_{r=1}^p \mathcal{K}_{t+\log p}(r)$, the following holds
\begin{align*}
\forall \bbeta, ~~~ Z(S) \le 4C'\kappa_U^{1/2} \sigma_x \sigma_\varepsilon \left\{ \sqrt{\frac{V(|S\setminus S^*|,t)}{n_*}} \sqrt{\sum_{e\in \mathcal{E}} \omega^{(e)} \left\|\mathbb{E}[\bx_S^{(e)} \varepsilon^{(e)}]\right\|_2^2} + \kappa_U^{1/2} \sigma_\varepsilon \frac{V(|S\setminus S^*|,t)}{n_*} \right\}
\end{align*} by the fact that
\begin{align*}
    \forall r \ge 1, ~~~~~ V(r, t + \log p) = r\log(ep/r) + s^* + (\log p) + t \le 2r\log(ep/r) + s^* + t \le 2V(r, t).
\end{align*} This completes the proof of via setting $C_2 = 4C'$.

\noindent {\sc Step 3. Upper Bound on $\mathsf{T}_3^{(e)}$.} We argue in this step that the event
\begin{align}
\label{eq:low-dim-id-bound3}
	\mathcal{C}_{3,t} = \left\{ \forall \bbeta \in \mathcal{B}_s, ~~~~ \sum_{e\in \mathcal{E}} \omega^{(e)} \mathsf{T}_3^{(e)} \le C_3\kappa_U^2 \sigma_x^2  \|\bbeta - \bbeta^*\|_2^2 \Bigg(\sqrt{\frac{V(s,t)}{n_{\bomega}}} + \frac{V(s,t)}{n_*}\Bigg) \right\}	
\end{align} occurs with probability at least $1-e^{-t}$ for any $t>0$, where $C_3$ is some universal constant to be determined. Without loss of generality, let $\bbeta \neq \bbeta^*$, then it suffices to establish an upper bound for
\begin{align}
\label{eq:step3-sup-argument-1}
\begin{split}
&~\sup_{\bbeta \neq \bbeta^*} \frac{(\bbeta - \bbeta^*)^\top}{\|\bbeta - \bbeta^*\|_2} \sum_{e\in \mathcal{E}} \omega^{(e)} \bSigma_{:,S}^{(e)} \left(\hat{\mathbb{E}}[\bx_S^{(e)} (\bx^{(e)})^\top] - \mathbb{E}[\bx_S^{(e)} (\bx^{(e)})^\top] \right) \frac{(\bbeta - \bbeta^*)}{\|\bbeta - \bbeta^*\|_2^2} \\
&~~~\le \sup_{|S| \le s} \left\| \sum_{e\in \mathcal{E}} \omega^{(e)} \bSigma_{S\cup S^*,S}^{(e)} \left(\hat{\mathbb{E}}[\bx_S^{(e)} (\bx^{(e)}_{S\cup S^*})^\top] - \mathbb{E}[\bx_S^{(e)} (\bx^{(e)}_{S\cup S^*})^\top] \right) \right\|_2 = \sup_{|S| \le s} \|\bA_S\|_2.
\end{split}
\end{align}
We follow a similar strategy as {\sc Step 1}. For any $S\in \mathcal{B}_s$, let $\{(\bv^{(e)}_{k}, \bu^{(e)}_{k})\}_{k=1}^{N_S} \in \mathcal{B}(S\cup S^*) \times \mathcal{B}(S\cup S^*) := \mathcal{B}^2(S\cup S^*)$ be a $1/4$-covering of $\mathcal{B}^2(S\cup S^*)$ in a sense that for any $(\bu, \bv) \in \mathcal{B}^2(S\cup S^*)$, there exists some $\pi(\bu, \bv) \in [N_S]$ such that
\begin{align*}
    \|\bu - \bu^{(S)}_{\pi(\bu, \bv)}\|_{2} + \|\bv - \bv^{(S)}_{\pi(\bu, \bv)}\|_{2} \le \frac{1}{4}.
\end{align*} It follows from standard empirical process theory that $N_S \le 9^{2|S\cup S^*|}$, then
\begin{align*}
    N = \sum_{S\subseteq [p], |S| \le s} N_S &\le \sum_{S\subseteq [p], |S| \le s} N_S 9^{2|S\cup S^*|} \le \sum_{i=0}^{s} 81^{i + s^*} \binom{p}{i}
    \le 81^{s^*} \left(\frac{81ep}{s}\right)^s,
\end{align*} where the last inequality follows from the same procedure as \eqref{eq:calculate-cover-number}.

At the same time, denote $\bu^\dagger = \bu^{(S)}_{\pi(\bu,\bv)}$ and $\bv^\dagger = \bv^{(S)}_{\pi(\bu,\bv)}$. It follows from the variational representation of the matrix $\ell_2$ norm that
\begin{align*}
    \|\bA_S\|_2 &= \sup_{(\bu, \bv) \in \mathcal{B}^2(S\cup S^*)} \bu^\top \bA_S \bv \\
    &\le \sup_{(\bu, \bv) \in \mathcal{B}^2(S\cup S^*)} (\bu^\dagger)^\top \bA_S \bv^\dagger + \sup_{(\bu, \bv) \in \mathcal{B}^2(S\cup S^*)} (\bu-\bu^\dagger)^\top \bA_S \bv^\dagger \\
    &~~~~~~~~ + \sup_{(\bu, \bv) \in \mathcal{B}^2(S\cup S^*)} (\bu-\bu^\dagger)^\top \bA_S (\bv - \bv^\dagger) + \sup_{(\bu, \bv) \in \mathcal{B}^2(S\cup S^*)} (\bu^\dagger)^\top \bA_S (\bv - \bv^\dagger) \\
    &\le  \sup_{k \in [N_S]} (\bu^{(S)}_k)^\top \bA_S \bv^{(S)}_k + \left(\frac{1}{4} + \frac{1}{4} + \frac{1}{16} \right) \|\bA_S\|_2, 
\end{align*} which implies $\|\bA_S\|_2 \le 4 \sup_{k \in [N_S]} (\bu^{(S)}_k)^\top \bA_S \bv^{(S)}_k$, thus
\begin{align}
\label{eq:step3-sup-argument-2}
    \sup_{|S| \le s} \|\bA_S\|_2 \le \sup_{|S| \le s, k \in [N_S]} (\bu_{k}^{(e)})^\top \bA_S (\bv_k^{(e)}) = \sup_{|S| \le s, k\in [N_S]} Z(S,k).
\end{align} For fixed $k$ and $S$, $Z(S,k)$ can be written as the sum of independent zero-mean random variables as
\begin{align*}
    Z(S,k) = \sum_{e\in \mathcal{E}} \sum_{i=1}^{n^{(e)}} \frac{\omega^{(e)}}{n^{(e)}} (X_{e,i} - \mathbb{E} [X_{e,i}]) \qquad \text{with} \qquad X_{e,i} = \left((\bu_{k}^{(e)})^\top \Sigma^{(e)}_{S\cup S^*, S} \bx_S^{(e)} \right) \left((\bx_{S\cup S^*}^{(e)})^\top \bv_k^{(e)}\right).
\end{align*} Here $X_{e,i}$ is the product of two zero-mean sub-Gaussian random variables with parameter $\kappa_U^{3/2}\sigma_x$ and $\kappa_U^{1/2}\sigma_x$, respectively. It then follows from \cref{lemma:product-sub-gaussian} and \cref{lemma:sum-sub-exp} that, for any $u>0$,
\begin{align*}
    \mathbb{P}\left[|Z(S,k)| \le C'\kappa_U^2 \sigma_x^2 \left(\sqrt{\frac{u}{n_{\bomega}}} + \frac{u}{n_*}\right)\right] \ge 1-2e^{-u},
\end{align*} where $C'$ is some universal constant. Finally, we apply union bound with $u=t+\log(2N) \le 6(t + s\log(ep/s) + s^*)$ and obtain
\begin{align*}
    \mathbb{P}\left[\sup_{|S| \le s, k\in [N_S]} |Z(S,k)| \le 6C'\kappa_U^2 \sigma_x^2 \Bigg(\sqrt{\frac{V(s,t)}{n_{\bomega}}} + \frac{V(s,t)}{n_*}\Bigg)\right] \ge 1-N\times 2e^{-\log(2N) + t} = 1-e^{-t}.
\end{align*} Set $C_3 = 24C'$. Combining the above inequality with the suprema arguments \eqref{eq:step3-sup-argument-2} and \eqref{eq:step3-sup-argument-1} completes the proof.

\noindent {\sc Step 4. Upper Bound on $\mathsf{T}_4^{(e)}$.} Our target in this step is to show that, for any $t>0$,
\begin{align}
\label{eq:low-dim-id-bound4}
\mathbb{P}(\mathcal{C}_{4,t}) = \mathbb{P}\left[\forall \bbeta \in \mathcal{B}_s, ~~ \sum_{e\in \mathcal{E}} \omega^{(e)} \mathsf{T}_4^{(e)} \le C_4 \kappa_U^{3/2} \sigma_x^2\sigma_\varepsilon \|\bbeta - \bbeta^*\|_2\Bigg(\sqrt{\frac{V(s,t)}{n_{\bomega}}} + \frac{V(s,t)}{n_*}\Bigg)\right] \ge 1-e^{-t}.
\end{align} The proof is very similar to that in {\sc Step 1}, we only sketch here and highlight the difference. Following a similar strategy, it suffices to derive an upper bound for the quantity
\begin{align*}
    \sup_{|S| \le s, k\in [N_S]} Z(S,k) ~~~~~ \text{with} ~~~~~ Z(S,k) = \sum_{e\in \mathcal{E}} \sum_{i=1}^{n^{(e)}} \frac{\omega^{(e)}}{n^{(e)}} (X_{e,i} - \mathbb{E}[X_{e,i}]) ~~\text{and}~~ N=\sum_{S\subseteq [p]} N_S \le 9^{s^*}(9ep/s)^s,
\end{align*} where $X_{e,i}=\left((\mathbb{E}[\bx_S^{(e)} \varepsilon^{(e)}])^\top [\bx^{(e)}_i]_S\right) \left([\bx_i^{(e)}]_{S\cup S^*}^\top \bv_k^{(S)}\right)$ is the product of two zero-mean sub-Gaussian random variables with parameters $\kappa_U\sigma_x\sigma_\varepsilon$ and $\kappa_{U}^{1/2}\sigma_x$, respectively. It then follows from \cref{lemma:sum-sub-exp} that, for any $u>0$,
\begin{align*}
    \mathbb{P}\left[|Z(S,k)| \le C' \kappa_U^{3/2} \sigma_x^2\sigma_\varepsilon \left(\sqrt{\frac{u}{n_{\bomega}}} + \frac{u}{n_*}\right)\right] \ge 1-2e^{-u}.
\end{align*} So it concludes via applying union bound with $u = t + \log (2N) \le 3V(s,t)$.

\noindent {\sc Step 5. Upper Bound on $\mathsf{T}_5^{(e)}$.} In this step, we claim that the following event
\begin{align}
\label{eq:low-dim-id-bound5}
    \mathcal{C}_{5,t} = \left\{\forall \bbeta \in \mathbb{R}^p, ~~ \sum_{e\in \mathcal{E}} \omega^{(e)} \mathsf{T}_5^{(e)} \le C_5\kappa_U^{3/2} \sigma_x^3 \sigma_\varepsilon \frac{p + \log(2|\mathcal{E}|)+t}{\bar{n}}\|\bbeta - \bbeta^*\|_2\right\}
\end{align} occurs with probability at least $1-e^{-t}$ if $s^*+t+\log(|2\mathcal{E}|) \le n_{\min}$. Define the event $\mathcal{K}_{5,u}(e)$ as
\begin{align*}
    \mathcal{K}_{5,u}(e) = \left\{\left\|\hat{\mathbb{E}}[\bx_{S^*}^{(e)}\varepsilon^{(e)}]\right\|_2 \le C_{1}'\kappa_U^{1/2} \sigma_x \sigma_\varepsilon \Bigg(\sqrt{\frac{s^* + u}{n^{(e)}}} + \frac{s^* + u}{n^{(e)}}\Bigg)\right\}
\end{align*} for some universal constant $C_1'$. Observe that $\|\mathbb{E}[\bx_{S^*}^{(e)}\varepsilon^{(e)}]\|_2 = \sup_{\bv\in \mathbb{R}^{s^*}, \|\bv\|_2=1} \bv^\top \hat{\mathbb{E}}[\bx_{S^*}^{(e)}\varepsilon^{(e)}]$,
and $\bv^\top \hat{\mathbb{E}}[\bx_S^{(e)}\varepsilon^{(e)}]$ with given fixed $\bv\in \mathbb{R}^{s^*}$ is the sum of independent (centered) products of two zero-mean sub-Gaussian random variables with parameter $\kappa_U^{1/2} \sigma_x$ and $\sigma_\varepsilon$, respectively. Then it follows from \cref{lemma:product-sub-gaussian} and \cref{lemma:sum-sub-exp} that, for any fixed $\bv\in \mathbb{R}^{s^*}$ and $x>0$
\begin{align*}
    \mathbb{P}\left[ \left|\bv^\top \hat{\mathbb{E}}[\bx_{S^*}^{(e)}\varepsilon^{(e)}] \right| \ge C'\kappa_U^{1/2}\sigma_x \sigma_\varepsilon\Bigg(\sqrt{\frac{x}{n^{(e)}}} + \frac{x}{n^{(e)}}\Bigg)\right] \le 2e^{-x}.
\end{align*} Following a similar argument as that of {\sc Step 1} and applying union bound gives $\mathbb{P}\left[\mathcal{C}_{5,u}(e)\right] \ge 1-e^{-u}$. 

At the same time, let 
\begin{align*}
    \mathcal{K}_{5,u}'(e) = \left\{ \forall \bbeta \in \mathcal{B}_s, ~~ \left\| \hat{\mathbb{E}}[\bx_{S^*}^{(e)} (\bx^{(e)})^\top] - \bSigma^{(e)}_{S^*,:} \right\|_2 \le C_2'\kappa_U \sigma_x^2 \Bigg(\sqrt{\frac{V(s,u)}{n^{(e)}}} + \frac{V(s,u)}{n^{(e)}}\Bigg)\right\}
\end{align*} for some universal constant $C_2'>0$. It is easy to verify that $\mathbb{P}(\mathcal{C}_{5,u}'(e)) \ge 1-e^{-u}$ for any given fixed $u>0$ and $e\in \mathcal{E}$. 

Under the event $\mathcal{K}_u = \bigcap_{e\in \mathcal{E}} \{\mathcal{K}_{5,u}(e) \cap \mathcal{K}'_{5,u}(e)\}$, which occurs with probability $1-2|\mathcal{E}| e^{-u}$, we obtain
\begin{align*}
    \forall \bbeta \in \mathcal{B}_s, ~~\sum_{e\in \mathcal{E}} \omega^{(e)} \mathsf{T}_5^{(e)} &= \sum_{e\in \mathcal{E}} \omega^{(e)} \left(\hat{\mathbb{E}}[\bx_I^{(e)}\varepsilon^{(e)}]\right)^\top \left(\hat{\mathbb{E}}[\bx_{I}^{(e)} (\bx^{(e)})^\top] - \bSigma^{(e)}_{I,:}\right) \left(\bbeta - \bbeta^*\right) \\
    &\le \sum_{e\in \mathcal{E}} \omega^{(e)} \left\|\hat{\mathbb{E}}[\bx_I^{(e)}\varepsilon^{(e)}]\right\|_2 \left\|\hat{\mathbb{E}}[\bx_{I}^{(e)} (\bx^{(e)})^\top] - \bSigma^{(e)}_{I,:}\right\|_2 \|\bbeta - \bbeta^*\|_2 \\
    &\le \sum_{e\in \mathcal{E}} \omega^{(e)} \left\|\hat{\mathbb{E}}[\bx_{S^*}^{(e)}\varepsilon^{(e)}]\right\|_2 \left\|\hat{\mathbb{E}}[\bx_{S^*}^{(e)} (\bx^{(e)})^\top] - \bSigma^{(e)}_{S^*,:}\right\|_2 \|\bbeta - \bbeta^*\|_2 \\ 
    &\overset{(a)}{\le} C'' \kappa_U^{3/2} \sigma_x^3 \sigma_\varepsilon \sum_{e\in \mathcal{E}} \omega^{(e)} \sqrt{\frac{|S^*| + u}{n^{(e)}}} \Bigg(\sqrt{\frac{V(s, u)}{n^{(e)}}} + \frac{V(s,u)}{n^{(e)}} \Bigg) ~ \|\bbeta - \bbeta^*\|_2 \\
    &\overset{(b)}{\le} C'' \kappa_U^{3/2} \sigma_x^3 \sigma_\varepsilon \frac{\sqrt{V(s,u) V(0,u)}}{\bar{n}} \|\bbeta - \bbeta^*\|_2 \\
    &~~~~~~~~ + C'' \kappa_U^{3/2} \sigma_x^3 \sigma_\varepsilon \frac{V(s,u)\sqrt{V(0,u)}}{n_\dagger} \|\bbeta - \bbeta^*\|_2
\end{align*} provided $s^* + u \le n_{\min}$. Here $(a)$ follows from the definition of the events $\mathcal{K}_{5,u}(e)$ and $\mathcal{K}_{5,u}'(e)$ and the fact that $x \le \sqrt{x}$ when $x\in [0,1]$, $(b)$ follows directly from and the definition of $\bar{n}$ in \eqref{eq:def-n-low-dim} and the definition of $n_\dagger$ in \eqref{eq:def-n-low-dim}. This completes the proof via letting $u = \log(2|\mathcal{E}|) + t$.

\noindent {\sc Step 6. Upper Bound on $\mathsf{T}_6^{(e)}$.} The goal of this step is to derive a high-probability bound for the event
\begin{align}
\label{eq:low-dim-id-bound6}
    \mathcal{C}_{6,t} = \left\{\forall \bbeta \in \mathbb{R}^p, ~~ \sum_{e\in \mathcal{E}} \omega^{(e)} \mathsf{T}_6^{(e)} \le C_6 \kappa_U \sigma_x^2\sigma_\varepsilon^2  \frac{t+ \log(2|\mathcal{E}||S^*|)}{\bar{n}} |S^*\setminus S| \right\}
\end{align} for any $t \in (0, n_{\min} - \log(2|\mathcal{E}||S^*|)]$. Note that both the L.H.S. and R.H.S. of the inequality in \eqref{eq:low-dim-id-bound6} depends on $\bbeta$, or more precisely, $S=\supp(\bbeta)$. Denoting $\delta = C_6 \kappa_U\sigma_x^2\sigma_\varepsilon^2 \{t+ \log(2|\mathcal{E}||S^*|)\}/\bar{n}$, we have the following decomposition
\begin{align*}
    \mathcal{C}_{6,t} &= \bigcup_{T\subseteq S^*} \left\{\forall \bbeta \in \mathbb{R}^p, S^*\setminus \supp(\bbeta) = T, ~~ \sum_{e\in \mathcal{E}} \omega^{(e)} \mathsf{T}_6^{(e)} \le \delta |T| \right\}\\
    &= \bigcup_{T\subseteq S^*} \left\{\forall \bbeta \in \mathbb{R}^p, S^*\setminus \supp(\bbeta) = T, ~~ \sum_{j\in T} \sum_{e\in \mathcal{E}} \omega^{(e)} \left|\hat{\mathbb{E}}[x_j^{(e)}\varepsilon^{(e)}]\right|^2 \le \delta |T| \right\} \\
    &= \bigcup_{T\subseteq S^*} \mathcal{K}(T).
\end{align*}

At the same time, given fixed $j\in S^*$ and $e\in \mathcal{E}$, it follows from \cref{lemma:sum-sub-exp} that,
\begin{align*}
    \mathbb{P}[\mathcal{K}_{6,x}(e, j)] = \mathbb{P}\left[ \left|\hat{\mathbb{E}}[x_j^{(e)} \varepsilon^{(e)}]\right| \le C'\kappa_U^{1/2}\sigma_x\sigma_\epsilon \Bigg(\sqrt{\frac{x}{n^{(e)}}} + \frac{x}{n^{(e)}}\Bigg)\right] \ge 1- 2e^{-x}.
\end{align*} for some universal constant $C'$. We claim that
\begin{align}
\label{eq:low-dim-id-step6-claim1}
    \mathcal{K}(T) \subseteq \bigcup_{j\in S^*, e\in \mathcal{E}} \mathcal{K}_{6,t + \log(2s^*|\mathcal{E}|)}(e,j) 
\end{align} by choosing $C_6=(C')^2$. This is because under $\bigcup_{j\in S^*, e\in \mathcal{E}} \mathcal{K}_{6,t + \log(2s^*|\mathcal{E}|)}(e,j)$, one has
\begin{align*}
    \sum_{j\in T} \sum_{e\in \mathcal{E}} \omega^{(e)} \left|\hat{\mathbb{E}}[x_j^{(e)}\varepsilon^{(e)}]\right|^2 &\le \sum_{j\in T} \sum_{e\in \mathcal{E}} (C')^2 \kappa_U \sigma_x^2\sigma_\varepsilon^2\left(\frac{t + \log(2s^*|\mathcal{E}|)}{n^{(e)}} \omega^{(e)}\right) \\
    &\le |T| (C')^2 \kappa_U \sigma_x^2\sigma_\varepsilon^2\frac{t + \log(2s^*|\mathcal{E}|)}{\bar{n}}
\end{align*} provided $t+\log(2s^*|\mathcal{E}|) \le n_{\min}$, this validates the claim \eqref{eq:low-dim-id-step6-claim1}. Therefore, we have
\begin{align*}
    \mathbb{P}(\mathcal{C}_{6,t}) = \mathbb{P} \left[\bigcup_{T\subseteq S^*} \mathcal{K}(T)\right] &\ge \mathbb{P} \left[ \bigcup_{j\in S^*, e\in \mathcal{E}} \mathcal{K}_{6,t + \log(2s^*|\mathcal{E}|)}(e,j) \right]\\
    &\ge 1 - \sum_{e\in \mathcal{E}, j\in S^*} \left(1-\mathbb{P}\left[\mathcal{K}_{6,t + \log(2s^*|\mathcal{E}|)}(e,j)\right]\right) \\
    &\ge 1-2s^*|\mathcal{E}|e^{-t - \log(2s^*|\mathcal{E}|)} \ge 1-e^{-t}.
\end{align*}

\noindent {\sc Step 7. Conclusion.} We are now ready to conclude the proof by combining results \eqref{eq:low-dim-id-bound1}, \eqref{eq:low-dim-id-bound2}, \eqref{eq:low-dim-id-bound3}, \eqref{eq:low-dim-id-bound4}, \eqref{eq:low-dim-id-bound5}, \eqref{eq:low-dim-id-bound6} we obtained from {\sc Step 1} to {\sc Step 6}. Plugging these upper bounds back into our decomposition in \eqref{eq:low-dim-id-decomposition}, we have that, under the event $\bigcap_{k=1}^6\mathcal{C}_{k,t}$, which occurs with probability at least $1-6e^{-t}$, the inequality in \eqref{eq:instance-dependent-bound-j-highdim} holds provided $n_{\min} \ge (s^* + \log(2|\mathcal{E}|) + t) \lor (\log(2|\mathcal{E}||S^*|) + t) = s^* + \log(2|\mathcal{E}|) + t$. This completes the proof. \qed

\subsection{Proof of \cref{lemma:instance-dependent-bound-r-highdim}}
\label{subsec:proof-instance-dependent-bound-r-highdim}

It follows from the definition of the pooled $L_2$ risk that
\begin{align*}
&~\mathsf{R}(\bbeta) - \mathsf{R}(\bbeta^*) - \hat{\mathsf{R}}(\bbeta) + \hat{\mathsf{R}}(\bbeta^*) \\
&~~~~~~~  = \sum_{e\in \mathcal{E}} \omega^{(e)} \left\{\mathsf{R}^{(e)}(\bbeta) - \mathsf{R}^{(e)}(\bbeta^*) - \left(\hat{\mathsf{R}}^{(e)}(\bbeta) - \hat{\mathsf{R}}^{(e)}(\bbeta^*)\right) \right\}\\
&~~~~~~~  = \sum_{e\in \mathcal{E}} \omega^{(e)} (\bbeta - \bbeta^*)^\top \left(\bSigma^{(e)} - \hat{\bSigma}^{(e)}\right) (\bbeta - \bbeta^*) - 2(\bbeta - \bbeta^*)^\top \left(\mathbb{E}[\bx^{(e)}\varepsilon^{(e)}] - \hat{\mathbb{E}}[\bx^{(e)} \varepsilon^{(e)}]\right) \\
&~~~~~~~  = (\bbeta - \bbeta^*)^\top \bA (\bbeta - \bbeta^*) - 2(\bbeta - \bbeta^*)^\top \bb.
\end{align*}

Recall that $\delta_1 = \sqrt{\frac{V(s,t)}{n_{\bomega}}} + \frac{V(s,t)}{n_*}$, we argue that the following two events
\begin{align*}
    \mathcal{C}_{1,t} &= \left\{ \sup_{(\bx, \by)\in \mathcal{B}_s \times \mathcal{B}_s, \|\bx\|_2=\|\by\|_2=1} \bx^\top \bA \by \le C_1 \kappa_U \sigma_x^2 \delta_1\right\} \\
    \qquad \mathcal{C}_{2,t} &= \left\{ \sup_{\bx \in \mathcal{B}_s, \|\bx\|_2=1} \bx^\top \bb \le C_2 \kappa_U^{1/2} \sigma_x\sigma_\varepsilon \delta_1\right\}
\end{align*} satisfies $\mathbb{P}[\mathcal{C}_{1,t}] \land \mathbb{P}[\mathcal{C}_{2,t}] \ge 1-0.5 e^{-t}$ for any $t>0$, where $C_1, C_2$ are some universal constants to be determined. If the two claims are verified, then we have, under the event $\mathcal{C}_{1,t} \cap \mathcal{C}_{2,t}$ which occurs with probability at least $1-e^{-t}$, the following holds
\begin{align*}
&~|\mathsf{R}(\bbeta) - \mathsf{R}(\bbeta^*) - \hat{\mathsf{R}}(\bbeta) + \hat{\mathsf{R}}(\bbeta^*)| \\
&~~~~~~ \le \left|(\bbeta - \bbeta^*)^\top \bA (\bbeta - \bbeta^*)\right| + 2\left|(\bbeta - \bbeta^*)^\top \bb \right| \\
&~~~~~~ \le \{C_1 \lor (2C_2) \}\left(\kappa_U \sigma_x^2 \delta_1 \|\bbeta - \bbeta^*\|_2^2 + \kappa_U^{1/2} \sigma_x\sigma_\varepsilon \delta_1 \|\bbeta - \bbeta^*\|_2\right)
\end{align*} for some universal constant $C'$. This completes the proof of \cref{lemma:instance-dependent-bound-r-lowdim}.

Now, we prove the two claims separately.

\noindent \emph{Proof of the Claim $\mathbb{P}[\mathcal{C}_{1,t}]\ge 1-0.5e^{-t}$. } The proof strategy is very similar to {\sc Step 3} in the proof of \cref{lemma:instance-dependent-bound-j-lowdim}. To be specific, similar to the derivation in \eqref{eq:step3-sup-argument-2}, there exist $N \le 81^{s^*} (81ep/s)^s$ pairs of $p$-dimensional unit vectors $(\bv_1, \bu_1),\ldots, (\bv_N, \bu_N)$ such that
\begin{align*}
    \sup_{(\bx, \by)\in \mathcal{B}_s \times \mathcal{B}_s, \|\bx\|_2=\|\by\|_2=1} \bx^\top \bA \by \le 4\sup_{k\in [N]} \bv_k^\top \bA \bu_k.
\end{align*}
For fixed $(\bv_k, \bu_k)$, it follows from Conditions \ref{cond0-model}, \ref{cond1-well-condition} and \ref{cond2-subgaussain-x} that $\bv_k^\top \bA \bu_k$ is the sum of independent variables $X_{e,i}-\mathbb{E}[X_{e,i}]$, and $X_{e,i}$ is the product of two zero-mean sub-Gaussian random variables with parameter $\kappa_U^{1/2}\sigma_x$. Then applying \cref{lemma:product-sub-gaussian} and \cref{lemma:sum-sub-exp} gives
\begin{align*}
    \mathbb{P}\left[|\bv_k^\top \bA \bu_k| \ge C' \kappa_U^{1/2} \sigma_x \left(\sqrt{\frac{u}{n_{\bomega}}} + \frac{u}{n_*}\right)\right] \le 2e^{-u}
\end{align*} for any $u>0$ and some universal constant $C'$. Using the union bounds over all the $k$, we have the following event
\begin{align*}
    \sup_{(\bx, \by)\in \mathcal{B}_s \times \mathcal{B}_s} \le 4\sup_{k\in [N]} \bv_k^\top \bA \bu_k \le 4C' \kappa_U^{1/2} \sigma_x \left(\sqrt{\frac{u}{n_{\bomega}}} + \frac{u}{n_*}\right)
\end{align*} will occurs with probability at least $1-2Ne^{-u}$. Letting $u=t+\log(4N) \le 6V(s,t)$ completes the proof.

\noindent \emph{Proof of the Claim $\mathbb{P}[\mathcal{C}_{2,t}]\ge 1-0.5e^{-t}$.} The proof strategy is also similar to the first part of {\sc Step 5} in the proof of \cref{lemma:instance-dependent-bound-j-lowdim}. To be specific, there exist $N=90^p$ $p$-dimensional unit vectors $\bu_1,\ldots, \bu_N$ such that $\sup_{\bx \in \mathcal{B}_s, \|\bx\|_2=1} \bx^\top \bb \le 2\sup_{k\in [N]} \bu_k^\top \bb$. Moreover, for fixed $\bu_k$, it follows from Conditions \ref{cond0-model} -- \ref{cond3-sub-gaussian-eps} such that $\bu_k^\top \bb$ is the sum of independent variables $X_{e,i} - \mathbb{E}[X_{e,i}]$, and $X_{e,i}$ is the product of two zero-mean sub-Gaussian random variables with parameters $\kappa_U^{1/2}\sigma_x$ and $\sigma_\varepsilon$, respectively. Following a similar procedure of applying \cref{lemma:product-sub-gaussian}, \cref{lemma:sum-sub-exp}, and the union bound as above concludes the proof.

\subsection{Proof of \cref{proposition:small-hat-s}}
\label{subsec:proof-small-hat-s}

We need the following lemma. 
\begin{lemma}
\label{lemma:lemma-qhat-beta-star}
    Assume Conditions \ref{cond0-model}--\ref{cond3-sub-gaussian-eps} hold. Define the following event
    \begin{align}
        \mathcal{A}_{5,t} = \left\{\hat{\mathsf{Q}}(\bbeta^*;\gamma,\bomega) \le \sigma_\varepsilon^2 \left( 1 + c_1 \sqrt{\frac{t}{n_*}} + c_1 \kappa_U\sigma_x^2 \gamma \frac{s^*( \log(2s^*|\mathcal{E}|) + t)}{\bar{n}}\right)\right\}
    \end{align} for some universal constant, we have $\mathbb{P}(\mathcal{A}_{5,t}) \ge 1-2e^{-t}$. 
\end{lemma}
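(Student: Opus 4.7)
The plan is to decompose $\hat{\mathsf{Q}}(\bbeta^*;\gamma,\bomega)=\hat{\mathsf{R}}_{\mathcal{E}}(\bbeta^*;\bomega)+\gamma\hat{\mathsf{J}}(\bbeta^*;\bomega)$ and bound the two pieces separately using sub-exponential concentration, exploiting the defining property $\mathbb{E}[\varepsilon^{(e)}\mid \bx^{(e)}_{S^*}]\equiv 0$. Note that at $\bbeta=\bbeta^*$ one has $y^{(e)}-(\bbeta^*)^\top\bx^{(e)}=\varepsilon^{(e)}$ and $\supp(\bbeta^*)=S^*$, so
\[
\hat{\mathsf{R}}_{\mathcal{E}}(\bbeta^*;\bomega)=\sum_{e\in\mathcal{E}}\omega^{(e)}\hat{\mathbb{E}}[(\varepsilon^{(e)})^2],\qquad \hat{\mathsf{J}}(\bbeta^*;\bomega)=\sum_{j\in S^*}\sum_{e\in\mathcal{E}}\omega^{(e)}\big|\hat{\mathbb{E}}[x_j^{(e)}\varepsilon^{(e)}]\big|^2.
\]

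For the $L_2$ part, I would write $\hat{\mathsf{R}}_{\mathcal{E}}(\bbeta^*;\bomega)=\sum_{e,i}\tfrac{\omega^{(e)}}{n^{(e)}}(\varepsilon_i^{(e)})^2$. Condition~\ref{cond3-sub-gaussian-eps} implies $\mathbb{E}[(\varepsilon^{(e)})^2]\le \sigma_\varepsilon^2$, so the mean is at most $\sigma_\varepsilon^2$. Each $(\varepsilon_i^{(e)})^2$ is sub-exponential with parameters $(C\sigma_\varepsilon^2,C\sigma_\varepsilon^2)$, so applying Lemma~\ref{lemma:sum-sub-exp} with weights $\omega^{(e)}/n^{(e)}$ gives, with probability at least $1-e^{-t}$,
\[
\big|\hat{\mathsf{R}}_{\mathcal{E}}(\bbeta^*;\bomega)-\mathbb{E}\hat{\mathsf{R}}_{\mathcal{E}}(\bbeta^*;\bomega)\big|\lesssim \sigma_\varepsilon^2\Big(\sqrt{t/n_{\bomega}}+t/n_*\Big),
\]
since $\sum_e(\omega^{(e)})^2/n^{(e)}=1/n_{\bomega}$ and $\max_e\omega^{(e)}/n^{(e)}=1/n_*$. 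Using $n_{\bomega}\ge n_*$ (because $\sum_e\omega^{(e)}\cdot(\omega^{(e)}/n^{(e)})\le 1/n_*$) and $t/n_*\le\sqrt{t/n_*}$ whenever $t\le n_*$ (outside this range the target bound is vacuous after adjusting $c_1$), this collapses to the $\sigma_\varepsilon^2\sqrt{t/n_*}$ term.

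For the invariance part, the crucial observation is that for $j\in S^*$, the conditional-expectation assumption gives $\mathbb{E}[\varepsilon^{(e)}x_j^{(e)}]=0$, so $\hat{\mathbb{E}}[x_j^{(e)}\varepsilon^{(e)}]$ is a centered empirical mean. By Condition~\ref{cond2-subgaussain-x} applied with $\bv=\bar{\bSigma}^{1/2}e_j$, the variable $x_j^{(e)}$ is zero-mean sub-Gaussian with parameter at most $\kappa_U^{1/2}\sigma_x$, and $\varepsilon^{(e)}$ is sub-Gaussian with parameter $\sigma_\varepsilon$. Hence Lemma~\ref{lemma:product-sub-gaussian} gives that $\varepsilon^{(e)}x_j^{(e)}$ is sub-exponential with parameters $(C\kappa_U^{1/2}\sigma_x\sigma_\varepsilon,C\kappa_U^{1/2}\sigma_x\sigma_\varepsilon)$. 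Applying Lemma~\ref{lemma:sum-sub-exp} to the $n^{(e)}$ i.i.d.\ copies, with tail parameter $u=\log(2s^*|\mathcal{E}|)+t$, and taking a union bound over the $s^*|\mathcal{E}|$ pairs $(j,e)$, yields
\[
\max_{j\in S^*,\,e\in\mathcal{E}}\big|\hat{\mathbb{E}}[x_j^{(e)}\varepsilon^{(e)}]\big|\lesssim \kappa_U^{1/2}\sigma_x\sigma_\varepsilon\Big(\sqrt{u/n^{(e)}}+u/n^{(e)}\Big)
\]
with probability at least $1-e^{-t}$. Squaring (using $u\le n^{(e)}$, ensured by $t\le n_{\min}$), summing over $e$ against $\omega^{(e)}$ and using $\sum_e\omega^{(e)}/n^{(e)}=1/\bar{n}$, then summing over $j\in S^*$, produces $\hat{\mathsf{J}}(\bbeta^*;\bomega)\lesssim s^*\kappa_U\sigma_x^2\sigma_\varepsilon^2(\log(2s^*|\mathcal{E}|)+t)/\bar{n}$, which multiplied by $\gamma$ is exactly the third term of the claimed bound.

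Combining the two estimates via a union bound (absorbing constants into $c_1$ so that the failure probability adds to at most $2e^{-t}$) gives the stated inequality. The only routine obstacle is tracking the sub-exponential constants precisely enough to ensure the mean part of $\hat{\mathsf{R}}_{\mathcal{E}}(\bbeta^*;\bomega)$ is bounded by $\sigma_\varepsilon^2\cdot 1$ rather than a larger multiple; this is immediate from $\mathrm{Var}(\varepsilon^{(e)})\le\sigma_\varepsilon^2$ for sub-Gaussian $\varepsilon^{(e)}$, so no additional concentration is needed there.
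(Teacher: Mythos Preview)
Your proposal is correct and follows essentially the same route as the paper's own proof: the same decomposition of $\hat{\mathsf{Q}}(\bbeta^*)$ into the pooled empirical noise variance plus $\gamma\hat{\mathsf{J}}(\bbeta^*)$, the same sub-exponential concentration (Lemma~\ref{lemma:sum-sub-exp}) for the $L_2$ part yielding the $\sqrt{t/n_*}$ term, and the same per-pair $(j,e)$ application of Lemmas~\ref{lemma:product-sub-gaussian}--\ref{lemma:sum-sub-exp} with tail $u=t+\log(2s^*|\mathcal{E}|)$ followed by a union bound, squaring, and summing against $\omega^{(e)}$ to produce the $s^*(\log(2s^*|\mathcal{E}|)+t)/\bar{n}$ term. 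The only cosmetic difference is that you make explicit the collapse of $\sqrt{t/n_{\bomega}}+t/n_*$ to $\sqrt{t/n_*}$ via $n_{\bomega}\ge n_*$ and $t\le n_*$, which the paper leaves implicit.
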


\begin{proof}[Proof of \cref{lemma:lemma-qhat-beta-star}]
It follows from the definition that
\begin{align*}
    \hat{\mathsf{Q}}(\bbeta^*) &= \sum_{e\in \mathcal{E}} \omega^{(e)} \left\{\hat{\mathbb{E}} |\varepsilon^{(e)}|^2 + \gamma \sum_{j\in S^*} \left(\hat{\mathbb{E}} [x_j^{(e)} \varepsilon^{(e)}]\right)^2\right\} \\
    &= \sum_{e\in \mathcal{E}} \omega^{(e)} \mathbb{E} |\varepsilon^{(e)}|^2 + \sum_{e\in \mathcal{E}} \omega^{(e)} \left(\hat{\mathbb{E}} |\varepsilon^{(e)}|^2 - {\mathbb{E}} |\varepsilon^{(e)}|^2\right) + \gamma \sum_{e\in \mathcal{E}} \omega^{(e)} \left(\hat{\mathbb{E}} [x_j^{(e)} \varepsilon^{(e)}]\right)^2.
\end{align*} It follows from \cref{cond3-sub-gaussian-eps} that
\begin{align}
\label{eq:proof-qhat-beta-star-c0}
    \sum_{e\in \mathcal{E}} \omega^{(e)} \mathbb{E} |\varepsilon^{(e)}|^2 \le \sum_{e\in \mathcal{E}} \omega^{(e)} \sigma_\varepsilon^2 = \sigma_\varepsilon^2,
\end{align} and for any $t>0$,
\begin{align}
\label{eq:proof-qhat-beta-star-c1}
    \mathbb{P}\left[\sum_{e\in \mathcal{E}} \omega^{(e)} \left(\hat{\mathbb{E}} |\varepsilon^{(e)}|^2 - {\mathbb{E}} |\varepsilon^{(e)}|^2\right) \le C' \sigma_\varepsilon^2 \left(\sqrt{\frac{t}{n_{\bomega}}} + \frac{t}{n_{*}}\right)\right] \ge 1-e^{-x}
\end{align} for some universal constant $C'>0$. At the same time, for any fixed $e\in \mathcal{E}$, it follows from \cref{lemma:product-sub-gaussian} and \cref{lemma:sum-sub-exp} that, for any $x>0$ and $j\in [S^*]$,
\begin{align*}
    \mathbb{P}[\mathcal{K}_{t}(e,j)] = \mathbb{P}\left[ \left|\hat{\mathbb{E}} [x_j^{(e)} \varepsilon^{(e)}] - \mathbb{E}[x_j^{(e)} \varepsilon^{(e)}]\right| \ge C' \kappa_U^{1/2}\sigma_x\sigma_\varepsilon \left(\sqrt{\frac{x}{n^{(e)}}} + \frac{x}{n^{(e)}}\right)\right] \le 1-2e^{-x}.
\end{align*} Note $\mathbb{E}[x_j^{(e)} \varepsilon^{(e)}] = 0$. Then under $\mathcal{K}_t = \bigcap_{j\in S^*, e\in \mathcal{E}} \mathcal{K}_{t+\log(2s^*|\mathcal{E}|)}(e,j)$, which occurs with probability at least $1-e^{-t}$, we have
\begin{align}
\label{eq:proof-qhat-beta-star-c2}
    \sum_{j\in S^*} \sum_{e\in \mathcal{E}} \omega^{(e)} \left(\hat{\mathbb{E}} [x_j^{(e)} \varepsilon^{(e)}] \right)^2 \le (C')^2 \kappa_U \sigma_x^2 \sigma_\varepsilon^2 \frac{s^*(\log(2s^*|\mathcal{E}|) + t)}{\bar{n}}
\end{align} provided $t+\log(2s^*|\mathcal{E}|) \le n_{\min}$. Putting the three bounds \eqref{eq:proof-qhat-beta-star-c0}, \eqref{eq:proof-qhat-beta-star-c1} and \eqref{eq:proof-qhat-beta-star-c2} together completes the proof.
\end{proof}

Now we are ready to prove \cref{proposition:small-hat-s}.
\begin{proof}[Proof of \cref{proposition:small-hat-s}]
Observing that \begin{align*}
    \left\{|\hat{S}| \le 2s^*\right\} \subseteq \left\{ \forall \bbeta~\text{with}~\supp(\bbeta) > 2s^*,~~ \hat{\mathsf{Q}}(\bbeta) + \lambda \|\bbeta\|_0 > \hat{\mathsf{Q}}(\bbeta^*) + \lambda \|\bbeta^*\|_0 \right\} := \mathcal{C}_t,
\end{align*} it remains to show that $\mathbb{P}(\mathcal{C}_t) \ge 1-e^{-t}$. To this end, we use a peeling device. Let $\alpha_\ell=2^{\ell} s^*$, then 
\begin{align*}
    \mathcal{C}_t = \bigcup_{\ell=1}^{\lceil \log_2(p/s^*) \rceil - 1} \left\{\forall \bbeta \in \mathcal{B}_{\alpha_{\ell+1}} \setminus \mathcal{B}_{\alpha_{\ell}}, ~~ \hat{\mathsf{Q}}(\bbeta) + \lambda \|\bbeta\|_0 > \hat{\mathsf{Q}}(\bbeta^*) + \lambda \|\bbeta^*\|_0\right\} = \bigcup_{\ell=1}^{\lceil \log_2(p/s^*) \rceil - 1} \mathcal{C}_{t}(\ell).
\end{align*}
We claim that if $\lambda$ satisfies the conditions presented in the statement, then \begin{align}
\label{eq:proof-small-hats-peeling-argument}
    \bigcup_{\ell=1}^{\lceil \log_2(p/s^*) \rceil - 1} \mathcal{C}_{t}(\ell) \subseteq \mathcal{A}_{5,t} \cup \left\{\bigcup_{\ell=1}^{\lceil \log_2(p/s^*) \rceil - 1} \mathcal{A}_{3,u_{\ell}}(\alpha_{\ell+1} s^*) \cup \mathcal{A}_{4,u_{\ell}}(\alpha_{\ell+1} s^*)\right\}.
\end{align} with $u_\ell = t + \log (\lceil \log_2(p/s^*) \rceil) \le t + \log (ep/s^*)$.
Denote $\phi_* = s^* + \log(ep/s^*)+t$ and
\begin{align*}
    \tilde{s} = \frac{n_*}{2\{\log(ep/s^*) + t\}} \land \frac{\left(\frac{1}{12C_1} \frac{\kappa_L}{\gamma \kappa_U^2\sigma_x^2}\right)^2n_*}{\log(ep/s^*) + t} \land \frac{n_\dagger}{\{\log(ep/s^*)+t\} \sqrt{\phi_*} (\gamma/\kappa_L)\kappa_U^{3/2} \sigma_x^2 },
\end{align*} where $C_1$ is some universal constant to be determined. We prove this claim by considering the following two cases.

\noindent \emph{Case 1, $\alpha_{\ell+1} s^* \le \tilde{s}$.} Let $S=\supp(\bbeta)$. Denote $\phi_s=\alpha_{\ell+1} s^* \log(ep/s^*) + t$. In this case, there exists some universal constant $C_1$ such that if $\mathcal{A}_{3,u_{\ell}}(\alpha_{\ell+1}s^*) \cap \mathcal{A}_{4,u_{\ell}}(\alpha_{\ell+1}s^*)$ occurs, then we have, for any $\bbeta \in \mathcal{B}_{\alpha_{\ell+1}}$,
\begin{align*}
    \hat{\mathsf{Q}}(\bbeta) - \hat{\mathsf{Q}}(\bbeta^*) &= \hat{\mathsf{Q}}(\bbeta) - \mathsf{Q}(\bbeta) + \mathsf{Q}(\bbeta) - \mathsf{Q}(\bbeta^*) + \mathsf{Q}(\bbeta^*) - \hat{\mathsf{Q}}(\bbeta^*) \\
    &\overset{(a)}{\ge} \frac{\kappa_L}{2} \|\bbeta - \bbeta^*\|_2^2 + \frac{\gamma}{6} \mathsf{J}(\bbeta; \bomega) + \hat{\mathsf{R}}(\bbeta) - \mathsf{R}(\bbeta) + \mathsf{R} (\bbeta^*) - \hat{\mathsf{R}} (\bbeta^*) \\
    & ~~~~~~~~~~~~ - \gamma \left(\mathsf{J}(\bbeta) - \hat{\mathsf{J}}(\bbeta) - \mathsf{J}(\bbeta^*) + \hat{\mathsf{J}}(\bbeta^*)\right)\\
    &\overset{(b)}{\ge} \left(\frac{\kappa_L}{2} - C_1 \kappa_U^2\sigma_x^2 \gamma \sqrt{\frac{\phi_s}{n_*}}\right) \|\bbeta - \bbeta^*\|_2^2 \\
    & ~~~~~~~~~~~~ + \frac{\gamma}{6} \mathsf{J}(\bbeta;\bomega) - C_1 \gamma \sqrt{\sum_{e\in \mathcal{E}} \omega^{(e)} \left\|\mathbb{E}[\bx_{S}^{(e)} \varepsilon^{(e)}]\right\|_2^2} \kappa_U^{1/2} \sigma_x\sigma_\varepsilon \sqrt{\frac{\phi_s}{n_*}} \\
    & ~~~~~~~~~~~~ - C_1 \|\bbeta - \bbeta^*\|_2 \times \gamma \kappa_U^{3/2} \sigma_x^2 \sigma_\varepsilon \sqrt{\frac{\phi_s}{n_*}} \\
    & ~~~~~~~~~~~~ - C_1 \|\bbeta - \bbeta^*\|_2 \times \gamma \kappa_U^{3/2} \sigma_x^3 \sigma_\varepsilon \left(\frac{\sqrt{\phi_*} \sqrt{\phi_s}}{\bar{n}} + \frac{\phi_s \sqrt{\phi_*}}{n_\dagger} \right) \\
    & ~~~~~~~~~~~~ - C_1 \gamma |S^*\setminus S| \kappa_U \sigma_x^2 \sigma_\varepsilon^2 \frac{t+ \log p}{\bar{n}} - C_1 \gamma \kappa_U \sigma_x \sigma_\varepsilon^2 \frac{\phi_s}{n_*} \\
    &= \mathsf{I}_1 + \mathsf{I}_2 + \mathsf{I}_3 + \mathsf{I}_4 + \mathsf{I}_5,
\end{align*} where $(a)$ follows from \cref{prop:population-lb}, $(b)$ follows from the bounds in \cref{lemma:instance-dependent-bound-j-highdim} \& \ref{lemma:instance-dependent-bound-r-highdim} together with the facts that $\log(|\mathcal{E}|) \lesssim \log p$, $\phi_s/n_* \le \tilde{s}(\log(ep/s^*)+t)/ n_* \le 1$, and $|S\setminus S^*| \log(ep/|S\setminus S^*|) + s^* \le 2|S\setminus S^*| \log(ep/s^*) $ because $|S\setminus S^*| \ge s^*$. For $\mathsf{I}_1$, we have $\mathsf{I}_1 \ge \frac{5}{12}\kappa_L \|\bbeta - \bbeta^*\|_2^2$ whenever
\begin{align*}
    C_1 \kappa_U^2\sigma_x^2 \gamma \sqrt{\frac{\phi_s}{n_*}} \le C_1 \kappa_U^2 \sigma_x^2 \gamma \sqrt{\frac{\tilde{s} \{\log(ep/s^*) + t\}}{n_*}} \le \frac{\kappa_L}{12}.
\end{align*} Denote $\Diamond=0.5 (6C_1)^2 (\gamma/\kappa_L)^2 \kappa_U^3 \sigma_x^4\sigma_\varepsilon^2$. 
For $\mathsf{I}_2$, it follows from \cref{lemma:removecondition7} that
\begin{align*}
    \mathsf{I}_2 &\ge \frac{\gamma}{6} \mathsf{J}(\bbeta; \bomega) - C_1 \gamma \sqrt{2\kappa_U^2\|\bbeta - \bbeta^*\|_2^2 + 2\mathsf{J}(\bbeta; \bomega)} \kappa_U^{1/2} \sigma_x \sigma_\varepsilon \sqrt{\frac{\phi_s}{n_*}} \\
    &\ge \frac{\gamma}{6} \mathsf{J}(\bbeta;\bomega) - \frac{\kappa_L}{12} \|\bbeta-\bbeta^*\|_2^2 - 4\Diamond \cdot \frac{\phi_s}{n_*} - \frac{\gamma}{6} \mathsf{J}(\bbeta;\bomega) \\
    &\ge - \frac{\kappa_L}{12} \|\bbeta-\bbeta^*\|_2^2 - 4\Diamond \cdot \frac{(\alpha_{\ell+1} s^*) \{\log(ep/s^*)+t\}}{n_*}
\end{align*}
For $\mathsf{I}_3$ and $\mathsf{I}_4$, it follows from the fact $xy \le \frac{1}{2} (x^2 + y^2)$ that
\begin{align*}
    \mathsf{I}_3 \ge -\frac{\kappa_L}{12} \|\bbeta - \bbeta^*\|_2^2 - \Diamond \cdot \frac{(\alpha_{\ell+1} s^*) \{\log(ep/s^*) + t\} }{n_*} 
\end{align*} and
\begin{align*}
    \mathsf{I}_4 \ge & ~-\frac{\kappa_L}{12} \|\bbeta - \bbeta^*\|_2^2 - 2\Diamond \cdot \frac{(\alpha_{\ell+1} s^*) \cdot \phi_* \left\{\log(ep/s^*) + t\right\}}{\bar{n}^2} \\
    &~~~~~~~~~~~~ - 2\Diamond \cdot \frac{(\alpha_{\ell+1} s^*) \cdot \phi_* \tilde{s} \{\log(ep/s^*) + t\}^2}{n_\dagger^2}.
\end{align*} For $\mathsf{I}_5$, it follows from \eqref{eq:low-dim-bound-i4-1} that
\begin{align*}
\mathsf{I}_5 &\ge -C_1 \|\bbeta - \bbeta_*\|_2^2  \gamma \left\{\frac{\kappa_U \sigma_x^2\sigma_\varepsilon^2}{ \min_{j\in S^*} |\bbeta_j^ *|^2}\frac{t + \log(es^*p)}{\bar{n}}\right\} - C_1 \gamma \kappa_U\sigma_x\sigma_\varepsilon^2 \frac{(\alpha_{\ell+1} s^*) \{\log(ep/s^*) + t\}}{n_*} \\
&\ge -\frac{\kappa_L}{12} \|\bbeta - \bbeta^*\|_2^2 - C_1 \gamma \kappa_U\sigma_x\sigma_\varepsilon^2 \frac{(\alpha_{\ell+1} s^*) \{\log(ep/s^*) + t\}}{n_*}
\end{align*} if $\bar{n} \ge 12C_1 (\gamma/\kappa_L) \kappa_U\sigma_x^2\sigma_\varepsilon^2 (t + \log p) / \min_{j\in S^*} |\beta_j^*|^2$. 
Recall that $\zeta = t + \log(ep/s^*)$. 
Putting all the pieces together and plugging $\zeta$ in, we obtain that, for any $\bbeta \in \mathcal{B}_{\alpha_{\ell+1}s^*} \setminus \mathcal{B}_{\alpha_{\ell}s^*}$
\begin{align*}
    & ~\lambda \|\bbeta\|_0 - \lambda \|\bbeta^*\|_0 + \hat{\mathsf{Q}}(\bbeta) - \hat{\mathsf{Q}}(\bbeta^*) \\
    & ~~~~ \overset{(a)}{\ge} \lambda \left(\alpha_{\ell} -  1\right) s^* + \hat{\mathsf{Q}}(\bbeta) - \hat{\mathsf{Q}}(\bbeta^*) \ge  \frac{\lambda}{4} \alpha_{\ell+1} s^* + \hat{\mathsf{Q}}(\bbeta) - \hat{\mathsf{Q}}(\bbeta^*) \\
    & ~~~~ \ge \frac{1}{2}\left(\kappa_L - \frac{\kappa_L}{6} \times 5\right) \|\bbeta - \bbeta^*\|_2^2 \\
    & ~~~~~~~~~~~~ + \alpha_{\ell+1} s^* \Bigg\{\frac{\lambda}{4} - 6\Diamond \cdot \left(\frac{\zeta}{n_*} + \frac{\phi_* \cdot \zeta}{\bar{n}^2} + \frac{\phi_* \tilde{s}\cdot \zeta^2}{n_\dagger^2}\right)\Bigg\},
\end{align*} where $(a)$ follows from the fact that $\bbeta \notin \mathcal{B}_{\alpha_\ell s^*}$. Recall our choice of $\tilde{s}$. We can conclude that if
\begin{align}
\label{eq:proof-lambda-req2}
\begin{split}
    \lambda &\ge 432C_1^2 \kappa_U^{3} \sigma_x^4 (\gamma/\kappa_L)^2 \sigma_{\varepsilon}^2 \left(\frac{\zeta}{n_*} + \frac{\zeta \phi_*}{\bar{n}^2}\right) + 432C_1^2 \kappa_U^{3/2} \sigma_x^2 (\gamma/\kappa_L) \sigma_\varepsilon^2 \frac{\zeta \sqrt{\phi_*}}{n_\dagger} \\
    &\ge 24\Diamond \cdot \left(\frac{\zeta}{n_*} + \frac{\phi_* \cdot \zeta}{\bar{n}^2} + \frac{\phi_* \tilde{s}\cdot \zeta^2}{n_\dagger^2} \right),
\end{split}
\end{align} then 
\begin{align*}
    \underbrace{\left\{\forall \bbeta \in \mathcal{B}_{\alpha_{\ell+1} s^*} \setminus \mathcal{B}_{\alpha_{\ell} s^*}, ~~ \hat{\mathsf{Q}}(\bbeta) + \lambda \|\bbeta\|_0 - \hat{\mathsf{Q}}(\bbeta^*) - \lambda \|\bbeta^*\|_0 > 0\right\}}_{C_t(\ell)} \subseteq \mathcal{A}_{3,u_{\ell}}(\alpha_{\ell+1} s^*) \cup \mathcal{A}_{4,u_{\ell}}(\alpha_{\ell+1} s^*).
\end{align*}

\noindent \emph{Case 2, $\alpha_{\ell+1} s^* \ge \tilde{s}$.} Observe that $\alpha_{\ell} s^* \ge \frac{1}{2} \alpha_{\ell+1} s^* \ge \frac{1}{2}\tilde{s}$ in this case. Hence the following holds
\begin{align*}
    \lambda \|\bbeta\|_0 - \lambda \|\bbeta^*\| + \hat{\mathsf{Q}}(\bbeta) - \hat{\mathsf{Q}}(\bbeta^*) \ge \frac{\lambda}{2} \tilde{s} - \hat{\mathsf{Q}}(\bbeta^*) - \lambda s^* \ge \frac{\lambda}{4} \tilde{s} - \hat{\mathsf{Q}}(\bbeta^*)
\end{align*} provided $4s^* \le \tilde{s}$. At the same time, if $t\le n_*$ and $\bar{n} \ge \kappa_U\sigma_x^2 \gamma s^*( \log(2s^*|\mathcal{E}|) + t)$, then it follows from \cref{lemma:lemma-qhat-beta-star} that under $\mathcal{A}_{5,t}$, 
\begin{align*}
    \hat{\mathsf{Q}}(\bbeta^*) \le (2 + C_2) \sigma_\varepsilon^2,
\end{align*} where $C_2 = c_1$ is the constant in \cref{lemma:lemma-qhat-beta-star}. At the same time, it follows from our construction of $\tilde{s}$ that
\begin{align*}
    (\tilde{s})^{-1} \le (12C_1)^2 \kappa_U^4\sigma_x^4 (\gamma/\kappa_L)^2 \frac{\zeta}{n_*} + \kappa_U^{3/2} \sigma_x^2 (\gamma/\kappa_L) \frac{\zeta \sqrt{\phi_*} }{n_\dagger}.
\end{align*}
Therefore, we can argue that $\mathcal{C}_{t}(\ell) \subseteq \mathcal{A}_{5,t}$ when 
\begin{align}
\label{eq:lambda-req-3}
\lambda \ge 8(2+C_2) (16C_1)^2 \kappa_U^4\sigma_x^4 \sigma_\varepsilon^2 (\gamma/\kappa_L)^2 \frac{\zeta}{n_*} + 8(2+C_2) \kappa_U^{3/2} \sigma_x^2 \sigma_\varepsilon^2 (\gamma/\kappa_L) \frac{\zeta \sqrt{\phi_*} }{n_\dagger} \ge 8(2+C_2) \sigma_\varepsilon^2 \tilde{s}^{-1}.
\end{align}

Now, we combine the two cases. According to the above discussion, we find that under the conditions 
\begin{align*}
    \tilde{s} \ge 4s^*\qquad \text{and} \qquad \bar{n} \ge C' \kappa_U\sigma_x^2 \gamma (t + \log p) \left\{ s^* +  \sigma_\varepsilon^2 / (\kappa_L \min_{j\in S^*} |\beta_j^*|^2)\right\},
\end{align*} we have \eqref{eq:proof-small-hats-peeling-argument} holds if our choice of $\lambda$ satisfies \eqref{eq:proof-lambda-req2} and \eqref{eq:lambda-req-3}. It then follows from the union bound that
\begin{align*}
    \mathbb{P}\left[\mathcal{C}_t^c\right] \le \sum_{\ell=1}^{\lceil \log_2(p/s^*) \rceil - 1} \left\{\mathbb{P}[\mathcal{A}_{3,u_\ell}(\alpha_{\ell+1} s^*)^c] + \mathbb{P}[\mathcal{A}_{4,u_\ell} (\alpha_{\ell+1} s^*)] \right\}+ \mathbb{P}(\mathcal{A}_{5,t}^c) \le 3e^{-t}.
\end{align*} This completes the proof.

\end{proof}

\subsection{Technical Lemmas}

\begin{lemma}
\label{lemma:removecondition7}
Under Conditions \ref{cond0-model} and \ref{cond1-well-condition}, we have
\begin{align*}
    \forall \bbeta \in \mathbb{R}^p \qquad \sum_{e\in \mathcal{E}} \omega^{(e)} \left\|\mathbb{E}[\bx_{\supp(\bbeta)}^{(e)} \varepsilon^{(e)}] \right\|_2^2 \le 2 \mathsf{J}(\bbeta;\bomega) + 2\kappa_U^2 \|\bbeta - \bbeta^*\|_2^2
\end{align*}
\end{lemma}
\begin{proof}[Proof of \cref{lemma:removecondition7}]
Let $S=\supp(\bbeta)$, it follows from the definition of $\mathsf{J}$ that
\begin{align*}
    \mathsf{J}(\bbeta;\bomega) = \sum_{e\in \mathcal{E}} \omega^{(e)} \|\mathbb{E}[(y^{(e)} - \bbeta^\top \bx^{(e)}) \bx_S^{(e)}]\|_2
    &= \sum_{e\in \mathcal{E}} \omega^{(e)} \left\|\mathbb{E}\left[(\varepsilon^{(e)} + (\bbeta^*)^\top \bx^{(e)} - \bbeta^\top \bx^{(e)}) \bx_S^{(e)}\right]\right\|_2.
\end{align*} Denote $\overline{S} = S^* \cup S$. Then it follows from the fact $(a+b)^2 \le 2(a^2+b^2)$ and \cref{cond1-well-condition} that
\begin{align*}
    \qquad \sum_{e\in \mathcal{E}} \omega^{(e)} \left\|\mathbb{E}[\bx_{S}^{(e)} \varepsilon^{(e)}] \right\|_2^2 &= \sum_{e\in \mathcal{E}} \omega^{(e)} \left\|\mathbb{E}[\bx_S^{(e)} \varepsilon^{(e)} + \bx_S^{(e)} (\bx^{(e)}_{\overline{S}})^\top  (\bbeta^*-\bbeta)_{\overline{S}} ] - \bSigma^{(e)}_{S,\overline{S}} (\bbeta^*-\bbeta)_{\overline{S}}\right\|_2^2 \\
    &\le 2 \sum_{e\in \mathcal{E}} \omega^{(e)} \left\|\mathbb{E}[\bx_S^{(e)} \varepsilon^{(e)} + \bx_S^{(e)} (\bx^{(e)}_{\overline{S}})^\top  (\bbeta^*-\bbeta)_{\overline{S}} ]\right\|_2^2 + 2\kappa_U^2 \|\bbeta - \bbeta^*\|_2^2 \\
    &\le 2\mathsf{J}(\bbeta;\bomega) + 2\kappa_U^2 \|\bbeta - \bbeta^*\|_2^2.
\end{align*} This completes the proof.
\end{proof}

\begin{lemma}
\label{lemma:x-product-epsilon}
Under \cref{cond1-well-condition} and \cref{cond3-sub-gaussian-eps}, we have that, for any $e\in \mathcal{E}$,
\begin{align*}
\left\| \mathbb{E}[\varepsilon^{(e)} \bx^{(e)}]	\right\|_2 \le \sigma_\varepsilon \kappa_U^{1/2}.
\end{align*}
\end{lemma}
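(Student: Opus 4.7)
The plan is to apply the variational characterization of the $\ell_2$ norm together with Cauchy--Schwarz. Specifically, for any unit vector $\bv \in \mathbb{R}^p$, I would write
\[
\bv^\top \mathbb{E}[\varepsilon^{(e)} \bx^{(e)}] = \mathbb{E}\bigl[\varepsilon^{(e)} \cdot (\bv^\top \bx^{(e)})\bigr],
\]
and then bound the right-hand side by $\sqrt{\mathbb{E}[(\varepsilon^{(e)})^2]} \cdot \sqrt{\mathbb{E}[(\bv^\top \bx^{(e)})^2]}$ via Cauchy--Schwarz.

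For the first factor, Condition~\ref{cond3-sub-gaussian-eps} says $\varepsilon^{(e)}$ is sub-Gaussian with parameter $\sigma_\varepsilon$ (and zero mean, since $\mathbb{E}[\varepsilon^{(e)}|\bx_{S^*}^{(e)}]=0$), which gives $\mathbb{E}[(\varepsilon^{(e)})^2] \le \sigma_\varepsilon^2$. For the second factor, Condition~\ref{cond1-well-condition} gives $\mathbb{E}[(\bv^\top \bx^{(e)})^2] = \bv^\top \bSigma^{(e)} \bv \le \kappa_U \|\bv\|_2^2 = \kappa_U$. Combining,
\[
\bigl|\bv^\top \mathbb{E}[\varepsilon^{(e)} \bx^{(e)}]\bigr| \le \sigma_\varepsilon \kappa_U^{1/2}.
\]

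Finally, taking the supremum over unit vectors $\bv$ yields the claim. There is no real obstacle here; the only thing to be slightly careful about is extracting the variance bound $\mathbb{E}[(\varepsilon^{(e)})^2] \le \sigma_\varepsilon^2$ from the sub-Gaussian MGF bound in Condition~\ref{cond3-sub-gaussian-eps}, which follows by comparing the Taylor expansions of both sides at $\lambda=0$ (equivalently, a second-order expansion of $\mathbb{E}[e^{\lambda \varepsilon^{(e)}}] \le e^{\lambda^2 \sigma_\varepsilon^2/2}$ at $\lambda=0$ gives $\mathrm{Var}(\varepsilon^{(e)}) \le \sigma_\varepsilon^2$, and since $\varepsilon^{(e)}$ has mean zero the variance equals $\mathbb{E}[(\varepsilon^{(e)})^2]$).
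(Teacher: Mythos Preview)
Your proof is correct. The paper takes a slightly different route: it forms the $(p+1)\times(p+1)$ second-moment matrix of $\bz = ((\bx^{(e)})^\top, \varepsilon^{(e)})^\top$, observes it is positive semidefinite, and uses nonnegativity of the Schur complement
\[
\mathbb{E}[|\varepsilon^{(e)}|^2] - \bigl(\mathbb{E}[\varepsilon^{(e)} \bx^{(e)}]\bigr)^\top (\bSigma^{(e)})^{-1} \bigl(\mathbb{E}[\varepsilon^{(e)} \bx^{(e)}]\bigr) \ge 0,
\]
then bounds the quadratic form below by $\kappa_U^{-1}\|\mathbb{E}[\varepsilon^{(e)}\bx^{(e)}]\|_2^2$ and the left side above by $\sigma_\varepsilon^2$. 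Your Cauchy--Schwarz argument is the same inequality in disguise (Schur-complement positivity of a $2\times 2$ block Gram matrix is equivalent to Cauchy--Schwarz), but your presentation is more direct and elementary since it avoids introducing the augmented matrix and inverting $\bSigma^{(e)}$. Both proofs require the step $\mathbb{E}[(\varepsilon^{(e)})^2]\le \sigma_\varepsilon^2$, which you correctly justify via the second-order expansion of the MGF bound.
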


\begin{proof}[Proof of \cref{lemma:product-sub-gaussian}]
	Let $\bz = ((\bx^{(e)})^\top, \varepsilon^{(e)})^\top$ be a $p+1$-dimensional random vector. Define the matrix $\bA = \mathbb{E}[\bz \bz^\top]$, we have
    \begin{align*}
		\bA = \begin{bmatrix}	
			\bSigma^{(e)} & \mathbb{E}[\bx^{(e)} \varepsilon^{(e)}] \\
			(\mathbb{E}[\bx^{(e)} \varepsilon^{(e)}])^{\top} & \mathbb{E}[|\varepsilon^{(e)}|^2]
 		\end{bmatrix}.
	\end{align*}
    The matrix is positive semi-definite. Combining this with the fact that $\bSigma^{(e)}$ is invertible indicates that the Schur complement is non-negative, that is,
	\begin{align*}
		\mathbb{E}[|\varepsilon^{(e)}|^2] - \left(\mathbb{E}[\varepsilon^{(e)} \bx^{(e)}]\right)^\top \left(\bSigma^{(e)} \right)^{-1} \left(\mathbb{E}[\varepsilon^{(e)} \bx^{(e)}] \right) \ge 0.
	\end{align*} Hence we have
	\begin{align*}
		\left\|\mathbb{E}[\varepsilon^{(e)} \bx^{(e)}]\right\|^2 \kappa_U^{-1} &\overset{(a)}{\le} \left\|\mathbb{E}[\varepsilon^{(e)} \bx^{(e)}]\right\|^2 \lambda_{\min}\{(\bSigma^{(e)})^{-1}\} \\
		&\le \left(\mathbb{E}[\varepsilon^{(e)} \bx^{(e)}]\right)^\top \left(\bSigma^{(e)} \right)^{-1} \left(\mathbb{E}[\varepsilon^{(e)} \bx^{(e)}] \right) \le \mathbb{E}[|\varepsilon^{(e)}|^2] \overset{(b)}{\le} \sigma_\varepsilon^2.
	\end{align*} where $(a)$ follows from \cref{cond1-well-condition} and $(b)$ follows from \cref{cond3-sub-gaussian-eps}. This completes the proof.
\end{proof}

\section{Omitted Discussions in Main Text}
\label{sec:discussion}

\subsection{Comparison with \cite{fan2014endogeneity}}
\label{subsec:discussionliao}

Both \cite{fan2014endogeneity} and our work aim to deal with the problem of endogeneity from a technical perspective. Our constructed focused linear invariance regularizer is similar to their developed FGMM criterion function from the view of the \emph{over-identification} idea they proposed. 

Briefly speaking, we say a parameter $\tilde{\bbeta}$ is over-identified if there are more restrictions than the degree of freedom. When $|\mathcal{E}|=1$, there will be exponential number of distinct $\bbeta$ satisfying $\mathsf{J}(\bbeta;\bomega)=0$. To see this, for any $S \subseteq [p]$, one can find some $\bbeta$ with $\supp(\bbeta) \subseteq S$ satisfying $|S|$ constraints that $\mathbb{E}[(y^{(1)} - \bbeta_S^\top \bx_S^{(1)}) x_j^{(1)}] = 0$ for any $j \in [S]$ because the degree of freedom and the number of constraints are both $|S|$. However, things may be different when $|\mathcal{E}| \ge 2$. In this case, for a given $S \subseteq [p]$, one need to find some $\bbeta$ with a degree of freedom $|S|$ satisfies $|\mathcal{E}| \cdot |S|$ constraints 
\begin{align*}
    \forall e \in \mathcal{E}, j\in [S], ~~~~~~ \mathbb{E}\left[x^{(e)}_j (y^{(e)} - \bbeta_S^\top \bx_S^{(e)})\right] = 0
\end{align*}
simultaneously to let $\mathsf{J}(\bbeta;\bomega)=0$, which does not hold in general.

According to the above discussion, the focused linear invariance regularizer shares a similar spirit with their proposed FGMM from a technical viewpoint since the over-identification of our regularizer comes from multiple environments. In contrast, theirs come from two instrumental variables or two marginal features of the covariate, for example, $x_j$ and $x_j^2$. However, the statistical models the two papers work on are different. We briefly remark on the differences between our method and theirs using marginal nonlinear features as follows:
\begin{itemize}
    \item[1.] We are working with multiple environment settings. The necessity of heterogeneous environments and the potential violation of their identification condition are illustrated by \cref{prop:necessary-multiple-envs}. In particular, if two marginal features of the covariate are used in their method, the corresponding identification condition is a sufficient condition of the condition that $S^*$ is the only CE-invariant set among $\mathcal{E}=\{1\}$ other than $\emptyset$.
    \item[2.] We use a linear combination of invariance regularizer and the $L_2$ loss to avoid collapsing to conservative solutions. At the same time, theirs will suffer from the case where two groups of important variables are independent.
    \item[3.] We provide a more explainable identification condition in the context of multiple environments and a non-asymptotic upper bound on the critical threshold of the regularization hyper-parameter $\gamma$; see the intuition explanations in \cref{subsubsec:debias} and a formal presentation in \cref{subsec:strong-convexity}. While \cite{fan2014endogeneity} has few discussions on the population-level conditions.
    \item[4.] The finite sample analyses are completely different because (1) the objective functions are different; and (2) we establish the variable selection consistency for the global minimizer while they only show that some good local minimizer satisfying the variable selection consistency exists. We should carefully deal with the dependence on the number of environments and apply a novel localization argument to obtain a fast rate or a weak condition for variable selection consistency. 
\end{itemize}

\subsection{Comparison with IRM}
\label{sec:irm}

Our constructed EILLS objective is similar to the invariant risk minimization criterion \citep{arjovsky2019invariant} by letting $\gamma \to \infty$. To see this, when $\gamma\to \infty$, the population-level EILLS objective becomes
\begin{align*}
    \min_{\bbeta \in \mathbb{R}^p} \mathsf{R}(\bbeta;\bomega) ~~~~~~s.t.~~~~~~ \mathsf{J}(\bbeta;\bomega)=0.
\end{align*} This optimization problem finds the most effective solution in the sense of small $L_2$ risk among all the LLS-invariant solutions as discussed in \cref{subsec:focused-regularizer}. However, it is unclear (1) what it implies when $\mathsf{J}(\bbeta;\bomega) \approx 0$; and (2) how large $\gamma$ must be to estimate $\beta^*$ consistently. Such two concerns are important for the non-asymptotic analysis because of finite-sample data and finite choice of $\gamma$. We provide an intuitive explanation in \cref{subsubsec:debias} addressing the above concerns, which can be treated as an informal presentation of our theory in \cref{subsec:strong-convexity}.

\subsection{Debiasing by Regularizer via Bias Differences} 
\label{subsubsec:debias} 

\cref{subsec:focused-regularizer} and \cref{sec:irm} provide a sketchy glimpse at the effects of two losses $\mathsf{J}$ and $\mathsf{R}$ on the solution. The following concerns remain: (1) we only analyze the implication of focused linear invariance regularizer $\mathsf{J}(\bbeta;\bomega)$ on the solution when $\mathsf{J}(\bbeta;\bomega)=0$, but we can not expect $\mathsf{J}=0$ in practice especially when it comes to the analysis of its empirical analogs with finite sample data; (2) the above paragraph characterizes that property of proposed EILLS estimator in $\gamma \to \infty$, while it is still unknown how large $\gamma$ is enough. In this part, we will provide a different perspective. In a high-level viewpoint, the $L_2$ risk will have some bias in the presence of the \emph{pooled linear spurious variables} satisfying $\sum_{e\in \mathcal{E}} \omega^{(e)} \mathbb{E} [\varepsilon^{(e)} x_j^{(e)}] \neq 0$. This is where our proposed regularizer $\mathsf{J}$ comes into play: it debiases the pooled least squares loss using the difference of biases between heterogeneous environments, as illuminated below. Such insight also answers the question of how large $\gamma$ is enough.

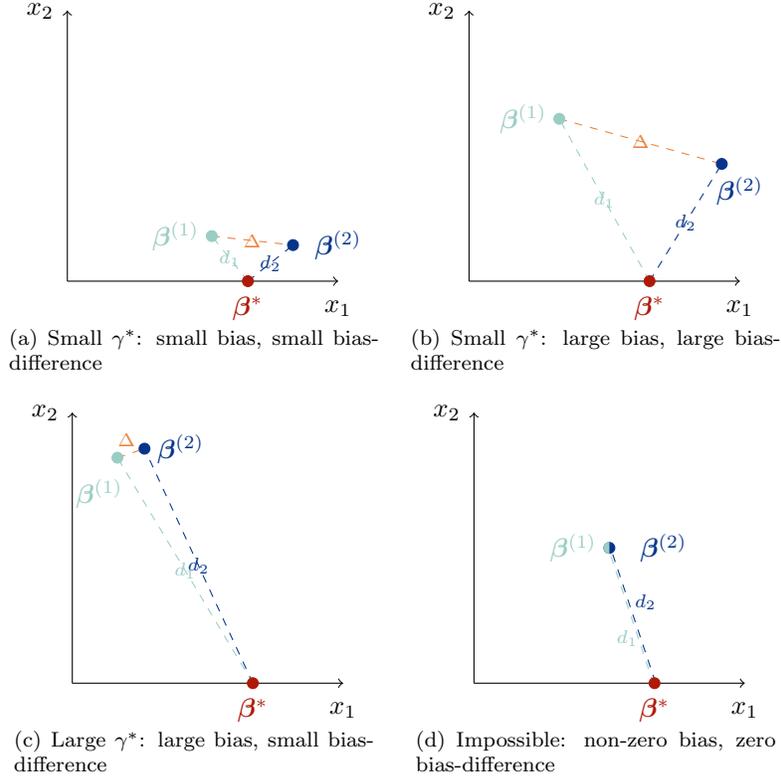
\begin{figure}[!t]
    \centering
    \begin{tabular}{cc}
    \subfigure[Small $\gamma^*$: small bias, small bias-difference]{
        \begin{tikzpicture}[scale=1.2]
        \draw[->] (0,0) -- (0,3);
        \draw[->] (0,0) -- (3,0);
        \draw (3, -0.3) node{$x_1$};
        \draw (-0.3, 3) node{$x_2$};
        \draw[mylightblue, dashed] (2, 0) -- (1.8, 0.25) node{\scriptsize $d_1$} -- (1.6, 0.5);
        \draw[myblue, dashed] (2, 0) -- (2.25, 0.2) node{\scriptsize $d_2$} -- (2.5, 0.4);
        \draw[myorange, dashed] (1.6, 0.5) -- (2.05, 0.45) node{\scriptsize $\Delta$} -- (2.5, 0.4);
        \draw[myred, fill] (2, 0) circle[radius=0.06];
        \draw[myred] (2,-0.3) node{${\bbeta}^*$};
        \draw[mylightblue, fill] (1.6,0.5) circle[radius=0.06];
        \draw[mylightblue] (1.2, 0.5) node{$\bbeta^{(1)}$};
        \draw[myblue, fill] (2.5, 0.4) circle[radius=0.06];
        \draw[myblue] (3, 0.4) node {$\bbeta^{(2)}$};
        \end{tikzpicture} 
    }
    &
        \subfigure[Small $\gamma^*$: large bias, large bias-difference]{
        \begin{tikzpicture}[scale=1.2]
        \draw[->] (0,0) -- (0,3);
        \draw[->] (0,0) -- (3,0);
        \draw (3, -0.3) node{$x_1$};
        \draw (-0.3, 3) node{$x_2$};
        \draw[mylightblue, dashed] (2, 0) -- (1.5, 0.9) node{\scriptsize $d_1$} -- (1, 1.8);
        \draw[myblue, dashed] (2, 0) -- (2.4, 0.65) node{\scriptsize $d_2$} -- (2.8, 1.3);
        \draw[myorange, dashed] (1, 1.8) -- (1.9, 1.55) node{\scriptsize $\Delta$} -- (2.8, 1.3);
        \draw[myred, fill] (2, 0) circle[radius=0.06];
        \draw[myred] (2,-0.3) node{${\bbeta}^*$};
        \draw[mylightblue, fill] (1,1.8) circle[radius=0.06];
        \draw[mylightblue] (0.6, 1.8) node{$\bbeta^{(1)}$};
        \draw[myblue, fill] (2.8, 1.3) circle[radius=0.06];
        \draw[myblue] (3, 1) node {$\bbeta^{(2)}$};
        \end{tikzpicture} 
    }
    \\
    \subfigure[Large $\gamma^*$: large bias, small bias-difference]{
        \begin{tikzpicture}[scale=1.2]
        \draw[->] (0,0) -- (0,3);
        \draw[->] (0,0) -- (3,0);
        \draw (3, -0.3) node{$x_1$};
        \draw (-0.3, 3) node{$x_2$};
        \draw[myblue, dashed] (2, 0) -- (1.4, 1.3) node{\scriptsize $d_2$} -- (0.8, 2.6);
        \draw[mylightblue, dashed] (2, 0) -- (1.25, 1.25) node{\scriptsize $d_1$} -- (0.5, 2.5);
        \draw[myorange, dashed] (0.5, 2.5) -- (0.8, 2.6);
        \draw[myred, fill] (2, 0) circle[radius=0.06];
        \draw[myred] (2,-0.3) node{${\bbeta}^*$};
        \draw[mylightblue, fill] (0.5, 2.5) circle[radius=0.06];
        \draw[mylightblue] (0.3, 2.1) node{$\bbeta^{(1)}$};
        \draw[myblue, fill] (0.8, 2.6) circle[radius=0.06];
        \draw[myblue] (1.2, 2.6) node {$\bbeta^{(2)}$};
	\draw[myorange] (0.6, 2.7) node{\scriptsize $\Delta$};
        \end{tikzpicture} 
    }
    &
    \subfigure[Impossible: non-zero bias, zero bias-difference]{
        \begin{tikzpicture}[scale=1.2]
        \draw[->] (0,0) -- (0,3);
        \draw[->] (0,0) -- (3,0);
        \draw[mylightblue, dashed] (2-0.01, 0-0.01) -- (1.5-0.01, 1.5-0.01);
        \draw[myblue, dashed] (2+0.01, 0+0.01) -- (1.5+0.01,1.5+0.01);
        
        \draw (3, -0.3) node{$x_1$};
        \draw (-0.3, 3) node{$x_2$};
        \draw[myred, fill] (2, 0) circle[radius=0.06];
        \draw[myred] (2,-0.3) node{${\bbeta}^*$};
        \draw[myblue, fill] (1.5,1.5) circle[radius=0.06];
        \draw[mylightblue] (1.1, 1.5) node{$\bbeta^{(1)}$};
        \draw[mylightblue, fill=mylightblue] (1.5,1.5+0.06) arc [start angle=90, end angle=270, radius=0.06];
        \draw[myblue] (2.1, 1.5) node{$\bbeta^{(2)}$};
        \draw[mylightblue] (1.7, 0.5) node{\scriptsize $d_1$};
        \draw[myblue] (1.9, 0.9) node{\scriptsize $d_2 $};
        \end{tikzpicture} 
    }
    \end{tabular}
    \caption{A geometric illustration of the bias-difference debiasing idea. We consider the same case where $|\mathcal{E}|=p=2$, $x_1$ is the important variable, and $x_2$ is the pooled linear spurious variable. In each subplot, ${\color{myred} \bbeta^*}$ is the true parameter and $\bbeta^{(e)}$ with $e\in \{{\color{mylightblue} 1}, {\color{myblue} 2}\}$ is the population risk minimizer in each environment. Following the discussion in the text, $d_e = \|\bbeta^* - \bbeta^{(e)}\|_2 \asymp \|\bb^{(e)}\|_2$ quantifies the bias of each environment and $\Delta = \|\bbeta^{(1)} - \bbeta^{(2)}\|_2$ represents the bias-difference. The four plots demonstrate four cases in which the magnitudes of bias and bias-difference vary, leading to different thresholds $\gamma^*$ satisfying $\gamma^* \asymp \left(\frac{ {\color{mylightblue} d_1 } + { \color{myblue} d_2} }{{\color{myorange} \Delta}}\right)^{2}$. The above two plots (a) and (b) are the cases where $\gamma^*$ is of reasonable, constant order. We can see when both bias and bias-difference are relatively small in plot (a) or relatively large in plot (b), and the ratio of the two quantities is within constant order that ${\color{mylightblue} d_1} + {\color{myblue} d_2} \asymp {\color{myorange} \Delta}$, the choice of $\gamma^*$ is also of constant order. However, when the bias is much larger than the bias difference that ${\color{mylightblue} d_1} + {\color{myblue} d_2} \gg {\color{myorange} \Delta}$ in (c), one needs to use a large $\gamma^*$ to accommodate the gain in loss decrease from selecting pooled linear spurious variable $x_2$. (d) present a case where the variable set $\{1,2\}$ is also LLS-invariance across the two environments because ${\color{mylightblue} \bbeta^{(1)}}$ and ${\color{myblue} \bbeta^{(2)}}$ coincides. In this case, our proposed EILLS approach will fail and converge to the spurious solution ${\color{mylightblue} \bbeta^{(1)}}={\color{myblue} \bbeta^{(2)}}$ instead of recovering ${\color{myred} \bbeta^*}$.}
    \label{fig:bias-difference-debiasing}
\end{figure}

We illustrate the phenomenon of debiasing at the population level by considering the simplest case of $|\mathcal{E}|=p=2$ and defer a thorough and rigorous analysis to \cref{subsec:strong-convexity}. Here we let $\omega^{(1)}=\omega^{(2)}=1/2$ and $\bbeta^*=(1,0)$ such that the first variable $x_1$ is an important and invariant variable and the second variable $x_2$ is a linear spurious variable. Moreover, suppose $\lambda(\bSigma^{(e)}) \asymp 1$ where $\lambda(\bS)$ represent the eigenvalues of the symmetric matrix $\bS$.

Under the above toy model, the population-level excess pooled $L_2$ risk is given 
\begin{align*}
    \mathsf{R}(\bbeta) - \mathsf{R}(\bbeta^*) = (\bbeta - \bbeta^*)^\top \bar{\bSigma} (\bbeta - \bbeta^*) - 2\times (\bbeta - \bbeta^*)^\top \bb
\end{align*} 
where $\bar{\bSigma} = \frac{1}{2} (\bSigma^{(1)} + \bSigma^{(2)})$ and  $\bb = (0, b_2)$ with  $b_2 = \frac{1}{2} \{\mathbb{E}[\varepsilon^{(1)} x_2^{(1)}] + \mathbb{E}[\varepsilon^{(2)} x_2^{(2)}]\}$.
This indicates that $\bbeta^*$ is not the minimizer of $\mathsf{R}(\bbeta)$ when $b_2 \not = 0$
and one can further decrease the loss in the direction of $(\bar{\bSigma})^{-1} \bb$. Furthermore, the  decrease in $\mathsf{R}(\bbeta)$ by moving towards this direction is  bounded from below by
\begin{align*}
    \mathsf{R}(\bbeta) - \mathsf{R}(\bbeta^*) \ge C_1^{-1} \|\bbeta - \bbeta^*\|_2^2 - C_1 \|\bb\|_2^2
\end{align*} for some constant $C_1>0$. Let us see how the invariance regularizer $\mathsf{J}(\bbeta)$ can compromise the bias. Denote $\bb^{(e)}=(0,\mathbb{E}[\varepsilon^{(e)} x_2^{(e)}])$ be the bias vector in each environment $e$. When $\bbeta$ is supported on $\{1,2\}$, we find
\begin{align*}
    \mathsf{J}(\bbeta) - \mathsf{J}(\bbeta^*) &= \frac{1}{2} \|\bSigma^{(1)} (\bbeta - \bbeta^*) - \bb^{(1)} \|_2^2 + \frac{1}{2} \|\bSigma^{(2)} (\bbeta - \bbeta^*) - \bb^{(2)} \|_2^2 \\
    & \ge 2 C_2^{-1} \Big \{\| \bbeta - \bbeta^* - (\bSigma^{(1)})^{-1} \bb^{(1)} \|_2^2 +  \|\bbeta - \bbeta^* - (\bSigma^{(2)})^{-1}\bb^{(2)} \|_2^2 \Big \}\\
    &\ge C_2^{-1} \|(\bSigma^{(1)})^{-1} \bb^{(1)} - (\bSigma^{(2)})^{-1} \bb^{(2)}\|_2
\end{align*} for another constant $C_2>0$. This demonstrates that one needs to pay an extra cost of bias-difference $\geq \gamma C_2^{-1}\|(\bSigma^{(1)})^{-1} \bb^{(1)} - (\bSigma^{(2)})^{-1} \bb^{(2)}\|_2$ when adopting pooled linear spurious variables. Such a cost can compromise the gain of the decrease in $\mathsf{R}(\bbeta)$ when a large $\gamma$ is used. Formally, we have
\begin{align*}
    \forall \bbeta ~~\text{with}~~ \|\bbeta \|_0 = 2, ~~~~~~\mathsf{Q}(\bbeta) - \mathsf{Q}(\bbeta^*) \ge C_1^{-1}\| \bbeta - \bbeta^*\|_2^2
\end{align*} provided
\begin{align}
\label{eq:gamma-star-sketch}
    \gamma \ge \gamma^* = C_1 C_2 \frac{\|\bb\|_2^2}{\|(\bSigma^{(1)})^{-1} \bb^{(1)} - (\bSigma^{(2)})^{-1} \bb^{(2)}\|_2} \asymp \frac{\|(\bbeta^{(1)} - \bbeta^*) + (\bbeta^{(2)} - \bbeta^*)\|_2^2}{\|(\bbeta^{(1)} - \bbeta^*) - (\bbeta^{(2)} - \bbeta^*)\|_2^2}
\end{align} where $\bbeta^{(e)} = \argmin_{\bbeta \in \mathbb{R}^2} \mathsf{R}^{(e)}(\bbeta)$ is the  population risk minimizer for environment $e\in \mathcal{E}$. The R.H.S. of \eqref{eq:gamma-star-sketch} follows from the fact that $\bbeta^{(e)} - \bbeta^* = (\bSigma^{(e)})^{-1} \bb^{(e)}$. We present a geometric illustration of the above discussion and how the bias and bias-difference jointly affect the critical threshold $\gamma^*$ in \cref{fig:bias-difference-debiasing}.

\subsection{Implementation Details of Simulations in \cref{sec:simulation}}
\label{subsec:implementation}

The structural assignment of the SCM in $e=1$ and $e=2$ are as follows
\begin{align*}
    x^{(e)}_1 &\gets u_1^{(e)} \\
    x^{(1)}_4 &\gets u_4^{(1)} \qquad \qquad \qquad \qquad &x^{(2)}_4 \gets (u_4^{(2)})^2-1\\
    x^{(e)}_2 &\gets \sin(x_4^{(e)}) + u_2^{(e)} \\
    x^{(e)}_3 &\gets \cos(x_4^{(e)}) + u_3^{(e)} \\
    x^{(e)}_5 &\gets \sin(x_3^{(e)} + u_5^{(e)}) \\
    x^{(e)}_{10} &\gets 2.5 x_1^{(e)} + 1.5 x_2^{(e)} + u_{10}^{(e)} \\
    y^{(e)} &\gets 3 x_1^{(e)} + 2x_2^{(e)} - 0.5 x_3^{(e)} + u_{13}^{(e)} \\
    x_6^{(e)} &\gets 0.8 y^{(e)} u_6^{(e)} \\
    x_7^{(1)} &\gets 0.5 x_3^{(1)} + y^{(1)} + u_7^{(1)} & x_7^{(2)} \gets 4 x_3^{(2)} + \tanh(y^{(2)}) + u_7^{(2)} \\
    x_8^{(e)} &\gets 0.5 x_7^{(e)} - y^{(e)} + x_{10}^{(e)} + u_8^{(e)} \\
    x_9^{(e)} &\gets \tanh(x_7^{(e)}) + 0.1 \cos(x_8^{(e)}) + u_9^{(e)} \\
    x^{(e)}_{11} &\gets 0.4 (x_7^{(e)} + x_8^{(e)}) * u_{11}^{(e)} \\
    x_{12}^{(e)} &\gets u^{(e)}_{12}
\end{align*} 
Here $u^{(e)}_1,\ldots, u^{(e)}_{13} \sim \mathcal{N}(\bm{0}, I_{13\times 13})$ for all the $e\in \mathcal{E}$.

\noindent \textbf{Implementation.} We use brute force search to calculate $\hat{\bbeta}_{\mathsf{Q}}$. To be specific, we enumerate all the possible support set $S\in 2^{[p]}$, for each fixed $S$, the EILLS objective \eqref{eq:eills-objective-q} is a quadratic function of $[\bbeta]_S$, whose minimum value $\hat{\bbeta}^{(S)}$ can be found by setting the first order condition to be held, that is, 
\begin{align*}
        \hat{\bbeta}^{(S)} &= \mathop{\mathrm{argmin}}_{\mathrm{supp}(\bbeta) \subseteq S} \sum_{e\in \mathcal{E}} \omega^{(e)} \hat{\mathbb{E}}[|y^{(e)} - \bbeta^\top \bx^{(e)}|^2] + \gamma \sum_{e\in \mathcal{E}} \left\|\hat{\mathbb{E}}[\{y^{(e)} - \bbeta^\top \bx^{(e)}\}\bx_{S}^{(e)}]\right\|_2^2 \\
        &= \left[\hat{\bbeta}^{(S)}_S, \hat{\bbeta}^{(S)}_{S^c}\right] \\
        &= \left[\left\{\sum_{e\in \mathcal{E}} \omega^{(e)} \hat{\bSigma}^{(e)}_S + \gamma \sum_{e\in \mathcal{E}} \omega^{(e)} (\hat{\bSigma}^{(e)}_S)^2\right\}^{-1} \left\{ \sum_{e\in \mathcal{E}} \omega^{(e)} \hat{\mathbb{E}}[\bx_S^{(e)} y^{(e)}] + \gamma \sum_{e\in \mathcal{E}} \omega^{(e)} \hat{\bSigma}_{S} \hat{\mathbb{E}}[\bx_S^{(e)} y^{(e)}]\right\}, \bm{0} \right].
\end{align*}
Then $\hat{\bbeta}_{\mathsf{Q}}$ is assigned to be $\hat{\bbeta}^{(S)}$ with the minimum empirical objective value $\hat{\mathsf{Q}}$. The total computational complexity is $\mathcal{O}(2^p p^3 + \sum_{e\in \mathcal{E}} n^{(e)} p^2)$.

We argue here it is possible to do some relaxation on the objective \eqref{eq:eills-objective-q} such that it may be efficiently solved via gradient descent or other methods. However, minimizing $\hat{\mathsf{Q}}$ is still a non-convex problem. We leave an efficient implementation as future work since this paper focused on a thorough statistical analysis for the multiple environment linear regression.

\noindent \textbf{Implementation for Other Invariance Based Methods.} It is noteworthy that all the invariance-based methods have hyper-parameters related to ``invariance'' besides. For example, the hyper-parameters balancing least squares and invariance regularizer in IRM and Anchor regression, and the Type-I error threshold $\alpha$ in ICP. The criterion for choosing this type of hyper-parameter is fundamentally different from that for hyper-parameters controlling the statistical complexity, such as $L_1$/$L_2$ regularization. We can use a train/valid split for the latter and choose the hyper-parameters using the validation dataset. On the contrary, the optimal hyper-parameter for the former should be tuned using the ``test dataset''. In the comparison experiment, the choice of hyper-parameters for ICP, IRM, and Anchor regression are picked in an oracle manner, that is, we enumerate all the possible hyper-parameters and choose the one that minimizes the $\ell_2$ prediction error $\|\bar{\bSigma}^{1/2}(\hat{\bbeta} - \bbeta^*)\|_2^2$. We set the candidate set to be $\{0,1e-5,1e-4,1e-3,1e-2,1e-1,1\}$ for IRM, $\{0, 1, 2, 4, 8, 10, 15, 20, 30, 40, 60, 80, 90, 100, 150, 200, 500, 1000, 5000, 10000\}$ for Anchor regression. For ICP, we try the Type-I error parameter in $\{0.9, 0.95, 0.99, 0.995\}$. We use the dummy variable $1\{E=1\}$ as the anchor variable for Anchor regression.

\noindent \textbf{Configurations for figures.} \cref{fig:numerical} (a) plots the estimated coefficients $\hat{\beta}_j$ over $\gamma \in \{0, 1, 2, 3, 8\} \cup \{5k: k\in [19]\} \cup \{100j:j\in [9]\} \cup \{1000\ell: \ell\in [10]\}$. \cref{fig:numerical} (b) (c) enumerates $n$ in $\{100k: k\in [10]\} \cup \{1500, 2000\}$ and all the plotted quantities are the average over 500 replications.

\subsection{A Diagnosis on Why Refitting is Worse than Vanilla EILLS} 
\label{sec:diagnosis}

In \cref{fig:numerical} (b), we see that the performance of EILLS with refitting is slightly worse than that of the vanilla EILLS estimator. Here, we provide a possible explanation for why there is a noticeable gap when $n=100$ from the observations in simulation studies and leave the finer analysis for future studies. \cref{fig:refit} visualizes the solution EILLS and EILLS with refitting produced in 100 replications when $n=100$. In the plot, each point $(x, y)$ with marker $m$ represents a solution $\hat{\bbeta}$ that the method $m$ produces. Here, $x$ denotes the relative $\ell_2$ norm restricted to the true important variables set $S^*$, calculated as $\|\hat{\bbeta}_{S^*}\|_2/\|{\bbeta}^*_{S^*}\|_2$; and $y$ represents the relative $\ell_2$ norm restricted to the pooled linear spurious variables set $G$, expressed as $\|\hat{\bbeta}_{G}\|_2/\|\bar{\bbeta}_{G}\|_2$ with $\bar{\bbeta} = \bar{\bbeta}^{([12])}$.  The solutions in each trial are connected by an arrow from EILLS to EILLS with refitting, indicating that the latter is the refitted version of the former. 

As shown in \cref{fig:refit}, some solutions falsely select variables in $G=\{7,8,9\}$. For these cases, the EILLS solutions tend to be more dependent on the variables in $S^*$ and less dependent on the variables in $G$ than the EILLS with refitting solution. In other words, the EILLS method yields a more robust solution when the variable selection property fails to hold. We guess that will contribute to the $\ell_2$ estimation error gap between the two methods.

    \begin{figure*}[t!]
    \centering
        \includegraphics[scale=0.5]{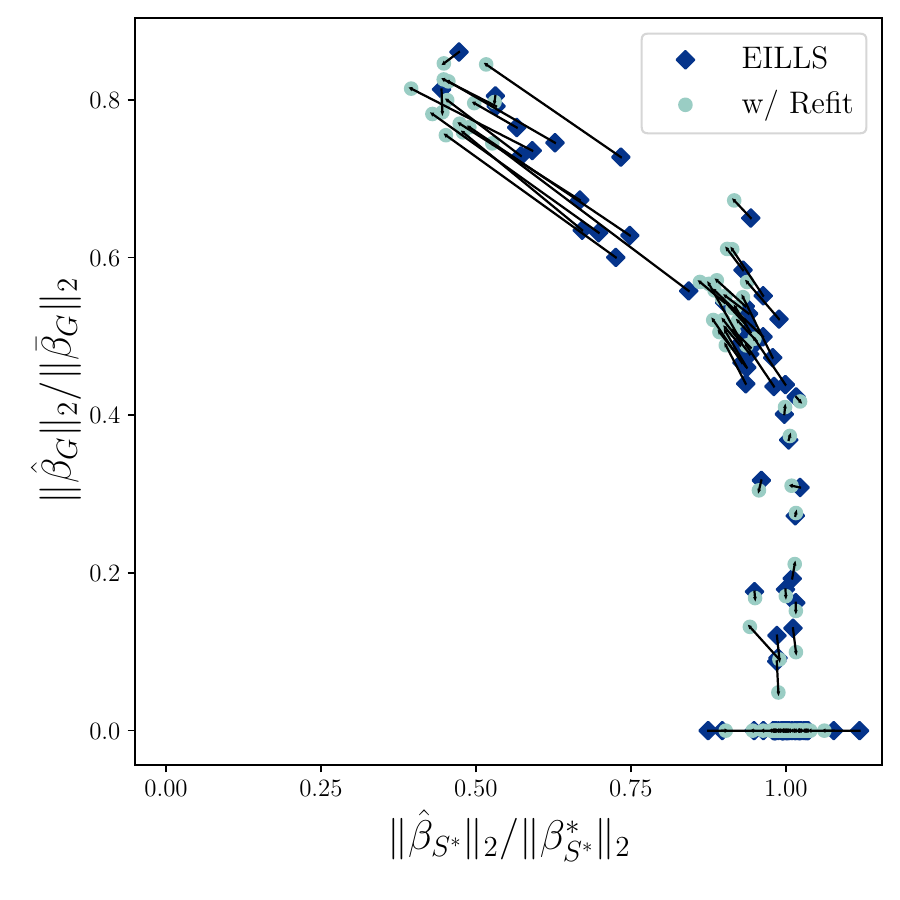}
    \caption{Visualization of solutions EILLS and EILLS with refitting produce in 100 replications when $n=100$.}
    \label{fig:refit}
    \end{figure*}

\subsection{EILLS by Gumbel Approximation}
\label{sec:gumbel}

In this section, we briefly describe how to use stochastic approximation and Gumbel approximation to handle the combinatorial nature of optimization and let a variant of gradient descent continue to work. This is supported by simulation results when $p=70$. We follow the notations in \citep{gu2024causality}.

The original EILLS objective can be written as 
\begin{align}
\label{fan5}
    (\hat{\beta}, \hat{a}) \in \mathop{\mathrm{argmin}}_{\beta \in \mathbb{R}^p, a\in \{0,1\}^p} \underbrace{\sum_{e\in \mathcal{E}} \hat{\mathbb{E}}[|Y^{(e)} - (\beta \odot a)^\top X^{(e)}|^2] + \gamma \sum_{j=1}^p a_j \left|\hat{\mathbb{E}}[\{Y^{(e)} - (\beta \odot a)^\top X^{(e)}\} X_j] \right|^2}_{\hat{\mathsf{Q}}_\gamma(\beta, a)},
\end{align} where $\odot$ is the point-wise multiplication, i.e., $[x \odot y]_j = x_j y_j$ for any $x, y\in \mathbb{R}^d$. This step is used to disentangle the effect of variable selection and parameter estimation, but the combinatorial nature remains. To avoid this, we first rewrite the optimization as a ``continuous'' optimization:
\begin{align*}
    (\hat{\beta}, \hat{w}) \in \argmin_{\beta \in \mathbb{R}^p, w\in \mathbb{R}^p} \mathbb{E}_{B(w)} \left [ \hat{\mathsf{Q}}_\gamma(\beta, B(w)) \right ],
\end{align*}
where the $j^{th}$ component of $B(w) \in \{0, 1\}^d$ follows an independent Bernoulli with probability of success $\sigma(w_j) = \exp(w_j)/(1+\exp(w_j))$.  This is easily seen by taking $\hat{w} = \mathrm{logit}(\hat a)= \log (\frac{\hat a}{1 - \hat a})$ (taking values $\pm \infty$).  Note that $B_j(w_j) = I(\mathrm{logit} (U_j) \leq w_j)$ is discontinuous in $w_j$ where $U_j \sim$ Uniform[0,1], but can be approximated as
\begin{equation} \label{fan6}
    B_j(w_j) \approx \frac{1}{1 + e^{ (\mbox{\scriptsize logit}(U_j) - w_j))/\tau}} \equiv V_\tau(U_j, w_j) ~~~~ \text{as} ~~~~\tau \to 0^+,
\end{equation}
for which its gradient can be taken.  Let
\begin{align*}
    A_\tau(U, w) = (V_\tau(U_{1}, w_1),\ldots, V_\tau(U_{d}, w_d))^\top \in \mathbb{R}^d
\end{align*} with $\{U_j\}_{j=1}^d$ being i.i.d. uniform random variables.
One can approximate the original objective \eqref{fan5} by
\begin{align}
\label{eq:final}
    (\hat{\beta}, \hat{w}) \argmin_{\beta \in \mathbb{R}^p, w\in \mathbb{R}^p} \mathbb{E}_{U} \left [ \hat{\mathsf{Q}}_\gamma(\beta, A_\tau(U, w)) \right ].
\end{align} 
Since $\mathrm{logit}(U_j) \stackrel{d}{=} U_{j,1}- U_{j, 2}$ with $\{U_{j,1}, U_{j,2}\}_{j=1}^d$ being i.i.d. Gumbel(0,1) random variables, the approximation \eqref{fan6} is also referred to as the Gumbel approximation. Given \eqref{eq:final}, one can adopt the following variants of gradient descent in Algorithm \ref{algo1}.

\begin{algorithm}
\caption{Scaling EILLS to High-dimension by Gumbel Trick}
\begin{algorithmic}[1]
\State \textbf{Hyper-parameter: } number of iterations $T$, hyper-parameter $\gamma$
\State \textbf{Gumbel parameters: } initial/final temperature $(\tau_0,\tau_T)$, anneal rate $\rho$, anneal iteration $T_\tau$.
\State \textbf{Input:} data $\{(X^{(e)}_i, Y^{(e)}_i)\}_{i\in [n], e\in \mathcal{E}}$
\State Initialize $\beta, w$
\State Set $\tau = \tau_0$
\For {$t \in \{1,\ldots, T\}$} 
    \State $\tau = \max(\tau_T, \tau \times \rho)$ \textbf{if} $t ~\mathrm{mod}~ T_\tau = 0$.
    \State Sample $\{U_{j,1}, U_{j,2}\}_{j=1}^p$ from Gumbel(0,1).
    \State Update $\beta, w$ by descending its gradient
        \begin{align*}
                \nabla_{(\beta, w)} \left[\hat{\mathsf{Q}}_\gamma(\beta, A_\tau(U, w))\right]
        \end{align*}
\EndFor
\State \textbf{Output:} estimate $\beta \odot \sigma(w)$. 
\end{algorithmic}
\label{algo1}
\end{algorithm}

\subsubsection{Simulation Results}

This section is to illustrate the performance of the above Gumbel-trick optimized EILLS estimator under moderate high dimension $p=70$, where brute force search is not feasible. The data-generating process and the experimental setting are the same as Section 5.2.1 in \cite{gu2024causality}. 

\noindent \textbf{Data Generating Process.} We consider the case where $|\mathcal{E}|=2$ and the data $(X^{(e)}, Y^{(e)})$ in each environment $e\in \{0,1\}$ are generated from two SCMs sharing the same causal relationship between variables. For each trial, we first generate the parent-children relationship among the variables. We enumerate all the $i\in [p+1]$. For each $i\in [p+1]$, we randomly pick at most $4$ parents for the variable $Z_i$ from $\{Z_1,\ldots, Z_{i-1}\}$, this step ensures that the induced graph is a DAG. We use fixed $p=70$, and let the variable $Z_{36}$ be $Y$ and the rest variables constitute the covariate $X$, that is, we let $(Z_1,\ldots, Z_{35}, Z_{36}, Z_{37}, \ldots, Z_{71})=(X_1,\ldots, X_{35}, Y, X_{36}, \ldots, X_{70})$. We also enforce that $Y$ has at least $5$ parents and at least $5$ children by adding parents and children when needed.  The structural assignment for each variable $Z_j$ is defined as
\begin{align*}
    Z_j^{(e)} \gets \sum_{k\in \mathtt{pa}(j)} C_{j,k}^{(e)} f_{j,k}^{(e)}(Z_k^{(e)}) + C_{j, j}^{(e)} \varepsilon_j
\end{align*} where $(\varepsilon_1,\ldots, \varepsilon_{71})$ are independent standard normal random variables. For $j\neq 36$, $f_{j,k}^{(e)}$ are sampled randomly from the candidate functions $\{\cos(x), \sin(x), \sin(\pi x), x, 1/(1+e^{-x})\}$, $C_{j,k}^{(e)}$ are sampled from the uniform distribution on $[-1.5, 1.5]$ with $|C_{j,j}^{(e)}| \ge 0.5$. For $j=36$ and $k<j$, we have $f_{36,k}^{(e)}(x)=x$ and $C_{36,k}^{(0)}\equiv C_{36,k}^{(1)}$ for linearity and invariance. The above data-generating process can be regarded as one observation environment $e=0$ and an interventional environment $e=1$ where the random and simultaneous interventions are applied to all the variables other than the variable $Y$, while the assignment from $Y$'s parent to $Y$ remains and furnishes the target regression function $f^*(x) = \sum_{k\in \mathtt{pa}(36)} C_{36, k}^{(e)} x_k$ in pursuit. In this case, we let $S^* = \mathtt{pa}(36)$ and $\beta^*$ with support set $S^*$ be such that $\beta_j^*=C^{(0)}_{36, k}=C^{(1)}_{36, k}$ for any $k\in S^*$. We also let the noise variance be different for the two environments, i.e., $C_{36,36}^{(0)} \neq C_{36,36}^{(1)}$. Now, the model only has conditional expectation invariance rather than the full conditional distribution invariance. \cref{fig:visualize-linear} (a) visualizes the induced graph in one trial. The complex cause-effect relationships in high-dimensional variables make the problem of estimating $\beta^*$ very challenging.

\begin{figure}[!t]
\centering
\begin{tabular}{cc}
\subfigure[]{
\includegraphics[scale=0.4]{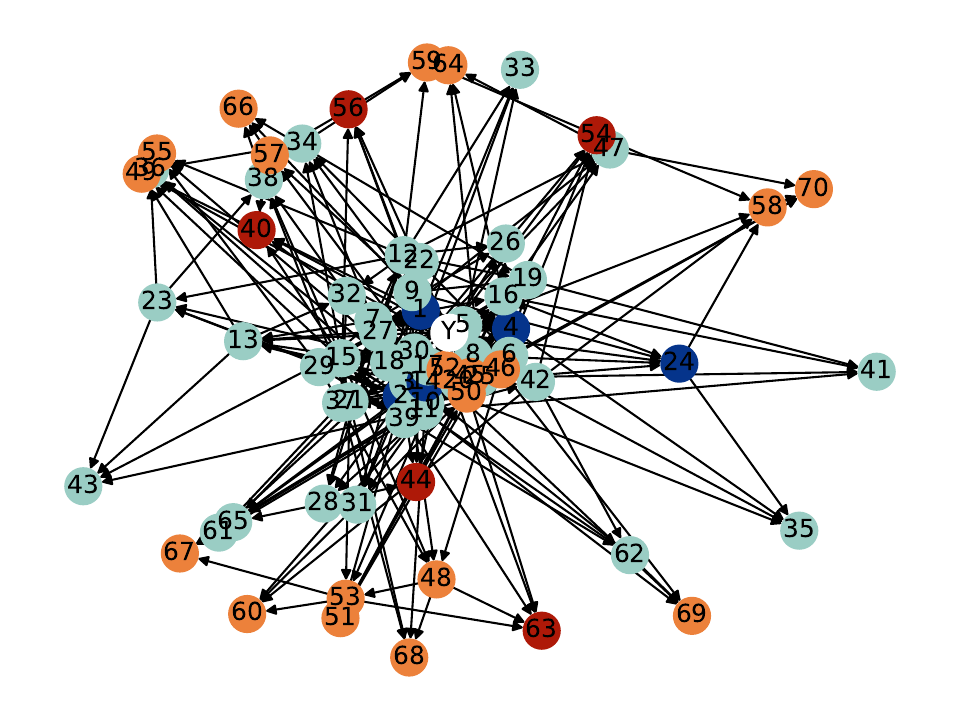}
}&
\subfigure[]{
\includegraphics[scale=0.3]{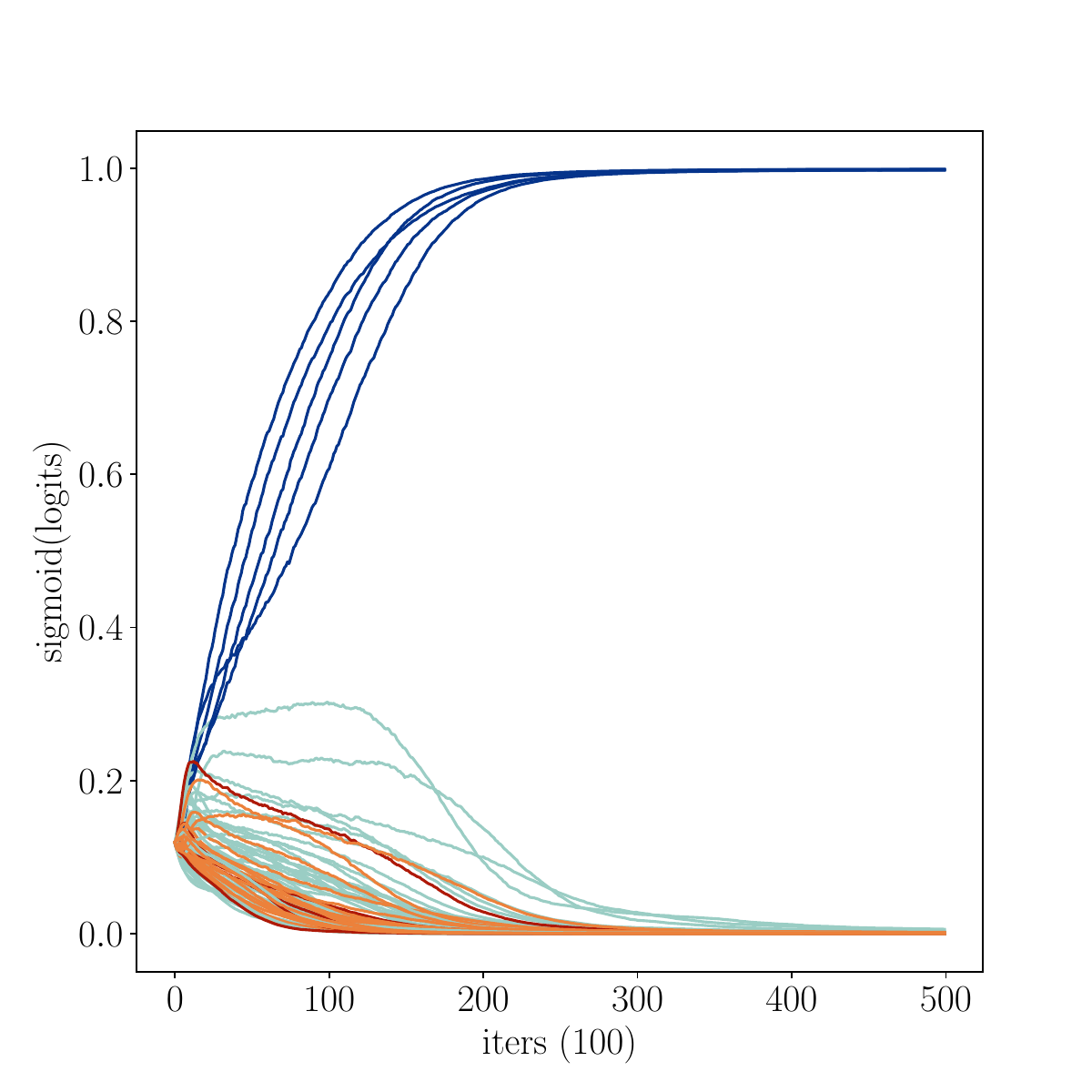}
}
\end{tabular}
\caption{The visualization of (a) the SCM and (b) the $\sigma({w})$ during training in one trial for the FAIR-Linear estimator. We use different colors to represent the different relationships with $Y$: \myblue{blue} = parent, \myred{red} = child, \myorange{orange} = offspring, \mylightblue{lightblue} = other.}
\label{fig:visualize-linear}
\end{figure}

\medskip
\noindent \textbf{Implementation.} For the EILLS estimator realized by gradient descent with Gumbel trick, we run gradient descent ascent using Adam optimizer with a learning rate of 1e-3, batch size $64$ for $50k$ iterations. We adopt a fixed hyper-parameter $\gamma=10$ and report the performance of the following estimators using the median of the estimation error $\|\hat{\beta} - \beta^*\|_2^2$ over $50$ replications and varying $n\in \{200, 500, 1000, 2000, 5000\}$.
\begin{itemize}
    \item[(1)] Pool-LS: it simply runs least squares on the full covariate $X$ using all the data. 
    \item[(2)] EILLS-GB: Our EILLS estimator with Gumbel approximation that outputs $\beta \odot \sigma(w)$.
    \item[(3)] EILLS-RF: it selects the variables $x_j$ with $\sigma({w_j})>0.7$ of the fitted model in (2), i.e., $\hat{S}=\{j: \sigma(w_j)>0.7\}$, and refits least squares again on $X_{\hat{S}}$ using all the data.
    \item[(4)] Oracle: it runs least squares on $X_{S^*}$ using all the data.
    \item[(5)] Semi-Oracle: it runs least squares on $X_{G^c}$ using all the data, where $G$ is the set of all the descendants of $Y$. Compared with the ERM, it manually removes all the variables that will lead to a biased estimation, but it will also keep uncorrelated variables compared with the full Oracle estimation.  
\end{itemize}

\cref{fig:visualize-linear} (b) visualizes how the Gumbel gate values for different covariables $\sigma(w)$ evolve during training in one trial. We can see that $\sigma(w_j)$ for $j\in S^*$ quickly increases and dominates the values for other variables like children/offspring of $Y$ during the whole training process. 

\medskip
\noindent \textbf{Results. } The results are shown in \cref{fig:simulation-result}. We can see that the square of the $\ell_2$ estimation error $\|\hat{\beta} - \beta^*\|_2^2$ for the pooled least squares estimator (\myyellow{$\times$}) does not decrease and remains to be very large ($\approx 1.5$) as $n$ increases, indicating that it converges to a biased solution. At the same time, the estimation error for EILLS-GB/EILLS-RF (\mylightblue{$\blacklozenge$}/\myblue{$\blacksquare$}) decays as $n$ grows and lies in between that for least squares on $X_{G^c}$ (Semi-Oracle \myorange{$\blacktriangledown$}) and least squares on $X_{S^*}$ (Oracle \myred{$\blacktriangle$}).

\begin{figure}[!t]
\centering
\includegraphics[scale=0.47]{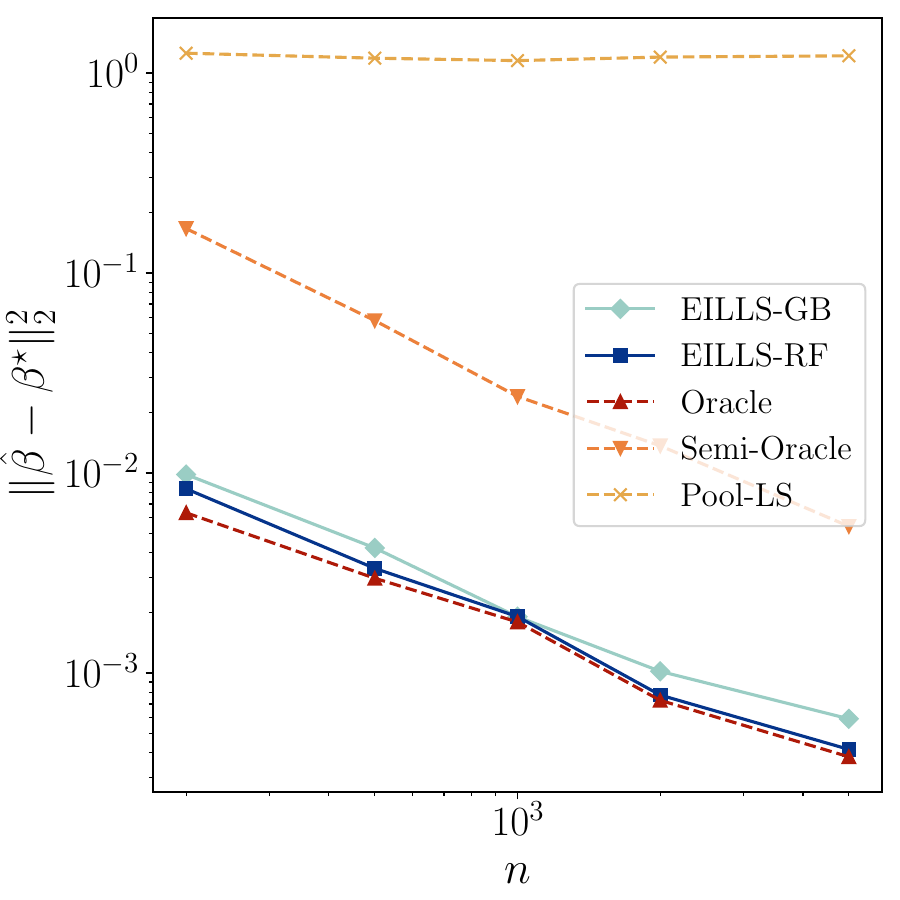}
\caption{The simulation results for linear models with $p=70$. It depicts how the median estimation errors (based on $50$ replications, shown in log scale) for different estimators (marked with different shapes and colors) change when $n$ varies in $\{200, 500, 1000, 2000, 5000\}$ respectively. 
}
\label{fig:simulation-result}
\end{figure}

\subsection{Choice of $\gamma$ in Practice}
\label{sec:gamma-tune}

We argue that the EILLS estimator is not very sensitive to the choice of $\gamma$ when the true causal signal is strong enough, thus one can, for example, adopt $\gamma = 36$, or $\gamma=100$. Such insensitivity is attributed to the discontinuity nature of the loss discussed in \cref{sec:small-gamma}. For example, in the simulation considered in this paper where there exists weak signal $\beta_3 = -0.5$, EILLS discards correct variables only when $\gamma$ is relatively large $\gamma > 3\times 10^3$ and it works pretty well in a wide range of $\gamma \in [15, 3\times 10^3]$ as shown in \cref{fig:numerical} (a).
    
    When the target is to produce the best predictions on unseen future data, we can choose a proper $\gamma$ from a set of candidate $\gamma$-values that has the best out-of-sample performance in certain validation environment(s). To be specific, we consider the two cases, for the first case, we have some training environments $\mathcal{E}_{train}$ and also some validation environments $\mathcal{E}_{valid}$ whose associations between $y$ and $\bx$ are similar to those to be confronted in the future by our belief. We can adopt the following procedure in this case:

    \begin{itemize} 
        \item[Step 1] Set candidate hyper-parameter set $\Gamma$.
        \item[Step 2] For each $\gamma \in \Gamma$, run EILLS estimator using data in $\mathcal{E}_{train}$ with $\gamma$ and get the estimated $\hat{\bbeta}^\gamma$, calculate the worse-case out-of-sample empirical $L_2$ risk among the validation set as
        \begin{align*}
            \hat{R}_\gamma = \max_{e\in \mathcal{E}_{valid}} \frac{1}{n^{(e)}} \sum_{i=1}^{n^{(e)}} \left\{y_i^{(e)} - (\hat{\bbeta}^\gamma)^\top \bx_i^{(e)} \right\}^2.
        \end{align*}
        \item[Step 3] We choose $\gamma$ from $\Gamma$ that corresponds to the minimum out-of-sample empirical $L_2$ risk, that is, $\hat{\gamma} = \argmin_{\gamma \in \Gamma} \hat{R}_\gamma$.
    \end{itemize}

    For the second case, suppose we only have $\mathcal{E}_{train}$, we can adopt the following leave-one-out procedure. 

    \begin{itemize} 
        \item[Step 1] Set candidate hyper-parameter set $\Gamma$.
        \item[Step 2] For fixed $\gamma \in \Gamma$, enumerate all the $e\in \mathcal{E}_{train}$. For each $e \in \mathcal{E}_{train}$, run EILLS estimator using data in $\mathcal{E}_{train} \setminus \{e\}$ with $\gamma$ and get the estimated $\hat{\bbeta}^{\gamma, e}$, calculate its out-of-sample empirical $L_2$ risk in $e$ as
        \begin{align*}
            \hat{R}_{\gamma, e} = \frac{1}{n^{(e)}} \sum_{i=1}^{n^{(e)}} \left\{y_i^{(e)} - (\hat{\bbeta}^{\gamma,e})^\top \bx_i^{(e)} \right\}^2,
        \end{align*} and get its maximum value $\hat{R}_{\gamma} = \max_{e\in \mathcal{E}_{train}} \hat{R}_{\gamma, e}$. 
        \item[Step 3] We choose $\gamma$ from $\Gamma$ that corresponds to the minimum out-of-sample empirical $L_2$ risk, that is, $\hat{\gamma} = \argmin_{\gamma \in \Gamma} \hat{R}_\gamma$.
    \end{itemize}

\subsection{The Interpretation of Small $\gamma$}
\label{sec:small-gamma}

When $\gamma$ is not large enough, i.e., $\gamma < \gamma^*$, the EILLS objective is somewhat similar to running (penalized) pooled least squares on variables excluding some spurious variable whose spuriousness to heterogeneity ratio is smaller than $\gamma$. We use the following variant of the thought experiment to illustrate the idea. Suppose we still do the cow/camel classification using three features $x_1=$~shape, $x_2=$~backgrouond, and $x_3=$~whether the object stands or not. In the two-environment training dataset, 95\% camels/cows on sand/grass, and 95\% camels/cows sit/stand in $\mathcal{D}_1$. Moreover, 90\% camels/cows on sand/grass, and 70\% camels/cows sit/stand in $\mathcal{D}_2$. Under mild conditions akin to \cref{cond-ident-main}, the regularization path of the population-level minimizer of EILLS $\bbeta_\gamma = \argmin_{\bbeta\in \mathbb{R}^p} \mathsf{Q}(\bbeta;\gamma,\bomega)$ can be interpreted as follows. There are change points $\gamma_2>\gamma_1>0$. When $\gamma \in [0,\gamma_1)$, $\bbeta_\gamma$ will be similar to regressing $Y$ on $(x_1,x_2, x_3)$ using all the data; it will threshold $X_3$ and thus be similar to regressing $y$ on $(x_1,x_2)$ using all the data as $\gamma \in (\gamma_1, \gamma_2)$; it will finally recover the ground-truth causal parameter $\bbeta^*$ with $\mathrm{supp}(\bbeta^*)=\{1\}$ when $\gamma > \gamma_2$.

\end{document}